\theoremstyle{plain}
\newtheorem{thm}{Theorem}[section]
\newtheorem{lem}[thm]{Lemma}
\newtheorem{cor}[thm]{Corollary}
\newtheorem{prop}[thm]{Proposition}
\theoremstyle{definition}
\newtheorem{rem}[thm]{Remark}
\newtheorem{conv}[thm]{Convention}
\newtheorem{hyp}[thm]{Hypotheses}
\newtheorem*{note}{Note}
\newtheorem{defn}[thm]{Definition}
\newtheorem{eg}[thm]{Example}
\newtheorem{egs}[thm]{Examples}
\newcommand{\cO}{\mathcal{O}}
\newcommand{\bC}{\mathbb{C}}
\newcommand{\bP}{\mathbb{P}}
\newcommand{\bQ}{\mathbb{Q}}
\newcommand{\bR}{\mathbb{R}}
\newcommand{\bZ}{\mathbb{Z}}
\newcommand{\fm}{\mathfrak{m}}
\newcommand{\fx}{\mathfrak{x}}
\newcommand{\fy}{\mathfrak{y}}
\newcommand{\scA}{\mathscr{A}}
\newcommand{\scD}{\mathscr{D}}
\newcommand{\scH}{\mathscr{H}}
\newcommand{\scJ}{\mathscr{J}}
\newcommand{\scL}{\mathscr{L}}
\newcommand{\scM}{\mathscr{M}}
\newcommand{\scO}{\mathscr{O}}
\newcommand{\scP}{\mathscr{P}}
\DeclareMathOperator{\Torder}{\emph{t}-ord}
\DeclareMathOperator{\ann}{ann}
\DeclareMathOperator{\specialize}{sp_{s-1 \mapsto -1}}
\DeclareMathOperator{\spe}{sp}
\DeclareMathOperator{\ord}{ord}
\DeclareMathOperator{\ind}{ind}
\DeclareMathOperator{\gr}{gr}
\DeclareMathOperator{\Der}{Der}
\DeclareMathOperator{\rank}{rank}
\DeclareMathOperator{\genLevel}{GenLevel}
\numberwithin{equation}{section}
\newcommand\dan[1]{\ensuremath{\spadesuit}{DB:\tiny{#1}}}
\begin{document}

\title[The Hodge filtration and parametrically prime divisors]{The Hodge filtration and parametrically prime divisors}

\author{Daniel Bath and Henry Dakin}

\date{}
\thanks{DB was supported by FWO Grant \#12E9623N}
\subjclass[2010]{Primary 14J17, 32S35 Secondary: 14F17, 14F10, 32C38, 32S40.}
\keywords{Hodge Filtration, Bernstein-Sato, Linear Jacobian type, D-modules, V-filtration, Order Filtration, Singularities}
\maketitle

\begin{abstract}
We study the canonical Hodge filtration on the sheaf $\scO_X(*D)$ of meromorphic functions along a divisor. For a germ of an analytic function $f$ whose Bernstein-Sato's polynomial's roots are contained in $(-2,0)$, we: give a simple algebraic formula for the zeroeth piece of the Hodge filtration; bound the first step of the Hodge filtration containing $f^{-1}$. If we additionally require $f$ to be Euler homogeneous and parametrically prime, then we extend our algebraic formula to compute every piece of the canonical Hodge filtration, proving in turn that the Hodge filtration is contained in the induced order filtration. Finally, we compute the Hodge filtration in many examples and identify several large classes of divisors realizing our theorems.
\end{abstract}

\tableofcontents

\section{Introduction}\label{intro}

Let $X$ be a complex manifold of dimension $n$ and $D$ a reduced effective divisor on $X$. We are interested in the canonical Hodge filtration $F_{\bullet}^H \scO_X(*D)$ associated to the (regular holonomic) left $\scD_X$-module $\scO_X(*D)$ of meromorphic functions on $X$ which are holomorphic on $U:=X\backslash D$. This canonical Hodge filtration is obtained from the natural mixed Hodge module structure overlying the $\scD_X$-module $\scO_X(*D)$ which, following Saito's \cite{MSai90} general theory, is given by $j_*\mathbf{\bQ}_U^H[n]$. Here $j:U\to X$ is the open embedding of $U$ into $X$ and $\bQ_U^H$ is the trivial Hodge module structure on $U$. 

This Hodge filtration, and the weight filtration on $\scO_X(*D)$ also coming from the mixed Hodge module structure, give a way to understand and calculate the mixed Hodge structures associated to the divisor complement $U$.\footnote{They in fact determine the mixed Hodge structure on the cohomology groups of $U$ as defined by Deligne, at least for $X$ compact and Kähler (see \cite{Del71} and Section 4.6 \cite{MSai93})}. In this paper we give explicit formulas for $F_{\bullet}^H \scO_{X}(*D)$ for a large class of divisors. Future work will be concerned with the weight filtration, completing the picture.

\vspace{5mm}

Results on $F_\bullet^H \scO_X(*D)$ broadly arise from one of two philosophies: either proceed globally and birationally or proceed locally and algebraically. The birational approach takes a log resolution $(Y, E) \to (X, D)$ and pushes forward the Hodge filtration on $\scO_Y(*E)$, see \cite{MP19a}, Section D. Mustaţă and Popa have obtained general results this way in a series of recent papers \cite{MP19a}, \cite{MP19b}, \cite{MP19c}. But to actually compute $F_\bullet^H \scO_X(*D)$, the birational approach requires having an explicit log resolution (which one generally does not) and extracting explicit derived direct image data of that log resolution (which is difficult even when the resolution is known). We do not proceed birationally.

Instead, we follow the algebraic approach to understand $F_\bullet^H \scO_X(*D)$. This requires working locally, so henceforth, unless stated otherwise, our divisor $D$ is defined by a global reduced equation $f \in \scO_X$. The algebraic methodology exploits the graph embedding of $f$:
\begin{equation*}
    i_f : X \to X \times \bC \quad \text{given by} \quad \fx \mapsto (\fx, f(\fx)), 
\end{equation*}
where $t$ denotes the coordinate of the $\bC$-factor. On the $\scD_X$-module direct image $i_{f,+} \scO_X(*D)$, the \emph{Kashiwara-Malgrange $V$-filtration} $V^\bullet i_{f,+}\scO_X(*f)$ on $i_{f,+} \scO_X(*D)$ gives, essentially by definition (Section 2.8 \cite{MSai90}), an expression for the Hodge filtration on $\scO_X(*D)$. This method for studying $F_\bullet^H \scO_X(*D)$ has been reaped to great effect for: isolated singularities \cite{Saito09,Zhang2021,Saito2022}; a general algorithm \cite{Blanco22}; strongly Koszul free divisors \cite{CNS22}.

From the graph embedding, we have a natural isomorphism of $\scD_{X \times \bC_t}$-modules
\begin{equation*}
    i_{f,+} \scO_X(*D) \simeq \scO_X(*D)[\partial_t]\delta
\end{equation*}
as recalled in Section 2. This leads to the \emph{$t$-order filtration} $F_\bullet^{\Torder} i_{f,+} \scO_X(*D)$, Definition \ref{defntord}, induced by pulling back elements of appropriate $\partial_t$-order along said isomorphism. Our first main result Theorem \ref{thmformula}, which is essentially a reformulation of, and in fact utilizes, Theorem A' \cite{MP20}, is an expression for $F_\bullet^H \scO_X(*D)$ in terms of this $t$-order filtration:

\begin{thm} \label{thmformula-intro}

Let $f \in \scO_X$. For each non-negative integer $k$,
\begin{center} \vspace{-0.4cm} \begin{align*}F_k^H\scO_X(*f) & = \left(\partial_tF_k^Hi_{f,+}\scO_X(*f)+V^0i_{f,+}\scO_X(*f)\right)\cap\left(\scO_X(*f)\delta\right) \\ &= \left(\partial_tF_{k-1}^{t-\text{\emph{ord}}}i_{f,+}\scO_X(*f)+V^0i_{f,+}\scO_X(*f)\right)\cap\left(\scO_X(*f)\delta\right),\end{align*}\end{center} where the two intersections take place in $\scO_X(*f)[\partial_t]\delta\simeq i_{f,+}\scO_X(*f)$.

\end{thm}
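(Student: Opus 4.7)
The plan is to reduce both equalities to Theorem~A' of \cite{MP20}, which expresses the canonical Hodge filtration $F_k^H\scO_X(*f)$ in terms of the $V$-filtration on the direct image $i_{f,+}\scO_X(*f)$. The backbone is Saito's formula for the Hodge filtration along a codimension-one closed embedding, namely
\[
F_k^H i_{f,+}\scO_X(*f) = \sum_{j \geq 0} F_{k-j-1}^H\scO_X(*f)\cdot\partial_t^j\delta,
\]
which, together with the vanishing $F_{<0}^H\scO_X(*f) = 0$, in particular yields the containment $F_k^H i_{f,+}\scO_X(*f) \subset F_{k-1}^{t\text{-ord}} i_{f,+}\scO_X(*f) = \bigoplus_{j \leq k-1} \scO_X(*f)\partial_t^j\delta$.

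For the first equality I would translate Theorem~A' of \cite{MP20} into the stated form: their result characterizes $a \in F_k^H\scO_X(*f)$ as those elements of $\scO_X(*f)$ for which $a\delta$ admits a presentation $a\delta = \partial_t c + v$ with $c \in F_k^H i_{f,+}\scO_X(*f)$ and $v \in V^0 i_{f,+}\scO_X(*f)$. This is, by definition, the intersection $(\partial_t F_k^H i_{f,+}\scO_X(*f) + V^0 i_{f,+}\scO_X(*f)) \cap \scO_X(*f)\delta$, so the first equality is immediate once the translation is made precise. This translation rests on Saito's standard compatibilities $\partial_t \colon F_k^H V^\alpha \to F_{k+1}^H V^{\alpha-1}$ and $t \colon F_k^H V^\alpha \to F_k^H V^{\alpha+1}$ underlying the definition of a filtered $V$-compatible $\scD$-module.

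For the second equality, the inclusion $\subset$ is immediate from the containment $F_k^H i_{f,+}\scO_X(*f) \subset F_{k-1}^{t\text{-ord}}i_{f,+}\scO_X(*f)$ recalled above. For the reverse inclusion, given $a\delta = \partial_t\bigl(\sum_{j=0}^{k-1} g_j\partial_t^j\delta\bigr) + v$ with $g_j \in \scO_X(*f)$ and $v \in V^0$, I would proceed by descending induction on $j$, at each step reducing the coefficient $g_j$ modulo $F_{k-j-1}^H\scO_X(*f)$ via the commutation relation $t\partial_t^j\delta = f\partial_t^j\delta - j\partial_t^{j-1}\delta$, which trades a multiple of $\partial_t^j\delta$ for a $V^0$-element plus a lower-$\partial_t$-order correction. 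The main obstacle here is bookkeeping: one must verify that all these successive corrections together remain inside $V^0 i_{f,+}\scO_X(*f)$ without reintroducing higher-order $\partial_t$-terms. This bookkeeping reduces, via induction on $k$, to the stability of $V^0$ under $t$ and $\scO_X$ combined with the first equality applied at strictly lower values of $k$.
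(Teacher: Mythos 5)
There is a genuine gap in your handling of the first equality, and the proof of the second equality inherits it. You claim that Theorem~A' of \cite{MP20} ``characterizes $a\in F_k^H\scO_X(*f)$ as those $a$ for which $a\delta$ admits a presentation $a\delta = \partial_t c + v$ with $c\in F_k^H i_{f,+}\scO_X(*f)$ and $v\in V^0 i_{f,+}\scO_X(*f)$,'' but this is not what Theorem~A' says. What it actually asserts is the explicit formula
\[
F_k^H\scO_X(*f)=\Bigl\{\textstyle\sum_{j=0}^{k} j!\,f^{-j-1}v_j \,\Big|\, v_j\in\scO_X,\ \textstyle\sum_{j=0}^{k} v_j\partial_t^j\delta\in V^1 i_{f,+}\scO_X\Bigr\}.
\]
Note in particular that this involves $V^1i_{f,+}\scO_X$ (the non-localized module), not $V^0i_{f,+}\scO_X(*f)$, and describes $a$ by a weighted sum of coefficients rather than a decomposition $\partial_t c + v$. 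Passing from one formulation to the other is exactly the content of the theorem you are asked to prove; saying ``the first equality is immediate once the translation is made precise'' is circular. The paper does the translation through a nontrivial chain: Proposition \ref{propVfilt}.iii) identifies $V^1 i_{f,+}\scO_X = t\cdot V^0 i_{f,+}\scO_X(*f)$; Lemma \ref{lem_HelpCompute} unwinds $v = t\cdot u$ coefficient-by-coefficient and shows $\sum_j j! f^{-j-1}v_j = u_0$; and, most importantly, Lemma \ref{lemHtordV} establishes $F_{k+1}^H V^0 i_{f,+}\scO_X(*f) = F_k^{t\text{-ord}} V^0 i_{f,+}\scO_X(*f)$ by an induction that explicitly invokes Theorem~A' at each step. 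This last identity is the engine of the whole proof and is not a formal consequence of the general compatibilities $\partial_t\colon F_k^H V^\alpha\to F_{k+1}^H V^{\alpha-1}$, $t\colon F_k^H V^\alpha\to F_k^H V^{\alpha+1}$ that you cite.

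Consequently, your induction for the reverse inclusion in the second equality is also incomplete: it relies on ``the first equality applied at strictly lower values of $k$,'' which you have not actually established. The commutation relation $t\partial_t^j\delta = f\partial_t^j\delta - j\partial_t^{j-1}\delta$ is the right local computation (and is what Lemma \ref{lem_HelpCompute} organizes), but what closes the induction is precisely the equality $F_{k+1}^H V^0 = F_k^{t\text{-ord}}V^0$ together with the coefficientwise reading of membership in $F_{k+1}^H i_{f,+}\scO_X(*f)$ from Lemma \ref{lem:HtoVonGraph}. Without isolating and proving a statement equivalent to Lemma \ref{lemHtordV}, the bookkeeping cannot be completed.
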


\noindent Consequently, to understand $F_\bullet^H \scO_X(*D)$ one must understand the interactions of the $t$-order filtration and $V$-filtration. In this, and throughout the rest of the paper, we are inspired by the ideas of Castaño Domínguez, Narváez Macarro, and Sevenheck from \cite{CNS22}.

\vspace{5mm}

On this quest we consider sort of competing object to $i_{f,+}\scO_{X,\hspace{1pt}\fx}(*f)$, where we have picked the $\fx \in D$ we are interested in. (The notation $i_{f,+}\scO_{X,\hspace{1pt}\fx}(*f)$ denotes $(i_{f,+}\scO_U(*f))_{(\fx,0)}$, for $U \ni \fx$ suitably chosen, cf. Convention \ref{convlocal1}.) Now that we work at stalks, we use $\scO_{X,\hspace{1pt}\fx}(*f)$ instead of divisor notation. Construct the $\scD_{X,\hspace{1pt}\fx}[s]=\scD_{X,\hspace{1pt}\fx} \otimes_\bC \bC[s]$-module $\scD_{X,\hspace{1pt}\fx}[s]\cdot f^s \subseteq (\scO_{X,\hspace{1pt}\fx}(*f) \otimes_\bC \bC[s] )f^s$, where $s$ is a new indeterminate (see subsection 3.2) and the $\scD_{X,\hspace{1pt}\fx}$-action is given by a formal chain rule. One can similarly define $\scD_{X,\hspace{1pt}\fx}[s]\cdot f^{s+\ell}$ for $\ell \in \mathbb{Z}$, which is the cyclic submodule generated by $f^{s+\ell}$ and naturally isomorphic to $\scD_{X,\hspace{1pt}\fx}[s] / \ann_{\scD_{X,\hspace{1pt}\fx}[s]} f^{s+\ell}$. These objects lead to the \emph{Bernstein-Sato polynomial} $b_f(s)$ of $f \in \scO_{X,\hspace{1pt}\fx}$\footnote{For this to be well-defined analytically, one must work at stalks.}, which is the minimal monic polynomial in $\bC[s]$ satisfying the \emph{functional equation}
\begin{equation*}
    b_f(s) f^s \in \scD_{X,\hspace{1pt}\fx}[s]\cdot f^{s+1}.
\end{equation*}
The roots of the Bernstein-Sato polynomial (or any $\bC[s]$-polynomial) are denoted by $Z(b_f(s))$. 

The divisors considered in \cite{CNS22} all have (locally everywhere) Bernstein-Sato polynomials' whose roots lie in $(-2,0)$, cf. \cite{Nar15}. If one defines new polynomials $\beta_f(s), \beta_f^\prime(s) \in \bC[s]$ by
\begin{equation*}
    \beta_f(s) \beta_f^\prime(s) = b_f(-s-1) \quad \text{where } Z(\beta_f(s)) \subseteq (0,1) \text{ and } \beta_f(s), \beta_f^\prime(s) \text{ are coprime},
\end{equation*}
then, when the roots of the Bernstein-Sato polynomial are all contained in $(-2,0)$, Proposition 3.3 \cite{CNS22} gives a formula for integral steps of the Kashiwara-Malgrange $V$-filtration on $i_{f,+} \scO_{X,\hspace{1pt}\fx}(*f)$ in terms of: $\beta_f(s)$, the generator choice $\scD_{X,\hspace{1pt}\fx}\cdot f^{-1} = \scO_{X,\hspace{1pt}\fx}(*f)$, and the $V$-filtration on $\scD_{X \times \bC_t, (\fx, t)}$ along $\{t=0\}$. 

Using this we give an algebraic formula for $F_0^H \scO_{X,\hspace{1pt}\fx}(*f)$, for any germ $f \in \scO_{X,\hspace{1pt}\fx}$ such that $Z(b_f(s)) \subseteq (-2,0)$. This can be stated in two ways. For one, let $\Gamma_f$ be the $\scD_{X,\hspace{1pt}\fx}[s]$-ideal
\begin{equation*}
    \Gamma_f = \scD_{X,\hspace{1pt}\fx}[s] \cdot f + \scD_{X,\hspace{1pt}\fx}[s] \cdot \beta_f(-s) + \scD_{X,\hspace{1pt}\fx}[s] \cdot \ann_{\scD_{X,\hspace{1pt}\fx}[s]} f^s.
\end{equation*}
For the other, consider all $P(s) \in \scD_{X,\hspace{1pt}\fx}[s]$ satisfying the ``functional equation''
\begin{equation*}
    P(s) \beta_f^\prime(-s) \cdot f^{s-1} \in \scD_{X,\hspace{1pt}\fx}[s]\cdot f^s.
\end{equation*}

Our second main result, Corollary \ref{cor-zeroHodgePiece}, computes $F_0^H \scO_{X,\fx}(*f)$, the localization of the zeroeth piece of the Hodge filtration $F_0^H \scO_{X}(*f)$ at $\fx$, cf. Remark \ref{remlocal2}, under the assumption $Z(b_f(s)) \subseteq (-2,0)$:

\begin{thm}     \label{thm-cor-zeroHodgePiece-intro}
Assume that $f \in \scO_{X,\hspace{1pt}\fx}$ satisfies $Z(b_f(s)) \subseteq (-2,0)$. Then
\begin{align*}
    F_0^H\scO_{X,\hspace{1pt}\fx}(*f) 
    =(\Gamma_f\cap\scO_{X,\hspace{1pt}\fx})f^{-1}.
\end{align*}
That is, $F_0^H \scO_{X,\hspace{1pt}\fx}(*f)$ is the $\scO_{X,\hspace{1pt}\fx}$-submodule generated by $\{g f^{-1}\}$ where $g$ ranges over all elements of $\scO_{X,\hspace{1pt}\fx}$ satisfying the ``functional equation''
\begin{equation*}
    g \beta_f^\prime(-s) f^{s-1} \in \scD_{X,\hspace{1pt}\fx}[s]\cdot f^{s}.
\end{equation*}
\end{thm}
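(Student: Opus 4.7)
The plan is to combine Theorem \ref{thmformula-intro} at $k=0$ with Proposition 3.3 of \cite{CNS22}, and then translate the resulting $V$-filtration membership condition into the ideal-theoretic language of $\Gamma_f$. Since the $t$-order filtration on $i_{f,+}\scO_X(*f) \simeq \scO_X(*f)[\partial_t]\delta$ visibly satisfies $F_{-1}^{t\text{-ord}}i_{f,+}\scO_X(*f) = 0$, specializing Theorem \ref{thmformula-intro} to $k = 0$ reduces the problem to
\[
F_0^H \scO_{X,\fx}(*f) \;=\; V^0 i_{f,+}\scO_{X,\fx}(*f) \cap \scO_{X,\fx}(*f)\delta,
\]
so it suffices to characterize those $h \in \scO_{X,\fx}(*f)$ for which $h\delta \in V^0 i_{f,+}\scO_{X,\fx}(*f)$.

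Under the hypothesis $Z(b_f(s)) \subseteq (-2, 0)$, Proposition 3.3 of \cite{CNS22} provides an explicit presentation of $V^0 i_{f,+}\scO_{X,\fx}(*f)$ in terms of $\beta_f$ acting on the canonical generator $f^{-1}\delta$, computed inside the $s$-picture via the standard identification $\scD_{X,\fx}[s]\cdot f^s \hookrightarrow i_{f,+}\scO_{X,\fx}(*f)$, under which $-\partial_t t$ corresponds (up to a shift) to $s$ and $t$ corresponds to multiplication by $f$. Using this presentation I plan to show that $gf^{-1}\delta \in V^0$ if and only if one can write
\[
g \;\equiv\; Q(s)\beta_f(-s) + R(s)\, f \pmod{\ann_{\scD_{X,\fx}[s]} f^s}
\]
for some $Q(s), R(s) \in \scD_{X,\fx}[s]$, which is precisely the condition $g \in \Gamma_f$. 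The fact that the same condition forces $g$ to lie in $\scO_{X,\fx}$ (and not merely $\scO_{X,\fx}(*f)$) should follow because the CNS generator of $V^0$ contributes only the single polar factor $f^{-1}$ to an element of $\scO_{X,\fx}(*f)\delta$.

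The second formulation in terms of the functional equation is then obtained via a Bezout argument using the coprimality of $\beta_f$ and $\beta_f'$ together with the shifted Bernstein-Sato identity $\beta_f(-s)\beta_f'(-s) f^{s-1} = b_f(s-1)f^{s-1} \in \scD_{X,\fx}[s]\cdot f^s$. For the forward implication, if $g = P\cdot f + Q\beta_f(-s) + A$ with $A \in \ann_{\scD_{X,\fx}[s]} f^s$, multiplying by $\beta_f'(-s) f^{s-1}$ yields $g\beta_f'(-s) f^{s-1} \in \scD_{X,\fx}[s]\cdot f^s$ after accounting for the contribution of $A$, which vanishes on $f^s$ and is controlled on $f^{s-1}$ via a standard commutator computation. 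For the reverse, a Bezout identity $\alpha(s)\beta_f(-s) + \alpha'(s)\beta_f'(-s) = 1$ in $\bC[s]$ lets one rewrite $g f^{s-1} = g\alpha(s)\beta_f(-s) f^{s-1} + \alpha'(s)\bigl(g\beta_f'(-s) f^{s-1}\bigr)$ to recover the three-term decomposition defining $\Gamma_f$, after clearing the $f^{-1}$ via an extra $\scD_{X,\fx}[s]\cdot f$ correction.

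The main technical obstacle will be the translation in the second paragraph: one must carefully track the identification between the $t$-picture element $gf^{-1}\delta$ and the $s$-picture expression $g f^{s-1}$, verify that the $V^0$-membership condition corresponds exactly to the three ideal generators of $\Gamma_f$, and confirm that no higher-order poles $f^{-k}$ ($k \geq 2$) can appear in the intersection with $\scO_{X,\fx}(*f)\delta$, which is what ultimately restricts $g$ to $\scO_{X,\fx}$.
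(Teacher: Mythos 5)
Your strategy (combine Theorem \ref{thmformula-intro} at $k=0$, Proposition 3.3 of \cite{CNS22}, and a Bezout argument with $\beta_f,\beta_f^\prime$) is essentially the route the paper takes, but there is a genuine gap in your treatment of the pole-order bound. The paper's argument hinges on the input from \cite{MSai93} that $F_0^H\scO_{X,\fx}(*f)\subseteq\scO_{X,\fx}\cdot f^{-1}$. This is what licenses writing any $u\in F_0^H$ as $u=gf^{-1}$ with $g\in\scO_{X,\fx}$, and only then does the $\pi_f$-argument close, since $\pi_f(g)=gf^{-1}\delta$ requires $g\in\scD_{X,\fx}[s]$ (in particular $g\in\scO_{X,\fx}$) before you can assert $g\in P+\ker\pi_f\subseteq\Gamma_f$. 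You replace this with the assertion that the bound ``should follow because the CNS generator of $V^0$ contributes only the single polar factor $f^{-1}$.'' That heuristic does not hold as stated: the CNS presentation $V^0=V^1_{\ind}+\beta_f(\partial_t t)V^0_{\ind}$ involves applying arbitrary elements of $V^0\scD_{X\times\bC_t,(\fx,0)}=\scD_{X,\fx}[\partial_t t]$ and $V^1\scD_{X\times\bC_t,(\fx,0)}$ to $f^{-1}\delta$, and these operators can have arbitrarily high $\scD_{X,\fx}$-order, which in general raises pole orders; the fact that intersecting with $\scO_{X,\fx}(*f)\delta$ (i.e.\ cancelling all $\partial_t$-coefficients) leaves only a first-order pole is exactly the nontrivial content of Saito's theorem, and your argument neither proves it nor cites it. You need to either invoke $F_0^H\subseteq\scO_{X,\fx}f^{-1}$ as an external input, or derive it directly from Theorem A' (Theorem \ref{thmMP}), whose $k=0$ case makes the containment manifest.

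A secondary issue: throughout you write $\ann_{\scD_{X,\fx}[s]}f^s$ where the ideal $\Gamma_f$ and the kernel of $\pi_f$ are defined via $\ann_{\scD_{X,\fx}[s]}f^{s-1}$. These are not the same ideal (indeed $\ann f^{s-1}\subsetneq\ann f^s$), and the shift matters. With the correct $A\in\ann_{\scD_{X,\fx}[s]}f^{s-1}$, the term $A\beta_f^\prime(-s)f^{s-1}=\beta_f^\prime(-s)Af^{s-1}$ vanishes on the nose because $\beta_f^\prime(-s)$ is central in $\scD_{X,\fx}[s]$; there is no ``standard commutator computation'' needed, and with your $A\in\ann f^s$ the term $Af^{s-1}$ is simply not controlled. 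The Bezout argument for the reverse implication is fine once this shift is corrected, and matches the paper's Proposition \ref{prop-funnyBSFunctional}.
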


\noindent This is a significant generalization of Corollary 5.3 \cite{CNS22} which requires the ``strongly Koszul free divisor'' assumption (both in statement and proof methodology) and only proves the statement preceding ``equivalently''. Moreover, argumentation in \cite{CNS22} differs in this result and throughout due to our exploitation of the tight relationship between $(\scO_X(*f) \otimes_\bC \bC[s])f^s$ and $i_{f,+} \scO_X(*f)$.

A fundamental result of Saito \cite{MSai93} is that the Hodge filtration is contained inside the pole order filtration. The containment is often strict and we are interested in the Hodge filtration's relationship to a finer filtration we call the \emph{induced order filtration} $F_\bullet^{\ord} \scO_{X,\hspace{1pt}\fx}(*f)$, cf. Definition \ref{def-orderFilLocalization}. This is given by applying the elements of $\scD_{X,\fx}$ of prescribed order to a fixed generator $f^{-1}$ of $\scO_{X,\hspace{1pt}\fx}(*f)$.

In general, the $k^{\text{th}}$ piece of the Hodge filtration on $\scO_{X,\hspace{1pt}\fx}(*f)$ will not contain the $k^{\text{th}}$ piece of the induced order filtration, nor vice versa. But, assuming only that the roots of the Bernstein-Sato polynomial are contained in $(-2,0)$, we find an explicit bound for when the $k^{\text{th}}$ piece of the Hodge filtration contains a lower piece of the induced order filtration. As $f^{-1} \in F_0^{\ord} \scO_{X,\hspace{1pt}\fx}(*f)$, this is related to finding the first piece of $F_\bullet^H \scO_{X,\hspace{1pt}\fx}(*f)$ containing $f^{-1}$. Corollary \ref{corrf} says:

\begin{thm} \label{thm-rf-intro}

Assume that $f \in \scO_{X,\hspace{1pt}\fx}$ satisfies $Z(b_f(s)) \subseteq (-2,0)$. Write $r_f$ for the degree of the polynomial $\beta_f(s)\in\bC[s]$, cf. Definition \ref{def-funnyBSpoly}. Then, for all $k \in \mathbb{Z}$, 
\[F_{k-r_f}^{\text{\emph{ord}}}\scO_{X,\hspace{1pt}\fx}(*f)\subseteq F_k^H\scO_{X,\hspace{1pt}\fx}(*f).\]

\end{thm}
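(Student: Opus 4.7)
The plan is to work in $i_{f,+}\scO_{X,\fx}(*f)\simeq\scO_{X,\fx}(*f)[\partial_t]\delta$ via Theorem \ref{thmformula-intro}, reducing by induction on $k$ to the single base case $k=r_f$, which is dispatched using the Bernstein-Sato functional equation and Proposition 3.3 of \cite{CNS22}. For $k<r_f$ the statement is vacuous. For the inductive step $k\geq r_f+1$, take $P\in F_{k-r_f}\scD_{X,\fx}$ and decompose $P=P_0+\sum_i\partial_{x_i}Q_i$ in local coordinates with $P_0,Q_i\in F_{k-r_f-1}\scD_{X,\fx}$. The inductive hypothesis places $P_0 f^{-1},Q_i f^{-1}\in F_{k-1}^H\scO_{X,\fx}(*f)$, and the standard compatibility $F_1\scD_X\cdot F_{k-1}^H\subseteq F_k^H$ yields $Pf^{-1}\in F_k^H\scO_{X,\fx}(*f)$.

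The content lies in the base case $k=r_f$. Since $F_0^{\text{ord}}\scO_{X,\fx}(*f)=\scO_{X,\fx}\cdot f^{-1}$ and $F_{r_f}^H\scO_{X,\fx}(*f)$ is an $\scO_{X,\fx}$-module, it suffices to prove $f^{-1}\in F_{r_f}^H\scO_{X,\fx}(*f)$, which by Theorem \ref{thmformula-intro} amounts to
\[
f^{-1}\delta\in\partial_t F^{\text{t-ord}}_{r_f-1}i_{f,+}\scO_{X,\fx}(*f)+V^0 i_{f,+}\scO_{X,\fx}(*f).
\]
Applying the natural $\scD_{X,\fx}[s]$-equivariant map $\phi:\scD_{X,\fx}[s]\cdot f^s\to i_{f,+}\scO_{X,\fx}(*f)$ determined by $f^s\mapsto f^{-1}\delta$, $f^{s+1}\mapsto\delta$, and $s\mapsto-\partial_t t-1$ to the functional equation $b_f(s)f^s=Q(s)f^{s+1}$, together with the factorization $b_f(-s-1)=\beta_f(s)\beta_f'(s)$, produces
\[
\beta_f(\partial_t t)\,\beta_f'(\partial_t t)(f^{-1}\delta)=Q(-\partial_t t-1)\delta\in\scD_{X,\fx}\cdot\delta\subseteq V^0 i_{f,+}\scO_{X,\fx}(*f).
\]

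The heart of the argument is then to extract $\beta_f(\partial_t t)(f^{-1}\delta)\in V^0$ from this product. On each graded piece $\gr_V^a$ with $a<0$, nilpotence of $-\partial_t t-a$ reduces $\beta_f'(\partial_t t)$ to multiplication by $\beta_f'(-a)$ modulo nilpotents; the hypothesis $Z(b_f)\subseteq(-2,0)$ forces $Z(\beta_f')\subseteq(-1,0]$, so for $a<0$ the value $-a$ is not a root of $\beta_f'$, making $\beta_f'(\partial_t t)$ invertible on $\gr_V^a$. Combined with the explicit $V^\bullet$ description in Proposition 3.3 of \cite{CNS22}, this promotes the product membership to $\beta_f(\partial_t t)(f^{-1}\delta)\in V^0$. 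Writing $\beta_f(u)=\beta_f(0)+uR(u)$ with $\deg R=r_f-1$, we obtain
\[
\beta_f(0)f^{-1}\delta=\beta_f(\partial_t t)(f^{-1}\delta)-\partial_t\bigl(tR(\partial_t t)(f^{-1}\delta)\bigr),
\]
and a short computation using $t(g\partial_t^j\delta)=fg\partial_t^j\delta-jg\partial_t^{j-1}\delta$ shows $tR(\partial_t t)(f^{-1}\delta)\in F^{\text{t-ord}}_{r_f-1}$. Since $\beta_f(0)\neq 0$ (as $0\notin Z(\beta_f)\subseteq(0,1)$), dividing delivers the required membership. The main obstacle is precisely this extraction step, where the root constraint on $\beta_f'$ forced by $Z(b_f)\subseteq(-2,0)$ and the $V$-filtration description of \cite{CNS22} together convert graded-piece invertibility into a genuine filtration membership.
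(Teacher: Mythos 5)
Your proof is correct and reaches the same two milestones as the paper's: first establish $f^{-1} \in F_{r_f}^H\scO_{X,\hspace{1pt}\fx}(*f)$, then propagate using the filtered $\scD$-module structure. Your inductive step is simply an unwinding of the paper's one-line observation $F_k^{\ord}\scD_{X,\hspace{1pt}\fx} \cdot F_{r_f}^H\scO_{X,\hspace{1pt}\fx}(*f)\subseteq F_{k+r_f}^H\scO_{X,\hspace{1pt}\fx}(*f)$.

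The genuine difference is in how the base case is argued, and here your route is more roundabout than necessary. The key fact needed is $\beta_f(\partial_t t)(f^{-1}\delta)\in V^0 i_{f,+}\scO_{X,\hspace{1pt}\fx}(*f)$. You derive this by applying the functional equation $b_f(s)f^s = Q(s)f^{s+1}$ and then running an invertibility-on-graded-pieces argument to strip off $\beta_f'(\partial_t t)$. That "extraction" argument is correct as a matter of logic (it is the standard generalized-eigenvalue trick, and does not actually need the explicit description of $V^\bullet$ to run), but it is redundant given what you already have in hand: Proposition 3.3 of \cite{CNS22} (Proposition \ref{prop-funnyBSpolyInducedV}) states $V^0 = V_{\ind}^1 + \beta_f(\partial_t t)V_{\ind}^0$, and $f^{-1}\delta\in V_{\ind}^0$ by definition, so $\beta_f(\partial_t t)(f^{-1}\delta)\in V^0$ immediately, with no functional-equation manipulation and no eigenvalue argument. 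The paper packages this observation as $\beta_f(-s)\in\Gamma_f$ and $\pi_f(\Gamma_f)=V^0$ (Proposition \ref{propcondnBS-new}), and then the split $\beta_f(u)=\beta_f(0)+uR(u)$ together with $\psi_{f,0}$ finishes the base case — which is exactly the manipulation you do by hand in $\scO_{X,\hspace{1pt}\fx}(*f)[\partial_t]\delta$. In short: same ingredients, but the paper reaches for the $\Gamma_f$/$\pi_f$ machinery it has already built, whereas you re-derive the crucial $V^0$-membership from scratch.

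Two bookkeeping errors worth flagging, though neither affects the conclusions you draw from them. From Definition \ref{def-funnyBSpoly}, the roots of $\beta_f(s)$ are $-\lambda-1$ for $\lambda\in Z(b_f(s))\cap(-1,0)$, so $Z(\beta_f)\subseteq(-1,0)$, not $(0,1)$; and the roots of $\beta_f'(s)$ are $-\lambda-1$ for $\lambda\in Z(b_f(s))\cap(-2,-1]$, so $Z(\beta_f')\subseteq[0,1)$, not $(-1,0]$. You have the two halves of $Z(b_f(-s-1))\subseteq(-1,1)$ swapped. What you actually need — $\beta_f(0)\neq 0$ and $\beta_f'(a)\neq 0$ for $a<0$ — still holds with the corrected intervals, since $0\notin(-1,0)$ and $[0,1)\cap(-\infty,0)=\emptyset$.

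Finally, a small gap in exposition: you assert $Q(-\partial_t t - 1)\delta\in\scD_{X,\hspace{1pt}\fx}\cdot\delta\subseteq V^0$. The containment $\scD_{X,\hspace{1pt}\fx}\cdot\delta\subseteq V^0$ is true but not immediate; the cleanest justification within your toolkit is that Proposition \ref{prop-funnyBSpolyInducedV} at index $k=-1$ gives $f^{-1}\delta\in V_{\ind}^0\subseteq V^{-1}$, hence $\delta = t\cdot(f^{-1}\delta)\in tV^{-1}\subseteq V^0$, and $\scD_{X,\hspace{1pt}\fx}\subseteq V^0\scD_{X\times\bC_t,(\fx,0)}$ preserves $V^0$. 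As noted above, once you invoke Proposition \ref{prop-funnyBSpolyInducedV} at all, you might as well use it to bypass the entire functional-equation detour.
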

Again this generalizes \cite{CNS22}, now specifically Corollary 4.8 of loc. cit., by relaxing the ``strongly Koszul free divisor'' hypothesis therein to the weaker requirement that $Z(b_f(s)) \subseteq (-2,0)$.

\vspace{5mm}

Our final two theorems are our main results: they compute higher pieces of the Hodge filtration on $\scO_{X,\hspace{1pt}\fx}(*f)$ and clarify its relationship to the induced order filtration. We must impose additional hypotheses (see Hypotheses \ref{hyp-mainHypotheses}) on $f$ (or $D$). Namely:
\begin{hyp} \label{hyp-mainHypotheses-intro}
We will often require the following hypotheses on $f \in \scO_X$:
\begin{enumerate}[label=\alph*)]
    \item $f$ is \emph{Euler homogeneous} (on $X$);
    \item $f$ is \emph{parametrically prime}, i.e. $\gr^{\sharp}(\ann_{\scD_X[s]} f^{s-1})$ is prime locally everywhere;
    \item the zeroes of the Bernstein-Sato polynomial live in $(-2,0)$ locally everywhere.
\end{enumerate}
\end{hyp}
Item a) means $f$ is in the ideal generated by its partial derivatives (Definition \ref{def-EulerHom}). The paper's title comes from item b)'s \emph{parametrically prime} condition (Definition \ref{def-parametricallyPrime}), which demands the primality of the initial ideal of $\ann_{\scD_{X}[s]} f^{s-1} \subseteq \scD_X[s]$ with respect to the \emph{total order filtration} on $\scD_X[s]$ (Definition \ref{def-totalOrderFiltration}). Item b) is motivated by Question 5.9 of \cite{Wal17}, where Walther essentially asks if this holds without any assumptions on $f$. In Example \ref{ex:uliQuestion} we answer Walther in the negative.

The difficulty in extending the techniques in Theorem \ref{thmformula-intro} from the zeroeth piece of the Hodge filtration to higher pieces of the Hodge filtration involves a sort of compatability style result (Proposition \ref{propcomp-new}) between three filtrations on $i_{f,+} \scO_{X,\hspace{1pt}\fx}(*f)$: the Kashiwara-Malgrange $V$-filtration; the \emph{induced $V$-filtration} (Definition \ref{def-inducedVfiltration}), given by fixing a generator of $i_{f,+} \scO_{X,\hspace{1pt}\fx}(*f)$ and looking at the action of elements $\scD_{X \times \bC_t, (\fx, 0)}$ of fixed order with respect to the $V$-filtration along $\{t=0\}$; the \emph{induced order filtration}\footnote{We have induced order filtrations on both $\scO_{X,\hspace{1pt}\fx}(*f)$ and $i_{f,+}\scO_{X,\hspace{1pt}\fx}(*f)$, arising in similar ways.} on $i_{f,+} \scO_{X,\hspace{1pt}\fx}(*f)$, which is defined similarly except now fixing the order of elements of $\scD_{X \times \bC_t, (\fx, 0)}$ with respect to canonical order filtration. Our assumptions enable a subtle sort of iterated Gr{\"o}bner style argument (Proposition \ref{propcomp-new}), further reducing understanding the Kashiwara-Malgrange $V$-filtration to understanding all our various induced filtrations.

Theorem \ref{thmmain-new} is our first main result for these divisors, refining the coarse containment of Hodge filtration inside the pole order filtration due to Saito \cite{MSai93} and completing the story started by Theorem \ref{thm-rf-intro}. There is also a non-globally defined version of this result.

\begin{thm} \label{thmmain-new-intro}
Assume that $f \in \scO_X$ satisfies Hypotheses \ref{hyp-mainHypotheses-intro}: $f$ is Euler homogeneous; $f$ is parametrically prime; the roots of the Bernstein-Sato polynomial of $f$ are contained in $(-2,0)$ locally everywhere. Then, for every non-negative integer $k$, 
\begin{equation*}
    F_k^H\scO_X(*f)\subseteq F_k^{\ord} \scO_X(*f).
\end{equation*}

Moreover, consider a reduced effective divisor $D$ (not necessarily globally defined) that is: strongly Euler homogeneous, $\gr^{\ord} (\ann_{\scD_X} f^{s-1}) \subseteq \gr^{\ord} \scD_X$ is prime (locally everywhere, for some choice of defining equation), and the roots of the Bernstein-Sato polynomial of $D$ are contained in $(-2,0)$ locally everywhere. Then for every non-negative integer $k$, 
\begin{equation*}
    F_k^H\scO_X(*D)\subseteq F_k^{\text{\emph{ord}}}\scO_X(*D).
\end{equation*}
\end{thm}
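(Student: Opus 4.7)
The plan is to reduce the problem to the local stalk-level containment $F_k^H\scO_{X,\fx}(*f)\subseteq F_k^{\ord}\scO_{X,\fx}(*f)$ at each $\fx\in D$ and argue by induction on $k$. For the non-globally-defined divisor of the second statement, I would pick at each point a local defining equation satisfying the three hypotheses and apply the first part; both filtrations are locally defined and independent of the chosen local equation in the appropriate sense, so this reduction is routine. The base case $k=0$ is immediate from Corollary \ref{cor-zeroHodgePiece}, which gives $F_0^H\scO_{X,\fx}(*f)=(\Gamma_f\cap\scO_{X,\fx})f^{-1}\subseteq\scO_{X,\fx}\cdot f^{-1}=F_0^{\ord}\scO_{X,\fx}(*f)$.

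For the inductive step, take $gf^{-1}\in F_k^H\scO_{X,\fx}(*f)$. By Theorem \ref{thmformula-intro} there exist $Q\in F_{k-1}^{t-\text{\emph{ord}}}i_{f,+}\scO_{X,\fx}(*f)$ and $R\in V^0 i_{f,+}\scO_{X,\fx}(*f)$ with $gf^{-1}\delta=\partial_tQ+R$. The $\partial_t^0$-component of $\partial_tQ$ in the $\partial_t^j\delta$-basis vanishes, so the $\delta$-coefficient of $R$ must equal $gf^{-1}$, and a $t$-order comparison forces $R\in V^0\cap F_k^{t-\text{\emph{ord}}}i_{f,+}\scO_{X,\fx}(*f)$.

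The crux is Proposition \ref{propcomp-new}, which under our three hypotheses upgrades this joint membership to the existence of a representation $R=P\cdot\delta$ with $P\in V^0\scD_{X\times\bC_t,(\fx,0)}\cap F_k\scD_{X\times\bC_t,(\fx,0)}$. Since $V^0\scD_{X\times\bC_t}$ is generated over $\scD_X$ by $t$ and $t\partial_t$, and Euler homogeneity supplies the identities $t\cdot\delta=f\delta$ and $(E+t\partial_t+1)\cdot\delta=0$ for the Euler operator $E$ satisfying $E\cdot f=f$, the operator $P$ can, modulo the annihilator of $\delta$, be rewritten inside $\scD_X$ via the substitutions $t\mapsto f$ and $t\partial_t\mapsto-(E+1)$, all while preserving total order. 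Using $\delta=t\cdot(f^{-1}\delta)$ together with the standard identity that for $P'\in\scD_X$ the $\delta$-coefficient of $P'\cdot(f^{-1}\delta)$ computed in $i_{f,+}$ equals $P'\cdot f^{-1}$ computed in $\scO_{X,\fx}(*f)$, one obtains $gf^{-1}=P'\cdot f^{-1}$ for some $P'\in F_k\scD_{X,\fx}$, which is exactly the assertion $gf^{-1}\in F_k^{\ord}\scO_{X,\fx}(*f)$.

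The main obstacle is Proposition \ref{propcomp-new} itself: lifting the abstract fact that $R\in V^0$ of controlled $t$-order to an explicit $V^0\cap F_k$-operator representation $R=P\cdot\delta$. Parametric primality of $\gr^\sharp(\ann_{\scD_X[s]}f^{s-1})$ is the essential input, enabling an iterated Gr\"obner-style lifting of $V$-filtration leading symbols to honest operators of matching total order; without a prime initial ideal, leading-term obstructions can force representatives into strictly higher total order. Once this order-controlled lifting is in hand, the Euler-homogeneity substitution and the $\delta$-coefficient extraction are essentially formal manipulations.
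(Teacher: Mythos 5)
Your proposal has the right ingredients but a genuine gap at the crux step, which is precisely the step the paper's Proposition \ref{propiota-new} is designed to handle.

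You correctly reduce to the local stalk level, correctly extract from Theorem \ref{thmformula-intro} an element $R\in V^0 i_{f,+}\scO_{X,\fx}(*f)$ whose $\delta$-coefficient is $gf^{-1}$, and correctly observe by a $t$-order count that $R$ has $t$-order at most $k$, i.e.\ $R\in V^0\cap F_k^{\Torder} i_{f,+}\scO_{X,\fx}(*f)$. But then you invoke Proposition \ref{propcomp-new} to ``upgrade this joint membership'' to a representation $R=P\cdot f^{-1}\delta$ with $P\in V^0\scD_{X\times\bC_t,(\fx,0)}\cap F_k^{\ord}\scD_{X\times\bC_t,(\fx,0)}$. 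That proposition does not apply here: it asserts
$F_\ell^{\ord} i_{f,+}\scO_{X,\fx}(*f)\cap V_{\ind}^k i_{f,+}\scO_{X,\fx}(*f) = (F_\ell^{\ord}\scD_{X\times\bC_t,(\fx,0)}\cap V^k\scD_{X\times\bC_t,(\fx,0)})\cdot f^{-1}\delta$, and the filtration appearing on the left is the \emph{induced order filtration} $F^{\ord}$, not the $t$-order filtration $F^{\Torder}$. These are genuinely different: by Lemma \ref{lem-OrderFilGraph}, $F_k^{\ord} i_{f,+}\scO_{X,\fx}(*f)=\sum_{j\geq 0}\bigl[F_{k-j}^{\ord}\scO_{X,\fx}(*f)\bigr]\partial_t^j\delta$, which demands each coefficient $R_j$ of $R=\sum R_j\partial_t^j\delta$ to lie in $F_{k-j}^{\ord}\scO_{X,\fx}(*f)$, whereas $F_k^{\Torder}$ only bounds the $\partial_t$-degree and says nothing about the coefficients. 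So the hypothesis of Proposition \ref{propcomp-new} is not satisfied, and your inductive step does not go through as written. (Your induction on $k$ is also vestigial: the $k-1$ case is never invoked in the $k$ step.)

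The missing idea is how to pass from $V^0\cap F_k^{\Torder}$ to $V^0\cap F_k^{\ord}$. The paper's Proposition \ref{propiota-new} does this by first showing the ``clearing denominators'' Claim: for any $u\in F_k^{\Torder} i_{f,+}\scO_{X,\fx}(*f)$ there is $p\geq 0$ with $t^p u\in F_k^{\ord} i_{f,+}\scO_{X,\fx}(*f)$. This lands $t^p R$ in $F_k^{\ord}\cap V^p$, which by Proposition \ref{prop-funnyBSpolyInducedV} sits in $F_k^{\ord}\cap V^p_{\ind}$; now Proposition \ref{propcomp-new} really does apply and gives $t^p R=P\cdot f^{-1}\delta$ with $P\in F_k^{\ord}\scD_{X\times\bC_t}\cap V^p\scD_{X\times\bC_t}$; the definition of $V^p\scD_{X\times\bC_t}$ lets one factor $P=t^p P'$ with $P'\in F_k^{\ord}\cap V^0$, and the invertibility of $t$ on $i_{f,+}\scO_{X,\fx}(*f)$ then gives $R=P'\cdot f^{-1}\delta\in F_k^{\ord} V^0$. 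Your Euler-homogeneity substitution and $\delta$-coefficient extraction at the end are then fine (they are in essence Lemmas \ref{lemorderfilt2-new} and \ref{lem-OrderFilGraph}), but without the $t$-power trick and the detour through $V^p$, the argument does not close.
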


The second main result Theorem \ref{thmgamma2-new} gives an explicit local equation for the Hodge filtration exactly in the spirit of Theorem \ref{thm-cor-zeroHodgePiece-intro}:

\begin{thm} \label{thmgamma2-new-intro}
Assume that $f \in \scO_{X,\hspace{1pt}\fx}$ satisfies Hypotheses \ref{hyp-mainHypotheses-intro} at $\fx$: $f$ is Euler homogeneous; $f$ is parametrically prime at $\fx$; the zeroes of the Bernstein-Sato polynomial are contained in $(-2,0)$ . Let $\phi_0 : \scD_{X,\hspace{1pt}\fx}[s] \to \scD_{X,\hspace{1pt}\fx}$ be the $\scD_{X,\hspace{1pt}\fx}$-map induced by $s \mapsto 0$. Then for all $k \in \mathbb{Z}_{\geq 0}$,
\begin{align*}
    F_k^H\scO_{X,\hspace{1pt}\fx}(*f) 
    = \phi_0 \bigg( \Gamma_f\cap F_k^{\sharp} \scD_{X,\hspace{1pt}\fx}[s] \bigg) \cdot f^{-1}
\end{align*}

In other words, let $\specialize: \scD_{X,\hspace{1pt}\fx}[s]\cdot f^{s-1} \to \scD_{X,\hspace{1pt}\fx}\cdot f^{-1}$ be the specialization map, cf. Definition \ref{def-specialization}. Then $F_k^H \scO_{X,\hspace{1pt}\fx}(*f)$ is the $\scO_{X,\hspace{1pt}\fx}$-module generated by $\{\specialize(P(s)\cdot f^{s-1})\}$ where $P(s)$ ranges over all elements of $F_k^\sharp \scD_{X,\hspace{1pt}\fx}[s]$ satisfying the ``functional equation''
\begin{equation*}
    P(s) \beta_f^\prime(-s) \cdot f^{s-1} \in \scD_{X,\hspace{1pt}\fx}[s]\cdot f^{s}.
\end{equation*}
\end{thm}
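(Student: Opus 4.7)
The plan is to combine Theorem \ref{thmformula-intro}, which expresses $F_k^H\scO_{X,\fx}(*f)$ as $\bigl(\partial_tF_{k-1}^{\Torder}i_{f,+}\scO_{X,\fx}(*f) + V^0i_{f,+}\scO_{X,\fx}(*f)\bigr)\cap\scO_{X,\fx}(*f)\delta$, with a careful analysis on the $\scD_{X,\fx}[s]\cdot f^{s-1}$-side that keeps track of the total order filtration $F^\sharp_\bullet\scD_{X,\fx}[s]$. Concretely, I would work throughout with the $\scD_{X,\fx}$-linear realization $\scD_{X,\fx}[s]\cdot f^{s-1}\hookrightarrow i_{f,+}\scO_{X,\fx}(*f)$ sending $f^{s-1}\mapsto f^{-1}\delta$ and $s\mapsto -\partial_tt$; under this realization, $F_k^\sharp\scD_{X,\fx}[s]\cdot f^{s-1}\subseteq F_k^{\Torder}i_{f,+}\scO_{X,\fx}(*f)$, and the projection $i_{f,+}\scO_{X,\fx}(*f)\twoheadrightarrow\scO_{X,\fx}(*f)\delta$ sends $P(s)\cdot f^{s-1}$ to $\phi_0(P(s))\cdot f^{-1}\delta$, agreeing with $\specialize$ as in the theorem. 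Combined with Proposition 3.3 of \cite{CNS22}, which under $Z(b_f(s))\subseteq(-2,0)$ identifies $V^0i_{f,+}\scO_{X,\fx}(*f)$ with $V^0\scD_{X\times\bC_t}\cdot\beta_f(-s)f^{s-1}$, this translates the desired equality into a filtered ideal-membership question for $\Gamma_f\subseteq\scD_{X,\fx}[s]$. The base case $k=0$ is Theorem \ref{thm-cor-zeroHodgePiece-intro}, whose argument supplies the template.

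For the inclusion $\supseteq$, I would verify separately that each of the three generator families of $\Gamma_f$, when intersected with $F_k^\sharp\scD_{X,\fx}[s]$ and pushed through $\phi_0$, lands in $F_k^H\scO_{X,\fx}(*f)\cdot f^{-1}$. A generator of the form $A(s)f$ applied to $f^{s-1}$ produces $A(s)\delta$; rewriting $\delta = t\cdot f^{-1}\delta$ and commuting $t$ past the $s = -\partial_tt$ factors re-expresses this as $\partial_t\cdot(\text{element of }F^{\Torder}_{k-1}) + (\text{element of }V^0)$. A generator $B(s)\beta_f(-s)$ applied to $f^{s-1}$ lies in $V^0i_{f,+}\scO_{X,\fx}(*f)$ directly by the Proposition 3.3 identification. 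A generator $C(s)R$ with $R\in\ann_{\scD_{X,\fx}[s]}f^s$ applied to $f^{s-1}$ is controlled using the Bernstein-Sato equation $b_f(s-1)f^{s-1}\in\scD_{X,\fx}[s]\cdot f^s$ and the coprimality $\gcd(\beta_f,\beta_f')=1$: one writes $Rf^{s-1}$ as a sum of a $\partial_tF^{\Torder}_{k-1}$-piece and a $V^0$-piece by multiplying and dividing through by $\beta_f'(-s)$. In each case Theorem \ref{thmformula-intro} then places the resulting $\delta$-component inside $F_k^H\scO_{X,\fx}(*f)$.

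The reverse inclusion $\subseteq$ is the main obstacle. Given $u\in F_k^H\scO_{X,\fx}(*f)$, Theorem \ref{thmformula-intro} produces $m\in F^{\Torder}_{k-1}i_{f,+}\scO_{X,\fx}(*f)$ and $v\in V^0i_{f,+}\scO_{X,\fx}(*f)$ with $u\delta = \partial_tm + v$. To convert this into a witness $N(s)\in\Gamma_f\cap F_k^\sharp\scD_{X,\fx}[s]$ with $\phi_0(N(s))\cdot f^{-1}=u$, one must lift both $m$ and $v$ to the $\scD_{X,\fx}[s]\cdot f^{s-1}$-side with matching $\sharp$-degree: $m = M(s)f^{s-1}$ with $M(s)\in F^\sharp_{k-1}\scD_{X,\fx}[s]$ and $v = V(s)\beta_f(-s)f^{s-1}$ with $V(s)\in F^\sharp_k\scD_{X,\fx}[s]$, both only well-defined modulo $\ann_{\scD_{X,\fx}[s]}f^{s-1}$. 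This is precisely the content of the compatibility statement Proposition \ref{propcomp-new} under Hypotheses \ref{hyp-mainHypotheses-intro}: parametric primeness drives a Gröbner-style argument aligning the induced order filtration, the induced $V$-filtration, and the Kashiwara-Malgrange $V$-filtration on $i_{f,+}\scO_{X,\fx}(*f)$ in a way that permits such filtered lifting, while Euler homogeneity supplies the Euler operator needed to absorb the $s$-degree in the lift. With lifts secured, expanding $\partial_tM(s)f^{s-1}$ via the standard relations produces an explicit $N(s) = M(s)f + V(s)\beta_f(-s) + (\text{correction from }\ann_{\scD_{X,\fx}[s]}f^s)$ in $\Gamma_f\cap F_k^\sharp\scD_{X,\fx}[s]$ with $\phi_0(N(s))f^{-1}=u$; the lift ambiguity in $\ann_{\scD_{X,\fx}[s]}f^{s-1}$ is absorbed into the third generator of $\Gamma_f$. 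The hard step is thus the filtered lift provided by Proposition \ref{propcomp-new}; the remainder is a tracking of total order through the calculations already carried out in the proof of Theorem \ref{thm-cor-zeroHodgePiece-intro}, with the equivalent ``functional equation'' description derived in the same way as for $k=0$.
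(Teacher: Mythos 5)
Your overall strategy is close to the paper's, but there is a genuine gap in the \(\subseteq\) direction, and it is exactly the step you wave at rather than supply.

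You produce \(v=u_0\delta-\partial_t m\in V^0 i_{f,+}\scO_{X,\fx}(*f)\) with bounded \(t\)-order, so \(v\in F_k^{\Torder}V^0\). You then assert that Proposition \ref{propcomp-new} ``permits such filtered lifting'' of \(v\) (and of \(m\)) to an element of \(F_k^\sharp\scD_{X,\fx}[s]\cdot f^{s-1}\). But Proposition \ref{propcomp-new} only concerns \(F_\ell^{\ord}\cap V_{\ind}^j\): it tells you nothing about elements whose \emph{\(t\)-order} is bounded but whose \(\partial_t^j\delta\)-coefficients may have arbitrary pole order. An element of \(F_k^{\Torder}V^0\) need not lie in \(F_k^{\ord}i_{f,+}\scO_{X,\fx}(*f)\), and without first establishing this there is no reason the lift \(v=V(s)\beta_f(-s)f^{s-1}\) with \(V(s)\in F_k^\sharp\scD_{X,\fx}[s]\) exists. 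Your proposed lift of \(m\) to \(M(s)f^{s-1}\) with \(M(s)\in F_{k-1}^\sharp\scD_{X,\fx}[s]\) is even less justified: an arbitrary \(m\in F_{k-1}^{\Torder}\) does not lie in \(\scD_{X,\fx}[s]\cdot f^{s-1}\cong V^0_{\ind}\) at all.

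What is missing is the content of Proposition \ref{propiota-new}: the equality \(F_k^{\Torder}V^0 i_{f,+}\scO_{X,\fx}(*f)=F_k^{\ord}V^0 i_{f,+}\scO_{X,\fx}(*f)\). Its proof is not a formal consequence of Proposition \ref{propcomp-new}; it requires an extra \(t^p\)-clearing argument: given \(w\in F_k^{\Torder}V^0\), choose \(p\) large enough that \(t^p\cdot w\in F_k^{\ord}i_{f,+}\scO_{X,\fx}(*f)\) (now the coefficients are in \(\scO_{X,\fx}\cdot f^{-1}\)); then \(t^p\cdot w\in F_k^{\ord}\cap V^p\subseteq F_k^{\ord}\cap V_{\ind}^p\), \emph{then} invoke Proposition \ref{propcomp-new} to write \(t^p\cdot w=P\cdot f^{-1}\delta\) with \(P\in F_k^{\ord}\scD_{X\times\bC_t}\cap V^p\scD_{X\times\bC_t}\), factor \(P=t^pP'\) with \(P'\in F_k^{\ord}\cap V^0\), and cancel \(t^p\) using that \(t\) acts invertibly. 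Only after this passage does Proposition \ref{proppi-new} produce the element of \(\Gamma_f\cap F_k^\sharp\scD_{X,\fx}[s]\) you are after. Once you insert this intermediate step your outline becomes essentially the paper's argument; as it stands, the claim that Proposition \ref{propcomp-new} alone ``permits such filtered lifting'' does not hold. (Your \(\supseteq\) generator-by-generator check is correct but unnecessary — it is exactly the content of Corollary \ref{corrf}, which holds already without the parametrically prime hypothesis.)
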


\noindent Theorem \ref{thmmain-new-intro} and Theorem \ref{thmgamma2-new-intro} should be contrasted with Theorem 4.4 and Theorem 5.2 \cite{CNS22}, where: both results in loc. cit. require their divisors to be strongly Koszul and free; the latter result is recursive, unlike our ``closed formula'' Theorem \ref{thmgamma2-new-intro}. And again, our argument systematically differs due to our usage of $\scD_{X,\hspace{1pt}\fx}[s]\cdot f^{s-1}$ as well as our more robust tackling of the compatability issue in Proposition \ref{propcomp-new}.

\vspace{5mm}

We conclude, in Section 6, by computing the Hodge filtration for several examples fitting Hypotheses \ref{hyp-mainHypotheses-intro}. This is done by: figuring out the Bernstein-Sato polynomial using Macaulay2 or other means; extracting $\beta_f(s)$; asking Macaulay2 for a Gr{\"o}bner basis of $\Gamma_f \subseteq \scD_X[s]$ with respect to the total order filtration. Note that our assumptions, as evidenced by some of our examples, do not imply that $\ann_{\scD_X[s]} f^{s-1}$ is generated by total order one differential operators, unlike the case of strongly Koszul free divisors considered in \cite{CNS22}.

In Section $5$, we note that divisors of \emph{linear Jacobian type} (Definition \ref{def-linearJacType}) satisfy items a) and b) of Hypotheses \ref{hyp-mainHypotheses-intro}, see Corollary \ref{cor-LJTsatisfiesHyp}. In \cite{Wal17}, Walther identified a large class of divisors of this type, transcribed as Theorem \ref{thmOfUli}. In this case $\ann_{\scD_X[s]} f^{s-1}$ is generated by total order one differential operators, expediting many computational difficulties in Macaulay2. In the case of positively weighted homogeneous locally everywhere divisors in $\mathbb{C}^3$, we note (Theorem \ref{thm-PWHomLENiceClass}) that these are of linear Jacobian type and remind the reader of Corollary 7.10 \cite{Bath24} (generalizing Proposition 1 \cite{BS23}) showing that checking item (a) of Hypotheses \ref{hyp-mainHypotheses-intro}, i.e. that $Z(b_{f,0}(s)) \subseteq (-2,0)$, equates to computing local cohomology of the Milnor algebra.

\vspace{5mm}

The structure of the paper is as follows. In section $2$, we prove Theorem \ref{thmformula-intro}/Theorem \ref{thmformula}, relating the Hodge filtration to the Kashiwara-Malgrange filtration. In section $3$ we pick (having assumed it is possible) the generator $f^{-1}$ of $\scO_{X,\hspace{1pt}\fx}(*f)$, introduce induced filtrations, and prove many useful identities. Then we prove our first two theorems, Theorem \ref{thm-cor-zeroHodgePiece-intro}/Corollary \ref{cor-zeroHodgePiece} and Theorem \ref{thm-rf-intro}/Corollary \ref{corrf} about $f \in \scO_{X,\hspace{1pt}\fx}$ only required to satisfy $Z(b_f(s)) \subseteq (-2,0)$. In Section $4$ we introduce Hypotheses \ref{hyp-mainHypotheses-intro} and prove our main results under these assumptions: Theorem \ref{thmmain-new-intro}/Theorem \ref{thmmain-new} and Theorem \ref{thmgamma2-new-intro}/Theorem \ref{thmgamma2-new}. In Section $5$ we discuss the linear Jacobian type condition; in Section $6$ we compute several examples.

The first-named author thanks Guillem Blanco and Christian Sevenheck for several helpful conversations, and the latter for, additionally, introducing the authors. The second-named author would like to thank his supervisor Christian Sevenheck for introducing him to the subject and for numerous helpful discussions concerning the material of this paper.

\section{A general formula for the Hodge filtration} \label{formula}

\noindent Throughout the paper, let $X$ be a complex manifold of dimension $n$ and $f \in \scO_X$ \hspace{-3pt}\footnote{Throughout, this shorthand notation will be used to denote a global section of the sheaf in question.} a reduced non-invertible non-zero holomorphic function on $X$. Write \[i_f:X\to X\times \bC_t\; ;\; \fx \mapsto (\fx,f(\fx))\] for the graph embedding of the function $f$, with $t$ denoting the coordinate on $\bC$. Mustaţă and Popa (see Theorem A' \cite{MP20}) proved a (local) formula for the Hodge filtration on $\scO_X(*f)$ in terms of the Kashiwara-Malgrange $V$-filtration. In this section, we recall some basic facts about the Kashiwara-Malgrange $V$-filtration and then rewrite this result in a manner that resembles more closely the formula obtained in \cite{CNS22} (Theorem 4.4). 


Let $\scM$ be a left $\scD_X$-module. The left $\scD_{X\times \bC_t}$-module $i_{f,+}\scM$ is isomorphic to $\scM[\partial_t]\delta$ (with $\delta$ being the generator of the localization), whose $\scD_{X\times \bC_t}$-action is given on an open chart $U\subseteq X$ with coordinates $x_1,\ldots,x_n$ by 
\begin{itemize}
    \item $g(x,t) \cdot(m \delta) = (g(x,f)\cdot m) \delta$;
    \item $\partial_{x_i}\cdot (m \partial_t^k\delta) = (\partial_{x_i} \cdot m) \partial_t^k\delta - (\partial_{x_i}(f)\cdot m)\partial_t^{k+1}\delta$;
    \item $t\cdot (m\partial_t^k\delta)=(f\cdot m)\partial_t^k\delta-km\partial_t^{k-1}\delta$;
    \item $\partial_t\cdot (m\partial_t^k\delta)=m\partial_t^{k+1}\delta$.
\end{itemize}

\vspace{4pt}\noindent where $g(x,t) \in \scO_{U\times\bC_t}$ and $m \in \scM|_U$. It is easy to check that this action is well-defined up to coordinate change and that this does indeed uniquely determine an action of $\scD_{U\times\bC_t}$ on $\scM|_U[\partial_t]\delta$ \footnote{In particular the action of $g(x,t)$ on $m\partial_t^k\delta$ is equal to the action of $\partial_t^kg(x,t)+[g(x,t),\partial_t^k]$ on $m\delta$, and is thus determined via recursion on $k$.}.

\begin{defn}

We may now define a filtration on $i_{f,+}\scM$, denoted $F_k^{\Torder}i_{f,+}\scM$, by pulling back $F_k^{\Torder}(\scM[\partial_t]\delta):=\sum_{i=0}^k\scM\partial_t^i\delta$ (defined to be 0 for $k<0$) along the isomorphism $i_{f,+}\mathscr{M} \simeq \mathscr{M}[\partial_t] \delta$ stated above. We call this filtration the \emph{t-order filtration} on $i_{f,+}\scM$. It is well behaved with respect to coordinate change since the isomorphism $i_{f,+} \scM \simeq \scM[\partial_t]\delta$ is.

\label{defntord}

\end{defn}

\begin{conv}\label{convlocal1}

Throughout the paper, we abuse notation and frequently refer to, for $f \in \scO_{X,\hspace{1pt}\fx}$, the $\scD_{X\times\bC,(\fx,0)}$-module $i_{f,+}\scO_{X,\hspace{1pt}\fx}(*f)$. By this we mean
\[i_{f,+}\scO_{X,\hspace{1pt}\fx}(*f):=(i_{f,+}\scO_U(*f))_{(\fx,0)}\]
where $U$ is an open neighborhood of $\fx$ on which $f$ extends to a global holomorphic function $f\in\scO_U$. In particular, we have
\begin{equation*}
    i_{f,+} \scO_{X,\hspace{1pt}\fx}(*f) \simeq \scO_{X,\hspace{1pt}\fx}(*f)[\partial_t].
\end{equation*}
    
\end{conv}

\begin{defn}

It is well known that if $\scM$ is regular and holonomic as a $\scD_X$-module, with quasi-unipotent monodromy (for example, when $ \scM = \scO_X$ or $\scM = \scO_X(*f)$), then $i_{f,+}\scM$ is specializable along the smooth divisor $\{t=0\}\subseteq X\times\bC_t$ (see Section 2.a \cite{MSai90}). That is, there exists a unique rational, decreasing, exhaustive, discrete, left-continuous filtration, denoted $V^{\bullet}i_{f,+}\scM$ (which we shorten to $V^{\bullet}$ below for ease of notation), satisfying 
\begin{enumerate}[label=\roman*)]
\item For each $\gamma\in\bQ$, $V^{\gamma}$ is a coherent module over the subring \[\{P\in\scD_{X\times\bC_t}\;|\;P\cdot t^i\in(t^i) \;\forall\; i\geq 0\} \subseteq \scD_{X\times\bC_t},\]
\item For each $\gamma\in\bQ$, $t\cdot V^{\gamma}\subseteq V^{\gamma+1}$, with equality if $\gamma>0$,
\item For each $\gamma\in\bQ$, $\partial_t\cdot V^{\gamma}\subseteq V^{\gamma-1}$,
\item For each $\gamma\in\bQ$, $\partial_tt-\gamma$ acts nilpotently on $\text{gr}_V^{\gamma}:=V^{\gamma}/ V^{>\gamma}$ \footnote{Here, $V^{>\gamma}:=\bigcup_{\gamma'>\gamma}V^{\gamma'}$.}.
\end{enumerate}
This filtration is called the \emph{Kashiwara-Malgrange $V$-filtration} for $\scM$ along $\{t=0\}$. 

\label{defnVfilt}

\end{defn}

We state firstly some basic properties of the Kashiwara-Malgrange $V$-filtrations for $\scO_X$ and $\scO_X(*f)$ and the relations between them.

\begin{prop}
With notation as above, we have:
\begin{enumerate}[label=\roman*)]

\item For all $\gamma\in\bQ$, \[t\cdot V^{\gamma}i_{f,+}\scO_X(*f)=V^{\gamma+1}i_{f,+}\scO_X(*f).\]

\item For all $\gamma\in\bQ_{>0}$, \[V^{\gamma}i_{f,+}\scO_X = V^{\gamma}i_{f,+}\scO_X(*f),\] where we identify $i_{f,+}\scO_X$ with its image under the canonical injective map $i_{f,+}\scO_X\to i_{f,+}\scO_X(*f)$.

\item For all $\gamma \in\bQ_{>-1}$, \[V^{\gamma+1}i_{f,+}\scO_X = t\cdot V^{\gamma}i_{f,+}\scO_X(*f).\]

\end{enumerate}

\label{propVfilt}

\end{prop}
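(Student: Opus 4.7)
The plan is: prove (i) using bijectivity of $t$ on $i_{f,+}\scO_X(*f)$ combined with the nilpotency axiom; prove (ii) by showing the $V$-filtration on the quotient $i_{f,+}\scO_X(*f)/i_{f,+}\scO_X$ vanishes in positive degrees; and deduce (iii) immediately from (i) and (ii).

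For (i), the key point is that multiplication by $t$ is bijective on $i_{f,+}\scO_X(*f)$. Indeed, under $i_{f,+}\scO_X(*f)\simeq\scO_X(*f)[\partial_t]\delta$, the formula $t\cdot(m\partial_t^k\delta)=fm\partial_t^k\delta-km\partial_t^{k-1}\delta$ shows that $t$ acts on the leading $\partial_t$-coefficient as multiplication by $f$, which is invertible in $\scO_X(*f)$; an induction on $\partial_t$-degree then yields both injectivity and surjectivity. Axiom (ii) of Definition \ref{defnVfilt} already supplies the equality for $\gamma>0$, so only $\gamma\leq 0$ remains. Given $v\in V^{\gamma+1}$ I would set $u:=t^{-1}v$ and let $\delta\in\bQ$ be maximal with $u\in V^\delta$ (which exists by exhaustiveness, discreteness and left-continuity). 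If $\delta<\gamma$, then from $(\partial_tt)u=\partial_tv\in V^\gamma$ together with the fact that $\partial_tt$ preserves $V^\gamma$, one obtains $(\partial_tt)^ku\in V^\gamma\subseteq V^{>\delta}$ for every $k\geq 1$; expanding
\[(\partial_tt-\delta)^Nu=(-\delta)^Nu+\sum_{k\geq 1}\binom{N}{k}(-\delta)^{N-k}(\partial_tt)^ku\]
and using the nilpotency of $\partial_tt-\delta$ on $\gr_V^\delta$ forces $(-\delta)^Nu\in V^{>\delta}$. Since $\delta<\gamma\leq 0$ excludes $\delta=0$, this contradicts maximality of $\delta$, whence $u\in V^\gamma$ as required.

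For (ii), I would exploit that the graph embedding $i_f$ is closed, so $i_{f,+}$ is exact and $Q:=i_{f,+}\scO_X(*f)/i_{f,+}\scO_X\simeq i_{f,+}(\scO_X(*f)/\scO_X)$ is supported on $\{t=0\}$, making $t$ locally nilpotent on $Q$. The first step is to show $V^\gamma Q=0$ for every $\gamma>0$: local nilpotency of $t$ combined with the axiomatic equality $tV^\gamma Q=V^{\gamma+1}Q$ (for $\gamma>0$) forces $V^\gamma Q=0$ for $\gamma\gg 0$, so the supremum $\gamma_0$ of $\gamma>0$ with $V^\gamma Q\neq 0$ is finite; if $\gamma_0>0$ then $V^{>\gamma_0}Q=0$, so $tV^{\gamma_0}Q=V^{\gamma_0+1}Q=0$, and $\partial_tt$ acts as zero on the nonzero $\gr_V^{\gamma_0}Q$, contradicting nilpotency of $\partial_tt-\gamma_0$ with $\gamma_0\neq 0$. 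The second step is to invoke the standard strictness of the $V$-filtration for the exact sequence $0\to i_{f,+}\scO_X\to i_{f,+}\scO_X(*f)\to Q\to 0$ -- a property of regular holonomic $\scD$-modules with quasi-unipotent monodromy, established within Saito's theory (cf.~Section 3 of \cite{MSai90}) -- which identifies $V^\gamma i_{f,+}\scO_X$ with $V^\gamma i_{f,+}\scO_X(*f)\cap i_{f,+}\scO_X$ and the image of $V^\gamma i_{f,+}\scO_X(*f)$ in $Q$ with $V^\gamma Q$. For $\gamma>0$ the image is therefore zero, so $V^\gamma i_{f,+}\scO_X(*f)\subseteq i_{f,+}\scO_X$; combined with the intersection identification this yields the desired equality.

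Part (iii) follows at once: for $\gamma>-1$ we have $V^{\gamma+1}i_{f,+}\scO_X=V^{\gamma+1}i_{f,+}\scO_X(*f)$ by (ii), and the latter equals $tV^\gamma i_{f,+}\scO_X(*f)$ by (i). The principal obstacle is the strictness invoked in (ii): a naive uniqueness argument -- attempting to verify that the intersection filtration on $i_{f,+}\scO_X$ and the image filtration on $Q$ satisfy all axioms of Definition \ref{defnVfilt} -- can break down because axiom (ii)'s equality for $\gamma>0$ can fail for the intersection and the image filtration can fail left-continuity, so the appeal to the deeper specializability machinery is genuinely needed.
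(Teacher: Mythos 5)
Your proof is correct and follows the same basic route as the paper's.  For part (i), the paper first establishes that $t\,\cdot : \gr_V^{\gamma'}\to\gr_V^{\gamma'+1}$ is an isomorphism for $\gamma'\neq 0$ (using the nilpotency axiom exactly as in your binomial expansion) and then deduces the result; you fold that lemma inline by expanding $(\partial_tt-\delta)^Nu$ directly, which is the same computation in a slightly different order.  For part (ii), the paper observes that $i_{f,+}\scO_X\to i_{f,+}\scO_X(*f)$ is an isomorphism on $\{t\neq 0\}$ and invokes Lemma 3.1.7 of \cite{MSai88} as a single black box to get the isomorphism $V^\gamma i_{f,+}\scO_X\to V^\gamma i_{f,+}\scO_X(*f)$ for $\gamma>0$.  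You split this into two pieces — explicitly proving $V^\gamma Q=0$ for $\gamma>0$ from local $t$-nilpotency on $Q$ plus the axioms, and then invoking strictness of morphisms of specializable modules — but both arguments ultimately rest on the same underlying strictness/specializability machinery; your extra step is part of what the cited lemma gives internally.  Your closing remark about why a naive axiom-verification argument cannot replace the strictness appeal is accurate and worth having in mind.  Part (iii) is handled identically in both.
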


\begin{proof}

See Remarks 2.1-2.3 \cite{MP20}.

\begin{enumerate}[label = \roman*)]

\item The inclusion $\subseteq$, as well as equality for $\gamma>0$, follows from the axiomatic definition of the Kashiwara-Malgrange $V$-filtration. So assume that $\gamma \leq 0$ and that $w \in i_{f,+}\scO_X(*f)$ satisfies $t\cdot w\in V^{\gamma+1}$ (it suffices to consider such a situation since the action of $t$ on $i_{f,+}\scO_X(*f)$ is bijective). Let $\delta\in\mathbb{Q}$ be maximal such that $w \in V^{\delta}$ (such a $\delta$ exists by the left-continuity of the $V$-filtration). By axiom iv) of Definition \ref{defnVfilt}, the map 
\[t\,\cdot : \text{gr}_V^{\gamma'}\to \text{gr}_V^{\gamma'+1}\]
is an isomorphism for any $\gamma' \neq 0$. Indeed, if $u \in \text{gr}_V^{\gamma'}$ such that $t\cdot u =0$, then 
\[0 = (\partial_tt-\gamma')^l\cdot u = (-\gamma')^lu\]
for $l$ sufficiently large, so $u=0$. If instead $u \in \text{gr}_V^{\gamma'+1}$, then 
\[(t\partial_t-\gamma')^l\cdot u =(\partial_tt-(\gamma'+1))^l\cdot u =0\]
for $l$ sufficiently large, implying that
\[(-\gamma')^lu \in t\partial_t\bC[t\partial_t]\cdot u \subseteq t\partial_t\cdot \text{gr}_V^{\gamma'+1}\subseteq t \cdot \text{gr}_V^{\gamma'},\]
implying that $u \in t\cdot \text{gr}_V^{\gamma'}$.

Therefore, if $\delta <\gamma$, since $t\cdot w \in V^{>\delta +1}$, we must have that $w \in V^{>\delta}$ (as $\delta \neq 0$ here), contradicting the definition of $\delta$. Thus $\delta \geq \gamma$, so $w \in V^{\gamma}$, as required.

\item Any element in the cokernel of the canonical injective $\scO_{X\times \bC}$-linear map 
\[i_{f,+}\scO_X\longrightarrow i_{f,+}\scO_X(*f).\]
is annihilated by some power of $t$, so the map becomes an isomorphism when restricted to the open subset $\{t \neq 0\}$ of $X\times\bC$. By Lemma 3.1.7 of \cite{MSai88}, this thus induces isomorphisms
\[V^{\gamma}i_{f,+}\scO_X\longrightarrow V^{\gamma}i_{f,+}\scO_X(*f).\]
for every $\gamma >0$, as required.

\item This follows immediately by combining parts i) and ii) of the proposition.

\end{enumerate}

\end{proof}

\begin{defn} \label{defn-HodgeFiltration}
Let $f \in \scO_X$. Set $D = \text{div}(f)$ and $i: D \to X$ the natural closed embedding. Set $U = X \setminus D$ and $j : U \to X$ the natural open embedding. We write $F_{\bullet}^H \scO_X(*f)$ for the Hodge filtration on $\scO_X(*f)$ underlying the mixed Hodge module $j_* \bQ_U^H[n]$. Identically, we write $F_{\bullet}^Hi_{f,+}\scO_X(*f)$ for the Hodge filtration on $i_{f,+}\scO_X(*f)$ underlying the mixed Hodge module $i_{f,*}j_*\bQ_U^H[n] \in \text{MHM}(X\times\bC)$. 
\end{defn} 

\begin{rem}\label{remlocal2}

Similarly as to in Convention \ref{convlocal1}, throughout the paper we also wish to consider the $\scO_{X,\hspace{1pt},\fx}$-modules $F_{\bullet}^H\scO_{X,\hspace{1pt}\fx}(*f)$ as well as the $\scO_{X\times\bC,\hspace{1pt},(\fx,0)}$-modules $F_{\bullet}^Hi_{f,+}\scO_{X,\hspace{1pt}\fx}(*f)$, when $f\in\scO_{X,\hspace{1pt}\fx}$. Again, these are defined by choosing an extension of $f$ to some neighborhood of $\fx$ and then localizing the Hodge filtrations defined on this neighborhood. 
    
\end{rem}

Instead of using the above definitions, we will focus on algebraic characterizations using the aforementioned isomorphism $i_{f,+}\scO_X(*f) \simeq \scO_X(*f) [\partial_t]$ and the Kashiwara-Malgrange filtration. The reader could safely regard the subsequent Theorem and Lemma as replacements for Definition \ref{defn-HodgeFiltration}.

Mustaţă and Popa proved the following (local) formula for the $k$-th step of the Hodge filtration on $\scO_X(*f)$ in terms of the $1$-st step of the Kashiwara-Malgrange $V$-filtration on $i_{f,+}\scO_X$.

\begin{thm}[Theorem A' \cite{MP20}]

Let $f \in \scO_X$. For $k \geq 0$, \[F_k^H\scO_X(*f) = \left\{\sum_{j=0}^kj!f^{-j-1}v_j \; \Big| \; v_j \in \scO_X, \;\sum_{j=0}^kv_j\partial_t^j\delta \in V^1i_{f,+}\scO_X\right\}.\]

\label{thmMP}

\end{thm}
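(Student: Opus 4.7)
The plan is to derive this formula by unwinding Saito's definition of the Hodge filtration through the graph embedding of $f$.

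First I would use functoriality of the Hodge filtration under the closed embedding $i_f$ to reduce to characterizing $F_k^H i_{f,+} \scO_X(*f) \cap \scO_X(*f)\delta$ inside $i_{f,+}\scO_X(*f) \simeq \scO_X(*f)[\partial_t]\delta$: the pushforward is a strict inclusion of filtered pieces, so $u \in F_k^H \scO_X(*f)$ iff $u\delta \in F_k^H i_{f,+}\scO_X(*f)$. Since $i_{f,+}\scO_X$ is the direct image of the trivial pure Hodge module $\bQ_X^H[n]$, its Hodge filtration coincides with the $t$-order filtration, $F_p^H i_{f,+}\scO_X = \sum_{j=0}^p \scO_X \partial_t^j \delta$, and the sub-Hodge module $V^1 i_{f,+}\scO_X$ inherits $F_p^H V^1 i_{f,+}\scO_X = V^1 i_{f,+}\scO_X \cap F_p^{\Torder} i_{f,+}\scO_X$. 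Saito's formula for the Hodge filtration on the meromorphic extension then reads
\[
F_k^H i_{f,+}\scO_X(*f) = \sum_{j=0}^k \partial_t^j\bigl( V^1 i_{f,+}\scO_X \cap F_{k-j}^{\Torder} i_{f,+}\scO_X \bigr),
\]
which is essentially the reformulation recorded as Theorem \ref{thmformula-intro}.

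Second, I would establish the algebraic identity in $i_{f,+}\scO_X(*f)$
\[
j!\, v \delta = (f-t)^j\, v \partial_t^j \delta \qquad (v \in \scO_X),
\]
by induction on $j$, using $[t,\partial_t^j] = -j\partial_t^{j-1}$ together with $t \cdot (v \partial_t^j \delta) = f v \partial_t^j \delta - j v \partial_t^{j-1} \delta$ to obtain the base relation $(f-t) v \partial_t^j \delta = j v \partial_t^{j-1}\delta$. Multiplying by $f^{-j-1}$ and expanding the $(f-t)^j$ binomially gives
\[
j!\, f^{-j-1}\, v \delta = f^{-1} v \partial_t^j \delta + \sum_{i=1}^{j} \binom{j}{i}(-1)^i f^{-i-1} t^i\, v \partial_t^j \delta.
\]
Summing over $j=0,\ldots,k$, the hypothesis $\sum_j v_j \partial_t^j \delta \in V^1 i_{f,+}\scO_X$ allows each $t^i$-term on the right to be rewritten as a $\partial_t^j$-iterate of a $V^1$-element of appropriate $t$-order, exhibiting $\sum_j j! f^{-j-1} v_j \delta$ as an element of $F_k^H i_{f,+}\scO_X(*f)$. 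The reverse inclusion is obtained by taking a Saito-style presentation $u\delta = \sum_{j=0}^k \partial_t^j w_j$ with $w_j \in V^1 i_{f,+}\scO_X \cap F_{k-j}^{\Torder}$ and using the identity to read off the $v_j$'s.

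The main obstacle is the bookkeeping in the converse direction: a given $u \in \scO_X(*f)$ admits many decompositions $u = \sum j! f^{-j-1} v_j$ with $v_j \in \scO_X$, and the theorem asserts the existence of a choice for which the formal lift $\sum v_j \partial_t^j \delta$ lies in $V^1 i_{f,+}\scO_X$ exactly when $u \in F_k^H \scO_X(*f)$. This requires carefully tracking how the $t^i$-multiples produced by the binomial expansion shift elements along the V-filtration, invoking Proposition \ref{propVfilt}(ii)--(iii) to identify $V^1 i_{f,+}\scO_X$ with $V^1 i_{f,+}\scO_X(*f)$ and so align the two sides. The delicate point is that the $v_j$'s must lie in the unlocalized $\scO_X$ rather than $\scO_X(*f)$, which is what captures the Hodge-theoretic content beyond the mere pole order data.
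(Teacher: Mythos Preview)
The paper does not give its own proof of this statement: it is quoted as Theorem~A$'$ of \cite{MP20}, and the paper only sketches a justification in Remark~\ref{rmk-MPHodgeDiscussion}. That sketch proceeds via the exact triangle relating $i_*i^!\bQ_X^H[n]$, $\phi_{f,1}$ and $\psi_{f,1}$, identifies the induced map $\mathrm{gr}_V^1 i_{f,+}\scO_X \to \scO_X(*f)/\scO_X$ with the explicit $\psi_{-1}\colon \sum v_j\partial_t^j\delta \mapsto \sum j!f^{-j-1}v_j$, and then invokes strictness of morphisms of mixed Hodge modules. Your approach is genuinely different: you try to derive the formula directly from Saito's description of the Hodge filtration on a localization, using the algebraic identity $j!\,v\delta=(f-t)^j v\partial_t^j\delta$.

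There are real gaps. First, the formula you call ``Saito's formula'', $F_k^H i_{f,+}\scO_X(*f)=\sum_j\partial_t^j\bigl(V^1 i_{f,+}\scO_X\cap F_{k-j}^{\Torder}\bigr)$, is not the standard one (which involves $V^0$ and the restriction along $j':\{t\neq 0\}\hookrightarrow X\times\bC$), and you do not justify it. Your parenthetical remark that this ``is essentially Theorem~\ref{thmformula-intro}'' is circular: in the paper, Theorem~\ref{thmformula} is \emph{derived from} Theorem~\ref{thmMP}, not the other way around. Second, there is an index shift you have not tracked: in the paper's conventions (Lemma~\ref{lem:HtoVonGraph}) one has $F_{k+1}^H i_{f,+}\scO_X=F_k^{\Torder}$, not $F_k^H=F_k^{\Torder}$. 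Third, and most seriously, after your binomial expansion the claim that ``the hypothesis $\sum_j v_j\partial_t^j\delta\in V^1$ allows each $t^i$-term to be rewritten\ldots'' does not follow: the hypothesis constrains the \emph{sum} over $j$, whereas your expansion produces terms $f^{-i-1}t^i\,v_j\partial_t^j\delta$ that must be controlled \emph{individually}, and the coefficients $f^{-i-1}$ now lie in $\scO_X(*f)$ rather than $\scO_X$. You yourself flag that the converse direction is delicate for exactly this reason; the forward direction has the same problem.

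The algebraic identity $(f-t)^j\partial_t^j\delta=j!\,\delta$ is correct and is a nice observation, but by itself it does not bridge the gap between the $V^1$-condition on $\sum v_j\partial_t^j\delta$ and membership of $\sum j!f^{-j-1}v_j$ in $F_k^H\scO_X(*f)$. The paper's route via strictness of the map $\psi_{-1}$ as a morphism of mixed Hodge modules is precisely what supplies that missing Hodge-theoretic input.
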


\begin{rem} \label{rmk-MPHodgeDiscussion}

This theorem can be justified as follows: in the category $D^b\text{MHM}(X)$ of derived mixed Hodge modules on $X$, we have an exact triangle
\[i_*i^!\bQ_X^H[n]\rightarrow\phi_{f,1}\bQ_X^H[n] \xrightarrow{\text{var}} \psi_{f,1}\bQ_X^H[n](-1)\xrightarrow{+1},\]
where $i : D \to X$ is the natural closed embedding, $\bQ_X^H[n]$ is the trivial Hodge module on $X$ and $\psi_{f,1}\bQ_X^H[n]$, $\phi_{f,1}\bQ_X^H[n]$ are the mixed Hodge modules overlying the (perverse) nearby and vanishing cycles of $f$ applied to the perverse sheaf $\underline{\bQ}_X[n]$.

We also have that $\scH^0i_*i^!\bQ_X^H[n]=0$ and that there is a short exact sequence in $\text{MHM}(X)$
\[0 \to \bQ_X[n] \to j_*\bQ_U^H[n] \to \scH^1i_*i^!\bQ_X^H[n] \to 0.\]

\noindent Then it may be shown (see Proposition 5.3 \cite{Olano23}) that the $\scD_X$-module map underlying the resulting surjection of mixed Hodge modules 
\[\psi_{f,1}\bQ_X^H[n](-1) \to \scH^1i_*i^!\bQ_X^H[n]\]
equals (up to multiplication by some invertible function at least)
\[\psi_{-1} : \text{gr}_V^1i_{f,+}\scO_X \to \frac{\scO_X(*f)}{\scO_X},\]
which is the map induced by
\[\psi_{-1} : i_{f,+}\scO_X \to \scO_X(*f) \,\,;\,\, \sum_{j=0}^kv_j\partial_t^j\delta \mapsto \sum_{j=0}^kj!f^{-j-1}v_j.\]

\noindent From this one can derive the expression given in Theorem \ref{thmMP}, using strictness of morphisms of mixed Hodge modules with respect to Hodge filtrations (the Hodge filtration on $\text{gr}_V^1i_{f,+}\scO_X$ being the one induced by the $t$-order filtration on $i_{f,+}\scO_X$).
\end{rem}

\begin{lem} \label{lem:HtoVonGraph}
    Let $f \in \scO_X$. On $i_{f,+}\scO_X(*f)$, and for all integers $k$, we have the following relations between the Hodge and $t$-order filtrations:
    \begin{align}\label{Hfilt}
    F_{k+1}^Hi_{f,+}\scO_X(*f) =\hspace{-2pt}\sum_{j\geq 0} F_{k-j}^H\scO_X(*f)\partial_t^j\delta \subseteq F_k^{\Torder}i_{f,+}\scO_X(*f).
    \end{align}
\end{lem}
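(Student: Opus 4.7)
My plan is to derive the equality from Saito's general formula for the Hodge filtration on the $\scD$-module direct image of a mixed Hodge module under a closed embedding of codimension one, and then to handle the $t$-order containment by a degree-counting argument.

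First, I would invoke the fact that $i_f : X \to X \times \bC_t$ is a closed embedding of codimension one. Saito's explicit formula for the direct image filtration (Section 2.11 of \cite{MSai88}, recalled in Section 2.8 of \cite{MSai90}) asserts that for any filtered $\scD_X$-module $(\scM, F_\bullet)$ underlying a mixed Hodge module on $X$,
\[
F_p \, i_{f,+} \scM \;=\; \sum_{j \geq 0} F_{p - j - 1}\,\scM \cdot \partial_t^{\,j}\delta \;\subseteq\; \scM[\partial_t]\delta \simeq i_{f,+}\scM,
\]
with the shift $-1$ on the right-hand index recording the codimension. I would apply this to $\scM = \scO_X(*f)$ carrying the Hodge filtration from $j_*\bQ_U^H[n]$, and then reindex via $p = k+1$ to deliver the asserted equality.

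For the remaining containment, I would appeal to the fact that the Hodge filtration on $\scO_X(*f)$ is bounded from below, so $F_m^H\scO_X(*f) = 0$ for $m < 0$. Only the terms with $0 \leq j \leq k$ then contribute, and the resulting sum lies in $\sum_{j=0}^{k} \scO_X(*f) \, \partial_t^{\,j}\delta = F_k^{\Torder}\,i_{f,+}\scO_X(*f)$, which is the required containment.

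The main obstacle I anticipate is simply careful bookkeeping of Saito's index conventions for the direct image filtration; once the codimension-one shift is aligned correctly, both the equality and the containment follow directly from the general theory of mixed Hodge modules, with no further input from Theorem \ref{thmMP} or from the Kashiwara--Malgrange $V$-filtration.
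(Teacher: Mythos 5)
Your proposal is correct and follows essentially the same route as the paper: the equality is deduced from Saito's filtered direct image formula for a codimension-one closed embedding (which is precisely the statement that $(i_{f,+}\scO_X(*f),F_\bullet^H)=i_{f,+}(\scO_X(*f),F_\bullet^H)$ as filtered modules, for which the paper cites \cite{Lau83} and Section 1.8 of \cite{MSai93} rather than \cite{MSai88}/\cite{MSai90}), and the containment is read off the definition of $F_k^{\Torder}$ once one notes that $F_m^H\scO_X(*f)=0$ for $m<0$. Your spelling-out of the truncation $0\leq j\leq k$ is slightly more explicit than the paper's terse ``by definition,'' but there is no substantive difference.
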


\begin{proof}
The final containment is by definition of $F_{k}^{\Torder} i_{f,+}\scO_X(*f)$. The equality involving the Hodge filtration on $i_{f,+}\scO_X(*f)$ follows since $ (i_{f,+}\scO_X(*f),F_{\bullet}^H)=i_{f,+}(\scO_X(*f),F_{\bullet}^H)$ as filtered  $\scD_X$-modules. See \cite{Lau83} as well as Section 1.8 of \cite{MSai93}. 
\end{proof}

\begin{rem} \label{rmk-0HodgeGraph}
Note (as can be seen from the argument in Remark \ref{rmk-MPHodgeDiscussion} for instance) that $F_{-1}^H\scO_X(*f)=0$, from which it follows also that $F_0^Hi_{f,+}\scO_X(*f)=0$.
\end{rem}


The remainder of this section will be devoted to proving a certain rewording of Mustaţă and Popa's result Theorem A' \cite{MP20}. We will give a formula for the $k$-th step of the Hodge filtration on $\scO_X(*f)$ in terms of the zeroeth step of the Kashiwara-Malgrange $V$-filtration on $i_{f,+}\scO_X(*f)$, instead of the first. A simple lemma that aids us in transferring between these two steps is the following.


\begin{lem} \label{lem_HelpCompute}
    Let $f \in \scO_X$. Write $v, u \in i_{f,+} \scO_X(*f)$ as $v = \sum_{j \geq 0} v_j \partial_t^j \delta$ and $u = \sum_{j \geq 0} u_j \partial_t^j \delta$. Suppose that $v = t \cdot u$. Then
    \begin{enumerate}[label=\roman*)]
        \item $v_j = f u_j - (j+1) u_{j+1}$ for all $j \geq 0$.
        \item $u_0 = \sum_{j \geq 0} j! f^{-j-1} v_j $.
    \end{enumerate}
\end{lem}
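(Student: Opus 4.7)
The plan is a direct calculation using the explicit $\scD_{X \times \bC_t}$-action on $\scO_X(*f)[\partial_t]\delta$ that was recorded right before Definition \ref{defntord}. For (i), I would start from the formula $t \cdot (m \partial_t^k \delta) = (f \cdot m)\partial_t^k \delta - k m \partial_t^{k-1} \delta$ and apply it termwise to $u = \sum_{j \geq 0} u_j \partial_t^j \delta$. Re-indexing the ``lowering'' piece gives
\[
t \cdot u \;=\; \sum_{j \geq 0} (f u_j)\partial_t^j \delta \;-\; \sum_{j \geq 1} j u_j \partial_t^{j-1}\delta \;=\; \sum_{j \geq 0}\bigl(f u_j - (j+1)u_{j+1}\bigr)\partial_t^j \delta.
\]
Setting this equal to $v = \sum_{j \geq 0} v_j \partial_t^j \delta$ and comparing coefficients of $\partial_t^j \delta$ (which is legitimate because every element of $\scO_X(*f)[\partial_t]\delta$ has a unique such expansion with only finitely many nonzero $u_j$, $v_j$) yields the identity $v_j = f u_j - (j+1)u_{j+1}$.

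For (ii), I would exploit this identity as a telescope. Multiplying both sides by $j! f^{-j-1}$, valid in $\scO_X(*f)$ since we have inverted $f$, produces
\[
j! f^{-j-1} v_j \;=\; j! f^{-j} u_j \;-\; (j+1)! f^{-j-1} u_{j+1}.
\]
Summing over $j \geq 0$, the right-hand side telescopes. Since only finitely many $u_j$ are nonzero, the boundary term at infinity vanishes and the remaining term is the $j = 0$ summand $0! f^0 u_0 = u_0$, giving $u_0 = \sum_{j \geq 0} j! f^{-j-1} v_j$.

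The only mild subtlety is the bookkeeping with the shifted index when rewriting $\sum_{j\geq 1} j u_j \partial_t^{j-1}\delta$ as $\sum_{j\geq 0}(j+1)u_{j+1}\partial_t^j \delta$; after that, both parts are formal manipulations with no real obstacle. No appeal to the $V$-filtration or the Hodge module structure is needed for this lemma — it is purely a property of the $\scD_{X \times \bC_t}$-module structure on $i_{f,+}\scO_X(*f)$.
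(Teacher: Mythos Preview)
Your proof is correct and follows essentially the same route as the paper: apply the rule $t\cdot(m\partial_t^k\delta)=(f m)\partial_t^k\delta-km\partial_t^{k-1}\delta$ termwise and re-index to get (i), then observe that $j!f^{-j-1}v_j$ telescopes to yield (ii). The paper's proof is identical in content and presentation.
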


\begin{proof}
    Both are computations. By assumption,
    \begin{align*}
        \sum_{j \geq 0} v_{j} \partial_t^j \delta = t \cdot  \sum_{j \geq 0} u_j \partial_t^j \delta  = \sum_{j \geq 0} ( f u_j \partial_t^j - j u_j \partial_t^{j-1} ) \delta  = \sum_{j \geq 0} ( f u_j - (j+1) u_{j+1}) \partial_t^j \delta.
    \end{align*}
    Comparing $\{\partial_t^\ell \delta\}_{\ell \geq 0}$ coefficients between $u$ and $v$ proves i). 
    
   Claim ii) follows by plugging i) into the proposed formula:
    \begin{align*}
        \sum_{j \geq 0} j! f^{-j-1} v_j = \sum_{j \geq 0} j! f^{-j-1} (f u_j - (j+1) u_{j+1}) = \sum_{j \geq 0} (j! f^{-j} u_j - (j+1)! f^{-j-1} u_{j+1}) = u_0.
    \end{align*}
\end{proof}

We obtain the following useful relationship between certain restrictions of $F_{\bullet + 1}^H i_{f,+} \scO_X(*f)$ and $F_\bullet^{\Torder} \scO_X(*f)$.

\begin{conv}    
Let $G_\bullet$ be a filtration on $i_{f,+} \scO_X(*f)$ and $\gamma \in \mathbb{Q}$. Throughout the article, we use $G_\bullet V^{\gamma} i_{f,+} \scO_X(*f)$ to denote the restriction of $G_\bullet$ to $V^{\gamma} i_{f,+} \scO_X(*f)$. 
\end{conv}

\begin{lem} \label{lemHtordV}
Let $f \in \scO_X$. Then
\[F_{\bullet+1}^HV^0i_{f,+}\scO_X(*f) = F_{\bullet}^{\Torder}V^0i_{f,+}\scO_X(*f),\]

\end{lem}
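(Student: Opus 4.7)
The inclusion $F_{k+1}^H V^0 i_{f,+}\scO_X(*f) \subseteq F_k^{\Torder} V^0 i_{f,+}\scO_X(*f)$ follows from Lemma \ref{lem:HtoVonGraph} by restriction to $V^0$. For the reverse inclusion, the plan is to use multiplication by $t$ as a filtered bijection $V^0 \xrightarrow{\sim} V^1$ that respects both the $t$-order and the Hodge filtrations, thereby reducing the claim to the corresponding equality on $V^1$, where the comparison becomes easy via the trivial Hodge module $\scO_X$.

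By Proposition \ref{propVfilt}(i), $t$ maps $V^0 i_{f,+}\scO_X(*f)$ onto $V^1 i_{f,+}\scO_X(*f)$, and the action formulas show $t$ is injective on $\scO_X(*f)[\partial_t]\delta$ because $f$ is invertible in $\scO_X(*f)$. Hence $t$ is a bijection, and Lemma \ref{lem_HelpCompute}(i), read in both directions, shows that $t$ and its inverse preserve $t$-order, giving a bijection $t : F_k^{\Torder} V^0 i_{f,+}\scO_X(*f) \xrightarrow{\sim} F_k^{\Torder} V^1 i_{f,+}\scO_X(*f)$. Saito's strictness of the Hodge filtration with respect to the $V$-filtration on a mixed Hodge module (Section 3.2 of \cite{MSai88}, see also Section 2.a of \cite{MSai90}) provides the analogous bijection $t : F_{k+1}^H V^0 i_{f,+}\scO_X(*f) \xrightarrow{\sim} F_{k+1}^H V^1 i_{f,+}\scO_X(*f)$. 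It thus suffices to establish the identity $F_{k+1}^H V^1 i_{f,+}\scO_X(*f) = F_k^{\Torder} V^1 i_{f,+}\scO_X(*f)$.

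For this reduced identity, Proposition \ref{propVfilt}(ii) identifies the underlying $\scD$-modules $V^1 i_{f,+}\scO_X$ and $V^1 i_{f,+}\scO_X(*f)$. Moreover, the cokernel of the MHM morphism $i_{f,+}\scO_X \hookrightarrow i_{f,+}\scO_X(*f)$ is supported on $\{t=0\}$, so by Kashiwara's equivalence its $V$-filtration vanishes in degree $\geq 1$; strictness then forces the MHM structures---and in particular the Hodge filtrations---on $V^1 i_{f,+}\scO_X$ and $V^1 i_{f,+}\scO_X(*f)$ to agree. Since $F_0^H \scO_X = \scO_X$ and $F_{<0}^H \scO_X = 0$, the argument of Lemma \ref{lem:HtoVonGraph} applied to the pushforward of the trivial MHM $\bQ_X^H[n]$ yields $F_{k+1}^H i_{f,+}\scO_X = \sum_{j=0}^k \scO_X \partial_t^j \delta = F_k^{\Torder} i_{f,+}\scO_X$, which on restriction to $V^1$ closes the argument. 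The main obstacle is the appeal to Saito's strictness, both for the $t$-bijection at the Hodge level and for the compatibility of Hodge filtrations on $V^1$ across the MHM map $i_{f,+}\scO_X \hookrightarrow i_{f,+}\scO_X(*f)$; these are non-elementary but standard inputs from mixed Hodge module theory.
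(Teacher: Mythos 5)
Your proposal takes a genuinely different route from the paper's proof, and I think it is essentially sound, but one step needs more justification than the citation provides.

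The paper works directly: it takes $u \in F_k^{\Torder} V^0$, applies Lemma \ref{lem_HelpCompute} and Theorem \ref{thmMP} to conclude $u_0 \in F_k^H \scO_X(*f)$, then runs a downward induction on $k$ using the identity $fu - tu = \sum_{j \geq 1} j u_j \partial_t^{j-1}\delta \in F_{k-1}^{\Torder} V^0$. Your argument instead transports everything to $V^1$ along the $t$-action, identifies $F^H$ on $V^1 i_{f,+}\scO_X(*f)$ with $F^H$ on $V^1 i_{f,+}\scO_X$, and reads the equality off the trivial Hodge module $\scO_X$ via Lemma \ref{lem:HtoVonGraph} and $F_0^H\scO_X = \scO_X$. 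That is a nice reduction: it isolates where the real MHM content lives.

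The step I would flag is the claimed bijection $t : F_{k+1}^H V^0 \to F_{k+1}^H V^1$. The strict specializability axiom you cite (Saito (3.2.1.2), i.e. Section 3.2 of \cite{MSai88}) gives surjectivity of $t : F_p V^\alpha \to F_p V^{\alpha+1}$ only for $\alpha > 0$; it does not cover $\alpha = 0$, which is exactly your case. What actually makes the claim work here is that $t$ acts invertibly on $i_{f,+}\scO_X(*f) = j'_* j'^* i_{f,+}\scO_X(*f)$, so one can instead appeal to Saito's formula for the Hodge filtration of the open direct image (Section 2.8 of \cite{MSai90}): $F_p\cap V^0$ is the naive restriction $j'_\bullet F_p \cap V^0$, and then surjectivity follows because $t^{-1}$ is an $\scO$-invertible function on $\{t \neq 0\}$ and $V^0 = t^{-1}V^1$. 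With the citation corrected to the $j_*$ formula (which is also the engine behind Theorem \ref{thmMP}), the rest of your argument — the $t$-order-preserving bijection via Lemma \ref{lem_HelpCompute}, the identification of $F^H$ on $V^1$ across the MHM map $i_{f,+}\scO_X \hookrightarrow i_{f,+}\scO_X(*f)$ via strictness and Kashiwara, and the computation on $\scO_X$ — goes through. Net assessment: your proof relies on more abstract MHM inputs while the paper's is more self-contained, but the conceptual clarity of reducing to $V^1$ and the trivial Hodge module is appealing; just be precise that it is the open direct image formula, not the general strictness axiom at $\alpha = 0$, that is doing the work.
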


\begin{proof}
It clearly suffices to check this statement locally. So we may assume $X$ is a sufficiently small open subset of $\bC^n$ and pick a coordinate system. The result is trivially true for negative indices, cf. Remark \ref{rmk-0HodgeGraph}. Let $k \in \bZ_{\geq 0}$.
    
The $\subseteq$ containment is read off of Lemma \ref{lem:HtoVonGraph}. Now we prove the reverse. Firstly, throughout the argument we convene that
\begin{equation*}
    u = \sum_{j=0}^ku_j\partial_t^j\delta \in F_k^{t\text{-ord}}V^0i_{f,+}\scO_X(*f)
\end{equation*}
is our arbitrary choice of element in $F_k^{t\text{-ord}}V^0i_{f,+}\scO_X(*f).$  By Proposition \ref{propVfilt}.iii), $t \cdot u \in V^1i_{f,+}\scO_X$. For the rest of the proof we also convene that 
\begin{equation*}
    v := t \cdot u \text{ and we write } v = \sum_{j\geq 0} v_j \partial_t^j\delta.
\end{equation*}
Employing Lemma \ref{lem_HelpCompute}.ii) and Theorem \ref{thmMP} yields 
\begin{equation} \label{eqn-u_0Fact}
    u_0 \in F_k^H\scO_X(*f). 
\end{equation}

We are ready to prove that $u \in F_{k+1}^H i_{f,+} \scO_X(*f)$. We use induction on the index $k$. In the case $k=0$, we have 
\begin{equation*}
    u = u_0 \delta \in F_0^H \scO_X(*f) \delta = F_1^H i_{f,+} \scO_X(*f),
\end{equation*}
where: the membership is \eqref{eqn-u_0Fact}; the equality is Lemma \ref{lem:HtoVonGraph}.

For $k > 0$, recall our conventions of $v = t \cdot u$. So,
\begin{equation*}
    v=t \cdot \sum_{j\geq 0} u_j \partial_t^j\delta = fu- \sum_{j= 1}^kju_j\partial_t^{j-1}\delta \in V^1i_{f,+}\scO_X,
\end{equation*}
where $V^1i_{f,+}\scO_X =t\cdot V^0i_{f,+}\scO_X(*f)\subseteq V^0i_{f,+}\scO_X(*f)$, c.f. Proposition \ref{propVfilt}.iii). Counting $\partial_t^{\ell}$ powers in $fu-v$ reveals $fu-v \in F_{k-1}^{\Torder} i_{f,+}\scO_X(*f)$, which in turn shows 
\begin{equation*}
    \sum_{j= 1}^kju_j\partial_t^{j-1}\delta =fu-v \in F_{k-1}^{t\text{-ord}}V^0i_{f,+}\scO_X(*f).
\end{equation*}
By the inductive hypothesis and Lemma \ref{lem:HtoVonGraph}, for all $j \geq 1$ each $j u_j \in F_{k-1-(j-1)}^H \scO_X(*f) = F_{k-j}^H \scO_X(*f)$. By \eqref{eqn-u_0Fact}, $u_0 \in F_k^H \scO_X(*f)$. Using Lemma \ref{lem:HtoVonGraph} again terminates the induction:
\begin{equation*}
    u = \sum_{j=0}^ku_j\partial_t^j\delta \in \sum_{j\geq 0}F_{k-j}^H\scO_X(*f)\partial_t^j\delta = F_{k+1}^Hi_{f,+}\scO_X(*f).
\end{equation*}
\end{proof}



We deduce our variant/re-framing of Mustaţă and Popa's Theorem \ref{thmMP}:

\begin{thm}

Let $f \in \scO_X$. For each non-negative integer $k$,
\begin{center} \vspace{-0.4cm} \begin{align*}F_k^H\scO_X(*f) & = \left(\partial_tF_k^Hi_{f,+}\scO_X(*f)+V^0i_{f,+}\scO_X(*f)\right)\cap\left(\scO_X(*f)\delta\right) \\ &= \left(\partial_tF_{k-1}^{t-\text{\emph{ord}}}i_{f,+}\scO_X(*f)+V^0i_{f,+}\scO_X(*f)\right)\cap\left(\scO_X(*f)\delta\right),\end{align*}\end{center} where the two intersections take place in $\scO_X(*f)[\partial_t]\delta\simeq i_{f,+}\scO_X(*f)$.

\label{thmformula}

\end{thm}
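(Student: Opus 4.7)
The plan is to translate Mustaţă-Popa's formula (Theorem \ref{thmMP}), which describes $F_k^H \scO_X(*f)$ in terms of $V^1 i_{f,+}\scO_X$, into the two proposed formulas by using the bijection $t \colon V^0 i_{f,+}\scO_X(*f) \xrightarrow{\sim} V^1 i_{f,+}\scO_X$ of Proposition \ref{propVfilt}.iii), the recursion in Lemma \ref{lem_HelpCompute}, and the $t$-order/Hodge comparisons of Lemmas \ref{lemHtordV} and \ref{lem:HtoVonGraph}. Since Lemma \ref{lem:HtoVonGraph} yields $F_k^H i_{f,+}\scO_X(*f) \subseteq F_{k-1}^{\Torder} i_{f,+}\scO_X(*f)$, intersecting with $\scO_X(*f)\delta$ produces a chain
\begin{equation*}
(\partial_t F_k^H i_{f,+}\scO_X(*f) + V^0)\cap \scO_X(*f)\delta \subseteq (\partial_t F_{k-1}^{\Torder} i_{f,+}\scO_X(*f) + V^0)\cap \scO_X(*f)\delta,
\end{equation*}
so both stated equalities reduce to (i) showing $F_k^H \scO_X(*f)\delta$ sits in the smaller set, and (ii) showing the larger set sits in $F_k^H \scO_X(*f)\delta$.

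For (i), given $w \in F_k^H \scO_X(*f)$, Theorem \ref{thmMP} produces $v = \sum_{j=0}^k v_j \partial_t^j \delta \in V^1 i_{f,+}\scO_X$ with $w = \sum_j j!\, f^{-j-1} v_j$, and Proposition \ref{propVfilt}.iii) gives a unique $u = \sum_j u_j \partial_t^j \delta \in V^0 i_{f,+}\scO_X(*f)$ with $v = t\cdot u$; Lemma \ref{lem_HelpCompute}.ii) identifies $u_0 = w$. Inspecting the top degree: if $M$ is the largest index with $u_M \neq 0$, the recursion $v_j = f u_j - (j+1) u_{j+1}$ of Lemma \ref{lem_HelpCompute}.i) gives $v_M = f u_M \neq 0$ by invertibility of $f$ in $\scO_X(*f)$, so $v_j = 0$ for $j > k$ forces $M \leq k$; hence $u \in F_k^{\Torder} V^0 i_{f,+}\scO_X(*f)$. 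Lemma \ref{lemHtordV} then upgrades this to $u \in F_{k+1}^H V^0 i_{f,+}\scO_X(*f)$, and Lemma \ref{lem:HtoVonGraph} forces $u_j \in F_{k-j}^H \scO_X(*f)$ for every $j$. Setting $\tilde u := \sum_{j\geq 0} u_{j+1} \partial_t^j \delta$, another application of Lemma \ref{lem:HtoVonGraph} gives $\tilde u \in F_k^H i_{f,+}\scO_X(*f)$, and the decomposition $u = w\delta + \partial_t \tilde u$ delivers
\begin{equation*}
w\delta = u - \partial_t \tilde u \in V^0 + \partial_t F_k^H i_{f,+}\scO_X(*f),
\end{equation*}
which is (i).

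For (ii), suppose $w\delta = u + \partial_t s$ with $u \in V^0 i_{f,+}\scO_X(*f)$ and $s \in F_{k-1}^{\Torder} i_{f,+}\scO_X(*f)$. Then $u = w\delta - \partial_t s$ has $t$-order at most $k$, so $u \in F_k^{\Torder} V^0 i_{f,+}\scO_X(*f)$, which by Lemma \ref{lemHtordV} equals $F_{k+1}^H V^0 i_{f,+}\scO_X(*f)$. Lemma \ref{lem:HtoVonGraph} therefore places the $\partial_t^0\delta$-coefficient of $u$ in $F_k^H \scO_X(*f)$; since $\partial_t s$ contributes no $\partial_t^0\delta$ term, that coefficient is exactly $w$, giving $w \in F_k^H \scO_X(*f)$. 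The only step requiring genuine care is the top-degree argument showing that the $u$ obtained by inverting $t$ on Mustaţă-Popa's element $v$ has $t$-order at most $k$; everything else is formal bookkeeping with the preparatory lemmas already in hand.
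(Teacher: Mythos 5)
Your proof is correct and follows essentially the same route as the paper's: both reduce to Mustaţă--Popa via the bijection $t\colon V^0 \to V^1$ (Proposition \ref{propVfilt}.iii), the recursion of Lemma \ref{lem_HelpCompute}, and the $t$-order/Hodge comparisons of Lemmas \ref{lemHtordV} and \ref{lem:HtoVonGraph}, differing only in bookkeeping: you prove $F_k^H\scO_X(*f)\delta$ lands directly in the first (smaller) expression and the second (larger) expression lands in $F_k^H\scO_X(*f)\delta$, closing the chain in one pass, whereas the paper first proves the two expressions equal and then sandwiches $F_k^H\scO_X(*f)\delta$ against the $t$-order expression. Your explicit top-degree argument that $u$ has $t$-order $\leq k$ makes a point the paper states somewhat tersely, and is accurate.
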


\begin{proof}
Again we may assume without loss of generality that $X$ is an open subset of $\bC^n$. 

Hereafter, fix $k \in \mathbb{Z}_{\geq 0}$. We first prove the second equality promised. The $\subseteq$ containment is Lemma \ref{lem:HtoVonGraph}. For the $\supseteq$ containment, take $u_0 \in \scO_X(* f)$ such that
\begin{equation*}
    u_0\delta \in (\partial_tF_{k-1}^{\Torder}i_{f,+}\scO_X(*f)+V^0i_{f,+}\scO_X(*f))\cap\left(\scO_X(*f)\delta\right).
\end{equation*}
As we are entitled to, set $u_0\delta = \partial_t u^\prime + u$ with $u^\prime \in F_{k-1}^{\Torder}i_{f,+}\scO_X(*f)$ and $u \in V^0i_{f,+}\scO_X(*f)$. By the definition of the $\partial_t \cdot$ action, $\partial_t u^\prime$ lies in $F_k^{\Torder} i_{f,+} \scO_X(*f)$ as does $u = u_0 \delta - \partial_t u^\prime$. Therefore
\begin{equation*}
    u = u_0 \delta - \partial_t u^\prime \in F_k^{\Torder}V^0i_{f,+}\scO_X(*f) = F_{k+1}^H V^0 i_{f,+} \scO_X(*f),
\end{equation*}
where the last claim is Lemma \ref{lemHtordV}. By considering the $\{\partial_t^\ell \delta\}_{\ell \geq 0}$ coefficients of $u = u_0 \delta - \partial_t u^\prime$ and using Lemma \ref{lem:HtoVonGraph}, it is clear that $u_0 \in F_k^H \scO_X(*f)$ and that $u^\prime \in F_k^H i_{f,+} \scO_X(*f)$. So
\begin{equation*}
    u_0 \delta = \partial_t u^\prime + u \in \big( \partial_t F_k^H i_{f,+} \scO_X(*f) + V^0 i_{f,+} \scO_X(*f) \big) \cap \big( \scO_X(*f) \delta \big).
\end{equation*}
This establishes the second promised equality, by confirming $\supseteq$. Our observations about $u_0$ also proved that
\begin{equation} \label{eqn-thmFormula-1}
    \left( \partial_tF_{k-1}^{\Torder} i_{f,+}\scO_X(*f)+V^0i_{f,+}\scO_X(*f) \right) \cap \left(\scO_X(*f)\delta\right) \subseteq F_k^H\scO_X(*f)\delta.
\end{equation}

Now we prove the first ``$=$''. With \eqref{eqn-thmFormula-1} in hand, our task is to certify
\begin{equation} \label{eqn-thmFormula-2}
F_k^H \scO_X(*f)\delta \subseteq \left( \partial_t F_{k-1}^{\Torder} i_{f,+} \scO_X(*f) + V^0i_{f,+}\scO_X(*f)\right) \cap \left(\scO_X(*f)\delta\right).
\end{equation}
So select $q \in F_k^H \scO(*f).$ By Theorem \ref{thmMP}, $q$ may be written 
\begin{equation*}
q = \sum_{j=0}^kj!f^{-j-1}v_j \;\;\;\; \text{for some} \;\;\;\; v = \sum_{j=0}^kv_j\partial_t^j\delta \in V^1i_{f,+}\scO_X.
\end{equation*}
Proposition \ref{propVfilt}.iii) asserts the existence of some $u = \sum_{j \geq 0} u_j \partial_t^j \delta \in V^0i_{f,+}\scO_X(*f)$ such that $v = t \cdot u$. Lemma \ref{lem_HelpCompute}.ii) computes $q = u_0$. Because $v$ has $t$-order at most $k$, Lemma \ref{lem_HelpCompute}.i) demonstrates that $u$ also has $t$-order at most $k$. Manipulating the $\partial_t \cdot$ action yields
\begin{align*}
    q \delta = u_0 \delta 
    = (- \sum_{j \geq 1}^{k} u_j \partial_t^j \delta ) + u 
    \in \big( \partial_t F_{k-1}^{\Torder} i_{f,+} \scO_X(*f) + V^0 i_{f,+} \scO_X(*f) \big) \cap \big(\scO_X(*f) \delta \big).
\end{align*}
So \eqref{eqn-thmFormula-2} holds and the proof is complete.
\end{proof}

\section{Hypotheses on the Bernstein-Sato polynomial} 

In the previous section, we reduced understanding the Hodge filtration on $\scO_X(*f)$ into the following three ingredients: the $t$-order filtration on $i_{f,+} \scO_X(*f)$; $V^0 i_{f,+} \scO_X(*f)$; the behavior of intersecting $i_{f,+} \scO_X(*f)$ with $\scO_X(*f)$. 

Our goal is to simplify our understanding: we would like to understand the (respective) Hodge filtrations on $\scO_X(*f)$, $i_{f,+} \scO_X(*f)$, and $V^0 i_{f,+} \scO_X(*f)$ in terms of more tractable filtrations. Namely: the order filtration on $\scD_X$; the order filtration on $\scD_{X \times \bC_t}$; the $V$-filtration on $\scD_{X \times \bC_t}$ along $\{t=0\}$; the total order filtration on $\scD_X[s]$. This introduces some costs. 
 
Firstly, $\scO_X(*f)$ is not $\scD_X$ and so it does not naturally have the order filtration. We deal with this by selecting a generator $q$ for $\scO_X(*f)$ and letting the \emph{induced order $k$} elements of $\scO_X(*f)$ be the elements $P q$ where $P \in \scD_X$ has order at most $k$, cf. Definition \ref{def-orderFilLocalization}. In a similar way we can select a generator for $i_{f,+} \scO_X(*f)$ and define the \emph{induced $V$-filtration} to be the result of applying the aforementioned $V$-filtration on $\scD_{X \times \bC_t}$ to this generator, cf. Definition \ref{def-inducedVfiltration}.

Secondly, we will have to work with $\scD_X[s]$-modules and Bernstein-Sato constructions. This is not unexpected: $\scD_X[s]$-modules and the Bernstein-Sato polynomial are familiar actors in the story of $i_{f,+} \scO_X(*f)$ and $\scO_X(*f)$. Lemma \ref{lem-commDiagram} hints at how to pass filtration data on one object to filtration data on another. Eventually, we will focus on the case of $f \in \scO_X$ whose Bernstein-Sato polynomial's roots are contained in $(-2,0)$. This forces $-1$ to be the only integer root of the Bernstein-Sato polynomial, which is equivalent to the equality $\scD_X\cdot f^{-1} = \scO_X(*f)$, cf. Proposition \ref{prop-specialization}.

While much of the machinations here are for later use, this section concludes with Corollary \ref{cor-zeroHodgePiece}. This is an algebraic formula for $F_0^H \scO_X(*f)$, provided the zeroes of the Bernstein-Sato polynomial of $f$ are contained in $(-2,0)$.

\subsection{Order Filtrations and Picking a Generator}

Recall the \emph{order filtration} $F_\bullet^{\ord} \scD_X$ on $\scD_X$ (resp. $F_\bullet^{\ord} \scD_{X \times \bC_t})$ is a good, increasing, and exhaustive filtration by coherent $\scO_X$-modules (resp. $\scO_{X \times \bC_t}$-modules). As promised, we will use canonical filtrations to induce ``simple'' filtrations on $\scO_X(*f)$ and, in a similar way, on $i_{f,+} \scO_X(*f)$. 

To do this we must fix a generator of $\scO_X(*f)$. This is possible: $\scO_X(*f)$ is cyclically generated by $f^\ell$ provided $\ell \ll 0$ (see a natural generalization of Proposition \ref{prop-specialization}). Hereafter we \emph{always} assume that $f^{-1}$ is a generator, i.e. $\scD_X\cdot f^{-1} = \scO(*f)$, and we will diligently remind the reader so. (Different generator choices mutate the following definitions appropriately.)

\begin{defn} \label{def-orderFilLocalization}
    Assume that $f \in \scO_X$ satisfies $\scD_X\cdot f^{-1} = \scO_X(*f).$ For $k \in \mathbb{Z}$ set
    \begin{equation*}
        F_k^{\ord} \scO_X(*f) := F_k^{\ord} \scD_X\cdot f^{-1}.
    \end{equation*}
    We call the filtration $F_\bullet^{\ord} \scO_X(*f)$ the \emph{order filtration} on the $\scD_X$-module $\scO_X(*f)$. It is a good, increasing, and exhaustive filtration by $\scO_X$-modules. 
\end{defn}

If the $\scD_X$-module $\scO_X(*f)$ is generated by $f^{-1}$, then the $\scD_{X \times \bC_t}$-module $\scO_X(*f)[\partial_t]\delta$ is generated by $f^{-1} \delta$. Using
\begin{equation} \label{eqn-GraphIso}
    i_{f,+}\scO_X(*f) \simeq \scO_X(*f)[\partial_t] \delta,
\end{equation}
the $\scD_{X \times \bC_t}$-module $i_{f,+} \scO_X(*f)$ is generated by the preimage of $f^{-1} \delta$.

\begin{defn} \label{def-orderFilGraphLocalization}
    Assume that $f \in \scO_X$ satisfies $\scD_X\cdot f^{-1} = \scO_X(*f).$ For $k \in \mathbb{Z}$ set
    \begin{equation*}
        F_k^{\ord} \scO_X(*f)[\partial_t] \delta := F_k^{\ord} \scD_{X \times \bC_t} \cdot f^{-1} \delta,
    \end{equation*}
    where here $F_k^{\ord} \scD_{X \times \bC_t}$ is the canonical order filtration on $\scD_{X \times \bC_t}$. We call the filtration $F_\bullet^{\ord} \scO_X(*f)[\partial_t]\delta $ the \emph{order filtration} on the $\scD_{X \times \bC_t}$-module $\scO(*f)[\partial_t] \delta$. We also obtain the \emph{order filtration} $F_\bullet^{\ord} i_{f,+} \scO_X(*f)$ on the $\scD_{X \times \bC_t}$-module $i_{f,+} \scO_X(*f)$ by pulling back $F_\bullet^{\ord} \scO_X(*f)[\partial_t] \delta$ along the isomorphism \eqref{eqn-GraphIso}. Both are good, increasing, and exhaustive filtrations by $\scO_{X \times \bC_t}$-modules.
\end{defn}

There is one more ``simple'' filtration we introduce. This still involves the generator choice/assumption $f^{-1}$ of $\scO_X(*f)$, but does not involve the canonical order filtration on $\scD_X$ or $\scD_{X \times \bC_t}$. Instead we exploit the $V$-filtration on $\scD_{X \times \bC_t}$:

\begin{defn} \label{def-inducedVfiltration}
    Recall the $V$-filtration of $\scD_{X \times \bC_t}$ along $\{t = 0\}$, denoted as $V^\bullet \scD_{X \times \bC_t}$, and characterized by
    \begin{equation*}
        V^k \scD_{X \times \bC_t} := \{ P \in \scD_{X \times \bC_t} \mid P \cdot t^i \in (t^{i+k}) \text{ for all } i \geq 0 \}.
    \end{equation*}
    This is a decreasing integer filtration by coherent $V^0 \scD_{X \times \bC_t}$-modules. 

    Now assume that $f \in \scO_X(*f)$ satisfies $\scD_X\cdot f^{-1} = \scO_X(*f)$. Then we define the \emph{induced $V$-filtration} $V_{\ind}^\bullet \scO_X(*f)[\partial_t]\delta$ on the $\scD_{X \times \bC_t}$-module $\scO_X(*f)[\partial_t]\delta$ by 
    \begin{equation*}
        V_{\ind}^k \scO_X(*f)[\partial_t] \delta := V^k \scD_{X \times \bC_t} \cdot f^{-1} \delta.
    \end{equation*}
    We define the \emph{induced $V$-filtration} $V_{\ind}^\bullet i_{f,+} \scO_X(*f)$ on the $\scD_{X \times \bC_t}$-module $i_{f,+} \scO_X(*f)$ by pulling back $V_{\ind}^\bullet \scO(*f)[\partial_t] \delta$ along the isomorphism \eqref{eqn-GraphIso}. Both induced $V$-filtrations are decreasing integer filtrations by coherent $V^0\scD_{X\times\bC_t}$-modules. 
\end{defn}

\begin{conv} \label{conv-PullBack}
    We emphasize that both the induced order and induced $V$-filtration on $i_{f,+} \scO_X(*f)$ are obtained by defining these filtrations on $\scO_X(*f)[\partial_t] \delta$ and pulling them back along the isomorphism \eqref{eqn-GraphIso}, just like $F_\bullet^{\Torder} i_{f,+} \scO_X(*f)$. When we give formulas like $F_k^{\ord} i_{f,+} \scO_X(*f) = \cdots$, the ``$=$'' sign implicitly uses this isomorphism.
\end{conv}

The order filtrations on $\scO_X(*f)$ and $i_{f,+} \scO_X(*f)$ are related by:

\begin{lem} \label{lem-OrderFilGraph}
    Assume that $f\in \scO_X$ satisfies $\scO_X(*f)=\scD_X\cdot f^{-1}$. Let $k \in \mathbb{Z}$. Then
    \begin{equation*}
        F_k^{\ord}i_{f,+}\scO_X(*f) = \sum_{j\geq 0} \bigg[ F_{k-j}^{\ord}\scO_X(*f) \bigg] \partial_t^j\delta.
    \end{equation*}
\end{lem}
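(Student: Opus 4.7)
The plan is to reduce everything to the explicit $\scD_{X\times\bC_t}$-action on $\scO_X(*f)[\partial_t]\delta$ recalled at the start of Section \ref{formula}, via the identification in Convention \ref{conv-PullBack}. The central technical ingredient, proved by induction on $r$, is the following sub-lemma: for every $P \in F_r^{\ord}\scD_X$,
\[
P \cdot (f^{-1}\delta) \;=\; \sum_{\ell=0}^{r} a_\ell\, \partial_t^\ell \delta \quad \text{with } a_\ell \in F_{r-\ell}^{\ord}\scO_X(*f),\ a_0 = Pf^{-1}.
\]
The base case $r=0$ is immediate, since $\scO_X$ commutes with $\partial_t$. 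For the step, decomposing $P = \sum_i \partial_{x_i} P'_i + P''$ with $P'_i, P'' \in F_{r-1}^{\ord}\scD_X$ and iterating $\partial_{x_i}\cdot(m\partial_t^\ell\delta) = (\partial_{x_i}m)\partial_t^\ell\delta - \partial_{x_i}(f)m\partial_t^{\ell+1}\delta$ yields $a_\ell = \sum_i(\partial_{x_i}a'_{i,\ell} - \partial_{x_i}(f)a'_{i,\ell-1}) + a''_\ell$; each summand lies in $F_{r-\ell}^{\ord}\scO_X(*f)$ by the inductive hypothesis together with the fact that $\partial_{x_i}$ raises order by one.

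For the inclusion $F_k^{\ord} i_{f,+}\scO_X(*f) \subseteq \sum_{j\geq 0} F_{k-j}^{\ord}\scO_X(*f)\,\partial_t^j\delta$, I would note that $F_k^{\ord}\scD_{X\times\bC_t}$ is generated as an $\scO_{X\times\bC_t}$-module by monomials $\partial_x^\alpha \partial_t^j$ with $|\alpha| + j \leq k$. Since $\partial_x$ commutes with $\partial_t$ in $\scD_{X\times\bC_t}$ and $g(x,t)\cdot(m\partial_t^\ell\delta) = g(x,f)\,m\,\partial_t^\ell\delta$, the sub-lemma applied to $\partial_x^\alpha$ gives
\[
g(x,t)\,\partial_x^\alpha\partial_t^j \cdot (f^{-1}\delta) \;=\; \sum_{\ell\geq 0} g(x,f)\, a_\ell\, \partial_t^{j+\ell}\delta,
\]
with each coefficient $g(x,f)a_\ell \in F_{|\alpha|-\ell}^{\ord}\scO_X(*f) \subseteq F_{k-(j+\ell)}^{\ord}\scO_X(*f)$, as required.

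For the reverse inclusion, I would induct on $r = \ord P$ to show $(Pf^{-1})\partial_t^j\delta \in F_{r+j}^{\ord} i_{f,+}\scO_X(*f)$ for every $j \geq 0$. Commutativity of $P$ with $\partial_t$ places $P\partial_t^j$ in $F_{r+j}^{\ord}\scD_{X\times\bC_t}$, hence $P\partial_t^j\cdot (f^{-1}\delta) \in F_{r+j}^{\ord}i_{f,+}\scO_X(*f)$. Rearranging via the sub-lemma,
\[
(Pf^{-1})\partial_t^j\delta \;=\; P\partial_t^j\cdot (f^{-1}\delta) \;-\; \sum_{\ell\geq 1} a_\ell\, \partial_t^{j+\ell}\delta,
\]
and writing $a_\ell = P_\ell f^{-1}$ with $P_\ell \in F_{r-\ell}^{\ord}\scD_X$, the inductive hypothesis at strictly smaller order $r-\ell$ (with shift $j+\ell$) puts $a_\ell \partial_t^{j+\ell}\delta$ in $F_{(r-\ell)+(j+\ell)}^{\ord}i_{f,+}\scO_X(*f) = F_{r+j}^{\ord}i_{f,+}\scO_X(*f)$, closing the induction.

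The only real difficulty is the bookkeeping for the sub-lemma: each application of $\partial_{x_i}$ to an element of $\scO_X(*f)[\partial_t]\delta$ simultaneously produces a ``naive'' derivative term and a correction of $\partial_t$-degree one higher, and one must verify that these two corrections always balance within the prescribed filtration step. Once the sub-lemma is in place, both containments fall out from the action formulas and the commutativity of $\partial_{x_i}$ with $\partial_t$.
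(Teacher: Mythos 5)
Your route is essentially the paper's: your sub-lemma is precisely the paper's equation \eqref{eqn-orderFiltrations-1} (for which the paper only says ``Recalling the graph embedding action, we compute''), and your two inclusions mirror the paper's, with the reverse one reorganized as increasing induction on $\ord(P)$ instead of the paper's decreasing induction on the $\partial_t$-exponent. That reorganization is harmless. The inductive proof of the sub-lemma via the decomposition $P=\sum_i\partial_{x_i}P_i'+P''$ is sound and fills in a computation the paper leaves to the reader; the reverse inclusion is also correct.

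However, the forward inclusion contains a genuine slip. You assert
\[
g(x,t)\cdot(m\,\partial_t^{\ell}\delta)=g(x,f)\,m\,\partial_t^{\ell}\delta,
\]
but this is false when $g$ depends on $t$, because $t$ and $\partial_t$ do not commute; the paper's own footnote on the module action flags exactly this subtlety. For instance $g(x,t)\cdot(m\,\partial_t\delta)=g(x,f)\,m\,\partial_t\delta-(\partial_t g)(x,f)\,m\,\delta$, and in general one picks up a whole tail of lower $\partial_t$-degree corrections involving $t$-derivatives of $g$. Your conclusion happens to survive: each such correction has the form (a $t$-derivative of $g$ evaluated at $t=f$)$\,\cdot a_{\ell}$ placed in a \emph{lower} $\partial_t$-slot, and since $a_{\ell}\in F_{|\alpha|-\ell}^{\ord}\scO_X(*f)$ and $F^{\ord}_{\bullet}$ is increasing, the corrections land in the required filtration steps. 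But as written the displayed identity is wrong, and you should not suppress those terms. The cleaner route, which the paper takes, is to first note that $(t-f)\cdot f^{-1}\delta=0$, so one may normalize $P\in F_k^{\ord}\scD_{X\times\bC_t}$ to a representative $\sum_{i}P_{k-i}\partial_t^{i}$ with $P_{k-i}\in F_{k-i}^{\ord}\scD_X$ (no $t$ at all) acting identically on $f^{-1}\delta$; then your sub-lemma applies directly with $t$-free coefficients and the $g(x,t)$-issue never arises.
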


\begin{proof}
    The statement for $k \in \mathbb{Z}_{\leq 0}$ is trivial. So we must show equality for $k \in \mathbb{Z}_{> 0}$.

    First, take $u \in F_k^{\ord} \scO_X(*f)[\partial_t]\delta$. By rule, we find $P \in F_k^{\ord} \scD_{X \times \bC_t}$ such that $u = P \cdot f^{-1} \delta$. Since $t$ acts on $\delta$ via multiplication by $f$, we may choose $P$ in a way for which we can write $P = \sum_{i = 0}^{k} P_{k-i} \partial_t^i$ for suitable $P_{k-i} \in F_{k-i}^{\ord} \scD_X$. Recalling the graph embedding action, we compute:
    \begin{equation} \label{eqn-orderFiltrations-1}
        (P_{k-i} \partial_t^i) \cdot f^{-1} \delta = \sum_{j = i}^{k} u_{k-i,j} \partial_t^j \delta \quad \text{where} \quad u_{k-i,j} \in F_{k-j}^{\ord} \scO_X(*f).
    \end{equation}
    So
    \begin{align*}
        P \cdot f^{-1} \delta = \sum_{i=0}^{k} \sum_{j=i}^k u_{k-i,j} \partial_t^j \delta = \sum_{j=0}^{k} \bigg[ \sum_{i=j}^k u_{k-i,j} \bigg] \partial_t^j \delta \in \sum_{j=0}^k \bigg[ F_{k-j}^{\ord} \scO_X(*f) \bigg] \partial_t^j \delta,
    \end{align*}\
    proving the containment $\subseteq$. 

    For the reverse containment $\supseteq$, it suffices to prove that
    \begin{equation} \label{eqn-orderFiltrations-2}
        \bigg[ F_{k-i}^{\ord} \scO_X(*f) \bigg] \partial_t^i \delta \subseteq F_k^{\ord} \scO_X(*f)[\partial_t] \delta
    \end{equation}
    holds for all $i \in \mathbb{Z}$ with $0 \leq i \leq k$. We do this by decreasing induction on $i$. When $i = k$,
    \begin{equation*}
        \bigg[ F_{0}^{\ord} \scO_X(*f) \bigg] \partial_t^k = (\scO_X \partial_t^k) \cdot f^{-1} \delta \subseteq F_k^{\ord} \scO_X(*f)[\partial_t] \delta,
    \end{equation*}
    confirming \eqref{eqn-orderFiltrations-2}. 

    For the inductive step, fix $i < k$ and assume that \eqref{eqn-orderFiltrations-2} holds for larger choices of $i$. Select $P_{k-i} \in F_{k-i}^{\ord} \scD_X$ and set $v_{k-i}= P_{k-i} \cdot f^{-1} \in \scO_X(*f)$. Use \eqref{eqn-orderFiltrations-1} to compute $(P_{k-i} \partial_t^i) \cdot f^{-1} \delta$. By comparing $\{\partial_t^\ell \delta\}$ coefficients of $(P_{k-i} \partial_t^i) \cdot f^{-1} \delta$, one checks that $v_{k-i} = u_{k-i,i}$. So
    \begin{equation*}
        v_{k-i} \partial_t^i \delta = \big( (P_{k-i} \partial_t^i) \cdot f^{-1} \delta \big) - \sum_{j \geq i+1}^{k} u_{k-i, j} \partial_t^j \delta \in  F_{k}^{\ord} \scO_X(*f)[\partial_t]\delta
    \end{equation*}
    where ``$\in$'' follows by the inductive hypothesis plus $u_{k-i,j} \in F_{k-j}^{\ord}\scO_X(*f)$ (cf. \eqref{eqn-orderFiltrations-1}).
\end{proof}

We obtain equivalent containments relating the Hodge and order filtrations on $\scO_X(*f)$ and $i_{f,+} \scO_X(*f)$:

\begin{lem} \label{lemorderfilt2-new}
Assume that $f\in\scO_X$ satisfies that $\scO_X(*f)=\scD_X\cdot f^{-1}$. Let $k$ be a non-negative integer. Then the following conditions are equivalent:
\begin{enumerate}[label=\roman*)]
    \item $F_\ell^H\scO_X(*f) \subseteq F_\ell^{\text{\emph{ord}}}\scO_X(*f)$ for all $0 \leq \ell \leq k$.
    \item $F_{\ell+1}^Hi_{f,+}\scO_X(*f) \subseteq F^{\text{\emph{ord}}}_\ell i_{f,+}\scO_X(*f)$ for all $0 \leq \ell \leq k$.
    \item $F_{\ell+1}^HV^0i_{f,+}\scO_X(*f) = F_\ell^{\text{\emph{ord}}}V^0i_{f,+}\scO_X(*f)$ for all $0 \leq \ell \leq k$.
\end{enumerate}
\end{lem}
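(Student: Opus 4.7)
The plan is to traverse the cycle (i) $\Rightarrow$ (ii) $\Rightarrow$ (iii) $\Rightarrow$ (i). The first implication is essentially formal, the second is a three-line chain of containments that collapses, and the third is where the real work sits; it replays the construction from the proof of Theorem \ref{thmformula}.

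First I would prove (i) $\iff$ (ii). Lemma \ref{lem:HtoVonGraph} gives $F_{\ell+1}^H i_{f,+}\scO_X(*f) = \sum_{j\geq 0}F_{\ell-j}^H\scO_X(*f)\partial_t^j\delta$ and Lemma \ref{lem-OrderFilGraph} gives $F_\ell^{\ord} i_{f,+}\scO_X(*f) = \sum_{j\geq 0}F_{\ell-j}^{\ord}\scO_X(*f)\partial_t^j\delta$. Uniqueness of the $\partial_t$-polynomial expression in $\scO_X(*f)[\partial_t]\delta$ then shows that (ii) at index $\ell$ is equivalent to (i) holding at every index $\ell'$ with $0 \leq \ell' \leq \ell$, so the two lists of hypotheses coincide.

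For (ii) $\Rightarrow$ (iii), observe that any $P \in F_\ell^{\ord}\scD_{X \times \bC_t}$ may be rewritten as $\sum_{i=0}^\ell P_{\ell-i}\partial_t^i$ with $P_{\ell-i} \in F_{\ell-i}^{\ord}\scD_X$, and hence $P \cdot f^{-1}\delta$ has $t$-order at most $\ell$. This yields the comparison $F_\ell^{\ord} i_{f,+}\scO_X(*f) \subseteq F_\ell^{\Torder} i_{f,+}\scO_X(*f)$. Combined with (ii) and Lemma \ref{lemHtordV}, one obtains
\[
F_{\ell+1}^H V^0 i_{f,+}\scO_X(*f) \subseteq F_\ell^{\ord}V^0 i_{f,+}\scO_X(*f) \subseteq F_\ell^{\Torder}V^0 i_{f,+}\scO_X(*f) = F_{\ell+1}^H V^0 i_{f,+}\scO_X(*f),
\]
and the chain collapses to equalities.

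The key step is (iii) $\Rightarrow$ (i). Fix $\ell$ with $0 \leq \ell \leq k$ and take $q \in F_\ell^H\scO_X(*f)$. Theorem \ref{thmMP} writes $q = \sum_{j=0}^\ell j!f^{-j-1}v_j$ with $v = \sum_{j=0}^\ell v_j \partial_t^j \delta \in V^1 i_{f,+}\scO_X$, and Proposition \ref{propVfilt}.iii) produces $u \in V^0 i_{f,+}\scO_X(*f)$ with $t \cdot u = v$. Lemma \ref{lem_HelpCompute}.ii) gives $u_0 = q$, while Lemma \ref{lem_HelpCompute}.i) together with $f \neq 0$ in $\scO_X(*f)$ forces $u$ to have $t$-order at most $\ell$. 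So $u \in F_\ell^{\Torder}V^0 i_{f,+}\scO_X(*f) = F_{\ell+1}^H V^0 i_{f,+}\scO_X(*f)$ by Lemma \ref{lemHtordV}. Now (iii) yields $u \in F_\ell^{\ord}V^0 i_{f,+}\scO_X(*f) \subseteq F_\ell^{\ord} i_{f,+}\scO_X(*f)$, and comparing $\partial_t$-coefficients via Lemma \ref{lem-OrderFilGraph} in the (unique) expansion $u = \sum_{j \geq 0} u_j \partial_t^j \delta$ gives $u_0 = q \in F_\ell^{\ord}\scO_X(*f)$.

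The main obstacle I anticipate is precisely inside (iii) $\Rightarrow$ (i): one must certify not merely that the element $u$ produced above lies in $V^0 i_{f,+}\scO_X(*f)$, but that it lies in $F_{\ell+1}^H V^0 i_{f,+}\scO_X(*f)$, which is the stronger hypothesis needed to feed (iii). The $t$-order bound extracted from Lemma \ref{lem_HelpCompute}.i) combined with the identification $F_{\bullet+1}^H V^0 = F_\bullet^{\Torder} V^0$ from Lemma \ref{lemHtordV} is what bridges this gap; from there, (iii) transports $u$ from the Hodge filtration into the order filtration, and the direct-sum decomposition of Lemma \ref{lem-OrderFilGraph} cleanly descends this to the bottom coefficient $u_0 = q$.
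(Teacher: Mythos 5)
Your proof is correct, and all three implications are essentially the paper's, just organized and phrased a little differently. The (i) $\iff$ (ii) step is the direct-sum comparison of Lemma \ref{lem:HtoVonGraph} with Lemma \ref{lem-OrderFilGraph}, and (ii) $\Rightarrow$ (iii) is the same collapsing chain (paper: restriction of (ii) to $V^0$ gives $\subseteq$, Lemma \ref{lemHtordV} plus $F^{\ord}_\ell \subseteq F^{\Torder}_\ell$ gives $\supseteq$). The one genuine deviation is inside (iii) $\Rightarrow$ (i): the paper cites Theorem \ref{thmformula} and Lemma \ref{lemHtordV} as black boxes to produce an element $u = u_0\delta + v \in F_{\ell+1}^H V^0 i_{f,+}\scO_X(*f)$ with $v$ of strictly positive $t$-order, whereas you re-derive such a $u$ from scratch out of Theorem \ref{thmMP}, Proposition \ref{propVfilt}.iii), and Lemma \ref{lem_HelpCompute} — which is precisely the construction inside the proof of Theorem \ref{thmformula}. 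Your version is more self-contained but redundant given that Theorem \ref{thmformula} is already available; the paper's is shorter. One phrasing quibble in your (iii) $\Rightarrow$ (i): what Lemma \ref{lem_HelpCompute}.i) needs to kill the high-order coefficients of $u$ is not merely ``$f \neq 0$'' but that multiplication by $f$ is injective on $\scO_X(*f)$ (indeed $f$ is a unit there), so that $f u_M = v_M = 0$ forces $u_M = 0$.
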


\begin{proof}
    First we prove $ii) \implies iii)$. The inclusion $\subseteq$ follows by hypothesis and definition. The reverse inclusion follows from Lemma \ref{lemHtordV} and from the fact that the $t$-order filtration is coarser than the order filtration. As for $i) \implies ii)$, observe:
    \begin{equation*}
        F_{\ell+1}^H i_{f,+} \scO_X(*f) = \sum_{j \geq 0} \big[ F_{\ell-j}^{H} \scO_X(*f) \big] \partial_t^j \delta \subseteq \sum_{j \geq 0} \big[ F_{\ell-j}^{\ord} \scO_X(*f) \big] \partial_t^j \delta = F_{\ell}^{\ord} i_{f,+} \scO_X(*f).
    \end{equation*}
    Here: the first ``$=$'' is Lemma \ref{lem:HtoVonGraph}; the ``$\subseteq$'' is our assumption $i)$; the last ``$=$'' is Lemma \ref{lem-OrderFilGraph}. 

    Validating $iii) \implies i)$ remains. So select $u_0 \in F_\ell^H \scO_X(*f)$. Invoking Lemma \ref{lemHtordV} and Theorem \ref{thmformula}, we find $u = u_0 \delta + v \in F_{\ell + 1}^{H} V^0 i_{f,+} \scO_X(*f)$, where $v = \sum_{j \geq 1} v_j \partial_t^j \delta$ for $v_j \in \scO_X(*f)$. By assumption $iii)$, we have $u \in F_\ell^{\ord} i_{f,+} \scO_X(*f)$. So we can write $u = P \cdot f^{-1} \delta$ where $P = \sum_{j\geq 0} P_{\ell-j} \partial_t^j$ and $P_{\ell-j} \in F_{\ell-j} \scD_X$. (Again, we have used the definition of the $t$-action on $\scO_X(*f)\delta$ to remove $t$-terms.) By the definition of the action of $\scD_{X \times \bC_t}$ on $\scO_X(*f)[\partial_t]\delta$ we deduce $i)$, since then $u_0 = P_{\ell} \cdot f^{-1} \in F_{\ell}^{\ord} \scO_X(*f).$
\end{proof}

\subsection{Bernstein-Sato Preliminaries}

Let $\scD_X[s] = \scD_X \otimes_\bC \bC[s]$, for $s$ a new variable. We often denote elements of $\scD_X[s]$ by $P(s) = \sum_{j \geq 0} P_j s^j$ where $P_j \in \scD_X$. First, another filtration:

\begin{defn} \label{def-totalOrderFiltration}
    The \emph{total order filtration} $F_{\bullet}^\sharp \scD_X[s]$ on $\scD_X[s]$ is exhaustive, increasing, indexed by $\mathbb{Z}$, and characterized by 
    \begin{equation*}
        F_k^\sharp \scD_X[s] := \sum_{i = 0}^{k} F_{k-i}^{\ord} \scD_X \otimes_{\bC} \bC[s]_{\leq i}
    \end{equation*}
    where $\bC[s]_{\leq i}$ is the set of elements of degree at most $i$ in the variable $s$.
\end{defn}

Let $f \in \scO_X$. Consider the ring $\scO_X(*f)[s] = \scO_X(*f) \otimes_\bC \bC[s]$ and denote the cyclic $\scO_X(*f)[s]$-module generated by the symbol $f^s$ by $\scO_X(*f)[s] f^s$. This module carries a natural $\scD_X[s]$-module structure. The action of $\scO_X[s]$ is inherited by construction. As for a derivation $\delta \in \text{Der}_{\bC}(\scO_X)$, the action is given by formal application of the chain and product rules:
\begin{equation*}
    \delta \cdot \left( \frac{g s^i}{f^\ell} f^s \right) = s^i \delta \cdot \left( \frac{g}{f^{\ell}} \right) f^{s} + s^{i+1} \frac{ g \big( \delta \cdot f \big)}{f^{\ell+1}} f^s
\end{equation*}
where $g \in \scO_X$, $i, \ell \in \mathbb{Z}_{\geq 0}$. One checks that this extends to a $\scD_X$-action, and to the level of sheaves.

It is well known that $\scO(*f)[s] f^s$ is closely related to $i_{f,+}\scO_X(*f)$. We cite a result of Mustaţă and Popa:

\begin{prop}[Proposition 2.5 \cite{MP20}] \label{prop-MP-graphIsofs}
    Let $\scD_{X} \langle t,s \rangle$ be the subsheaf of $\scD_{X \times \bC_t}$ generated by $\scD_{X}$, $t$, and $s = - \partial_t t$. Let $\scD_{X} \langle t, t^{-1}, s \rangle = \scD_{X} \langle t, t^{-1}, \partial_t \rangle$ be the localization at $t$ (i.e. the pushforward of the sheaf of differential operators along $X \times \bC^* \to X \times \bC$). Then $\scO_X(*f)[s]f^s$ is a $\scD_{X} \langle t, t^{-1}, s \rangle $-module, given by extending the $\scD_{X}$-action, using the obvious $s$-action, decreeing that $t \cdot u s^j f^s = f u (s+1)^j f^s$, and letting $t^{-1}$ act in an inverse manner. Moreover, we have an isomorphism of $\scD_{X} \langle t, t^{-1}, s \rangle$-modules
    \begin{equation} \label{eqn-varphiMap}
        \varphi_f : \scO_X(*f)[s] f^s \to i_{f,+} \scO_X(*f)
    \end{equation}
    given by $us^j f^s \mapsto u (-\partial_t t)^j \cdot \delta$.
\end{prop}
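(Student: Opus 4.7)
The plan is to split the argument into three stages: first, verifying that the stated operations define a $\scD_X\langle t, t^{-1}, s\rangle$-module structure on $\scO_X(*f)[s]f^s$; second, constructing $\varphi_f$ by extending the rule $f^s \mapsto \delta$ and checking equivariance on the generator $f^s$; third, demonstrating bijectivity via a triangular change-of-basis calculation. Everything is checkable locally, so I would fix coordinates $x_1, \dots, x_n$ on $X$ throughout.

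For the module structure, I would take as a starting point the already-established $\scD_X$-action on $\scO_X(*f)[s]f^s$ (via the formal chain rule) and the tautological $\bC[s]$-action. It then suffices to verify the two nontrivial cross-relations one expects from $\scD_X\langle t, t^{-1}, s\rangle$: the commutation $[\partial_{x_i}, t] = 0$ and the relation $[s,t] = -t$ (the latter forced by $s = -\partial_t t$, since $[s,t] = -[\partial_t t, t] = -t$). The relation $[s,t] = -t$ on $us^j f^s$ is immediate from the stated shift formula: one computes $st\cdot us^j f^s = fu\, s(s+1)^j f^s$ and $ts\cdot us^j f^s = fu(s+1)^{j+1} f^s$, whose difference factors as $-fu(s+1)^j f^s = -t\cdot us^j f^s$. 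The relation $[\partial_{x_i}, t] = 0$ reduces to a short chain-rule computation, where on both sides the cross-term $(\partial_{x_i} f)u\, (s+1)g(s+1)f^s$ appears and matches. Invertibility of the $t$-action is visible from the formula, with inverse $t^{-1}\cdot us^j f^s = f^{-1}u(s-1)^j f^s$.

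For the map $\varphi_f$, I would define it by $\scO_X(*f)$-linearity on the $\scO_X(*f)$-basis $\{s^j f^s\}_{j\geq 0}$ (which is free, essentially by construction of $\scO_X(*f)[s]f^s$ as a symbol), sending $s^j f^s \mapsto (-\partial_t t)^j \delta$. Well-definedness is automatic from this basis description, and $\scD_X\langle t, t^{-1}, s\rangle$-equivariance reduces to checking the action of the generators on $f^s$: the $s$-action gives $sf^s \mapsto (-\partial_t t)\delta$ by construction; the $t$-action gives $t\cdot f^s = ff^s \mapsto f\delta$, which matches $t\cdot \delta$ under the graph-embedding rule; and for the $\scD_X$-action, the chain-rule expression $\partial_{x_i}\cdot f^s = s(\partial_{x_i}f)/f \cdot f^s$ maps to $(\partial_{x_i}f)/f \cdot (-\partial_t t)\delta = -(\partial_{x_i}f)\partial_t\delta$, which agrees with $\partial_{x_i}\cdot \delta$ computed via the graph-embedding action recalled at the start of Section 2.

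For bijectivity, I would prove by induction on $j$ the identity
\[
(-\partial_t t)^j \delta = (-1)^j f^j \partial_t^j \delta + \sum_{k<j} a_{j,k}\partial_t^k \delta, \qquad a_{j,k}\in \scO_X(*f),
\]
using that $\partial_t$ and $f(x)$ commute together with the graph-embedding rule $t\cdot m\partial_t^k\delta = fm\partial_t^k\delta - km\partial_t^{k-1}\delta$. Since $f$ is a unit in $\scO_X(*f)$, the transition matrix from $\{\partial_t^j\delta\}_{j\geq 0}$ to $\{(-\partial_t t)^j\delta\}_{j\geq 0}$ is (lower) triangular with invertible diagonal, so both families are $\scO_X(*f)$-bases of $\scO_X(*f)[\partial_t]\delta \simeq i_{f,+}\scO_X(*f)$; combined with the $\scO_X(*f)$-basis $\{s^j f^s\}_{j\geq 0}$ of the source, this yields bijectivity. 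The main obstacle is the careful bookkeeping in checking $[\partial_{x_i}, t] = 0$ on $\scO_X(*f)[s]f^s$, since one expression involves the chain rule (producing $\partial_{x_i}f$-terms with an extra factor of $s$) and the other involves the shift $s\mapsto s+1$ under $t$, and one must match these two contributions term-by-term.
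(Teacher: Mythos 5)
Your proposal is a correct reconstruction of a result that the paper cites as Proposition~2.5 of Mustaţă--Popa \cite{MP20} without giving its own proof, so there is no argument in the paper to compare against; your task here was genuinely to supply the missing verification, and you have done so.

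The three-stage structure (module axioms, construction of $\varphi_f$, triangular change of basis for bijectivity) is sound, and the computations I spot-checked are right: $[s,t]=-t$ follows from the shift formula by the factoring you describe; the two contributions in $[\partial_{x_i},t]=0$ — the chain-rule term $s(\partial_{x_i}f)u(s+1)^j f^s$ from $t\partial_{x_i}$ and the $(\partial_{x_i}f)u(s+1)^j f^s$ from $\partial_{x_i}t$ — do combine to $(s+1)(\partial_{x_i}f)u(s+1)^j f^s$ on both sides; the inverse formula for $t^{-1}$ works because the $s\mapsto s+1$ shift undoes $s\mapsto s-1$; and the leading term of $(-\partial_t t)^j\delta$ is indeed $(-1)^j f^j\partial_t^j\delta$, which, together with invertibility of $f$ in $\scO_X(*f)$, gives the triangular change of basis.

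One phrase is looser than it should be: you say ``equivariance reduces to checking the action of the generators on $f^s$.'' Since $\varphi_f$ is a priori defined as a map of abelian groups (via the free $\scO_X(*f)[s]$-basis $\{s^j f^s\}$), one should verify $\varphi_f(r\cdot m)=r\cdot\varphi_f(m)$ for $r$ ranging over algebra generators of $\scD_X\langle t,t^{-1},s\rangle$ and $m$ ranging over an $\bC$-module (or $\scO_X(*f)$-module) generating set of the source, i.e. over all $us^j f^s$ and not merely $f^s$. For the $t$-action on $us^j f^s$, the check does go through — one uses $t(-\partial_t t)^j=(-t\partial_t)^j t$ and the fact that $t\partial_t$ commutes with $\scO_X$ and with the $\scO_X(*f)$-action on $\scO_X(*f)[\partial_t]\delta$ — but this is an additional commutator bookkeeping step beyond what you recorded on $f^s$ alone. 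Similarly, the compatibility of the $\scO_X(*f)$-scaling with the $(-\partial_t t)$-action on the target (needed to justify the identity $(-\partial_t t)\cdot u(-\partial_t t)^j\delta=u(-\partial_t t)^{j+1}\delta$ underlying $s$-equivariance on $us^j f^s$) deserves an explicit sentence. Neither point is a gap in the argument, but they would need to be stated to make the write-up airtight.
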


Clearly $\scO_X(*f)[s]f^s$ is unwieldy as a $\scD_X[s]$-module: for one, coherence over $\scD_X[s]$ fails thanks to $\{f^\ell f^s\}_{\ell \in \mathbb{Z}}$. So we will work with coherent, and even cyclic, submodules. To facilitate, for all $\ell \in \mathbb{Z}$ we denote $f^\ell f^s$ as $f^{s+\ell}$. Furthermore, set
\begin{equation*}
    \scD_X[s]\cdot f^{s+\ell} = \text{the cyclic $\scD_X[s]$-submodule of $\scO_X(*f)[s] f^s$ generated by $f^{s+\ell}$},
\end{equation*}
which is canonically isomorphic to $\scD_X[s] / \ann_{\scD_X[s]} f^{s+\ell}$.

Proposition \ref{prop-MP-graphIsofs} showed a tight relationship between $\scO_X(*f)[s]f^s$ and $i_{f,+}\scO_X(*f)$. There is a looser relationship between $\scD_X[s]\cdot f^{s + \ell}$ and $\scD_X\cdot f^j \subseteq \scO_X(*f)$:

\begin{defn} \label{def-specialization}
    Let $\ell, j \in \mathbb{Z}$. The \emph{specialization} map
    \begin{equation*}
        \spe_{s+\ell \mapsto j} : \scD_X[s]\cdot f^{s+\ell} \twoheadrightarrow \scD_X\cdot f^j
    \end{equation*}
    is induced by $\sum u_t (s+\ell)^t f^s+\ell \mapsto u_t j^t f^j$ for $u_t \in \scO_X(*f)$ (or equivalently, $P(s)\cdot f^{s + \ell} \mapsto P(j)\cdot f^j$). It is a surjective map of $\scD_X$-modules naturally factoring as
    \begin{equation*}
        \overline{\spe_{s+\ell \mapsto j}} : \frac{ \scD_X[s]\cdot f^{s+\ell}}{\scD_X[s] \cdot (s+\ell - j) f^{s+\ell}} \twoheadrightarrow \scD_X\cdot f^j ,
    \end{equation*}
    which is also surjective and $\scD_X$-linear. We also call $\overline{\spe_{s+\ell \mapsto j}}$ the \emph{specialization} map, trusting context to disambiguate.
\end{defn}

Proposition \ref{prop-MP-graphIsofs} relates $\scO_X(*f)[s]$ and $\scO_X(*f)$; Definition \ref{def-specialization} relates $\scO_X f^{s-1} \subseteq \scO_X(*f)[s]$ to $\scO_X(*f)$. The following connects all these objects simultaneously. Note that all objects in Lemma \ref{lem-commDiagram} are $\scD_X$-modules.

\begin{lem} \label{lem-commDiagram}
    Let $f \in \scO_X$. Define the $\scD_X$-linear map $\phi_{0} : \scD_X[s] \to \scD_X$ by $P(s) \mapsto P(0)$; define the $\scD_X$-linear map $\widetilde{\phi}_{f,0}: \scD_X[s] \to \scO_X(*f)$ by $P(s) \mapsto \phi_{0}(P(s)) \cdot f^{-1}$. Then $\widetilde{\phi}_{f,0}$ factors as in the commutative diagram of $\scD_X$-linear maps below:
    \begin{equation}
        \begin{tikzcd}
            \widetilde{\phi}_{f,0}: \scD_X[s] \arrow[rr, bend left = 20, "\pi_f"] \arrow[r, twoheadrightarrow] 
                & \scD_X[s]\cdot f^{s-1} \arrow[r, hookrightarrow, "\varphi_f"] \arrow[rr, bend right = 20, swap, "\specialize"]
                    & i_{f,+}\scO_X(*f) \arrow[r, "\psi_{f,0}"]
                        & \scO_X(*f) .
        \end{tikzcd}
    \end{equation}
    Here: 
    \begin{itemize}
        \item $\pi_f(P(s)) = P(-\partial_t t) \cdot f^{-1} \delta$; 
        \item $\varphi_f$ is restriction of the map of Proposition \ref{prop-MP-graphIsofs} to $\scD_X[s]\cdot f^{s-1}$;
        \item $\specialize$ is the specialization map;
        \item $\psi_{f,0}$ is induced by $\partial_t \mapsto 0$ and $u \delta \mapsto u$ for $u \in \scO_X(*f)$;
    \end{itemize}
    Moreover, $\ker \pi_f = \ann_{\scD_X[s]} f^{s-1}$.
\end{lem}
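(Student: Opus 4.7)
The plan is to verify each triangle of the displayed diagram and the kernel claim separately, exploiting the fact that all maps involved are $\scD_X$-linear and, crucially, that Proposition \ref{prop-MP-graphIsofs} supplies both the isomorphism $\varphi_f$ and the dictionary $s \leftrightarrow -\partial_t t$.

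First I would handle the left-hand triangle and the kernel of $\pi_f$. The leftmost arrow is the natural surjection $P(s) \mapsto P(s)\cdot f^{s-1}$, whose kernel is $\ann_{\scD_X[s]} f^{s-1}$ by the identification $\scD_X[s]\cdot f^{s-1} \simeq \scD_X[s]/\ann_{\scD_X[s]} f^{s-1}$. By Proposition \ref{prop-MP-graphIsofs}, $\varphi_f$ is $\scD_X\langle t, t^{-1}, s\rangle$-linear with $s$ acting as $-\partial_t t$, and $\varphi_f(f^{s-1}) = f^{-1}\delta$; together these force $\varphi_f(P(s)\cdot f^{s-1}) = P(-\partial_t t)\cdot f^{-1}\delta = \pi_f(P(s))$, giving the first commutation. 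Since the same proposition states $\varphi_f$ is an isomorphism, its restriction to the submodule $\scD_X[s]\cdot f^{s-1}$ is injective, so $\ker \pi_f$ coincides with the kernel of the surjection, namely $\ann_{\scD_X[s]} f^{s-1}$.

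Next I would verify the right-hand triangle. A quick check against the action formulas listed at the start of Section 2 confirms that the rule $\sum_j u_j \partial_t^j \delta \mapsto u_0$ defines a $\scD_X$-linear map $\psi_{f,0}$, and that $\ker \psi_{f,0} \supseteq \partial_t\cdot i_{f,+}\scO_X(*f)$. For $P(s) \in \scD_X[s]$ I would decompose $P(s) = P(0) + s\cdot Q(s)$ with $Q(s) \in \scD_X[s]$, yielding
$$\varphi_f(P(s)\cdot f^{s-1}) \;=\; P(0)\cdot f^{-1}\delta \;+\; (-\partial_t t)\cdot\varphi_f(Q(s)\cdot f^{s-1}) \;=\; P(0)\cdot f^{-1}\delta \;-\; \partial_t\cdot\bigl[ t\cdot\varphi_f(Q(s)\cdot f^{s-1}) \bigr].$$
The second summand lies in $\partial_t\cdot i_{f,+}\scO_X(*f) \subseteq \ker\psi_{f,0}$, so applying $\psi_{f,0}$ and using its $\scD_X$-linearity gives $P(0)\cdot f^{-1} = \specialize(P(s)\cdot f^{s-1})$, establishing commutativity of the $\specialize$-triangle. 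Commutativity of the total composition then falls out for free, since $P(s) \mapsto P(s)\cdot f^{s-1} \mapsto P(0)\cdot f^{-1} = \phi_0(P(s))\cdot f^{-1} = \widetilde{\phi}_{f,0}(P(s))$.

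The main obstacle, such as it is, will be keeping the dictionary $s\leftrightarrow -\partial_t t$ and the graph-embedding action formulas aligned, especially in the rewriting $(-\partial_t t)\cdot(-) = -\partial_t\cdot[t\cdot(-)]$ that places the ``$s Q(s)$-part'' inside the $\partial_t$-divisible kernel of $\psi_{f,0}$. Beyond this, the proof requires no input other than Proposition \ref{prop-MP-graphIsofs} and the module action formulas recalled at the start of Section 2; the decomposition $P(s) = P(0) + s\cdot Q(s)$ is precisely the device that compensates for the fact that the maps involved are only $\scD_X$-linear rather than $\scD_X[s]$-linear.
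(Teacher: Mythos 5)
Your argument is correct and follows the same route as the paper, which merely asserts "the commutativity follows by construction" and derives $\ker \pi_f = \ann_{\scD_X[s]} f^{s-1}$ exactly as you do, from $\ker(\scD_X[s] \twoheadrightarrow \scD_X[s]\cdot f^{s-1}) = \ann_{\scD_X[s]} f^{s-1}$ together with injectivity of $\varphi_f$. The decomposition $P(s) = P(0) + sQ(s)$ and the observation $\partial_t \cdot i_{f,+}\scO_X(*f) \subseteq \ker\psi_{f,0}$ are precisely the computations hidden behind "by construction," so you have simply spelled out what the paper leaves implicit.
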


\begin{proof}
    The commutativity follows by construction. To check the claim about $\ker \pi_f$, use that $\ann_{\scD_X[s]} f^{s-1}$ is the kernel of $\scD_X[s] \twoheadrightarrow \scD_X[s]\cdot f^{s-1}$ and the injectivity of $\varphi_f$. 
\end{proof}

We next define the Bernstein-Sato polynomial of $f \in \scO_{X,\hspace{1pt}\fx}$. (We consider analytic germs as otherwise the Bernstein-Sato polynomial may not exist.) This is one of our main characters: not only does it inform the relationship between $\overline{\spe_{s-1 \mapsto -1}}$, $\scD_X[s]\cdot f^{s-1}$, $\scD_X\cdot f^{-1}$ and $\scO_X(*f)$ as well as the diagram of Lemma \ref{lem-commDiagram}, but we eventually use it to give a description of the $V$-filtration on $i_{f,+} \scO_X(*f)$ in terms of the induced $V$-filtration. 

\begin{defn} \label{def-BSpoly}
    Let $f \in \scO_{X,\hspace{1pt}\fx}$. The \emph{Bernstein-Sato polynomial} $b_f(s)$ of $f$ is the monic, minimal degree polynomial $b_f(s) \in \bC[s]$ satisfying the \emph{functional equation}
    \begin{equation*}
        b_f(s) f^s \in \scD_{X,\hspace{1pt}\fx}[s]\cdot f^{s+1}.
    \end{equation*}
    Equivalently, $b_f(s)$ is the monic generator of the $\bC[s]$-ideal 
    \begin{equation*}
        \ann_{\bC[s]} \frac{ \scD_{X,\hspace{1pt}\fx}[s]\cdot f^{s}}{ \scD_{X,\hspace{1pt}\fx}[s]\cdot f^{s+1}} = \ann_{\bC[s]} \frac{\scD_{X,\hspace{1pt}\fx}[s]}{\ann_{\scD_{X,\hspace{1pt}\fx}[s]} f^s + \scD_{X,\hspace{1pt}\fx}[s] \cdot f } = \bC[s] \cap \big( \ann_{\scD_{X,\hspace{1pt}\fx}[s]} f^s + \scD_{X,\hspace{1pt}\fx}[s] \cdot f \big).
    \end{equation*}
    We denote the zeroes of $b_{f}(s)$ by $Z(b_{f}(s))$.
\end{defn}

\begin{rem}
    The proof Bernstein-Sato polynomials exist is nontrivial. The first proof in the algebraic case is due to Bernstein \cite{Bernstein72}; the first proof in the local analytic case is due to Kashiwara \cite{Kashiwara77}. We refer to the survey \cite{WaltherSurvey} for general properties and connections to other singularity invariants.
\end{rem}

We conclude the subsection by using the Bernstein-Sato polynomial to characterize not only when $\specialize$ is a $\scD_X$-isomorphism, but also when its image is all of $\scO_X(*f)$. This fulfills part of our promise to clarify the precise relationship between $\scD_X[s]\cdot f^{s-1}$ and $\scO_X(*f)$. The following seems to be well-known to specialists, essentially appearing in Appendix A of \cite{BS23}, some pieces appearing in the proof of Lemma 6.21 \cite{Ka01}, and other pieces elsewhere, cf. Proposition 6.3.15 \cite{Bj93} and Section 3 \cite{OT99}.

\begin{prop} \label{prop-specialization}
    Consider $f \in \scO_{X,\hspace{1pt}\fx}$. Then the following are equivalent:
    \begin{enumerate}[label=\roman*)]
        \item $Z(b_{f}(s)) \cap \mathbb{Z} = \{-1\}$;
        \item $f^{-1}$ generates $\scO_{X,\hspace{1pt}\fx}(*f)$ as a $\scD_{X,\hspace{1pt}\fx}$-module, i.e. $\scD_{X,\hspace{1pt}\fx}\cdot f^{-1} = \scO_{X,\hspace{1pt}\fx}(*f)$;
        \item the $\scD_{X,\hspace{1pt}\fx}$-linear specialization map below is an isomorphism:
        \begin{equation*}
            \overline{\specialize} : \frac{\scD_{X,\hspace{1pt}\fx}[s]\cdot f^{s-1}}{\scD_{X,\hspace{1pt}\fx}[s] \cdot s f^{s-1}} \twoheadrightarrow \scD_{X,\hspace{1pt}\fx}\cdot f^{-1}.
        \end{equation*}
    \end{enumerate}
\end{prop}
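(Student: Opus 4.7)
The plan is to prove the implications $i) \implies ii) \implies iii) \implies ii) \implies i)$; combined with the classical $ii) \implies i)$ (the contrapositive of which is standard and uses that an integer root $-k \leq -2$ forces $\scD_{X,\hspace{1pt}\fx}\cdot f^{-k+1} \subsetneq \scD_{X,\hspace{1pt}\fx}\cdot f^{-k}$), this closes the cycle. For $i) \implies ii)$, I would iterate the Bernstein-Sato functional equation: specializing $b_f(s)\cdot f^s = P(s)\cdot f^{s+1}$ at $s \mapsto -k$ for $k \geq 2$ yields $b_f(-k) f^{-k} = P(-k)\cdot f^{-k+1}$, and hypothesis $i)$ guarantees $b_f(-k) \neq 0$, so inductively each $f^{-k} \in \scD_{X,\hspace{1pt}\fx}\cdot f^{-1}$; the exhaustion $\scO_{X,\hspace{1pt}\fx}(*f) = \bigcup_{k \geq 0}\scO_{X,\hspace{1pt}\fx}\cdot f^{-k}$ then yields $ii)$.

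The implication $ii) \implies iii)$ requires a structural reduction. Because $\scD_{X,\hspace{1pt}\fx}[s]\cdot f^{s-1}$ embeds inside the $\bC[s]$-free module $\scO_{X,\hspace{1pt}\fx}(*f)[s] f^s$, it is $\bC[s]$-torsion-free; combining this with Lemma \ref{lem-commDiagram} gives the natural identification
\begin{equation*}
\frac{\scD_{X,\hspace{1pt}\fx}[s]\cdot f^{s-1}}{\scD_{X,\hspace{1pt}\fx}[s]\cdot s\, f^{s-1}} \;\cong\; \frac{\scD_{X,\hspace{1pt}\fx}}{\phi_0\bigl(\ann_{\scD_{X,\hspace{1pt}\fx}[s]}(f^{s-1})\bigr)},
\end{equation*}
reducing $iii)$ to the equality of ideals $\phi_0(\ann_{\scD_{X,\hspace{1pt}\fx}[s]}(f^{s-1})) = \ann_{\scD_{X,\hspace{1pt}\fx}}(f^{-1})$. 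The inclusion $\subseteq$ is immediate by evaluating at $s=0$; the reverse, equivalent to the $s$-saturation of $\scD_{X,\hspace{1pt}\fx}[s]\cdot f^{s-1}$ inside $\scO_{X,\hspace{1pt}\fx}(*f)[s] f^s$, is where hypothesis $ii)$ enters. Under $ii)$, both the domain and codomain above become models of $\scO_{X,\hspace{1pt}\fx}(*f)$, so the natural surjection must be an isomorphism either by a characteristic-cycle/length argument on coherent holonomic $\scD_{X,\hspace{1pt}\fx}$-modules, or by explicitly lifting each $Q\in \ann_{\scD_{X,\hspace{1pt}\fx}}(f^{-1})$ to an annihilator in $\ann_{\scD_{X,\hspace{1pt}\fx}[s]}(f^{s-1})$ by exploiting that $Q\cdot f^{s-1}$ already lies in $\scD_{X,\hspace{1pt}\fx}[s]\cdot f^{s-1}$ and vanishes at $s=0$.

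For $iii) \implies ii)$ I would prove by induction on $k \geq 1$ that $f^{-k} \in \scD_{X,\hspace{1pt}\fx}\cdot f^{-1}$. The case $b_f(-k) \neq 0$ follows immediately from the functional equation. When $b_f(-k) = 0$, I would invoke the cascaded functional equation $\beta(s)\, f^{s-k} = P(s-k)P(s-k+1)\cdots P(s-2)\, f^{s-1}$ with $\beta(s) := \prod_{j=0}^{k-2} b_f(s-k+j)$. Writing $\beta(s) = s^m \tilde\beta(s)$ with $\tilde\beta(0) \neq 0$, the inductive hypothesis and $iii)$ allow me to lift the annihilator $P(-k)\cdots P(-2) \in \ann_{\scD_{X,\hspace{1pt}\fx}}(f^{-1})$ to $\ann_{\scD_{X,\hspace{1pt}\fx}[s]}(f^{s-1})$, so that $P(s-k)\cdots P(s-2)\, f^{s-1} \in s\cdot \scD_{X,\hspace{1pt}\fx}[s]\cdot f^{s-1}$; iterating this lift-then-divide-by-$s$ procedure $m$ times, valid by $\bC[s]$-torsion-freeness, produces $\tilde\beta(s)\, f^{s-k} = V(s)\, f^{s-1}$ for some $V(s) \in \scD_{X,\hspace{1pt}\fx}[s]$, and evaluating at $s=0$ gives $f^{-k} = \tilde\beta(0)^{-1} V(0)\cdot f^{-1} \in \scD_{X,\hspace{1pt}\fx}\cdot f^{-1}$. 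The main obstacle is the $s$-saturation step in $ii) \implies iii)$: promoting the ``formal'' factor $w$ in the decomposition $Q\cdot f^{s-1} = s\cdot w$ from $\scO_{X,\hspace{1pt}\fx}(*f)[s] f^s$ into $\scD_{X,\hspace{1pt}\fx}[s]\cdot f^{s-1}$ is the crux, and any clean proof must either invoke holonomicity of $\scO_{X,\hspace{1pt}\fx}(*f)$ or recycle the functional equation via an induction on the order of $Q$.
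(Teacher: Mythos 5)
Your proposal has a genuine gap in the implication $ii) \implies iii)$, and you acknowledge it yourself in the final sentence. Let me be concrete about why the gap is serious and how the paper avoids it.

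After your (correct) identification
\begin{equation*}
\frac{\scD_{X,\hspace{1pt}\fx}[s]\cdot f^{s-1}}{\scD_{X,\hspace{1pt}\fx}[s]\cdot s\, f^{s-1}} \;\cong\; \frac{\scD_{X,\hspace{1pt}\fx}}{\phi_0\bigl(\ann_{\scD_{X,\hspace{1pt}\fx}[s]}(f^{s-1})\bigr)},
\end{equation*}
statement $iii)$ is precisely the saturation claim $\ann_{\scD_{X,\hspace{1pt}\fx}}(f^{-1}) = \phi_0(\ann_{\scD_{X,\hspace{1pt}\fx}[s]}(f^{s-1}))$, of which only $\supseteq$ is formal. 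For $\subseteq$ you must show that whenever $Q \in \scD_{X,\hspace{1pt}\fx}$ kills $f^{-1}$, the element $Q\cdot f^{s-1}$ of $\scD_{X,\hspace{1pt}\fx}[s]\cdot f^{s-1}$ actually lies in $s\cdot\scD_{X,\hspace{1pt}\fx}[s]\cdot f^{s-1}$, not merely in $s\cdot\scO_{X,\hspace{1pt}\fx}(*f)[s]f^s$. Your first suggestion — ``both sides are models of $\scO_{X,\hspace{1pt}\fx}(*f)$'' — is circular: the domain being such a model \emph{is} statement $iii)$. Your second suggestion, a holonomic length/characteristic-cycle comparison between $\scD_{X,\hspace{1pt}\fx}[s]\cdot f^{s-1}/s\,\scD_{X,\hspace{1pt}\fx}[s]\cdot f^{s-1}$ and its generic-$\lambda$ specializations, is the right spirit but is nontrivial: one needs a coherence/flatness argument over $\bC[s]$ plus Kashiwara's description of when $\nabla_\ell$ (multiplication by $f$ on the parameter module) is an isomorphism, which is exactly the content of Proposition A of \cite{BS23} that the paper cites. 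In other words, your route requires re-deriving the very result the paper quotes.

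The paper takes a different and shorter route: it does \emph{not} prove $ii) \implies iii)$ directly. Instead, $i) \iff iii)$ is taken wholesale from Proposition A of \cite{BS23} (the criterion ``$\spe_{s\mapsto\ell}$ bijective $\iff$ $Z(b_f)\cap\{\ell-1,\ell-2,\dots\}=\emptyset$''), and then the paper proves $i)\implies ii)$ and $ii)\implies i)$ by an induction on the commutative square \eqref{eqn-specializationCommDiag}, again leaning on Lemma A and Proposition A of \cite{BS23} to force the vertical maps $\nabla_\ell$ to be isomorphisms. So every implication factors through that reference. Your $i)\implies ii)$ via iterated functional equations is fine (it is the standard argument), and your cascaded-functional-equation proof of $iii)\implies ii)$ — extracting the $s$-power $s^m$ from $\prod_{j=0}^{k-2} b_f(s-k+j)$ and repeatedly dividing by $s$ using $\bC[s]$-torsion-freeness of $\scO_{X,\hspace{1pt}\fx}(*f)[s]f^s$, invoking $iii)$ each time to re-enter $s\cdot\scD_{X,\hspace{1pt}\fx}[s]f^{s-1}$ — is a nice piece of argumentation not in the paper and appears sound. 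But as written, your proof of the proposition is incomplete without a real argument for the saturation step, and ``invoke holonomicity or recycle the functional equation via an induction on the order of $Q$'' is not yet an argument.
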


\begin{proof}
    For $\ell \in \mathbb{Z}$, consider the commutative diagram of $\scD_{X,\hspace{1pt}\fx}$-maps
    \begin{equation} \label{eqn-specializationCommDiag}
    \begin{tikzcd}
        \frac{\scD_{X,\hspace{1pt}\fx}[s]\cdot f^s}{\scD_{X,\hspace{1pt}\fx}[s] \cdot (s-\ell) f^s} \arrow[twoheadrightarrow]{r}{\overline{\spe_{s \mapsto \ell}}} \arrow{d}{\nabla_\ell}
            & \scD_{X,\hspace{1pt}\fx}\cdot f^\ell \arrow[hookrightarrow]{d} \\
        \frac{\scD_{X,\hspace{1pt}\fx}[s]\cdot f^s}{\scD_{X,\hspace{1pt}\fx}[s] \cdot (s-(\ell-1)) f^s} \arrow[twoheadrightarrow]{r}{\overline{\spe_{s \mapsto \ell-1}}}
            & \scD_{X,\hspace{1pt}\fx}\cdot f^{\ell - 1}
    \end{tikzcd}
    \end{equation}
    where the leftmost vertical map $\nabla_\ell$ is induced by the $\scD_{X,\hspace{1pt}\fx}$-linear map $\scD_{X,\hspace{1pt}\fx}[s]\cdot f^s \to \scD_{X,\hspace{1pt}\fx}[s]\cdot f^{s+1}$ given by $s \mapsto s+1$. Proposition A \cite{BS23} shows $\spe_{s \mapsto \ell}$ is bijective if and only if $Z(b_f(s)) \cap \{\ell - 1, \ell - 2, \dots \} = \emptyset$. This shows $i) \iff iii)$, after the appropriate language change. As for $i) \implies ii)$, Lemma A and Proposition A of \cite{BS23} along with an inductive application of \eqref{eqn-specializationCommDiag} show $i)$ forces $\scD_{X,\hspace{1pt}\fx}\cdot f^{-1} = \scD_{X,\hspace{1pt}\fx}\cdot f^{-2} = \cdots = \scO_{X,\hspace{1pt}\fx}(*f)$.
    
    Now we prove that $ii) \implies i)$. We can use Proposition A \cite{BS23} to find an integer $j \gg 0$ such that $\overline{\spe_{s \mapsto - j-1}}$ is an isomorphism. By assumption, $\scD_{X,\hspace{1pt}\fx}\cdot f^{- j} = \scO_{X,\hspace{1pt}\fx}(*f)$. Rewriting the square \eqref{eqn-specializationCommDiag} with the map $\overline{\spe_{s \mapsto - j -1}}$ as the lower horizontal map, the rightmost and lowermost maps in the reframed \eqref{eqn-specializationCommDiag} are bijections. This forces the leftmost map $\nabla_{-j}$ to be a surjection, which, by Lemma A \cite{BS23}, forces $\nabla_{-j}$ to be an isomorphism. Whence the uppermost map $\overline{\spe_{s \mapsto -j}}$ is an isomorphism and (Proposition A \cite{BS23}) $Z(b_{f}(s)) \cap \{-j - 1 , -j -2 , \dots \} = \emptyset$. (In sum, the set-up forces all maps to be isomorphisms.) Continuing inductively verifies $ii) \implies i)$. 
\end{proof}

\subsection{Restricting Zeroes of the Bernstein-Sato Polynomial}

Here we consider a nontrivial restriction on $f \in \scO_X$: we consider such $f$ whose Bernstein-Sato polynomial (locally everywhere) only has roots in $(-2,0)$, i.e. $Z(b_f(s)) \subseteq (-2,0)$. (In particular, $\scD_X\cdot f^{-1} = \scO_X(*f)$, cf. Proposition \ref{prop-specialization}.) This assumption is relatively strong, but is integral for our formula for the $V$-filtration on $i_{f,+} \scO_X(*f)$ to hold. Namely, we use the formula appearing as Proposition 3.3 \cite{CNS22}, transcribed as Proposition \ref{prop-funnyBSpolyInducedV} here. On the other hand, the condition frequently occurs in nature: for example, hyperplane arrangements \cite{MSai16}, strongly Koszul free divisors \cite{Nar15}, and the examples in Section \ref{calns}.

The goal of this subsection is to give a $\scD_X[s]$-theoretic description of both $V^0 i_{f,+} \scO_X(*f)$ (Proposition \ref{propcondnBS-new}) and $F_0^H \scO_X(*f)$ (Corollary \ref{cor-zeroHodgePiece}). 

When $Z(b_f(s)) \subseteq (-2,0)$, we can cut $b_f(s)$ into its $(-1,0)$ ``half'' and its $(-2,-1]$ ``half.'' Modulo a $\bC$-involution, we do this now: 

\begin{defn} \label{def-funnyBSpoly}
    Assume that $f \in \scO_{X,\hspace{1pt}\fx}$ satisfies $Z(b_f(s)) \subseteq (-2,0)$. Define
    \begin{equation} \label{eqn-funnyBSpoly1}
        \beta_f(s) = \prod_{\lambda \in Z(b_f(s)) \cap (-1,0)} (s + \lambda + 1)^{\ell_\lambda}
    \end{equation}
    where $\ell_\lambda$ is the multiplicity of the root $\lambda$ in $b_f(s)$. Similarly, define
    \begin{equation} \label{eqn-funnyBSpoly2}
        \beta_f^\prime(s) = \prod_{\lambda \in Z(b_f(s)) \cap (-2,-1]} (s + \lambda + 1)^{\ell_\lambda} .
    \end{equation}
\end{defn}

\begin{rem} \label{rmk-funnyBSpoly} \enspace
    \begin{enumerate}
        \item The definition of $\beta_f(s)$ is technical, the idea intuitive. Take $b_f(s)$ and remove all the roots (and their multiplicities) living in $(-2,-1]$. Exchange $s \mapsto -s - 1$ in the resulting polynomial. This produces \emph{exactly} $\beta_f(s)$. 
        \item Here is another recipe for $\beta_f(s)$: take $b_f(-s-1)$ (whose roots lie in $(-1,1)$); remove all the roots of $b_f(-s-1)$ (and their multiplicities) living in $[0,1)$.
        \item By construction, $\beta_f(s) \beta_f^\prime (s) = b_f(-s-1)$ with $\beta_f(s)$, $\beta_f^\prime(s) \in \bC[s]$ coprime.
    \end{enumerate}
\end{rem}

The following result from \cite{CNS22} is the reason the hypothesis ``$Z(b_f(s)) \subseteq (-2,0)$'' will be so useful to us. It is also why we introduced $\beta_f(s)$. 

\begin{prop}[Proposition 3.3 \cite{CNS22}] \label{prop-funnyBSpolyInducedV}
    Assume that $f \in \scO_{X,\hspace{1pt}\fx}$ satisfies $Z(b_f(s)) \subseteq (-2,0)$. Then, for all $k \in \mathbb{Z}$,
    \begin{equation*}
        V^k i_{f,+} \scO_{X,\hspace{1pt}\fx}(*f) = V_{\ind}^{k+1} \scO_{X,\hspace{1pt}\fx}(*f) + \beta_f(\partial_t t - k) V_{\ind}^k i_{f,+} \scO_{X,\hspace{1pt}\fx}(*f).
    \end{equation*}
    In particular, $V^k i_{f,+} \scO_{X,\hspace{1pt}\fx}(*f) \subseteq V_{\ind}^k i_{f,+} \scO_{X,\hspace{1pt}\fx}(*f)$. 
\end{prop}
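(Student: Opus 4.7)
The plan is to use the uniqueness characterization of the Kashiwara-Malgrange $V$-filtration on $i_{f,+}\scO_{X,\hspace{1pt}\fx}(*f)$. Define the candidate filtration
$$W^k := V_{\ind}^{k+1}\, i_{f,+}\scO_{X,\hspace{1pt}\fx}(*f) \; + \; \beta_f(\partial_t t - k)\, V_{\ind}^k i_{f,+}\scO_{X,\hspace{1pt}\fx}(*f)$$
and verify that $W^\bullet$ satisfies the four axiomatic properties i)--iv) of Definition \ref{defnVfilt}. Uniqueness of the $V$-filtration will then force $W^k = V^k i_{f,+}\scO_{X,\hspace{1pt}\fx}(*f)$. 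Note that the hypothesis $Z(b_f(s))\subseteq(-2,0)$ guarantees via Proposition \ref{prop-specialization} that $f^{-1}$ generates $\scO_{X,\hspace{1pt}\fx}(*f)$, so $V_{\ind}^\bullet$ is well-defined in the first place.

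Coherence of $W^k$ over $V^0\scD_{X\times\bC_t}$ is routine: each $V_{\ind}^j = V^j\scD_{X\times\bC_t}\cdot f^{-1}\delta$ inherits coherence from $V^j\scD_{X\times\bC_t}$ and is stable under the action of $\beta_f(\partial_t t - k) \in V^0\scD_{X\times\bC_t}$. For the $t$-shift and $\partial_t$-shift axioms, the workhorse is the commutation relation $t\cdot g(\partial_t t) = g(\partial_t t - 1)\cdot t$, valid for any $g \in \bC[x]$ (easy induction on $\deg g$), and its $\partial_t$ counterpart $\partial_t g(\partial_t t) = g(\partial_t t + 1)\partial_t$. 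Applying the first to $g = \beta_f$, and using $tV_{\ind}^j \subseteq V_{\ind}^{j+1}$, yields
$$t\cdot W^k = V_{\ind}^{k+2} + \beta_f(\partial_t t - (k+1))\, tV_{\ind}^k \;\subseteq\; W^{k+1},$$
with the reverse inclusion for $k > 0$ coming from surjectivity of $t$ on $V^k\scD$ in that range. The analogous manipulation handles $\partial_t\cdot W^k\subseteq W^{k-1}$.

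The main obstacle is axiom iv): verifying that $\partial_t t - k$ acts nilpotently on $W^k/W^{>k}$. This is precisely where the hypothesis $Z(b_f(s))\subseteq(-2,0)$ enters, via the factorization $\beta_f(s)\beta_f'(s) = b_f(-s-1)$ with $\beta_f,\beta_f'$ coprime. My strategy is to translate the functional equation $b_f(s)f^s \in \scD_X[s]\cdot f^{s+1}$ across the isomorphism $\varphi_f$ of Proposition \ref{prop-MP-graphIsofs}, under which $s\leftrightarrow-\partial_t t$; iterating the functional equation and passing to $f^{s-1}$ produces, for any $k$, an inclusion showing that $b_f(-\partial_t t - 1 + k)$ maps $V_{\ind}^k$ into $V_{\ind}^{k+1}$ modulo controlled error. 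Splitting $b_f(-s - 1 + k) = \beta_f(s - k)\beta_f'(s - k)$ and exploiting coprimality of $\beta_f,\beta_f'$, one argues that $\beta_f'(\partial_t t - k)$ annihilates the image of $\beta_f(\partial_t t - k)V_{\ind}^k$ in $W^k/V_{\ind}^{k+1}$. Because the roots of $\beta_f'(\partial_t t - k)$ all lie in $Z(b_f)\cap(-2,-1]$ shifted appropriately, the only remaining eigenvalue of $\partial_t t$ contributing to $\gr^k_W$ after killing the $V_{\ind}^{k+1}$ piece is $k$ itself, giving the required nilpotence. The delicate point is that the two summands of $W^k$ interact: elements of $V_{\ind}^{k+1}\cap W^{>k}$ must be separated from genuine lifts of $\gr^k_W$, which requires careful bookkeeping of multiplicities rather than just of root loci.
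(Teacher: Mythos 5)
Your plan to exploit the uniqueness characterization is the right instinct, and your handling of coherence and the shift axioms via the commutation $t\, g(\partial_t t) = g(\partial_t t - 1)\,t$ is essentially fine, but the step you flag as the main obstacle is where the approach genuinely breaks. Axiom iv) of Definition \ref{defnVfilt} demands that $\partial_t t - \gamma$ act nilpotently on $V^\gamma / V^{>\gamma}$, where $V^{>\gamma} = \bigcup_{\gamma'>\gamma} V^{\gamma'}$ is a union over \emph{rational} indices. Your $W^\bullet$ is only indexed by integers, and you implicitly identify $W^{>k}$ with $W^{k+1}$. But for the actual Kashiwara--Malgrange filtration, $V^{>k}$ is strictly larger than $V^{k+1}$ whenever there is a rational jump number in the open interval $(k,k+1)$ — and under $Z(b_f(s))\subseteq(-2,0)$ such jumps exist generically. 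Consequently $\partial_t t - k$ does \emph{not} act nilpotently on $V^k/V^{k+1}$: its eigenvalues there are all of $[0,1)$ shifted by $k$, not just $k$ itself. Your closing claim that ``the only remaining eigenvalue of $\partial_t t$ contributing to $\gr^k_W$ after killing the $V_{\ind}^{k+1}$ piece is $k$ itself'' is therefore false, and the four-axiom characterization is not the tool you can use to pin down an integer-indexed candidate.

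The correct uniqueness criterion for integral filtrations — and the one the paper invokes via \cite{MM04} and its Remark following the statement — is of a different nature. Among \emph{good} $V$-filtrations $U^\bullet$ (integer-indexed, coherent and good over $V^\bullet\scD_{X\times\bC_t}$), the integer steps of the canonical rational $V$-filtration are characterized as the unique one whose Bernstein--Sato polynomial $b_U(s)$ — the monic minimal polynomial with $b_U(\partial_t t - k)U^k \subseteq U^{k+1}$ for all $k$ — has all roots in $[0,1)$. So the substantive computation you need to run is: show $W^\bullet$ is a good $V$-filtration, and show its Bernstein--Sato polynomial divides $\beta_f(s-1)\beta_f'(s)$. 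The hypothesis $Z(b_f(s))\subseteq(-2,0)$ is precisely equivalent to $Z(\beta_f(s-1)\beta_f'(s))\subseteq[0,1)$, which then closes the argument. Your sketched manipulations with the functional equation, $\varphi_f$, and coprimality of $\beta_f$, $\beta_f'$ are the right raw material for establishing that divisibility; they just need to be aimed at bounding the $b$-function of $W^\bullet$ rather than at proving a nilpotence statement that is not true.
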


\begin{rem}
    The proof goes as follows: restricting to the integer steps of the Kashiwara-Malgrange $V$-filtration $V^{\bullet}i_{f,+}\scO_{X,\hspace{1pt}\fx}(*f)$, the resulting integral $V$-filtration is (by definition) the unique good $V$-filtration such that the Bernstein-Sato polynomial of this good $V$-filtration has all of its roots in the interval $[0,1)$ (see Proposition 4.2-6, Proposition 4.3-5 \cite{MM04}). 

    Then one shows manually that the Bernstein-Sato polynomial of the right hand side in the expression given in the proposition divides
    \[\beta_f(s-1)\beta^{\prime}_f(s).\]
    whose roots lie in the interval $[0,1)$, thus proving the proposition by the uniqueness mentioned above. (If one defines $\beta_f^{\prime}(s) = b_f(-s-1) / \beta_f(s)$, see Remark \ref{rmk-funnyBSpoly}, then the property $Z(\beta_f(s-1) \beta_f^\prime(s)) \subseteq [0,1)$ is equivalent to $Z(b_f(s)) \subseteq (-2,0)$.)
\end{rem}

We will eventually want to use Proposition \ref{prop-funnyBSpolyInducedV} to connect $\scD_X[s]\cdot f^{s-1}$ with the Hodge filtration on $\scO_X(*f)$. We introduce a key player:

\begin{defn} \label{def-Gammaf}
    Let $f \in \scO_{X,\hspace{1pt}\fx}$ such that $Z(b_f(s)) \subseteq (-2,0)$. Define the left $\scD_{X,\hspace{1pt}\fx}[s]$-ideal 
    \begin{equation*}
        \Gamma_f = \scD_{X,\hspace{1pt}\fx}[s] \cdot f + \scD_{X,\hspace{1pt}\fx}[s] \cdot \beta_f(-s) + \ann_{\scD_{X,\hspace{1pt}\fx}[s]} f^{s-1} \subseteq \scD_{X,\hspace{1pt}\fx}[s].
    \end{equation*}
\end{defn}

Membership in $\Gamma_f$ has an functional equation style interpretation:

\begin{prop} \label{prop-funnyBSFunctional}
    Suppose that $f \in \scO_{X,\hspace{1pt}\fx}$ such that $Z(b_f(s)) \subseteq (-2,0)$. Then the following are equivalent:
    \begin{enumerate}[label=\roman*)]
        \item $P(s) \in \Gamma_f$;
        \item $P(s) \in \scD_{X,\hspace{1pt}\fx}[s]$ satisfies the functional equation
        \begin{equation*}
            P(s) \beta_f^\prime(-s) \cdot f^{s-1} \in \scD_{X,\hspace{1pt}\fx}[s]\cdot f^{s}.
        \end{equation*}
    \end{enumerate}
\end{prop}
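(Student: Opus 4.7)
The plan is to prove both directions directly, using that $\beta_f(s) \beta_f^\prime(s) = b_f(-s-1)$ (so $\beta_f(-s) \beta_f^\prime(-s) = b_f(s-1)$) and that $\beta_f(s)$ and $\beta_f^\prime(s)$ are coprime in $\bC[s]$, cf. Remark \ref{rmk-funnyBSpoly}. Throughout, I will freely use that inside $\scO_{X,\hspace{1pt}\fx}(*f)[s]f^s$ we have $f \cdot f^{s-1} = f^s$, since this is what lets us pass between $\scD_{X,\hspace{1pt}\fx}[s]\cdot f^{s-1}$ and $\scD_{X,\hspace{1pt}\fx}[s]\cdot f^s$.

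For (i)$\Rightarrow$(ii), it suffices to check the functional equation on each of the three generating pieces of $\Gamma_f$. If $P(s) = Q(s) f$, then $P(s)\beta_f^\prime(-s) \cdot f^{s-1} = Q(s)\beta_f^\prime(-s) \cdot f^s \in \scD_{X,\hspace{1pt}\fx}[s]\cdot f^s$. If $P(s) = Q(s)\beta_f(-s)$, then $P(s)\beta_f^\prime(-s) \cdot f^{s-1} = Q(s) b_f(s-1) \cdot f^{s-1}$, which lies in $\scD_{X,\hspace{1pt}\fx}[s]\cdot f^s$ by Definition \ref{def-BSpoly} (applied after the shift $s \mapsto s-1$). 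If $P(s) \in \ann_{\scD_{X,\hspace{1pt}\fx}[s]} f^{s-1}$, then since $\bC[s]$ and $\scD_{X,\hspace{1pt}\fx}$ commute, $P(s)\beta_f^\prime(-s)\cdot f^{s-1} = \beta_f^\prime(-s) P(s)\cdot f^{s-1} = 0$.

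For (ii)$\Rightarrow$(i), the assumption lets us write $P(s)\beta_f^\prime(-s) \cdot f^{s-1} = R(s)\cdot f^s = R(s) f \cdot f^{s-1}$ for some $R(s) \in \scD_{X,\hspace{1pt}\fx}[s]$, so
\begin{equation*}
    P(s)\beta_f^\prime(-s) - R(s) f \in \ann_{\scD_{X,\hspace{1pt}\fx}[s]} f^{s-1}.
\end{equation*}
In particular, $P(s)\beta_f^\prime(-s) \in \scD_{X,\hspace{1pt}\fx}[s]\cdot f + \ann_{\scD_{X,\hspace{1pt}\fx}[s]} f^{s-1}$. By coprimality, pick $A(s), B(s) \in \bC[s]$ with $A(s)\beta_f(-s) + B(s)\beta_f^\prime(-s) = 1$. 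Then
\begin{equation*}
    P(s) = P(s)A(s)\beta_f(-s) + B(s) \cdot \bigl( P(s)\beta_f^\prime(-s) \bigr),
\end{equation*}
using that $B(s)$ is central in $\scD_{X,\hspace{1pt}\fx}[s]$. The first summand lies in $\scD_{X,\hspace{1pt}\fx}[s]\cdot\beta_f(-s)$; the second lies in $\scD_{X,\hspace{1pt}\fx}[s]\cdot f + \ann_{\scD_{X,\hspace{1pt}\fx}[s]}f^{s-1}$, since both $\scD_{X,\hspace{1pt}\fx}[s]\cdot f$ and $\ann_{\scD_{X,\hspace{1pt}\fx}[s]}f^{s-1}$ are left $\scD_{X,\hspace{1pt}\fx}[s]$-ideals. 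Hence $P(s) \in \Gamma_f$.

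The only potential obstacle is bookkeeping about left/right multiplication: we must ensure that multiplying by polynomials in $\bC[s]$ preserves the three pieces of $\Gamma_f$, which is immediate from $\bC[s]$ being central in $\scD_{X,\hspace{1pt}\fx}[s]$, and that $\scD_{X,\hspace{1pt}\fx}[s]\cdot f^s$ really is the correct target module for the functional equation, which is just the observation $f \cdot f^{s-1} = f^s$ inside $\scO_{X,\hspace{1pt}\fx}(*f)[s]f^s$.
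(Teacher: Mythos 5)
Your proof is correct and follows essentially the same route as the paper's: the forward direction reduces to the three generators of $\Gamma_f$ and uses $\beta_f(-s)\beta_f'(-s) = b_f(s-1)$ together with the Bernstein--Sato functional equation shifted by $s\mapsto s-1$; the backward direction uses the B\'ezout identity for the coprime pair $\beta_f(-s),\beta_f'(-s)$ to decompose $P(s)$ into the three pieces of $\Gamma_f$. The only cosmetic difference is that you first isolate the fact $P(s)\beta_f'(-s) \in \scD_{X,\fx}[s]\cdot f + \ann_{\scD_{X,\fx}[s]} f^{s-1}$ before applying B\'ezout, whereas the paper carries the $\cdot f^{s-1}$ through and reads off the membership at the end.
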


\begin{proof}
    $i) \implies ii)$. Take $P(s) \in \Gamma_f$. We may write $P(s) = C(s) f + D(s) \beta_f(-s) + E(s)$ for suitable $C(s), D(s) \in \scD_{X,\hspace{1pt}\fx}[s]$ and $E(s) \in \ann_{\scD_{X,\hspace{1pt}\fx}[s]} f^{s-1}$. Then
    \begin{align*}
        P(s) \beta_f^{\prime}(-s) \cdot f^{s-1} 
        &= C(s) \beta_f^{\prime}(-s) \cdot f^{s} + D(s) \beta_f^{\prime}(-s) \beta_f(-s) \cdot f^{s-1} \\ 
        &= C(s) \beta_f^{\prime}(-s) \cdot f^{s} + D(s) b_f(s-1) \cdot f^{s-1} 
        \in \scD_{X,\hspace{1pt}\fx}[s]\cdot f^s
    \end{align*}
    by the definition of $\beta_f(s)$, $\beta_f^\prime(s)$ and $b_f(s)$.

    $ii) \implies i)$: Suppose $P(s) \beta_f^\prime (-s) \cdot f^{s-1} = Q(s) \cdot f^s.$ Since $\beta_f(-s)$ and $\beta_f^\prime(s)$ are coprime we may pick $\alpha(s), \alpha^\prime(s) \in \bC[s]$ so that $\alpha(s) \beta_f(-s) + \alpha^\prime(s) \beta_f^\prime (-s) = 1$. Then
    \begin{align*}
        \alpha^\prime(s) Q(s) \cdot f^s 
        = \alpha^\prime(s) \beta_f^\prime(-s) P(s) \cdot f^{s-1} 
        = \big[ 1 - \alpha(s) \beta_f(-s) \big] P(s) \cdot f^{s-1}.
    \end{align*}
    In particular, $P(s) - \alpha^\prime(s) Q(s) f - \alpha(s) P(s)\beta_f(-s) \in \ann_{\scD_{X,\hspace{1pt}\fx}[s]} f^{s-1}$, verifying that $P(s) \in \Gamma_f$.
\end{proof}

We demonstrate the utility of assuming $Z(b_f(s)) \subseteq (-2,0)$, and consequently of Proposition \ref{prop-funnyBSpolyInducedV}. With no other assumptions, we obtain an algebraic description of $V^0 i_{f,+} \scO_{X,\hspace{1pt}\fx}(*f)$ in terms of $\scD_{X,\hspace{1pt}\fx}[s]\cdot f^{s-1}$ data. We also include some special cases.

\begin{prop} \label{propcondnBS-new}

    Assume that $f \in \scO_{X,\hspace{1pt}\fx}$ satisfies $Z(b_f(s)) \subseteq (-2,0)$. Then
    \begin{enumerate}[label=\roman*)]
        \item Recall (Definition \ref{def-Gammaf}) the $\scD_{X,\hspace{1pt}\fx}[s]$-ideal
            \begin{equation*}
                \Gamma_f:=\scD_{X,\hspace{1pt}\fx}[s]f +\scD_{X,\hspace{1pt}\fx}[s]\beta_f(-s)+ \text{\emph{ann}}_{\scD_{X,\hspace{1pt}\fx}[s]}f^{s-1}\subseteq \scD_{X,\hspace{1pt}\fx}[s]
            \end{equation*}
            and (Lemma \ref{lem-commDiagram}) the $\scD_{X,\hspace{1pt}\fx}$-linear map
            \begin{equation*}
                \pi_f: \scD_{X,\hspace{1pt}\fx}[s] \to i_{f,+}\scO_{X,\hspace{1pt}\fx}(*f) \; ; \; P(s) \mapsto P(-\partial_tt)\cdot(f^{-1}\delta).
            \end{equation*}
            Then
            \begin{equation*}
                \pi_f(\Gamma_f)=V^0i_{f,+}\scO_{X,\hspace{1pt}\fx}(*f).
            \end{equation*}
        \item If $Z(b_f(s)) \subseteq (-2,-1]$ then $\beta_f(s)=1$ and thus, for all $k\in\bZ$,
            \begin{equation*}
                V^ki_{f,+}\scO_{X,\hspace{1pt}\fx}(*f) = V_{\text{\emph{ind}}}^ki_{f,+}\scO_{X,\hspace{1pt}\fx}(*f).
            \end{equation*}
        \item If $Z(b_f(s)) \subseteq (-2,-1]$ then $\Gamma_f=\scD_{X,\hspace{1pt}\fx}[s]$ and thus
            \begin{equation*}
                \pi_f(\scD_{X,\hspace{1pt}\fx}[s])=V^0i_{f,+}\scO_{X,\hspace{1pt}\fx}(*f).
            \end{equation*}
    \end{enumerate}
\end{prop}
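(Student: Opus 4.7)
The plan is to leverage Proposition \ref{prop-funnyBSpolyInducedV} at $k=0$, which gives
\[ V^0 i_{f,+}\scO_{X,\hspace{1pt}\fx}(*f) = V_{\ind}^{1} i_{f,+}\scO_{X,\hspace{1pt}\fx}(*f) + \beta_f(\partial_t t)\, V_{\ind}^{0} i_{f,+}\scO_{X,\hspace{1pt}\fx}(*f), \]
and to identify each summand on the right with $\pi_f$ applied to an explicit $\scD_{X,\hspace{1pt}\fx}[s]$-ideal. Combined with the fact from Lemma \ref{lem-commDiagram} that $\ker\pi_f = \ann_{\scD_{X,\hspace{1pt}\fx}[s]} f^{s-1}$, this will deliver part i) at once.

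The heart of the argument is the identity $\pi_f(\scD_{X,\hspace{1pt}\fx}[s]) = V_{\ind}^{0}i_{f,+}\scO_{X,\hspace{1pt}\fx}(*f)$. I would prove it by transporting to the $\scO_{X,\hspace{1pt}\fx}(*f)[s] f^s$ side along the isomorphism $\varphi_f$ of Proposition \ref{prop-MP-graphIsofs}. The key input is the formula $t \cdot (u s^j f^s) = fu(s+1)^j f^s$: a PBW-style rewriting yields $t^a s^b \cdot f^{s-1} = f^a (s+a)^b \cdot f^{s-1}$, from which one reads off $\scD_{X,\hspace{1pt}\fx}\langle t, s\rangle \cdot f^{s-1} = \scD_{X,\hspace{1pt}\fx}[s]\cdot f^{s-1}$. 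Pushing forward through $\varphi_f$ and using $V^0\scD_{X\times\bC_t} = \scD_X\langle t, \partial_t t\rangle$ together with the correspondence $-\partial_t t \leftrightarrow s$ gives $V_{\ind}^{0} = \varphi_f(\scD_{X,\hspace{1pt}\fx}[s]\cdot f^{s-1}) = \pi_f(\scD_{X,\hspace{1pt}\fx}[s])$.

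The two summands are identified by the same token. For $V_{\ind}^{1} = t \cdot V_{\ind}^{0}$: the $t$-action formula gives $tP(s)\cdot f^{s-1} = P(s+1) f \cdot f^{s-1}$, and since $P(s)\mapsto P(s+1)f$ is a bijection $\scD_{X,\hspace{1pt}\fx}[s]\to \scD_{X,\hspace{1pt}\fx}[s]\cdot f$, we obtain $V_{\ind}^{1} = \pi_f(\scD_{X,\hspace{1pt}\fx}[s]\cdot f)$. For $\beta_f(\partial_t t)V_{\ind}^{0}$: because $\partial_t t$ commutes with every element of $\scD_{X,\hspace{1pt}\fx}$ (they live in different factors of $X\times\bC_t$) and $\partial_t t$ corresponds to $-s$ under $\varphi_f$, one checks $\beta_f(\partial_t t)\cdot \pi_f(P(s)) = \pi_f(P(s)\beta_f(-s))$; centrality of $\beta_f(-s)\in\bC[s]$ then yields $\beta_f(\partial_t t)V_{\ind}^{0} = \pi_f(\scD_{X,\hspace{1pt}\fx}[s]\cdot \beta_f(-s))$. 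Adding the two pieces produces
\[ V^0 i_{f,+}\scO_{X,\hspace{1pt}\fx}(*f) = \pi_f\bigl(\scD_{X,\hspace{1pt}\fx}[s]\cdot f + \scD_{X,\hspace{1pt}\fx}[s]\cdot \beta_f(-s)\bigr) = \pi_f(\Gamma_f), \]
where the last equality uses $\ann_{\scD_{X,\hspace{1pt}\fx}[s]} f^{s-1}\subseteq \ker\pi_f$; this proves i).

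Parts ii) and iii) then fall out easily. If $Z(b_f(s))\subseteq(-2,-1]$ then $Z(b_f(s))\cap(-1,0)=\emptyset$, so the product \eqref{eqn-funnyBSpoly1} defining $\beta_f(s)$ is empty and $\beta_f(s)=1$. Proposition \ref{prop-funnyBSpolyInducedV} thus collapses, for every $k\in\bZ$, to $V^k i_{f,+}\scO_{X,\hspace{1pt}\fx}(*f) = V_{\ind}^{k+1} + V_{\ind}^{k} = V_{\ind}^{k}$, establishing ii). Similarly $\beta_f(-s)=1\in\Gamma_f$ forces $\Gamma_f = \scD_{X,\hspace{1pt}\fx}[s]$, so by i) $\pi_f(\scD_{X,\hspace{1pt}\fx}[s]) = V^0$, establishing iii). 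I do not anticipate a substantive obstacle: the only nontrivial step is the bookkeeping identification $\scD_{X,\hspace{1pt}\fx}\langle t, s\rangle \cdot f^{s-1} = \scD_{X,\hspace{1pt}\fx}[s]\cdot f^{s-1}$, which is routine once the $t$-action formula of Proposition \ref{prop-MP-graphIsofs} is in hand.
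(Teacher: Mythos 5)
Your proof is correct and follows essentially the same route as the paper: compute $\pi_f(\Gamma_f)$ term by term using $\ker\pi_f=\ann_{\scD_{X,\hspace{1pt}\fx}[s]}f^{s-1}$, identify the image with $V_{\ind}^{1}+\beta_f(\partial_t t)V_{\ind}^{0}$, and invoke Proposition~\ref{prop-funnyBSpolyInducedV}. The one thing you do that the paper elides is to spell out why $\pi_f(\scD_{X,\hspace{1pt}\fx}[s])=V_{\ind}^{0}i_{f,+}\scO_{X,\hspace{1pt}\fx}(*f)$ — that is, why the $t$'s in $V^{0}\scD_{X\times\bC_t}$ can be traded for $f$'s so that only $\scD_{X,\hspace{1pt}\fx}[\partial_t t]$ contributes — which the paper treats as immediate from Definition~\ref{def-inducedVfiltration}; your PBW rewriting on the $\scO_{X,\hspace{1pt}\fx}(*f)[s]f^s$ side is a perfectly good justification of this point.
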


\begin{proof}
    For item $ii)$, $Z(b_f(s)) \subseteq (-2,-1] \implies \beta_f(s) = 1$. The rest is Proposition \ref{prop-funnyBSpolyInducedV}. For item $iii)$, $\beta_f(s) = 1 \implies \Gamma_f = \scD_X[s]$. The rest is $i)$.

    It remains to prove $i)$. Lemma \ref{lem-commDiagram} describes the commutative diagram of $\scD_{X,\hspace{1pt}\fx}$-maps
    \begin{equation*}
        \begin{tikzcd}
            \scD_{X,\hspace{1pt}\fx}[s] \arrow[rr, bend left = 20, "\pi_f"] \arrow[r, twoheadrightarrow] & \scD_{X,\hspace{1pt}\fx}[s]\cdot f^{s-1} \arrow[r, hookrightarrow, "\varphi_f"] & i_{f,+}\scO_{X,\hspace{1pt}\fx}(*f).
        \end{tikzcd}
    \end{equation*}
So $\ker\pi_f = \text{ann}_{\scD_{X,\hspace{1pt}\fx}[s]}f^{s-1}$, and
\begin{align*}
\pi_f(\Gamma_f)&=\scD_{X,\hspace{1pt}\fx}[\partial_tt]\cdot \delta + \scD_{X,\hspace{1pt}\fx}[\partial_tt]\beta_f(\partial_tt)\cdot f^{-1}\delta\\
&= V^1_{\text{ind}}i_{f,+}\scO_{X,\hspace{1pt}\fx}(*f)+\beta_f(\partial_tt) V^0_{\text{ind}}i_{f,+}\scO_{X,\hspace{1pt}\fx}(*f) \\
&= V^0i_{f,+}\scO_{X,\hspace{1pt}\fx}(*f) ,
\end{align*}
where the second ``$=$'' is the definition of $V_{\ind}^\bullet$ and the last ``$=$'' is Proposition \ref{prop-MP-graphIsofs}.
\end{proof}

Finally we conclude with our algebraic description of $F_0^H \scO_{X,\hspace{1pt}\fx}(*f)$ promised in Theorem \ref{thm-cor-zeroHodgePiece-intro}. In the \emph{much} more restrictive case of ``strongly Koszul free divisors,'' a subset of Corollary \ref{cor-zeroHodgePiece} appears as Corollary 5.3 \cite{CNS22}. 

\begin{cor} \label{cor-zeroHodgePiece}
Assume that $f \in \scO_{X,\hspace{1pt}\fx}$ satisfies $Z(b_f(s)) \subseteq (-2,0)$. Then
\begin{align*}
    F_0^H\scO_{X,\hspace{1pt}\fx}(*f) 
    =(\Gamma_f\cap\scO_{X,\hspace{1pt}\fx})f^{-1}.
\end{align*}
That is, $F_0^H \scO_{X,\hspace{1pt}\fx}(*f)$ is the $\scO_{X,\hspace{1pt}\fx}$-submodule generated by $\{g f^{-1}\}$ where $g$ ranges over all elements of $\scO_{X,\hspace{1pt}\fx}$ satisfying the ``functional equation''
\begin{equation*}
    g \beta_f^\prime(-s) f^{s-1} \in \scD_{X,\hspace{1pt}\fx}[s]\cdot f^{s}.
\end{equation*}
\end{cor}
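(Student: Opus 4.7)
The plan is to combine Theorem \ref{thmformula} with Proposition \ref{propcondnBS-new}. Setting $k=0$ in Theorem \ref{thmformula} and noting that $F_{-1}^{\Torder} i_{f,+} \scO_{X,\hspace{1pt}\fx}(*f) = 0$, one obtains
\[
F_0^H \scO_{X,\hspace{1pt}\fx}(*f) \cdot \delta \;=\; V^0 i_{f,+} \scO_{X,\hspace{1pt}\fx}(*f) \;\cap\; \scO_{X,\hspace{1pt}\fx}(*f) \delta.
\]
Proposition \ref{propcondnBS-new}(i) then identifies $V^0 i_{f,+} \scO_{X,\hspace{1pt}\fx}(*f) = \pi_f(\Gamma_f)$, reducing the corollary to the identity
\[
\pi_f(\Gamma_f) \cap \scO_{X,\hspace{1pt}\fx}(*f) \delta \;=\; (\Gamma_f \cap \scO_{X,\hspace{1pt}\fx})\, f^{-1} \delta.
\]

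The $\supseteq$ direction is immediate: if $g \in \Gamma_f \cap \scO_{X,\hspace{1pt}\fx}$, then viewing $g$ as a constant element of $\scD_{X,\hspace{1pt}\fx}[s]$ yields $\pi_f(g) = g f^{-1}\delta$. For the reverse $\subseteq$, start with $P(s) \in \Gamma_f$ such that $\pi_f(P(s)) = h\delta$ for some $h \in \scO_{X,\hspace{1pt}\fx}(*f)$. The commutative diagram of Lemma \ref{lem-commDiagram} factors $\pi_f$ through the isomorphism $\varphi_f$ of Proposition \ref{prop-MP-graphIsofs}, and since $\varphi_f(\scO_{X,\hspace{1pt}\fx}(*f) \cdot f^s) = \scO_{X,\hspace{1pt}\fx}(*f)\delta$, the membership $\pi_f(P(s)) \in \scO_{X,\hspace{1pt}\fx}(*f)\delta$ is equivalent to
\[
P(s) \cdot f^{s-1} \;=\; w \cdot f^s
\quad \text{in} \quad \scO_{X,\hspace{1pt}\fx}(*f)[s] f^s
\]
for some $w \in \scO_{X,\hspace{1pt}\fx}(*f)$, with $h = w$.

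The key step is to specialize this equation at $s = 1$: the natural $\scD_{X,\hspace{1pt}\fx}$-linear map $\scO_{X,\hspace{1pt}\fx}(*f)[s] f^s \to \scO_{X,\hspace{1pt}\fx}(*f)$ extending Definition \ref{def-specialization} (characterized by $s \mapsto 1$ and $f^s \mapsto f$) sends the left hand side to $P(1)\cdot 1$, which lies in $\scO_{X,\hspace{1pt}\fx}$ because $\scD_{X,\hspace{1pt}\fx}$ preserves $\scO_{X,\hspace{1pt}\fx}$, and sends the right hand side to $wf$. Hence $wf = P(1)\cdot 1 \in \scO_{X,\hspace{1pt}\fx}$, so $h = w = gf^{-1}$ with $g := wf \in \scO_{X,\hspace{1pt}\fx}$. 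To verify $g \in \Gamma_f$, observe that $\pi_f(g) = gf^{-1}\delta = h\delta = \pi_f(P(s))$, so $P(s) - g \in \ker \pi_f = \ann_{\scD_{X,\hspace{1pt}\fx}[s]}f^{s-1} \subseteq \Gamma_f$, whence $g = P(s) - (P(s) - g) \in \Gamma_f$. The equivalent ``functional equation'' characterization is then immediate from Proposition \ref{prop-funnyBSFunctional} applied to $g$ viewed as a constant element of $\scD_{X,\hspace{1pt}\fx}[s]$. The main subtlety is the specialization at $s = 1$, which cleanly substitutes for the otherwise standard appeal to Saito's containment of the Hodge filtration inside the pole order filtration.
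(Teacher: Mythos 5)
Your argument is correct, and it coincides with the paper's up to one step where it takes a genuinely different route. Both proofs reduce, via Theorem \ref{thmformula} and Proposition \ref{propcondnBS-new}, to identifying $\pi_f(\Gamma_f)\cap\scO_{X,\hspace{1pt}\fx}(*f)\delta$ with $(\Gamma_f\cap\scO_{X,\hspace{1pt}\fx})f^{-1}\delta$, both handle $\supseteq$ the same way, and both use $\ker\pi_f=\ann_{\scD_{X,\hspace{1pt}\fx}[s]}f^{s-1}\subseteq\Gamma_f$ to replace the representative $P(s)$ by a suitable element of $\scO_{X,\hspace{1pt}\fx}$. Where you diverge is in establishing the pole bound: the paper's proof of $\subseteq$ invokes Saito's inclusion $F_0^H\scO_{X,\hspace{1pt}\fx}(*f)\subseteq\scO_{X,\hspace{1pt}\fx}\cdot f^{-1}$ from \cite{MSai93} to conclude $uf\in\scO_{X,\hspace{1pt}\fx}$, whereas you derive it internally by rewriting $\pi_f(P(s))\in\scO_{X,\hspace{1pt}\fx}(*f)\delta$ as $P(s)\cdot f^{s-1}=w\cdot f^s$ via $\varphi_f$, then applying the $\scD_{X,\hspace{1pt}\fx}$-linear evaluation $q(s)f^s\mapsto q(1)f$ to get $P(1)\cdot 1=wf$, so that $wf\in\scO_{X,\hspace{1pt}\fx}$ is immediate. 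This recovers the level-$0$ pole bound as a formal by-product of the $\scD[s]$-module set-up rather than as an external Hodge-theoretic input, which is a modest but real self-containment gain (the dependence on Hodge-module theory remains, of course, through Theorem \ref{thmformula}). One point to make explicit when writing this up: your specialization map is the natural extension of Definition \ref{def-specialization} from the cyclic submodules $\scD_{X,\hspace{1pt}\fx}[s]\cdot f^{s+\ell}$ to all of $\scO_{X,\hspace{1pt}\fx}(*f)[s]f^s$, and its $\scD_{X,\hspace{1pt}\fx}$-linearity deserves a sentence of justification --- it follows directly from the formal chain-rule definition of the $\scD_{X,\hspace{1pt}\fx}$-action and the fact that $f^s$ freely generates $\scO_{X,\hspace{1pt}\fx}(*f)[s]f^s$ over $\scO_{X,\hspace{1pt}\fx}(*f)[s]$, but the paper does not record this extension.
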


\begin{proof}

We prove the first claimed ``$=$'', as the ``that is'' claim is Proposition \ref{prop-funnyBSFunctional}. Note: 
\begin{equation} \label{eqn-corZeroHodgePiece1}
F_0^H\scO_{X,\hspace{1pt}\fx}(*f) 
\simeq V^0i_{f,+}\scO_{X,\hspace{1pt}\fx}(*f)\cap\scO_{X,\hspace{1pt}\fx}(*f)\delta
=\pi_f(\Gamma_f)\cap\scO_{X,\hspace{1pt}\fx}(*f)\delta,
\end{equation}
where the ``$\simeq$'' is Theorem \ref{thmformula}, the ``$=$'' Proposition \ref{propcondnBS-new}.

So if $u\in F_0^H\scO_{X,\hspace{1pt}\fx}(*f)$, we may write $u\delta=\pi_f(P)$ with $P\in \Gamma_f$. Since $F_0^H \scO_{X,\hspace{1pt}\fx}(*f) \subseteq \scO_{X,\hspace{1pt}\fx} \cdot f^{-1}$ (cf. \cite{MSai93}), we know that $uf \in \scO_{X,\hspace{1pt}\fx}$ and that $P - uf \in \scD_{X,\hspace{1pt}\fx}[s]$. Then $\pi_f(P-uf)=0$, implying that $uf \in P+\ker\pi_f \subseteq \Gamma_f$, since $\ker\pi_f=\ann_{\scD_{X,\hspace{1pt}\fx}[s]}f^{s-1}\subseteq\Gamma_f$ as seen in the proof of Proposition \ref{propcondnBS-new}.i). Also, $u = (uf)f^{-1} \in (\Gamma_f \cap \scO_{X,\hspace{1pt}\fx}) f^{-1}$, proving that $F_0^H \scO_{X,\hspace{1pt}\fx}(*f) \subseteq (\Gamma_f \cap \scO_{X,\hspace{1pt}\fx}) f^{-1}$ as required.

Conversely, take $u \in \Gamma_f \cap \scO_{X,\hspace{1pt}\fx}$. Then $\pi_f(u) = u f^{-1} \delta$, implying that $u f^{-1} \in F_0^H \scO_{X,\hspace{1pt}\fx}(*f)$, cf. \eqref{eqn-corZeroHodgePiece1}.
\end{proof}

\begin{eg}   
We record a curiosity. Let $\mathscr{A} \subseteq \bC^n$ define a (central, indecomposable, and irreducible) hyperplane arrangement defined by reduced polynomial $f \in R = \bC[x_1, \dots, x_n]$. Let $\mathscr{L}(\mathscr{A})$ denote the intersection lattice of $\mathscr{A}$. For each flat $F \in \mathscr{L}(\mathscr{A})$, let $m_F$ denote the number of hyperplanes containing $F$, let $\rank(F)$ denote the rank (codimension of $F \subseteq \mathbb{A}^n$) of $F$, and let $I_F \subseteq R$ be the defining ideal of $F$. 

By a formula of Mustaţă \cite{Mustata06} (see also Teitler's \cite{Teitler08}) we have that
    \begin{equation} \label{eqn-MustataArrangementMultiplier}
        F_0^H \scO_{\mathbb{A}^n}(* \mathscr{A}) = \bigcap_{F \in \mathscr{L}(\mathscr{A})} I_F^{m_F - \rank(F)} \cdot \scO_{\mathbb{A}^n} (\mathscr{A}).
    \end{equation}
\end{eg}
On the other hand, it is well known that the global Bernstein-Sato polynomial of $f$ equals the local one at zero. Moreover, $Z(b_f(s)) \subseteq (-2,0)$, cf. \cite{MSai16}. Corollary \ref{cor-zeroHodgePiece} then implies
\begin{equation} \label{eqn-OurArrangementMultiplierFml}
    F_0^H \scO_{\mathbb{A}^n}(* \mathscr{A}) = \{g \in \scO_{\mathbb{A}^n} \mid g \beta_f^\prime (-s) \in f^{s-1} \in \scD_X[s]\cdot f^{s} \} \cdot \scO_{\mathbb{A}^n}(\mathscr{A}).
\end{equation}

We deduce the RHS's of \eqref{eqn-MustataArrangementMultiplier} and \eqref{eqn-OurArrangementMultiplierFml} agree, giving a novel formula for the multiplier ideals of $\mathscr{A}$. We have no idea how to prove the RHS's of \eqref{eqn-MustataArrangementMultiplier} and \eqref{eqn-OurArrangementMultiplierFml} agree directly. Note that computing $b_f(s)$ or $\beta_f^\prime(s)$ is currently intractable for arbitrary arrangements.

\vspace{10pt}

We finish by proving Theorem \ref{thm-rf-intro} from the Introduction, now significantly generalizing Corollary 4.8 \cite{CNS22}. Our result describes what shift $F_{k - \ell}^{\ord} \scO_{X,\hspace{1pt}\fx}(*f)$ of the induced order filtration will be contained in the Hodge filtration $F_k^H \scO_{X,\hspace{1pt}\fx}(*f)$. Equivalently, we find a bound for the first step of the Hodge filtration that contains $f^{-1}$.

\begin{cor} \label{corrf}

Assume that $f \in \scO_{X,\hspace{1pt}\fx}$ satisfies $Z(b_f(s)) \subseteq (-2,0)$. Write $r_f$ for the degree of the polynomial $\beta_f(s)\in\bC[s]$, cf. Definition \ref{def-funnyBSpoly}. Then, for all $k \in \mathbb{Z}$, 
\[F_{k-r_f}^{\text{\emph{ord}}}\scO_{X,\hspace{1pt}\fx}(*f)\subseteq F_k^H\scO_{X,\hspace{1pt}\fx}(*f).\]

\end{cor}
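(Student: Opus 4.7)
The plan is to apply Theorem \ref{thmformula} and translate the desired containment into an assertion about $i_{f,+}\scO_{X,\hspace{1pt}\fx}(*f)$: given any $P \in F_{k-r_f}^{\ord}\scD_{X,\hspace{1pt}\fx}$, it suffices to exhibit $u \in F_{k-1}^{\Torder} i_{f,+}\scO_{X,\hspace{1pt}\fx}(*f)$ and $v \in V^0 i_{f,+}\scO_{X,\hspace{1pt}\fx}(*f)$ such that $P\cdot f^{-1}\delta = \partial_t u + v$, since $Pf^{-1}\delta \in \scO_{X,\hspace{1pt}\fx}(*f)\delta$. The case $k < r_f$ is vacuous because $F_{k-r_f}^{\ord}\scD_{X,\hspace{1pt}\fx} = 0$, so I would assume $k \geq r_f$.

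The starting point is the inclusion $\beta_f(-s)\cdot P \in \Gamma_f$ (since $\Gamma_f$ contains $\scD_{X,\hspace{1pt}\fx}[s]\beta_f(-s)$), which via Proposition \ref{propcondnBS-new}.i) yields
\[
    v := \beta_f(\partial_t t)\cdot P\cdot f^{-1}\delta = \pi_f(P\cdot \beta_f(-s)) \in V^0 i_{f,+}\scO_{X,\hspace{1pt}\fx}(*f),
\]
where we used that $P \in \scD_{X,\hspace{1pt}\fx}$ commutes with $\beta_f(-\partial_t t)$ and that the definition of $\pi_f$ sends $s$ to $-\partial_t t$. Next, I would decompose $\beta_f(s) = \beta_f(0) + s\cdot Q(s)$ for a polynomial $Q \in \bC[s]$ of degree $r_f - 1$, noting that $\beta_f(0) = \prod_{\lambda \in Z(b_f)\cap(-1,0)} (\lambda+1)^{\ell_\lambda}$ is nonzero because each $\lambda+1 \in (0,1)$. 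Substituting $s = \partial_t t$ and setting $u := t\cdot Q(\partial_t t)\cdot P\cdot f^{-1}\delta$ gives
\[
    \beta_f(0)\cdot P\cdot f^{-1}\delta = v - \partial_t u,
\]
so after dividing by $\beta_f(0)$, the element $P\cdot f^{-1}\delta$ has the desired shape, provided $u$ lies in $F_{k-1}^{\Torder} i_{f,+}\scO_{X,\hspace{1pt}\fx}(*f)$.

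The only substantive point to verify is this $t$-order estimate on $u$. Using the action formulas from Section \ref{formula}, one checks by induction that $(\partial_t t)$ sends an element of $t$-order $n$ to an element of $t$-order $\leq n+1$, since $(\partial_t t)\cdot g\partial_t^n\delta = \partial_t(fg\partial_t^n - ng\partial_t^{n-1})\delta = fg\partial_t^{n+1}\delta - ng\partial_t^n\delta$, and that $t\cdot$ preserves $t$-order. Since $P\cdot f^{-1}\delta = (Pf^{-1})\delta$ has $t$-order zero and $Q(\partial_t t)$ has degree $r_f - 1$, the element $u$ has $t$-order at most $r_f - 1 \leq k-1$, as required. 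I anticipate no serious obstacle here: the assumption $Z(b_f(s)) \subseteq (-2,0)$ does all the heavy lifting through Proposition \ref{propcondnBS-new}.i), and the only real input beyond that is the elementary factorization of $\beta_f(s)$ together with the mechanical $t$-order bookkeeping.
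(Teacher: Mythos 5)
There is a genuine gap. You write throughout ``$P\cdot f^{-1}\delta$'' and, at the end, assert ``$P\cdot f^{-1}\delta = (Pf^{-1})\delta$ has $t$-order zero.'' This identity is false once $P$ has positive order: the $\scD_X$-action on $i_{f,+}\scO_{X,\hspace{1pt}\fx}(*f) \simeq \scO_{X,\hspace{1pt}\fx}(*f)[\partial_t]\delta$ is the restriction of the $\scD_{X\times\bC_t}$-action through the graph embedding, not the coordinate-wise action, and in particular $\partial_{x_i}\cdot(m\delta) = (\partial_{x_i}m)\delta - (\partial_{x_i}f\cdot m)\partial_t\delta$. So $P\cdot(f^{-1}\delta)$ has $t$-order up to $k-r_f$, not $0$, while $(Pf^{-1})\delta$ does have $t$-order $0$ but is a \emph{different} element. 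Your argument quietly switches between the two readings: the opening claim ``since $Pf^{-1}\delta\in\scO_{X,\hspace{1pt}\fx}(*f)\delta$'' and the final $t$-order count need $(Pf^{-1})\delta$, but the crucial step $v:=\beta_f(\partial_t t)\cdot P\cdot f^{-1}\delta=\pi_f(P\beta_f(-s))\in V^0$ genuinely requires $P\cdot(f^{-1}\delta)$, since $\pi_f(P\beta_f(-s))=\beta_f(\partial_t t)P\cdot(f^{-1}\delta)$ by definition. Under neither reading does the proof close as written.

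The argument is repairable, but it takes more care than you indicate. Working consistently with $P\cdot(f^{-1}\delta)=\sum_{j\geq 0}w_j\partial_t^j\delta$ (so $w_0 = Pf^{-1}$ by Lemma \ref{lem-commDiagram}, $t$-order $\leq k-r_f$), your decomposition gives $\beta_f(0)\,P\cdot(f^{-1}\delta) = v - \partial_t u$ with $u = tQ(\partial_t t)P\cdot(f^{-1}\delta)$ of $t$-order $\leq (k-r_f)+(r_f-1)=k-1$. You then still have to pass from $P\cdot(f^{-1}\delta)$ to $(Pf^{-1})\delta$; the difference $\sum_{j\geq 1}w_j\partial_t^j\delta = \partial_t\bigl(\sum_{j\geq 1}w_j\partial_t^{j-1}\delta\bigr)$ has the inner sum of $t$-order $\leq k-r_f-1 \leq k-1$, so it absorbs into the $\partial_t F_{k-1}^{\Torder}$ term of Theorem \ref{thmformula}. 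The paper sidesteps this bookkeeping entirely by first establishing only the case $P=1$, namely $f^{-1}=\phi_{f,0}(\beta_f(s)/\beta_f(0))\cdot f^{-1}\in F_{r_f}^H\scO_{X,\hspace{1pt}\fx}(*f)$, where the two readings coincide, and then invoking that the Hodge filtration is a filtration of $\scD_X$-modules (so $F_{k-r_f}^{\ord}\scD_{X,\hspace{1pt}\fx}\cdot F_{r_f}^H\subseteq F_k^H$) to conclude. You would do well to adopt this two-step structure: it is both shorter and insulates you from the subtlety about the $\scD_X$-action on $i_{f,+}\scO_{X,\hspace{1pt}\fx}(*f)$ that trips up the one-shot approach.
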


\begin{proof}

Recall from Proposition \ref{propcondnBS-new} that our assumption $Z(b_f(s))\subseteq (-2,0)$ implies the equality
\[\pi_f(\Gamma_f)=V^0i_{f,+}\scO_{X,\hspace{1pt}\fx}(*f).\]
Moreover, the definition of $\pi_f$ and of the module action on $\scO_{X,\hspace{1pt}\fx}(*f)[\partial_t]\delta$ implies that we have an inclusion
\[\pi_f(F_k^{\sharp}\scD_{X,\hspace{1pt}\fx}[s])\subseteq F_k^{\Torder}i_{f,+}\scO_{X,\hspace{1pt}\fx}(*f).\]
Also, see that Theorem \ref{thmformula} may be reformulated as saying (we work locally here for consistency) that 
\begin{equation} \label{eqn-reformulation-0}
    F_k^H\scO_{X,\hspace{1pt}\fx}(*f) = \psi_{f,0}(F_k^{\Torder}V^0i_{f,+}\scO_{X,\hspace{1pt}\fx}(*f)).
\end{equation}
Indeed, Theorem \ref{thmformula} says that $u_0 \in F_k^H \scO_{X,\hspace{1pt}\fx}(*f)$ if and only if there exists $u_1, \ldots , u_q \in \scO_{X,\hspace{1pt}\fx}(*f)$ such that
\begin{equation} \label{eqn-reformulation-help}
    V^0 i_{f,+} \scO_{X,\hspace{1pt}\fx}(*f) \ni u_0\delta + \underbrace{u_1 \partial_t \delta + \cdots + u_q \partial_t^q \delta}_{\in \enspace \partial_t F_{k-1}^{\Torder} i_{f,+} \scO_{X,\hspace{1pt}\fx}(*f)}.
\end{equation}
In particular, $q \leq k$. Now we check \eqref{eqn-reformulation-0}. If $u_0 \in F_k^H \scO_{X,\hspace{1pt}\fx}(*f)$, we find $u_1, \ldots, u_k \in \scO_{X,\hspace{1pt}\fx}(*f)$ such that $u_0 \delta + u_1 \partial_t \delta + \cdots u_k \partial_t^k \delta$ is in the form of \eqref{eqn-reformulation-help}. And $\psi_{f,0}(u_0 \delta + u_1 \partial_t \delta + \cdots + u_k \partial_t^k \delta) = u_0$. On the other hand, if $u \in F_k^{\Torder} V^0 i_{f,+} \scO_{X,\hspace{1pt}\fx}(*f)$, then $u = u_0 \delta + u_1 \partial_t \delta + \cdots + u_k \partial_t^k \delta$ is trivially of the form \eqref{eqn-reformulation-help}, meaning $u_0 \in F_k^H \scO_{X,\hspace{1pt}\fx}(*f)$. And $\psi_{f,0}(u) = u_0 $.

Combining the first three display equations and applying the map $\psi_{f,0}$ yields:
\begin{align*}
\phi_{f,0}(\Gamma_f\cap F_k^{\sharp}\scD_{X,\hspace{1pt}\fx}[s])\cdot f^{-1} &= (\psi_{f,0}\circ\pi_f)(\Gamma_f\cap F_k^{\sharp}\scD_{X,\hspace{1pt}\fx}[s]) \\ &\subseteq \psi_{f,0}(F_k^{\Torder}V^0i_{f,+}\scO_{X,\hspace{1pt}\fx}(*f)) \\ &= F_k^H\scO_{X,\hspace{1pt}\fx}(*f).
\end{align*}
By definition $\beta_f(s)\in\Gamma_f\cap F_{r_f}^{\sharp}\scD_{X,\hspace{1pt}\fx}[s]$, and thus
\[f^{-1}\in \phi_{f,0}\left(\frac{\beta_f(s)}{\beta_f(0)}\right)\cdot f^{-1} \subseteq F_{r_f}^H\scO_{X,\hspace{1pt}\fx}(*f),\]
which implies in turn that
\[F_k^{\ord}\scO_{X,\hspace{1pt}\fx}(*f) = F_k^{\ord}\scD_{X,\hspace{1pt}\fx}\cdot f^{-1}\subseteq F_k^{\ord}\scD_{X,\hspace{1pt}\fx}\cdot F_{r_f}^H\scO_{X,\hspace{1pt}\fx}(*f) \subseteq F_{k+r_f}^H\scO_{X,\hspace{1pt}\fx}(*f)\]
for all $k \in\bZ$ as required.

\end{proof}

\section{Main Results}

Here we prove our main theorems Theorem \ref{thmmain-new} and Theorem \ref{thmgamma2-new} on $F_\bullet^H \scO_{X,\hspace{1pt}\fx}(*f)$. Under their assumptions, the first strengthens Saito's \cite{MSai93} result that the Hodge filtration is contained in the pole order filtration, by showing it is contained in the finer induced order filtration. The second gives a simple algebraic formula for \emph{all} pieces $F_k^H \scO_{X,\hspace{1pt}\fx}(*f)$. The formula is a natural generalization of the expression for $F_0^H \scO_{X,\hspace{1pt}\fx}(*f)$ from Corollary \ref{cor-zeroHodgePiece}.

Let us roughly describe the path the proofs chart. We continue our approach of understanding the Hodge filtration on $\scO_X(*f)$ via the interactions between the induced order and induced $V$-filtration on $i_{f,+} \scO_X(*f)$. Under the assumptions of Theorem \ref{thmmain-new} and Theorem \ref{thmgamma2-new} we will have explicit presentations of $\scO_X(*f)$ and $i_{f,+} \scO_X(*f)$, thanks to:

\begin{prop} \label{prop-presentations}
    Assume that there exists $E \in \Der_X$ such that $E \cdot f = f$ and that $\scD_X\cdot f^{-1} = \scO_X(*f)$. Then
    \begin{equation} \label{eqn-presentationComplement}
        \frac{\scD_X}{\ann_{\scD_X}f^{s-1} + \scD_X\cdot (E+1)} = \frac{\scD_X}{\ann_{\scD_X} f^{-1}} \simeq \scD_X\cdot f^{-1} \xhookrightarrow{=} \scO_X(*f)
    \end{equation}
    Moreover, 
    \begin{align} \label{eqn-presentationGraphEmbed}
        i_{f,+} \scO_X(*f) \simeq \scO_X(*f)[\partial_t]\delta 
        &= \scD_{X \times \bC_t} \cdot f^{-1} \delta \\
        &\simeq \frac{\scD_{X \times \bC_t}}{\scD_{X \times \bC_t} \cdot (t-f, E + f \partial_t + 1) + \scD_{X \times \bC_t} \cdot \ann_{\scD_{X}} f^{s-1}}. \nonumber
    \end{align}
\end{prop}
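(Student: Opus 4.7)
The statement splits into two parts, which I treat in sequence.

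For \eqref{eqn-presentationComplement}, the content is the ideal equality $\ann_{\scD_X}f^{s-1}+\scD_X\cdot(E+1)=\ann_{\scD_X}f^{-1}$, reading $\ann_{\scD_X}f^{s-1}$ as $\ann_{\scD_X[s]}f^{s-1}\cap\scD_X$. The inclusion $\supseteq$ follows from the direct check $(E+1)\cdot f^{-1}=0$ (using $E\cdot f^{-1}=-f^{-2}Ef=-f^{-1}$). For $\subseteq$, the plan is to combine the specialization isomorphism
\[\overline{\spe}_{s\mapsto 0}\colon\frac{\scD_X[s]\cdot f^{s-1}}{s\,\scD_X[s]\cdot f^{s-1}}\xrightarrow{\sim}\scO_X(*f),\]
provided by Proposition \ref{prop-specialization} under the hypothesis $\scD_X\cdot f^{-1}=\scO_X(*f)$, with the Euler identity $E\cdot f^{s-1}=(s-1)f^{s-1}$, which rearranges to $(E+1)f^{s-1}=sf^{s-1}$ inside $\scD_X[s]\cdot f^{s-1}$ and iterates, by centrality of $s$, to $(E+1)^jf^{s-1}=s^jf^{s-1}$ for every $j\geq0$. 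Given $P\in\ann_{\scD_X}f^{-1}$, the specialization isomorphism produces $Q(s)=\sum_jQ_js^j\in\scD_X[s]$ with $Pf^{s-1}=sQ(s)f^{s-1}$. Substituting each $s^{j+1}f^{s-1}$ by $(E+1)^{j+1}f^{s-1}$ and collecting yields $Pf^{s-1}=R(E+1)f^{s-1}$ for $R:=\sum_jQ_j(E+1)^j\in\scD_X$, whence $P-R(E+1)\in\ann_{\scD_X[s]}f^{s-1}\cap\scD_X$ and the decomposition $P=(P-R(E+1))+R(E+1)$ is the desired one.

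For \eqref{eqn-presentationGraphEmbed}, the first isomorphism $i_{f,+}\scO_X(*f)\simeq\scO_X(*f)[\partial_t]\delta$ is the defining description of the graph embedding. The equality $\scO_X(*f)[\partial_t]\delta=\scD_{X\times\bC_t}\cdot f^{-1}\delta$ follows from functoriality of $i_{f,+}$ applied to the cyclic presentation $\scO_X(*f)=\scD_X\cdot f^{-1}$, which sends cyclic $\scD_X$-modules to cyclic $\scD_{X\times\bC_t}$-modules preserving the generator; concretely, $\partial_t^j\delta=f\partial_t^j\cdot(f^{-1}\delta)\in\scD_{X\times\bC_t}\cdot(f^{-1}\delta)$ and the $\scD_X$-action then scales coefficients across $\scO_X(*f)=\scD_X\cdot f^{-1}$. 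It remains to identify $\ann_{\scD_{X\times\bC_t}}(f^{-1}\delta)$ with $\cI:=\scD_{X\times\bC_t}(t-f,\,E+f\partial_t+1)+\scD_{X\times\bC_t}\cdot(\ann_{\scD_X[s]}f^{s-1}\cap\scD_X)$. The inclusion $\cI\subseteq\ann(f^{-1}\delta)$ is checked generator-by-generator: $(t-f)(f^{-1}\delta)=0$ trivially; $(E+f\partial_t+1)(f^{-1}\delta)=0$ via a short computation using the graph action rule $E\cdot(m\delta)=(Em)\delta-fm\partial_t\delta$ together with Euler homogeneity; and for $P\in\ann_{\scD_X[s]}f^{s-1}\cap\scD_X$, $P\cdot(f^{-1}\delta)=\varphi_f(P\cdot f^{s-1})=0$ by $\scD_X$-linearity of $\varphi_f$ in Proposition \ref{prop-MP-graphIsofs}.

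The main obstacle is the reverse containment. The plan is as follows: given $Q\in\ann_{\scD_{X\times\bC_t}}(f^{-1}\delta)$, use the freeness of $\scD_{X\times\bC_t}$ as a left $\scD_X[t]$-module on $\{\partial_t^k\}_{k\geq0}$ to reduce modulo $\scD_{X\times\bC_t}(t-f)$, expressing $Q\equiv\sum_{j=0}^N\tilde P_j\partial_t^j$ with $\tilde P_j\in\scD_X$. Then induct on $N$. The base case $N=0$ rests on the identification $\scD_X\cap\ann_{\scD_{X\times\bC_t}}(f^{-1}\delta)=\ann_{\scD_X[s]}f^{s-1}\cap\scD_X$, which follows from $\scD_X$-linearity and injectivity of $\varphi_f$ on $\scD_X[s]\cdot f^{s-1}$. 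For the inductive step, the congruence $f\partial_t\equiv-(E+1)\pmod{\scD_{X\times\bC_t}(E+f\partial_t+1)}$ together with the commutator $[E,f\partial_t]=f\partial_t$ yield, by iteration, a reduction of $(f\partial_t)^k=f^k\partial_t^k$ modulo $\cI$ to a polynomial in $E$ of degree $k$; this trades $\tilde P_N\partial_t^N$ for a lower $\partial_t$-degree expression plus an element of $\cI$. The delicate point is that this reduction introduces $f^N$-factors which must be commuted past $\tilde P_N$ using $[f,\partial_x]=-\partial_x(f)$; the $f$-adjustments that arise are reabsorbed into further reductions modulo $(t-f)$ at each step, allowing the induction on $N$ to close.
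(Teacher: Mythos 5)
Your treatment of \eqref{eqn-presentationComplement} is correct and, up to reorganization, matches the paper: the paper first establishes the decomposition $\ann_{\scD_X[s]}f^{s-1}=\scD_X[s]\cdot\ann_{\scD_X}f^{s-1}+\scD_X[s]\cdot(E-(s-1))$ and then invokes the specialization isomorphism, whereas you use the Euler identity directly to eliminate $s$ from the witness $Q(s)$. Same ingredients (Proposition \ref{prop-specialization} plus centrality of $s$), same conclusion.

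For \eqref{eqn-presentationGraphEmbed}, your approach diverges genuinely from the paper's, which simply applies the graph-embedding functor to the cyclic presentation \eqref{eqn-presentationComplement} and cites Lemma~3.5 of \cite{CNS22} for the resulting presentation formula. Your plan to prove $\ann_{\scD_{X\times\bC_t}}(f^{-1}\delta)\subseteq\cI$ by reduction modulo $(t-f)$ and then induction on $\partial_t$-degree has a real gap in the inductive step. To trade $\tilde P_N\partial_t^N$ for something of lower $\partial_t$-degree via the relation $f\partial_t\equiv-(E+1)$, you need an $f$ sitting immediately to the left of the $\partial_t$'s; since $\tilde P_N$ need not be right-divisible by $f$, the natural move is to pass from $Q$ to $fQ$ (left multiplication is fine, as the annihilator is a left ideal). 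One can indeed show by a double induction on $(\partial_t\text{-degree},\,\ord_{\scD_X}\tilde P_N)$ that $f^kQ\in\cI$ for some $k\gg 0$: the reduction $fQ-\tilde P_N\partial_t^{N-1}(E+f\partial_t+1)$ replaces the top coefficient by $[f,\tilde P_N]$, which has strictly smaller order, and the $(E+1)$-term drops the $\partial_t$-degree. But this only yields $f^kQ\in\cI$, and to descend to $Q\in\cI$ you would need $\cI$ to be $f$-saturated (equivalently, $\scD_{X\times\bC_t}/\cI$ to be $f$-torsion-free). That is precisely what is being proved, so the argument is circular as stated. Your phrase ``the $f$-adjustments that arise are reabsorbed into further reductions modulo $(t-f)$'' does not resolve this: the commutators $[f,\tilde P_N]$ introduce no $t$-dependence, so there is nothing to reduce modulo $(t-f)$, and the saturation issue persists.

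What is actually needed is the sharper pointwise fact that for any $Q=\sum_j\tilde P_j\partial_t^j\in\ann(f^{-1}\delta)$ with $N\geq1$, the top coefficient satisfies $\tilde P_N\in\scD_X\cdot f+\ann_{\scD_X}f^{s-1}$. Granted this, one writes $\tilde P_N=Af+B$, and then $\tilde P_N\partial_t^N=A\partial_t^{N-1}(E+f\partial_t+1)-A(E+1)\partial_t^{N-1}+B\partial_t^N$ lies in $\cI$ plus a term of strictly lower $\partial_t$-degree, and the induction closes without any $f$-saturation issue. But this membership of $\tilde P_N$ is itself a nontrivial statement, equivalent to $\tilde P_N\cdot f^{s-1}\in\scD_X\cdot f^s$, and you have not argued it. The paper's route --- use the general presentation formula for $i_{f,+}$ of a cyclic $\scD_X$-module, exploiting the fact that elements of $\ann_{\scD_X}f^{s-1}$ annihilate $f^{-1}\delta$ without needing the $\partial_{x_i}\mapsto\partial_{x_i}+(\partial_{x_i}f)\partial_t$ transformation --- is what makes the computation manageable. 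You should either establish the membership $\tilde P_N\in\scD_X\cdot f+\ann_{\scD_X}f^{s-1}$ (nontrivial), or else invoke the standard result on graph direct images of cyclic presentations as the paper does.
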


\begin{proof}
    The only thing to prove in \eqref{eqn-presentationComplement} is the first equality. Because $E - (s-1) \in \ann_{\scD_{X}[s]} f^{s-1}$, one checks that $\ann_{\scD_{X}[s]} f^{s-1} = \scD_X[s]\cdot \ann_{\scD_X} f^{s-1} + \scD_X[s]\cdot (E - (s-1))$. By Proposition \ref{prop-specialization} we know the $\scD_X$-linear specialization map $\specialize : \scD_X[s]\cdot f^{s-1} / \scD_{X}[s] \cdot s f^s \to \scD_X\cdot f^{-1}$ is an isomorphism. This validates \eqref{eqn-presentationComplement}.

    As for \eqref{eqn-presentationGraphEmbed}, since $\scD_X\cdot f^{-1} = \scO_X(*f)$, by applying the graph embedding and using \eqref{eqn-presentationComplement} just as in Lemma 3.5 \cite{CNS22}, we have
    \begin{align*}
        i_{f,+} \scO_X(*f) = i_{f,+} \scD_X\cdot f^{-1} 
        &\simeq i_{f,+} \left( \frac{\scD_X}{\ann_{\scD_X}f^{s-1} + \scD_X \cdot( E + 1)} \right) \\
        &= \frac{\scD_{X \times \bC_t}}{\scD_{X \times \bC_t} \cdot (t-f, E + f \partial_t + 1) + \scD_{X \times \bC_t} \cdot \ann_{\scD_X} f^{s-1}}.
    \end{align*}
    The validity of \eqref{eqn-presentationGraphEmbed} follows.  
\end{proof}

We will use the explicit presentations of $\scO_X(*f)$ and $i_{f,+}\scO_X(*f)$ to study the induced order and $V$-filtrations, and, in turn, the Hodge filtration. This requires understanding generating sets of the $\scD_X$-ideal (resp. $\scD_{X \times \bC_t}$-ideal) whose cokernel gives $\scO_X(*f)$ (resp. $i_{f,+} \scO_X(*f)$) and the behavior of these generating sets with respect to the $\scD_X$-order filtration (resp. $\scD_{X \times \bC_t}$-order filtration and $\scD_{X \times \bC_t}-V$-filtration along $\{t=0\}$). In other words: it is helpful to have Gr{\"o}bner basis type properties for these ideals. We will see that the full package of hypotheses assumed in Theorem \ref{thmmain-new} and Theorem \ref{thmgamma2-new} allow us to make Gr{\"o}bner inspired arguments. With this in hand we will be able to strengthen the results from the previous section.

The first subsection establishes some general algebraic facts used to set up the Gr{\"o}bner basis type variant we require. The second subsection introduces the hypotheses on $f \in \scO_X$ our main theorems demand, including the titular \emph{parametrically prime} condition. The hypotheses include our earlier requirement that $Z(b_f(s)) \subseteq (-2,0)$. The third subsection proves Theorem \ref{thmmain-new}; the fourth proves Theorem \ref{thmgamma2-new}. The last subsection collects stronger results when, additionally, $Z(b_{f}(s)) \subseteq (-2,-1]$.

\subsection{Gr{\"o}bner Type Constructions and Lemmas}

In this subsection we first study the relationship between ideals in a filtered ring $A$ and the ideal of principal symbols in the associated graded ring $\gr A$. The prize of our labor is the crucial Lemma \ref{lem-GRespNZDs}. We finish with the useful Lemma \ref{lem-IndVRespBasisonGrOrd} which lets us understand, in a special case, the behavior of the induced $V^\bullet$ filtration on $\gr^{\ord} \scD_{X \times \bC_t}$. First, generalities. 

We define an \emph{integral increasing filtration} $G_\bullet A$ on a ring $A$ to be a $\mathbb{Z}$-indexed set of subgroups $G_k A$ of $A$ such that: $G_k A \subseteq G_{k+1} A$; $(G_k A) \cdot (G_j A) \subseteq G_{k+j} A$. We say the filtration is \emph{exhaustive} when $\cup_{k \in \mathbb{Z}} G_k A = A$, we say the filtration is \emph{separated} when $\cap_{j \in\bZ} G_j A = 0$.

For integral, exhaustive, separated, and increasing filtrations $G_\bullet A$, we define the \emph{order} of $a \in A \setminus 0$ with respect to $G_\bullet A$ by $\ord_G (a) = \min\{ k \in \mathbb{Z} \mid a \in G_k A \}$. We set $\ord_G(0) = -\infty$. (Note that exhaustive and separated ensures $\ord_G : A \setminus 0 \to \mathbb{Z}$ is well-defined.) When $G_\bullet A$ is increasing we have the associated graded ring $\gr_G A = \bigoplus_{k \in \mathbb{Z}} G_k A / G_{k-1} A$ which is naturally $\mathbb{Z}$-graded. The $\emph{initial symbol}$ of $a \in A$ will be denoted by $\gr_G (a) \in \gr_G A$ which, for $a \neq 0$, is defined by $\gr_G(a) = \overline{a} \in G_{\ord_G(a)} A / G_{\ord_G(a) - 1} A \subseteq \gr_G A$. (We always set $\gr_G(0) = 0 \in \gr_G A.$) For $I$ a left $A$-ideal, we call $\gr_G (I) := \gr_G A \cdot (\{ \gr_G(a) \mid a \in I \})$ the \emph{ideal of principal symbols of $I$}.

\begin{rem}

An \emph{integral decreasing fltration} $G^{\bullet}A$ is a sequence of subgroups such that $G_{\bullet}^+A:=G^{-\bullet}A$ is an integral increasing filtration. From here on out, whenever we say that a decreasing filtration $G^{\bullet}A$ has a certain property, we mean that the corresponding increasing filtration $G_{\bullet}^+A$ has said property. When $G^{\bullet}A$ is an integral, exhaustive, separated and decreasing filtration, we write $\ord^G(a):=\text{max}\{k\in\bZ\mid a\in G^kA\}$ (with $\ord^G(0)=\infty$) and $\gr^GA = \bigoplus_{k\in\bZ}G^kA/G^{k+1}A$ for the associated graded ring, and $\gr^G(a)\in G^{\ord^G(a)}A/G^{\ord^G(a)+1}A\subseteq \gr^GA$ for the initial symbol of $a\in A$.
    
\end{rem}

\begin{conv}
For readability's sake, in this subsection we state most definitions and results for increasing filtrations. We convene that all definitions and results also hold for decreasing filtrations even when only stated for increasing ones, since symmetric versions hold due to the natural correspondence between increasing and decreasing filtrations. For example, Lemma \ref{lem-IndVRespBasisonGrOrd} uses the decreasing version of Definition \ref{def-GRespBasis}.
\end{conv}

Now suppose $a_1, \ldots, a_r$ generate $I \subseteq A$. In general, the gulf between the (left) ideal of principal symbols $\gr_G(I)$ and the ideal $\gr_G A \cdot (\gr_G(a_1), \ldots \gr_G(a_r))$ generated by the principal symbols of the $\{a_k\}_{1 \leq k \leq r}$ is vast. So it will be useful for us to restrict to generating sets of our ideals that play well with our filtrations. Thus we make two definitions:

\begin{defn} \label{def-GGrobnerBasis}
    Let $A$ be a ring and $G_\bullet A$ an integral, exhaustive, separated, and increasing filtration such that $\gr_G A$ is a commutative domain. Let $I \subseteq A$ be a left ideal. We say that $a_1 \ldots, a_r \in A$ is a \emph{Gr{\"o}bner basis of $I$ with respect to $
    G_\bullet A$} provided that:
    \begin{enumerate}[label=\roman*)]
        \item $I = A \cdot (a_1, \ldots , a_r)$;
        \item $\gr_G(I) = (\gr_G(a_1) , \ldots, \gr_G(a_r))$.
    \end{enumerate}
\end{defn}

\begin{defn} \label{def-GRespBasis}
    Let $A$ be a ring and $G_\bullet A$ an integral, exhaustive, separated, and increasing filtration such that $\gr_G A$ is a commutative domain. For $I \subseteq A$ a left ideal, we say that $a_1, \ldots, a_r \in A$ form a \emph{$G_\bullet A$-respecting basis} for $I$ if $I = A \cdot (a_1, \ldots, a_r)$ and if for all $b \in I$ we can find a \emph{$G_\bullet A$-standard representation} of $b$; that is, $k_1, \ldots, k_r \in A$ satisfying:
    \begin{enumerate}[label=\roman*)]
        \item for each $1 \leq i \leq r$ we have either $\ord_G (k_i) + \ord_G (a_i) \leq \ord_G (b)$ or $k_i= 0$;
        \item $b = \sum_i k_i a_i$.
    \end{enumerate}
\end{defn}

In general, every $G_\bullet A$-respecting basis is a Gr{\"o}bner basis:

\begin{lem} \label{lem-GRespImpliesGrobner}
    Let $A$ be a ring and $G_\bullet A$ an integral, exhaustive, separated, and increasing filtration. If $a_1, \ldots, a_r$ is $G_\bullet A$-respecting basis, then $a_1, \ldots, a_r$ is a Gr{\"o}bner basis of $A \cdot (a_1, \ldots, a_r)$ with respect to $G_\bullet A$.
\end{lem}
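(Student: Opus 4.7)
The plan is to directly verify the two conditions of Definition \ref{def-GGrobnerBasis} for $\gr_G(I)$. The first condition, $I = A \cdot (a_1, \ldots, a_r)$, is part of the hypothesis of being a $G_\bullet A$-respecting basis, so nothing needs to be done there. For the claim about principal symbols, the containment $(\gr_G(a_1), \ldots, \gr_G(a_r)) \subseteq \gr_G(I)$ is automatic from the definition of $\gr_G(I)$, since each $a_i \in I$.

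For the reverse containment, I would take an arbitrary nonzero $b \in I$ and set $d = \ord_G(b)$; the goal is to show $\gr_G(b) \in (\gr_G(a_1), \ldots, \gr_G(a_r))$. By hypothesis we may pick a $G_\bullet A$-standard representation $b = \sum_{i=1}^r k_i a_i$ with $\ord_G(k_i) + \ord_G(a_i) \leq d$ (or $k_i = 0$) for every $i$. Now partition the indices into $S = \{i : k_i \neq 0 \text{ and } \ord_G(k_i) + \ord_G(a_i) = d\}$ and its complement.

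For $i \notin S$ with $k_i \neq 0$, the product $k_i a_i$ lies in $G_{d-1} A$, so its image in $G_d A / G_{d-1} A$ is zero. For $i \in S$, the product $k_i a_i \in G_d A$, and its image in $G_d A / G_{d-1} A$ is precisely $\gr_G(k_i) \gr_G(a_i)$ (this is a direct consequence of how multiplication is defined on $\gr_G A$, and does not need the domain hypothesis). Passing the equation $b = \sum_i k_i a_i$ to the degree $d$ component thus gives
\begin{equation*}
    \gr_G(b) = \sum_{i \in S} \gr_G(k_i) \gr_G(a_i) \in \gr_G A \cdot (\gr_G(a_1), \ldots, \gr_G(a_r)),
\end{equation*}
completing the proof.

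I do not anticipate any serious obstacle here; the content is essentially just the standard observation that a standard-representation property is exactly what forces the principal symbols of a generating set to generate the symbol ideal. The only subtlety worth noting is that while the domain hypothesis on $\gr_G A$ is not strictly needed for this particular lemma, it was likely included in the definitions for downstream use when one wants $\gr_G(k) \gr_G(a) \neq 0$ to imply $\ord_G(ka) = \ord_G(k) + \ord_G(a)$.
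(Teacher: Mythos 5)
Your proof is correct and follows essentially the same route as the paper's: pick a standard representation of $b$, note that terms with $\ord_G(k_i) + \ord_G(a_i) < \ord_G(b)$ vanish upon passing to the top graded component, and read off $\gr_G(b)$ as a sum of $\gr_G(k_i)\gr_G(a_i)$ over the remaining indices (your set $S$, the paper's $\Upsilon$). Your version is slightly more explicit in working directly in $G_d A / G_{d-1} A$, and your parenthetical remark that the domain hypothesis on $\gr_G A$ is not used here is accurate.
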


\begin{proof}
    Let $b\in A\cdot(a_1,\dots,a_r)$. Then, by hypothesis, we may choose a $G_{\bullet}A$-standard representation $b=\sum_{i=1}^rk_ia_i$. In particular, $b,k_1a_1,\dots,k_ra_r\in G_{\ord_G(b)}$, and thus
    \[\gr_Gb=\sum_{i=1}^r\gr_G(k_ia_i)=\sum_{i\in\Upsilon}\gr_G(k_i)\gr_G(a_i)\in(\gr_G(a_1),\dots,\gr_G(a_r)),\]
    where $\Upsilon:=\{i\in\{1,\dots,r\}\mid \ord_G(k_i)+\ord_G(a_i)=\ord_G(b)\}.$ 
\end{proof}

We say an integral increasing filtration $G_{\bullet}A$ is \emph{bounded} when $G_k A = 0$ for $k \ll 0$. When bounded, there is no difference between a $G_\bullet A$-Gr{\"o}bner basis and $G_\bullet A$-respecting basis:

\begin{lem} \label{lem-BoundedFiltration}
    Let $A$ be a ring and $G_\bullet A$ an integral, exhaustive, separated, and increasing filtration such that $\gr_G A$ is a commutative domain. Assume that $G_\bullet A$ is bounded. Then $a_1, \ldots , a_r$ is a Gr{\"o}bner basis for the left $A$-ideal $I = A\cdot (a_1 \ldots, a_r)$ if and only if $a_1, \ldots, a_r$ is a $G_\bullet A$-respecting basis. 
\end{lem}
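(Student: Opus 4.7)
The ``if'' direction is literally Lemma \ref{lem-GRespImpliesGrobner} and requires no boundedness hypothesis, so the work lies in the ``only if'' direction. My plan is to carry out the standard Gr\"obner-style division/reduction argument, where the role of the boundedness hypothesis is purely to guarantee termination.

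Assume $a_1,\dots,a_r$ is a Gr\"obner basis of $I = A\cdot(a_1,\dots,a_r)$ with respect to $G_\bullet A$, and fix $b \in I$; our task is to produce a $G_\bullet A$-standard representation of $b$. Since $\gr_G(b)\in\gr_G(I) = (\gr_G(a_1),\dots,\gr_G(a_r))$ and $\gr_G A$ is a graded commutative domain, we may extract the homogeneous component of degree $\ord_G(b)$ and thereby write
\[
    \gr_G(b) \;=\; \sum_{i=1}^r c_i\,\gr_G(a_i) \qquad \text{in } \gr_G A,
\]
where each $c_i$ is either zero or homogeneous of degree $\ord_G(b)-\ord_G(a_i)$. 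Lift each nonzero $c_i$ to an element $k_i'\in G_{\ord_G(b)-\ord_G(a_i)}A$ (and set $k_i'=0$ otherwise), and put $b^{(1)} := b - \sum_i k_i' a_i$. By construction the ``leading parts'' cancel, so $\ord_G(b^{(1)}) < \ord_G(b)$, while the tentative coefficients $k_i'$ already satisfy $\ord_G(k_i')+\ord_G(a_i)\le \ord_G(b)$.

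Now iterate: $b^{(1)}\in I$, so we can repeat the procedure to get $b^{(1)} = \sum_i k_i'' a_i + b^{(2)}$ with $\ord_G(b^{(2)})<\ord_G(b^{(1)})<\ord_G(b)$ and $\ord_G(k_i'')+\ord_G(a_i)\le\ord_G(b^{(1)})\le\ord_G(b)$, and so on. This is the only step where anything can go wrong: in the unbounded setting the orders of the remainders could descend forever. But under our hypothesis $G_{\bullet}A$ is bounded below, so there exists $k_0\in\bZ$ with $G_{k_0-1}A = 0$; hence every nonzero element of $A$ has order at least $k_0$, and the strictly decreasing sequence $\ord_G(b) > \ord_G(b^{(1)}) > \ord_G(b^{(2)}) > \cdots$ must terminate with some $b^{(N)}=0$ after finitely many steps.

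Summing the finitely many reduction steps yields $b = \sum_{i=1}^r k_i a_i$ where $k_i = k_i' + k_i'' + \cdots$ is a finite sum of elements all of order at most $\ord_G(b) - \ord_G(a_i)$; therefore $\ord_G(k_i)+\ord_G(a_i)\le\ord_G(b)$, which is exactly the standard representation required by Definition \ref{def-GRespBasis}. The main (and only real) obstacle is this termination argument, which is precisely what the boundedness hypothesis exists to provide; the rest is a routine leading-term cancellation that mirrors classical Gr\"obner basis division in polynomial rings, now transported to the associated graded setting of a filtered ring with commutative domain associated graded.
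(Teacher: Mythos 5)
Your proof is correct and takes essentially the same approach as the paper: in both, the ``only if'' direction proceeds by extracting degree-$\ord_G(b)$ coefficients from the Gr\"obner representation, lifting to $A$, subtracting to strictly lower the order, iterating, and invoking boundedness for termination, then summing the accumulated coefficients. The only differences are notational.
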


\begin{proof}
    Thanks to Lemma \ref{lem-GRespImpliesGrobner}, all we must show is that if $a_1, \ldots, a_r$ is a Gr{\"o}bner basis for the left $A$-ideal $I$, then it is also a $G_\bullet A$-respecting basis. So select $b_0 \in I$. By the Gr{\"o}bner property, there exists $c_{1,1}, \ldots, c_{1,r} \in \gr_G A$ such that $c_{1,k}$ is homogeneous of degree $\ord_G(b)-\ord_G(a_k)$ (or is zero) and such that $\gr_G(b_0) = \sum c_{1,k} \gr_G (a_k)$. Select lifts $C_{1,k} \in A$ of each $c_{1,k}$, i.e. $C_{1,k} \in A$ satisfies $\gr_G(C_{1,k}) = c_{1,k}$. So $\ord_G(C_{i,k}) + \ord_G(a_k) \leq \ord_G(b)$. Moreover,
    \begin{equation*}
        b_1 := b_0 - \sum_{k = 1}^r C_{1,k} a_k \in G_{\ord_G(b_1)} A
    \end{equation*}
    by the choice of the $c_{1,k}$. Moreover, $b_1 \in I$ and $\ord_G(b_1) < \ord_G(b_0)$. We may repeat this procedure on $b_1$, obtaining $C_{2,1}, \ldots, C_{2,r} \in A$ such that $b_2 := b_1 - \sum_{k=1}^r C_{2,k} a_{k} \in G_{\ord_G(b_1) - 1} A$, $\ord_G(C_{2,k}) + \ord_G(a_k) \leq \ord_G(b_1)$ and $b_2 \in I$. Repeating, eventually we find $b_m = 0$ because $G_\bullet A$ is bounded. Unraveling the equations gives 
    \begin{align} \label{eqn-BoundedFiltration-1}
        b = b_1 + \sum_{k=1}^{r} C_{1,k} a_k 
        &= b_2 + \bigg[\sum_{k=1}^r C_{2,k} a_k \bigg] + \bigg[ \sum_{k=1}^{r} C_{1,k} a_k \bigg] \\
        &= \bigg[\sum_{k=1}^r C_{2,k} a_k \bigg] + \cdots + \bigg[ \sum_{k=1}^r C_{m-1,k} a_k \bigg]
        = \sum_{k=1}^r \bigg( \sum_{j = 1}^{m-1} C_{j,k} \bigg) a_k . \nonumber
    \end{align}
    Since $\ord_G (C_{j,k}) \leq \ord_G(b_{j-1}) - \ord_G(a_k) \leq \ord_G(b_0) - \ord_G(a_k)$ for all $k$, we see that $\sum_{j=1}^{m-1} C_{j,k} \in G_{\ord_G(b_0) - \ord_G(a_k)} A$. That is, $\ord_G ( \sum_{j=1}^{m-1} C_{j,k}) + \ord_G(a_k) \leq \ord_G(b_0)$ and \eqref{eqn-BoundedFiltration-1} exhibits a $G_\bullet A$-standard representation of $b_0$ with respect to $a_1, \ldots, a_r \in A$. 
\end{proof}

\begin{rem} \label{rmk-FiltrationsGen} \enspace
    \begin{enumerate}
        \item It is not immediately obvious that Lemma \ref{lem-BoundedFiltration} is true if the bounded hypothesis is removed. It is true in certain situations (cf. Theorem 10.6 \cite{OT01}), but as we do not need a stronger version of Lemma \ref{lem-BoundedFiltration} we do not attempt to establish one.
        \item Lemma \ref{lem-BoundedFiltration} exploits the well-behavedness of standard representations with respect to addition. Namely, if $b = \sum k_j a_j$ and $\widetilde{b} = \sum \widetilde{k_j} a_j$ are $G_\bullet A$ standard representations with respect to $a_1, \dots, a_r$, then $b + \widetilde{b} = \sum (k_j + \widetilde{k_j}) a_j$ is a $G_\bullet A$ standard representation of $b + \widetilde{b}$.
    \end{enumerate}
\end{rem}

\begin{lem} \label{lem-GRespNZDs}
    Let $A$ be a ring and $G_\bullet A$ an integral, exhaustive, separated, and increasing filtration with a $G_\bullet A$-respecting basis $a_1, \ldots, a_r$. Assume that $a_{r+1} \in A$ is such that $\gr_G(a_{r+1})$ is a non-zero divisor on $\gr_{G} A / \gr_G(A\cdot(a_1, \ldots, a_r))$. Then $a_1, \ldots, a_r, a_{r+1}$ is a $G_\bullet A$-respecting basis.
\end{lem}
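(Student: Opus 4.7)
The plan is a division algorithm. Given any representation $b = \sum_{i=1}^{r+1} k_i a_i$ of $b \in I' := A \cdot (a_1, \ldots, a_{r+1})$ with apparent order $e := \max_i (\ord_G(k_i) + \ord_G(a_i))$, I will iteratively reduce $e$ until it equals $d := \ord_G(b)$. Since $d \leq e$ automatically, the process terminates after finitely many strict reductions. Assume $e > d$; then in $\gr_G A$ the leading symbols must cancel, giving $\sum_{i \in S} \gr_G(k_i) \gr_G(a_i) = 0$, where $S := \{i : \ord_G(k_i) + \ord_G(a_i) = e\}$.

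The reduction splits into two cases. If $r+1 \notin S$, set $c := \sum_{i \leq r} k_i a_i = b - k_{r+1} a_{r+1} \in I$; then $\ord_G(c) < e$, and the $G_\bullet A$-respecting property of $a_1, \ldots, a_r$ on $I$ produces a standard representation of $c$, which combined with $k_{r+1} a_{r+1}$ yields a representation of $b$ with apparent order $<e$. If $r+1 \in S$, the cancellation becomes $\gr_G(k_{r+1}) \gr_G(a_{r+1}) \in \gr_G(I) = (\gr_G(a_1), \ldots, \gr_G(a_r))$ (the equality by Lemma \ref{lem-GRespImpliesGrobner}), and the NZD hypothesis forces $\gr_G(k_{r+1}) \in \gr_G(I)$. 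Writing $\gr_G(k_{r+1}) = \sum_i h_i \gr_G(a_i)$ homogeneously, lifting to $H_i \in A$, and setting $\tilde k_{r+1} := k_{r+1} - \sum_i H_i a_i$ produces an element of strictly lower order than $k_{r+1}$. Using the commutativity of $\gr_G A$, so that $[a_i, a_{r+1}] \in G_{\ord_G(a_i)+\ord_G(a_{r+1})-1} A$, rearrangement yields
\begin{equation*}
b = \tilde k_{r+1} a_{r+1} + \sum_{i=1}^r (H_i a_{r+1} + k_i) a_i + R, \qquad R := \sum_i H_i [a_i, a_{r+1}] \in I', \ \ord_G(R) \leq e-1.
\end{equation*}

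The main obstacle is the commutator residual $R$. I would handle it by an outer induction on $\ord_G(b)$, legitimate in the intended applications where $G_\bullet A$ is bounded below (e.g., the total order filtration on $\scD_X[s]$): the inductive hypothesis supplies a standard representation $R = \sum_j k_j^R a_j$ with apparent order $\leq \ord_G(R) \leq e-1$. Combining with the decomposition above rewrites $b$ so that its new $a_{r+1}$-coefficient has order strictly less than $e - \ord_G(a_{r+1})$. Thus $r+1$ is absent from the leading-index set of the resulting representation, so a final application of the Case 1 argument reduces the apparent order strictly below $e$. Iterating this entire process until $e = d$ produces the desired $G_\bullet A$-standard representation of $b$ and establishes that $a_1, \ldots, a_r, a_{r+1}$ is a $G_\bullet A$-respecting basis.
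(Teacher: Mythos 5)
Your instinct to track the commutator residual $R = \sum_i H_i[a_i, a_{r+1}]$ is sound, and more careful than the paper's own proof, whose Case~2 rearrangement silently rewrites $\big(\sum_i k_i a_i\big)a_{r+1}$ as $\sum_i (k_i a_{r+1})a_i$ --- an identity valid only when $\sum_i k_i[a_i, a_{r+1}] = 0$, so your $R$ is exactly the hidden error term. Unfortunately the proposed outer induction on $\ord_G(b)$ cannot reach $R$: you know only $\ord_G(R) \leq e-1$, and since $e > d = \ord_G(b)$ this permits $\ord_G(R) \geq d$, so the inductive hypothesis does not apply. The only representation of $R$ at hand, $R = \big(\sum_i H_i a_i\big)a_{r+1} - \sum_i (H_i a_{r+1})a_i$, has $a_{r+1}$-coefficient $\sum_i H_i a_i$ of the same order as $k_{r+1}$, so substituting it back returns the iteration to its starting point.

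In fact the gap is irreparable for a non-commutative $A$, boundedness notwithstanding. Take $A = \bC\langle x, \partial_x\rangle$ with the order filtration $F_\bullet^{\ord}$, so $\gr^{\ord}A \simeq \bC[x,\xi]$ is a commutative domain. Then $\{\partial_x\}$ is an $F_\bullet^{\ord}$-respecting basis of $A\cdot\partial_x$, and $\gr^{\ord}(x) = x$ is a nonzerodivisor on $\gr^{\ord}A/\gr^{\ord}(A\cdot\partial_x) \simeq \bC[x]$; yet $1 = \partial_x x - x\partial_x \in A\cdot(\partial_x, x)$ has order $0$ and admits no $F_\bullet^{\ord}$-standard representation $1 = k_1\partial_x + k_2 x$ (this would force $k_1 = 0$ and $k_2 \in \bC[x]$, hence $1 = k_2 x$, impossible). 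Thus $\{\partial_x, x\}$ is neither a respecting basis nor a Gr{\"o}bner basis of $A\cdot(\partial_x, x) = A$: a commutator can enlarge a left ideal past what the symbols predict, and no induction repairs this. When $A$ is itself commutative --- as in Lemma~\ref{propgroe-primeHyp(lemma)}.ii, where $A = \gr^{\ord}\scD_{X\times\bC_t, (\fx,0)}$ --- the residual $R$ vanishes and both your argument and the paper's go through unamended; the application to the non-commutative $\scD_{X\times\bC_t,(\fx,0)}$ in Lemma~\ref{propgroe-primeHyp(lemma)}.i needs structural control on the commutators beyond what the bare nonzerodivisor hypothesis supplies.
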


\begin{proof}
    Take $b \in A \cdot (a_1, \ldots, a_r, a_{r+1})$, and write $b = \sum_{i=1}^{r+1} b_i a_i$. We show that $b$ has a $G_{\bullet}A$-standard representation with respect to $a_1,\ldots,a_{r+1}$.
    
    \emph{Case 1:} $\ord_{G}(b_{r+1}) + \ord_{G}(a_{r+1}) \leq \ord_G(b)$ or $b_{r+1} = 0$. 

    \noindent Set $b^{\prime} = b - b_{r+1} a_{r+1}$. If $b_{r+1} \neq 0$, then $b^{\prime} \in G_{\leq \ord_G(b)}$. Since $b^{\prime} \in A \cdot (a_1, \ldots, a_r)$, it has a $G_\bullet A$-standard representation with respect to $a_1, \ldots, a_r$ and we are done. If $b_{r+1} = 0$, we are again done since $b = b^{\prime}$.

    \emph{Case 2:} $\ord_G(b_{r+1}) + \ord_G(a_{r+1}) > \ord_G(b)$ and $b_{r+1} \neq 0$. 

    \noindent We will construct a new expression $b = \sum_{i = 1}^{r+1} \widetilde{b_i} a_{i}$ where $\ord_G (\widetilde{b_{r+1}}) < \ord_G (b_{r+1})$ or $\widetilde{b_{r+1}} = 0$. This will finish the proof: by proceeding inductively we will arrive at \emph{Case 1}. 

    Set $b^{\prime} = b - b_{r+1} a_{r+1}$. Since $\ord_G(b) < \ord_G(b_{r+1} a_{r+1}) = \ord_G(b_{r+1}) + \ord_G(a_{r+1})$, we deduce that $\gr_G(b^{\prime}) = -\gr_G(b_{r+1} a_{r+1}) = -\gr_G(b_{r+1}) \cdot \gr_G(a_{r+1})$. By Lemma \ref{lem-GRespImpliesGrobner}, $a_1, \dots, a_r$ is a Gr{\"o}bner basis for $A \cdot (a_1, \ldots, a_r)$ and so
    \begin{equation*}
        0 = \overline{\gr_G(b_{r+1})} \cdot \overline{\gr_G(a_{r+1})} = -\overline{\gr_G(b^{\prime})} \in \frac{\gr_G A}{\gr_G( A \cdot (a_1, \ldots, a_r))} = \frac{\gr_G A}{(\gr_G(a_1), \ldots, \gr_G(a_r))},
    \end{equation*}

    Since $\gr_G(a_{r+1})$ is a non-zero divisor on this module (and in particular $\overline{\gr_G(a_{r+1})} \neq 0$), we deduce that $\gr_G(b_{r+1}) \in \gr_G(A \cdot (a_1, \ldots, a_r))$. Using that $a_1, \ldots, a_r$ is a $G_{\bullet}A$-respecting basis, we find $k_1, \ldots, k_r \in A$ such that $\ord_G(k_i)+\ord_G(a_i)\leq\ord_G(b_{r+1})$ for all $i$ and such that 
    \begin{equation} \label{eqn-RespBasis-1}
        \gr_G(b_{r+1}) = \gr_G\left( \sum_{i = 1}^{r} k_i a_i \right).
    \end{equation}
    Now we rearrange:
    \begin{align} \label{eqn-RespBasis-2}
        b 
        = \sum_{i = 1}^{r+1} b_i a_i 
        &= \sum_{i = 1}^{r} b_i a_i  + \bigg[ (b_{r+1} - \sum_{i = 1}^{r} k_i a_i) + \sum_{i=1}^r k_i a_i \bigg] a_{r+1} \\
        &= \bigg[ \sum_{i=1}^{r} (b_i + k_i a_{r+1}) a_i \bigg] + \bigg[ b_{r+1} - \sum_{i=0}^r k_i a_i \bigg] a_{r+1}. \nonumber
    \end{align}
    By \eqref{eqn-RespBasis-1}, we know that $b_{r+1} = \sum_{i=1}^r k_i a_i$ or $\ord_G(b_{r+1} - \sum_{i=1}^r k_i a_i) < \ord_G(b_{r+1})$. So either the expression \eqref{eqn-RespBasis-2} for $b$ has an $a_{r+1}$ coefficient equal to zero or it has an $a_{r+1}$ coefficient with strictly smaller $G_\bullet A$-order than $\ord_G(b_{r+1})$. So we have constructed our desired expression for $b$; iterating the procedure finishes the proof.
\end{proof}

The second technical lemma returns to the realm of the paper at large:

\begin{conv}
    When working with $F_{\bullet}^{\ord} \scD_X$ we will often write $\gr^{\ord} \scD_X$ for the associated graded object and $\gr^{\ord} : \scD_X \to \gr^{\ord} \scD_X$ for the principal symbol map. Similarly for $F_\bullet^{\ord} \scD_{X \times \bC_t}$.
\end{conv}

$V^{\bullet} \scD_{X \times \bC_t}$ is the $V$-filtration on $\scD_{X \times \bC_t}$ along $\{t=0\}$, cf. Definition \ref{def-inducedVfiltration}. One checks that $V^{\bullet}$ induces an exhaustive, separated, and decreasing filtration on $\gr^{\ord} \scD_{X \times \bC_t}$, cf. the prelude to Lemma 3.7 \cite{CNS22}.

\begin{defn} \label{defn-widetildeVfiltration}
    We define $\widetilde{V}^{\bullet} \gr^{\ord} \scD_{X \times \bC_t}$ to be the filtration described above, i.e. the one induced by $V^{\bullet}$ on $\gr^{\ord} \scD_{X \times \bC_t}$. 
\end{defn}

Recall (see the first display equation after Lemma 3.6 of \cite{CNS22}), in local coordinates, we have
\begin{equation} \label{eqn - incudedVonGrexplicit}
\widetilde{V}^k \gr^{\ord} \scD_{X \times \bC_t, (\fx, 0)}  \simeq \left\{ \sum_q \sum_I \bigg( \sum_{p = 0}^\infty g_{I,q,p} (x) t^p \bigg) \xi^I T^q \mid g_{I,q,p}(x) \neq 0 \implies p-q \geq k \right\}
\end{equation}
where $\sum_{p=0}^\infty g_{I,p,q}(x) t^p \in \scO_{X \times \bC_t, (\fx, 0)}$ and $g_{I,p,q}(x) \in \scO_{X,\hspace{1pt}\fx}$ and $\xi^I$ is standard multi-index notation. Note that $\sum_q$ and $\sum_I$ are finite sums.

We establish a useful criterion for constructing certain $\widetilde{V}^\bullet \gr^{\ord} \scD_{X \times \bC_t, (\fx, 0)}$-respecting bases:

\begin{lem} \label{lem-IndVRespBasisonGrOrd}
    Suppose that $A_1, \ldots, A_m \in \gr^{\ord} \scD_{X,\hspace{1pt}\fx}$. Then $A_1, \ldots, A_m \in \gr^{\ord} \scD_{X \times \bC_t, (\fx, 0)}$ is a $\widetilde{V}^\bullet \gr^{\ord} \scD_{X \times \bC_t, (\fx, 0)}$-respecting basis.
\end{lem}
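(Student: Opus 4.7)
The plan is to exploit the fact that each $A_i \in \gr^{\ord} \scD_{X,\fx}$ involves neither $t$ nor $T = \gr^{\ord}(\partial_t)$, so that left-multiplication by $A_i$ preserves the $(p, q)$-bigrading on $\gr^{\ord} \scD_{X \times \bC_t, (\fx, 0)}$ visible in \eqref{eqn - incudedVonGrexplicit}. In particular, the filtration $\widetilde V^\bullet$ is the filtration by ``$p - q \geq k$'' and each element of the ring decomposes canonically into its $(p - q) = j$ homogeneous pieces, for $j$ ranging over $\bZ$. Multiplication by any $A_i$ respects this decomposition, since it does not change the $(p, q)$-bidegree of any monomial.

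Given $B \in \gr^{\ord} \scD_{X \times \bC_t, (\fx, 0)} \cdot (A_1, \ldots, A_m)$, I will first set $k := \ord^{\widetilde V}(B)$, write $B = \sum_i L_i A_i$ for some $L_i$, and decompose each $L_i = \sum_j L_i^{(j)}$ into its $(p - q = j)$-components. Because multiplication by $A_i$ preserves the $(p - q)$-grading, the $(p - q = j)$-component of $B$ equals $\sum_i L_i^{(j)} A_i$. Since $B \in \widetilde V^k$ means exactly that its $(p - q = j)$-components vanish for $j < k$, this forces $\sum_i L_i^{(j)} A_i = 0$ for every $j < k$.

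The second step is to set $K_i := \sum_{j \geq k} L_i^{(j)} = L_i - \sum_{j < k} L_i^{(j)}$. The right-hand expression makes it clear that $K_i$ genuinely lies in $\gr^{\ord} \scD_{X \times \bC_t, (\fx, 0)}$: for each of the finitely many $\xi^I T^q$-monomials appearing in $L_i$, the constraint $p \geq 0$ forces $j \geq -q$ in $L_i^{(j)}$, so only finitely many $j < k$ actually contribute to the subtracted sum, making it a genuine polynomial-in-$t$ expression rather than a divergent series. By construction $K_i \in \widetilde V^k$, and the vanishing from the previous step yields $\sum_i K_i A_i = \sum_{j \geq k} \sum_i L_i^{(j)} A_i = \sum_j \sum_i L_i^{(j)} A_i = B$. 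Since $\ord^{\widetilde V}(A_i) \geq 0$ and $\ord^{\widetilde V}(K_i) \geq k = \ord^{\widetilde V}(B)$, this is a $\widetilde V^\bullet$-standard representation in the decreasing-filtration sense of Definition \ref{def-GRespBasis}.

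I do not expect a serious obstacle here; the argument is essentially formal bookkeeping that the $(p - q)$-grading interacts well with the subring $\gr^{\ord} \scD_{X, \fx} \subseteq \gr^{\ord} \scD_{X \times \bC_t, (\fx, 0)}$. The only point requiring attention is the verification that the truncation $K_i$ remains in the ring, and this is handled by the finiteness observation above.
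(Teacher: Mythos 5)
Your proof is correct, but it takes a somewhat different route from the paper's, and arguably a cleaner one. The paper first decomposes $B$ by $T$-degree, writing $B = \sum_q M_q T^q$, then for each $q$ determines the $t$-order $\tau_q$ of $M_q$ and reduces modulo $t^{\tau_q}$ in the free module $\bigoplus_{v < \tau_q} \scO_{X,\hspace{1pt}\fx}[\xi]\cdot t^v$ to kill the low $t$-degree part of each coefficient, finishing by summing standard representations across $q$. You instead work directly with the $(p-q)$-grading underlying $\widetilde{V}^\bullet$, making explicit the structural observation that is only implicit in the paper: since each $A_i$ lies in the $(p,q)=(0,0)$ piece, left-multiplication by $A_i$ preserves $(p-q)$-degree, so the $(p-q=j)$-component of $B = \sum_i L_i A_i$ is exactly $\sum_i L_i^{(j)} A_i$. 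From there the vanishing of all components with $j < k := \ord^{\widetilde{V}}(B)$ is immediate, and the single truncation $K_i := L_i - \sum_{j<k} L_i^{(j)}$ produces the standard representation in one step, with no $T$-stratification or quotient module needed. The finiteness check you flag, that only the finitely many $j$ with $-Q \le j < k$ (where $Q$ bounds the $T$-degree of $L_i$) contribute to the subtracted sum, is precisely the point requiring care, and your argument handles it correctly. As a minor remark, the paper has a sign typo where it writes $\tau_q - q \le \ord^{\widetilde{V}}(B)$ mid-proof; the correct inequality $\tau_q - q \ge \ord^{\widetilde{V}}(B)$ appears in its final sentence and is what your phrasing gives directly.
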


\begin{proof}
    Take $B \in \gr^{\ord} \scD_{X \times \bC_t, (\fx, 0)} \cdot (A_1, \ldots, A_m)$. Since $T$ is a new indeterminant in a polynomial ring extension, we may find $Q \in \mathbb{Z}_{\geq 0}$ such that
    \begin{align*}
        B = \sum_{q = 0}^Q  \bigg[ \underbrace{ \sum_\ell \bigg[ \sum_I \bigg( \sum_{p=0}^\infty g_{I,q,p}^\ell(x) t^p \bigg) \xi_I \bigg] A_\ell }_{:= M_q} \bigg]  T^q
    \end{align*}
    Fix $0 \leq q \leq Q$ and assume that $M_q \neq 0$. Let $\tau_q$ be the largest power such that $M_q \in t^{\tau_q} \cdot \scO_{X \times \bC_t, (\fx,0)}[\zeta_1, \dots, \zeta_n]$. By definition of $\widetilde{V}^\bullet \scO_{X \times \bC_t, (\fx, 0)}$, we must have $\tau_q - q \leq \ord^{\widetilde{V}}(B)$. By reducing modulo $t^{\tau_q}$ we see that
    \begin{equation*}
        M_{q, < \tau_q} = \sum_{p=0}^{\tau_q - 1} \bigg[ \sum_\ell  \bigg( \sum_I  g_{i,q,p}^\ell(x)  \xi_I \bigg) A_\ell \bigg] t^p \in \bigoplus_{v = 0}^{\tau_q - 1} \scO_{X,\hspace{1pt}\fx} [\xi_1, \dots, \xi_n] \cdot t^v
    \end{equation*}
    equals $0$. That is,
    \begin{equation*}
        M_q = \sum_\ell \bigg[ \sum_I \bigg( \sum_{p \geq \tau_q}^\infty g_{I,q,p}^\ell (x) t^p \bigg) \xi_I \bigg] A_\ell.
    \end{equation*}
    In other words, $M_q T^q$ has a $\widetilde{V}^\bullet \gr^{\ord} \scD_{X \times \bC_t, (\fx, 0)}$-standard representation with respect to $A_1, \ldots, A_m$, since $\tau_q - q \geq \ord^{\widetilde{V}}(B)$ and $\ord^{\widetilde{V}}(A_\ell) = 0$. (When $M_q = 0$, there is trivially a standard representation of $M_q T^q$.) Since standard representations are well-behaved with respect to sums, we have exhibited a $\widetilde{V}^\bullet \gr^{\ord} \scD_{X \times \bC_t, (\fx,0)}$-standard representation of $B$.
\end{proof}

\subsection{Hypotheses for Main Theorems}

Our main theorems require three hypotheses on $f \in \scO_X$:

\begin{hyp} \label{hyp-mainHypotheses}
We will often require the following hypotheses on $f \in \scO_X$:
\begin{enumerate}[label=\alph*)]
    \item $f$ is Euler homogeneous (on $X$);
    \item $f$ is \emph{parametrically prime}, i.e. $\gr^{\sharp}(\ann_{\scD_{X,\fx}[s]} f^{s-1})$ is prime locally everywhere;
    \item the zeroes of the Bernstein-Sato polynomial of $f$ live in $(-2,0)$ locally everywhere.
\end{enumerate}
\end{hyp}

We have already considered condition c) extensively. For us, its potency lies in Proposition \ref{prop-funnyBSpolyInducedV} and Proposition \ref{propcondnBS-new}. Conditions a) and b) have different philosophical roles: we will shortly see their utility in executing Gr{\"o}bner basis arguments. As they are new, they must be defined. Condition b) involves principal symbols of $\ann_{\scD_{X}[s]} f^{s-1}$:

\begin{defn} \label{def-parametricallyPrime}
    Let $f \in \scO_X$ and consider $\ann_{\scD_X[s]} f^{s-1}$,  the $\scD_X[s]$-annihilator of $f^{s-1}$. Recall that $\gr^\sharp (\ann_{\scD_X[s]} f^{s-1}) \subseteq \gr^\sharp \scD_X[s]$ denotes the ideal of principal symbols of $\ann_{\scD_X[s]} f^{s-1}$ with respect to the total order filtration $F^\sharp_{\bullet} \scD_X[s]$ (Definition \ref{def-totalOrderFiltration}). We say that $f$ is \emph{parametrically prime at $\fx \in X$} when $\gr^\sharp (\ann_{\scD_{X,\fx}[s]} f^{s-1})$ is a prime $(\gr^{\sharp} \scD_{X,\fx}[s]$)-ideal. We say that $f \in \scO_X$ is \emph{parametrically prime} when it is parametrically prime at all $\fx \in X$. 
\end{defn}

Condition a) evokes the criterion of homogeneity for standard graded polynomials:

\begin{defn} \label{def-EulerHom} \enspace 
    \begin{enumerate}[label=\roman*)]
        \item Let $f \in \scO_{X,\hspace{1pt}\fx}$ be a function germ. An \emph{Euler vector field} or \emph{Euler homogeneity} $E$ is a derivation $E \in \Der_{\bC}(\scO_{X,\hspace{1pt}\fx})$ such that $E \cdot f = f$. A \emph{strong Euler vector field} or \emph{strong Euler homogeneity $E$} is a Euler homogeneity $E$ such that $E$ vanishes at $\fx$ (i.e. $E \in \mathfrak{m}_{X,\fx} \cdot \Der_{\bC}(\scO_{X,\hspace{1pt}\fx})$). We say that $f$ is \emph{(strongly) Euler homogeneous} if it admits a (strong) Euler homogeneity.
        \item We say $f \in \scO_X$ is \emph{(strongly) Euler homogeneous at $\fx \in X$} if the function germ is $f \in \scO_{X,\hspace{1pt}\fx}$ is (strongly) Euler homogeneous at $\fx$. We say $f \in \scO_X$ is \emph{(strongly) Euler homogeneous} if it is Euler homogeneous at all $\fx \in \text{Var}(f)$. We say \emph{$f$ is (strongly) Euler homogeneous with (strong) Euler homogeneity $E$} when the localization of $E$ is a (strong) Euler homogeneity for all $\fx \in \text{Var}(f)$.
        \item We say a (reduced, effective) divisor $D \subseteq X$ is \emph{(strongly) Euler homogeneous at $\fx \in X$} if it admits a local defining equation $f \in \scO_{X,\hspace{1pt}\fx}$ such that $f$ is (strongly) Euler homogeneous. We say $D$ is (strongly) Euler homogeneous if it is (strongly) Euler homogeneous at all $\fx \in D$. 
    \end{enumerate}
\end{defn}

\begin{rem} \label{rmk-EulerHomBasics} \enspace
    \begin{enumerate}
        \item Euler homogeneity is an open condition: if $f \in \scO_X$ is Euler homogeneous at $\fx \in X$ with Euler homogeneity $E$, then $E$ is an Euler homogeneity for $f$ on an open $U \ni \fx$. Strong Euler homogeneity is not open.
        \item Euler homogeneity at $\fx$ is nothing more than requiring that $f \in (\partial_1 \cdot f, \ldots, \partial_n \cdot f) \subseteq \scO_{X,\hspace{1pt}\fx}$. We have used local coordinates to describe the Jacobian ideal, but this membership is not coordinate dependent.
        \item \label{item-StrongEulerChoice} Let $f \in \scO_{X,\hspace{1pt}\fx}$ and $u \in \scO_{X,\hspace{1pt}\fx}$ a unit. Then it is possible for $f$ to be Euler homogeneous while $uf$ is not Euler homogeneous, cf. Example 2.8.(2) \cite{Wal17}. In contrast, $f$ is strongly Euler homogeneous if and only if $uf$ is strongly Euler homogeneous. Indeed, if $E$ is a strong Euler homogeneity of $f$, then $uE / (u + E \cdot u)$ is a strong Euler homogeneity of $uf$, cf. Example 2.8.(3) \cite{Wal17}.
        \item \label{item-OtherDivMainHyp} The $f \in \scO_X$ considered in \cite{CNS22}, namely strongly Koszul free divisors, are both parametrically prime and strongly Euler homogeneous. As are $f \in \scO_X$ of \emph{linear Jacobian type}, see Definition \ref{def-linearJacType} and Proposition \ref{propLJ0-new}. The latter includes the class of $f \in \scO_X$ considered in the main theorems of \cite{Wal17} Section 3 and in particular those of \cite{Bath24}.
        \item One can check that for $f \in \scO_{X,\hspace{1pt}\fx}$, being parametrically prime at $\fx$ equates to $\gr^\sharp(\ann_{\scD_{X,\fx}[s]} f^{as + b})$ being prime for any $a \in \bC^\star, b \in \bC$.
        \item Macaulay2 experiments suggest that \emph{most} Euler homogeneous germs are parametrically prime.
    \end{enumerate}
\end{rem}

When $f \in \scO_X$ is Euler homogeneous with Euler homogeneity $E$, we saw in Proposition \ref{prop-presentations}'s proof that $\ann_{\scD_{X}[s]} f^{s-1} = \scD_X[s] \cdot \ann_{\scD_X} f^{s-1} + \scD_X[s] \cdot (E - (s-1))$. This suggests being parametrically prime is linked to the primality of $\gr^{\ord} (\ann_{\scD_X}f^{s-1}) \subseteq \gr^{\ord} \scD_X$, the ideal of principal symbols of $\ann_{\scD_X}f^{s-1}$ with respect to the order filtration. In fact, it is the primality of $\gr^{\ord} (\ann_{\scD_X}f^{s-1})$ that is most useful to us. We prove:

\begin{prop} \label{prop-ParaPrimeEquivalent}
    Suppose $f \in \scO_{X,\hspace{1pt}\fx}$ is Euler homogeneous. Then $\gr^\sharp (\ann_{\scD_{X,\fx}[s]} f^{s-1})$ is prime if and only if $\gr^{\ord} (\ann_{\scD_{X,\fx}} f^{s-1})$ is prime.
\end{prop}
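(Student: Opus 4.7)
The plan is to exploit Euler homogeneity together with the Gr\"obner-style machinery of Section 4.1 to identify the two quotient rings in question, so that the primality equivalence becomes tautological. As recalled in the proof of Proposition \ref{prop-presentations}, Euler homogeneity of $f$ yields
\[\ann_{\scD_{X,\hspace{1pt}\fx}[s]} f^{s-1} = \scD_{X,\hspace{1pt}\fx}[s]\cdot \ann_{\scD_{X,\hspace{1pt}\fx}} f^{s-1} + \scD_{X,\hspace{1pt}\fx}[s]\cdot(E - s + 1).\]
Since $s$ commutes with $\scD_{X,\hspace{1pt}\fx}$, a direct bookkeeping check gives $\gr^{\sharp}\scD_{X,\hspace{1pt}\fx}[s] \cong (\gr^{\ord}\scD_{X,\hspace{1pt}\fx})[\sigma]$ with $\sigma$ (in degree one) the total-order symbol of $s$. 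Crucially, $E-s+1$ has total order one with symbol $\gr^{\ord}(E)-\sigma$: linear and monic in $\sigma$.

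The first step is to show that any $F_\bullet^{\ord}$-Gr\"obner basis $g_1,\dots,g_r$ of $\ann_{\scD_{X,\hspace{1pt}\fx}} f^{s-1} \subseteq \scD_{X,\hspace{1pt}\fx}$ (which exists and is automatically $F_\bullet^{\ord}$-respecting by Lemma \ref{lem-BoundedFiltration}, as $F_\bullet^{\ord}$ is bounded below) remains $F_\bullet^{\sharp}$-respecting when viewed inside $\scD_{X,\hspace{1pt}\fx}[s]$. This uses the direct-sum decomposition $\scD_{X,\hspace{1pt}\fx}[s] = \bigoplus_{j\geq 0}\scD_{X,\hspace{1pt}\fx}\cdot s^j$: an element $b = \sum_j b_j s^j$ of the extended ideal has each $s$-coefficient $b_j$ in $\ann_{\scD_{X,\hspace{1pt}\fx}} f^{s-1}$; expanding each $b_j$ in an $F_\bullet^{\ord}$-standard representation with respect to the $g_i$ and reassembling yields an $F_\bullet^{\sharp}$-standard representation of $b$, since the total order of each reassembled term matches the expected sum.

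Next, the quotient $\bigl(\gr^{\ord}\scD_{X,\hspace{1pt}\fx}/\gr^{\ord}(\ann_{\scD_{X,\hspace{1pt}\fx}}f^{s-1})\bigr)[\sigma]$ is a polynomial ring over a commutative ring, so the monic-linear element $\sigma - \gr^{\ord}(E)$ is a non-zero-divisor therein. Lemma \ref{lem-GRespNZDs} then promotes $g_1,\dots,g_r,\,E-s+1$ to an $F_\bullet^{\sharp}$-respecting, hence Gr\"obner, basis of $\ann_{\scD_{X,\hspace{1pt}\fx}[s]} f^{s-1}$ (Lemma \ref{lem-GRespImpliesGrobner}). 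Reading off initial ideals gives
\[\gr^{\sharp}(\ann_{\scD_{X,\hspace{1pt}\fx}[s]} f^{s-1}) = (\gr^{\ord}\scD_{X,\hspace{1pt}\fx})[\sigma]\cdot\gr^{\ord}(\ann_{\scD_{X,\hspace{1pt}\fx}} f^{s-1}) + (\gr^{\ord}\scD_{X,\hspace{1pt}\fx})[\sigma]\cdot(\sigma - \gr^{\ord}(E)).\]

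To finish, the evaluation isomorphism $(\gr^{\ord}\scD_{X,\hspace{1pt}\fx})[\sigma]/(\sigma - \gr^{\ord}(E)) \xrightarrow{\sim} \gr^{\ord}\scD_{X,\hspace{1pt}\fx}$ identifies $\gr^\sharp\scD_{X,\hspace{1pt}\fx}[s]/\gr^\sharp(\ann_{\scD_{X,\hspace{1pt}\fx}[s]} f^{s-1})$ with $\gr^{\ord}\scD_{X,\hspace{1pt}\fx}/\gr^{\ord}(\ann_{\scD_{X,\hspace{1pt}\fx}} f^{s-1})$, and two isomorphic rings are simultaneously domains or not. The only step requiring genuine attention is the verification that the $g_i$ retain their respecting-basis status after passage to $\scD_{X,\hspace{1pt}\fx}[s]$; everything else is automatic once Euler homogeneity has made the extra relation linear and monic in the new $\sigma$-direction.
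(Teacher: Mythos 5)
Your proof is correct, and it takes a genuinely different (and in some ways stronger) route than the one in the paper. The paper never establishes the unconditional ideal identity
\[
\gr^\sharp\bigl(\ann_{\scD_{X,\hspace{1pt}\fx}[s]} f^{s-1}\bigr) \;=\; (\gr^{\ord}\scD_{X,\hspace{1pt}\fx})[\sigma]\cdot \gr^{\ord}\bigl(\ann_{\scD_{X,\hspace{1pt}\fx}} f^{s-1}\bigr) + (\gr^{\ord}\scD_{X,\hspace{1pt}\fx})[\sigma]\cdot\bigl(\sigma - \gr^{\ord}(E)\bigr);
\]
instead, in each direction it invokes the assumed primality together with a height count (comparing against the generic behaviour at smooth points near $\fx$, where the characteristic variety has codimension $n-1$, resp. $n$) to force the ``Claim'' containment $J\subseteq\gr^{\ord}(\ann_{\scD_{X,\fx}} f^{s-1})$ up to equality. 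You instead \emph{prove} the ideal identity directly, using the Gr\"obner-respecting-basis machinery the paper set up for other purposes: start from any $F_\bullet^{\ord}$-Gr\"obner basis of $\ann_{\scD_{X,\fx}} f^{s-1}$ (which is automatically $F_\bullet^{\ord}$-respecting by Lemma~\ref{lem-BoundedFiltration}, since $F_\bullet^{\ord}$ is bounded), observe via the $\bigoplus_j \scD_{X,\fx}\,s^j$ decomposition that it stays $F_\bullet^\sharp$-respecting in $\scD_{X,\fx}[s]$ for the extended ideal, and then adjoin $E-s+1$ via Lemma~\ref{lem-GRespNZDs}, using that its symbol $\gr^{\ord}(E)-\sigma$ is monic-linear in $\sigma$ and hence a non-zero-divisor on the polynomial ring $\bigl(\gr^{\ord}\scD_{X,\fx}/\gr^{\ord}(\ann_{\scD_{X,\fx}} f^{s-1})\bigr)[\sigma]$. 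The evaluation isomorphism $\sigma\mapsto\gr^{\ord}(E)$ then identifies the two quotient rings outright. Your argument buys a cleaner single-pass proof with no dimension bookkeeping, plus the stronger unconditional statement that the initial ideal of $\ann_{\scD_{X,\fx}[s]} f^{s-1}$ is generated by $\gr^{\ord}(\ann_{\scD_{X,\fx}} f^{s-1})$ together with $\gr^{\ord}(E)-\sigma$; the paper's argument is shorter to state for a reader who has not internalised the respecting-basis lemmas, but proves less. One small point worth making explicit in your write-up: the step that each $b_j$-standard representation reassembles into an $F_\bullet^\sharp$-standard one hinges on $\ord^\sharp\bigl(\sum_j b_j s^j\bigr) = \max_j\bigl(\ord^{\ord}(b_j)+j\bigr)$, which holds because $F_k^\sharp\scD_{X,\fx}[s] = \bigoplus_j F_{k-j}^{\ord}\scD_{X,\fx}\cdot s^j$; you gesture at this with ``a direct bookkeeping check,'' and it is indeed routine, but it is the one place where the structure of the total order filtration is genuinely used.
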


\begin{proof}
    Throughout, let $E$ be the Euler homogeneity of $f$ at $\fx$. 
    
    We first prove the harder fact: $\gr^\sharp (\ann_{\scD_{X,\fx}[s]} f^{s-1})$ is prime $\implies \gr^{\ord} (\ann_{\scD_{X,\fx}} f^{s-1})$ is prime. So presume the antecedent. As $E - (s-1) \in \ann_{\scD_{X,\fx}[s]} f^{s-1}$, certainly $\gr^{\ord}(E) - s \in \gr^\sharp (\ann_{\scD_{X,\fx}[s]} f^{s-1})$. So if $Q(s) = \sum Q_t s^t \in \gr^\sharp (\ann_{\scD_{X,\fx}[s]} f^{s-1})$ where $Q_t \in \gr^{\ord} \scD_{X,\fx}$, we may write $Q(s) = (\sum Q_t \gr^{\ord}(E)^t ) + R(s) (\gr^{\ord}(E) - s)$ for some $R(s) \in \gr^{\sharp} \scD_{X,\fx}[s]$. Therefore we may find a $(\gr^{\ord} \scD_{X,\fx})$-ideal $J$ such that
    \begin{equation*}
        \gr^{\sharp} (\ann_{\scD_{X,\fx}[s]} f^{s-1}) = \gr^{\sharp} \scD_{X,\fx}[s] \cdot J + \gr^{\sharp} \scD_{\scD_{X,\fx}[s]} \cdot (\gr^{\ord}(E) - s).
    \end{equation*}

    \emph{Claim:} $J \subseteq \gr^{\ord} (\ann_{\scD_{X,\fx}} f^{s-1})$.

    If we prove \emph{Claim} we will be done. Indeed, since we have assumed that $\gr^{\sharp} (\ann_{\scD_{X,\fx}[s]} f^{s-1})$ is prime, necessarily of height $n$ (consider smooth points near $\fx$), we deduce that $J \subseteq \gr^{\ord} \scD_{X,\fx}$ is prime of height $n-1$. Since $\gr^{\ord}(\ann_{\scD_{X,\fx}} f^{s-1})$ is generically, i.e. at smooth points near $\fx$, prime of height $n-1$, and since $J \subseteq \gr^{\ord}(\ann_{\scD_{X,\fx}} f^{s-1})$, the primality plus a dimension count forces $J = \gr^{\ord}(\ann_{\scD_{X,\fx}} f^{s-1})$.
    
    \emph{Proof of Claim:} Take $p \in J$ of degree $r$ and select a lift $A(s) \in \ann_{\scD_{X}[s]} f^{s-1}$ such that $\gr^{\sharp}(A(s)) = p$. Since $A(s)$ has total order $r$ we may write $A(s) = \sum_{k = 0}^r P_{r-k} s^k$ where $P_{r-k} \in F_{r-k}^{\ord} \scD_{X,\fx}$; since $\gr^{\sharp}(A(s)) = p \in \gr^{\ord} \scD_{X,\fx}$ is (homogeneous) of degree $r$, we deduce both $\gr^{\sharp}(A(s)) = \gr^{\ord}(P_r) = p$ and $P_{r-k} \in F_{r-k-1}^{\ord} \scD_{X,\fx}$ for all $0 \leq k \leq r-1$. 
    
    Now consider $B := \sum_{k=0}^r P_{r-k}(E + 1)^k \in \scD_{X,\fx}$. Observe:
    \begin{equation*}
        B \cdot f^{s-1} = \bigg( \sum_{k=0}^r P_{r-k} (E+1)^k \bigg) \cdot f^{s-1}= \bigg( \sum_{k=0}^r P_{r-k} s^k \bigg) \cdot f^{s-1} = A(s) \cdot f^{s-1} = 0.
    \end{equation*}
    So $B \in \ann_{\scD_{X,\fx}} f^{s-1}$. Because $P_{r-k} (E + 1)^k \in (F_{r-k-1}^{\ord} \scD_{X,\fx}) \cdot (F_{k}^{\ord} \scD_{X,\fx}) \subseteq F_{r-1}^{\ord} \scD_{X,\fx}$ for all $1 \leq k \leq r$, we know that $\gr^{\ord}(B) = \gr^{\ord}(P_r) = p$. This demonstrates that $J \subseteq \gr^{\ord} \ann_{\scD_{X,\fx}} f^{s-1}$. $\square$

        \vspace{5mm}
    
    Now we prove that $\gr^{\ord} (\ann_{\scD_{X,\fx}} f^{s-1})$ is prime $\implies \gr^\sharp (\ann_{\scD_{X,\fx}[s]} f^{s-1})$ is prime. Assuming $\gr^{\ord} (\ann_{\scD_{X,\fx}} f^{s-1}) \subseteq \gr^{\ord} \scD_{X,\fx}$ is prime, necessarily of height $n-1$, we see that the $(\gr^\sharp \scD_{X,\fx}[s])$-ideal on right hand side of
    \begin{equation} \label{eqn-parametricPrime-2}
        \gr^{\sharp} (\ann_{\scD_{X,\fx}[s]} f^{s-1}) \supseteq \gr^{\sharp} \scD_{X,\fx}[s] \cdot \gr^{\ord} (\ann_{\scD_{X,\fx}} f^{s-1}) + \gr^{\sharp} \scD_{X,\fx}[s] \cdot (\gr^{\ord}(E) - s)
    \end{equation} 
    is prime of height $n$. To prove equality in \eqref{eqn-parametricPrime-2} it suffices to confirm that $\gr^{\sharp} (\ann_{\scD_{X,\fx}[s]} f^{s-1})$ is generically prime of height $n$. But this can be checked at a smooth points near $\fx$, as in the discussion after \emph{Claim}.
\end{proof}

\begin{eg} \label{ex:uliQuestion}
    In Question 5.9 of \cite{Wal17}, Walther asked: for an arbitrary $f \in \scO_{X,\hspace{1pt}\fx}$, is $\gr^{\ord}(\ann_{\scD_{X,\fx}} f^{s-1}) \subseteq \gr^{\ord} \scD_{X,\fx}$ always prime? The answer is no. Consider $f = (x_1 x_3 + x_2)(x_1^4 - x_2^4) \in \mathbb{C}[x_1,x_2,x_3]$, an example studied in subsection 3.3 of \cite{LinearityNarvaezMacarro}. Macaulay2 claims that $\gr^{\ord} (\ann_{\scD_{X,0}} f^{s-1}) \subseteq \gr^{\ord} \scD_{X,0} \simeq \scO_{X,0}[y_1, y_2, y_3] $ has $(x_1, x_2, y_3) \subseteq \scO_{X,0}[y_1, y_2, y_3]$ as an embedded prime. Here $f$ is Euler homogeneous and free but it is not Saito-holonomic: all logarithmic derivations of $f$ vanish along $\{x_3 = 0\}$.  (Note that $\gr ^{\ord} \ann_{\scD_{X,0}} f^{s-1}$ is always generically prime and in the Euler homogeneous case $\text{rad} (\gr^{\ord} \ann_{\scD_{X,0}} f^{s-1})$ is always prime and its zero locus is given explicitly by, for example, Proposition 21 of \cite{MaisonobeFiltrationRel}. So here Walther's question is about the existence of embedded primes.) 
\end{eg}

\subsection{Hodge versus Order}

This subsection is devoted to proving the following theorem:

\begin{thm} \label{thmmain-new}
Assume that $f \in \scO_X$ satisfies Hypotheses \ref{hyp-mainHypotheses}: $f$ is Euler homogeneous; $f$ is parametrically prime; the roots of the Bernstein-Sato polynomial of $f$ are contained in $(-2,0)$ locally everywhere. Then, for every non-negative integer $k$, 
\begin{equation*}
    F_k^H\scO_X(*f)\subseteq F_k^{\ord} \scO_X(*f).
\end{equation*}

Moreover, consider a reduced effective divisor $D$ (not necessarily globally defined) that is: strongly Euler homogeneous, $\gr^{\ord} (\ann_{\scD_X} f^{s-1}) \subseteq \gr^{\ord} \scD_X$ is prime (locally everywhere, for some choice of defining equation), and the roots of the Bernstein-Sato polynomial of $D$ are contained in $(-2,0)$ locally everywhere. Then for every non-negative integer $k$, 
\begin{equation*}
    F_k^H\scO_X(*D)\subseteq F_k^{\text{\emph{ord}}}\scO_X(*D).
\end{equation*}
\end{thm}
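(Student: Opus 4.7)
The plan is to reduce to a compatibility statement between the $t$-order and induced order filtrations on $V^0 i_{f,+}\scO_X(*f)$, and then to prove that compatibility via a Gr\"obner-style argument powered by the three hypotheses. By Lemma \ref{lemorderfilt2-new}, the desired containment $F_k^H \scO_X(*f) \subseteq F_k^{\ord}\scO_X(*f)$ is equivalent to the equality $F_{k+1}^H V^0 i_{f,+}\scO_X(*f) = F_k^{\ord} V^0 i_{f,+}\scO_X(*f)$. Lemma \ref{lemHtordV} identifies the left hand side with $F_k^{\Torder} V^0 i_{f,+}\scO_X(*f)$, and since the order filtration is a refinement of the $t$-order filtration on $i_{f,+}\scO_X(*f)$ (cf.\ Lemma \ref{lem-OrderFilGraph}), we always have $F_k^{\ord} \subseteq F_k^{\Torder}$. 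The whole theorem therefore reduces to establishing the reverse containment
\[
F_k^{\Torder} i_{f,+}\scO_X(*f) \,\cap\, V^0 i_{f,+}\scO_X(*f) \;\subseteq\; F_k^{\ord} i_{f,+}\scO_X(*f).
\]
The statement is local, so I would work at a stalk $\fx$ with a fixed Euler vector field $E$ for $f$.

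For the lifting step, Proposition \ref{propcondnBS-new} writes $V^0 i_{f,+}\scO_{X,\hspace{1pt}\fx}(*f) = \pi_f(\Gamma_f)$ where $\Gamma_f = \scD_{X,\hspace{1pt}\fx}[s]\cdot f + \scD_{X,\hspace{1pt}\fx}[s]\cdot \beta_f(-s) + \ann_{\scD_{X,\hspace{1pt}\fx}[s]}f^{s-1}$, and directly from the definition $\pi_f(P(s)) = P(-\partial_t t)\cdot f^{-1}\delta$ together with the fact that $\partial_t t$ has order one in $\scD_{X \times \bC_t}$ one obtains $\pi_f(F_k^{\sharp}\scD_{X,\hspace{1pt}\fx}[s]) \subseteq F_k^{\ord} i_{f,+}\scO_{X,\hspace{1pt}\fx}(*f)$. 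Consequently it suffices to prove that every $u \in \pi_f(\Gamma_f)$ of $t$-order at most $k$ admits a lift $P(s) \in \Gamma_f \cap F_k^{\sharp}\scD_{X,\hspace{1pt}\fx}[s]$; in other words, that lifts through $\pi_f$ may be chosen of total order bounded by the $t$-order of the image.

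This lifting problem is the main obstacle, and the anticipated tool is the iterated Gr\"obner argument promised by Proposition \ref{propcomp-new}. I would approach it through the presentation of $i_{f,+}\scO_{X,\hspace{1pt}\fx}(*f)$ from Proposition \ref{prop-presentations}, where the cokernel ideal is
\[
J = \scD_{X \times \bC_t, (\fx, 0)} \cdot (t-f,\, E + f\partial_t + 1) + \scD_{X \times \bC_t, (\fx, 0)} \cdot \ann_{\scD_{X,\hspace{1pt}\fx}} f^{s-1}.
\]
Euler homogeneity supplies the generator $E + f\partial_t + 1$, while parametric primality---upgraded via Proposition \ref{prop-ParaPrimeEquivalent} to primality of $\gr^{\ord}(\ann_{\scD_{X,\hspace{1pt}\fx}} f^{s-1})$---provides exactly the non-zero-divisor input required by Lemma \ref{lem-GRespNZDs}. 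The plan is to start from a $\widetilde V^{\bullet}$-respecting basis of $\gr^{\ord}(\ann_{\scD_{X,\hspace{1pt}\fx}} f^{s-1})$ furnished by Lemma \ref{lem-IndVRespBasisonGrOrd}, successively adjoin the principal symbols of $t - f$ and $E + f\partial_t + 1$, and at each step invoke Lemma \ref{lem-GRespNZDs} after checking that the new symbol is a non-zero-divisor on the relevant quotient---primality of the base ideal is precisely what makes these checks tractable. The third hypothesis $Z(b_f(s)) \subseteq (-2,0)$ feeds in through Proposition \ref{prop-funnyBSpolyInducedV}, yielding both the $\Gamma_f$-description of $V^0$ and the translation between $V^{\bullet}$ and $V_{\ind}^{\bullet}$ used throughout. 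The moreover clause then follows by picking a local defining equation: strong Euler homogeneity (rather than mere Euler homogeneity) is required because it is preserved under multiplying $f$ by a unit (cf.\ Remark \ref{rmk-EulerHomBasics}), and primality of $\gr^{\ord}\ann_{\scD_X} f^{s-1}$ together with the Bernstein-Sato hypothesis are likewise unit-stable, so the local theorem glues to the stated global assertion about $\scO_X(*D)$.
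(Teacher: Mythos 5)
Your reduction is on target: by Lemma \ref{lemorderfilt2-new} the theorem is equivalent to $F_{k+1}^H V^0 i_{f,+}\scO_X(*f) = F_k^{\ord} V^0 i_{f,+}\scO_X(*f)$, by Lemma \ref{lemHtordV} the left side is $F_k^{\Torder} V^0 i_{f,+}\scO_X(*f)$, and the inclusion $F_k^{\ord} \subseteq F_k^{\Torder}$ is automatic, so the real work is showing $F_k^{\Torder} V^0 i_{f,+}\scO_X(*f) \subseteq F_k^{\ord} i_{f,+}\scO_X(*f)$. You also correctly identify the compatibility Proposition \ref{propcomp-new} (via Lemma \ref{propgroe-primeHyp(lemma)}, Lemma \ref{lem-GRespNZDs}, and Lemma \ref{lem-IndVRespBasisonGrOrd}) as the Gr{\"o}bner engine that the three hypotheses power, and the ``moreover'' discussion about unit-stability is fine.

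However, there is a genuine gap between the target inclusion and the point where Proposition \ref{propcomp-new} can be invoked. Proposition \ref{propcomp-new} states
\[
F_\ell^{\ord} i_{f,+}\scO_{X,\fx}(*f)\cap V_{\ind}^k i_{f,+}\scO_{X,\fx}(*f) = (F_\ell^{\ord}\scD_{X\times\bC_t,(\fx,0)}\cap V^k\scD_{X\times\bC_t,(\fx,0)})\cdot f^{-1}\delta,
\]
that is, it lifts elements that are \emph{already known to lie in} $F_\ell^{\ord}$. Your starting hypothesis is only a $t$-order bound, $u \in F_k^{\Torder} V^0$, which says nothing about the pole order of the $\partial_t^j$-coefficients of $u$. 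The rephrasing ``lifts through $\pi_f$ may be chosen of total order bounded by the $t$-order of the image'' is, by the (later) Proposition \ref{proppi-new}, literally equivalent to $u \in F_k^{\ord} V^0$, which is what you are trying to prove --- so appealing to it is circular. Nothing in the Gr{\"o}bner machinery alone converts a $t$-order bound into an $\ord$-order bound.

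The missing ingredient is the paper's intermediate ``Claim'' (inside Proposition \ref{propiota-new}): for any $u \in F_k^{\Torder} i_{f,+}\scO_{X,\fx}(*f)$, there is a $p\geq 0$ with $t^p\cdot u \in F_k^{\ord} i_{f,+}\scO_{X,\fx}(*f)$. This is a short induction on $k$ clearing poles one $\partial_t$-coefficient at a time. Since $u \in V^0$ forces $t^p u \in V^p \subseteq V_{\ind}^p$ by Proposition \ref{prop-funnyBSpolyInducedV}, one may now apply Proposition \ref{propcomp-new} to $t^p u \in F_k^{\ord}\cap V_{\ind}^p$ to write $t^p u = P\cdot f^{-1}\delta$ with $P\in F_k^{\ord}\scD_{X\times\bC_t}\cap V^p\scD_{X\times\bC_t}$; then $P = t^p P'$ with $P'\in F_k^{\ord}\scD_{X\times\bC_t}\cap V^0\scD_{X\times\bC_t}$, and invertibility of $t$ on $i_{f,+}\scO_{X,\fx}(*f)$ cancels the $t^p$ to give $u = P'\cdot f^{-1}\delta \in F_k^{\ord}$. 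Your proposal jumps straight from the $t$-order hypothesis to a $\scD_{X,\fx}[s]$-lift in $F_k^{\sharp}$, skipping both the pole-clearing claim and the factor-out-$t^p$ step that make the compatibility proposition applicable.
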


\vspace{5mm}

This is a significant improvement to Theorem 4.4 \cite{CNS22}, where Hypotheses \ref{hyp-mainHypotheses} are replaced with the much more restrictive condition ``strongly Koszul free.'' (See \cite{Nar15} for proof that ``strongly Koszul free divisors'' satisfy Hypotheses \ref{hyp-mainHypotheses}.) Morally, apart from the Bernstein--Sato condition, when $D$ is given by a global defining equation $f \in \scO_X$, the change from loc. cit. to our Theorem \ref{thmmain-new} is: in loc. cit. they require strongly Euler homogeneous and that $\gr^{\ord} \ann_{\scD_X} f^{s-1}$ is a complete intersection and prime; here we only require Euler homogeneous and primality. In the case $D$ is not globally defined, the change from loc. cit. to our Theorem \ref{thmmain-new} is as above minus weakening of homogeneity type. 

The proof is presented through a sequence of lemmas and propositions. We start with two technical Gr{\"o}bner basis style lemmas. Note that since $F_{\bullet}^{\ord} \scD_{X}$ and $F_{\bullet}^{\ord} \scD_{X \times \bC_t}$ are bounded filtrations, the notions of a Gr{\"o}bner basis with respect to $F_{\bullet}^{\ord}$ and of a $F_{\bullet}^{\ord}$-respecting basis coincide (Lemma \ref{lem-BoundedFiltration}).

\begin{lem} \label{propgroe-primeHyp(lemma)}
    Assume that $f \in \scO_{X,\hspace{1pt}\fx}$ has an Euler homogeneity $E$ and that $\gr^{\ord} \ann_{\scD_{X,\hspace{1pt}\fx}} f^{s-1}$ is prime. Let $\zeta_1, \ldots, \zeta_m$ be a Gr{\"o}bner basis of $\ann_{\scD_{X,\hspace{1pt}\fx}} f^{s-1}$ with respect to the order filtration $F_\bullet^{\ord} \scD_{X,\hspace{1pt}\fx}$. Then the following hold:
    \begin{enumerate}[label=\roman*)]
        \item $t-f, E + \partial_t t + 1, \zeta_1, \dots, \zeta_m$ form a Gr{\"o}bner basis with respect to the order filtration $F_\bullet^{\ord} \scD_{X \times \bC_t, (\fx,0)}$; 
        \item $t - f, \gr^{\ord}(E) + Tt, \gr^{\ord}(\zeta_1), \ldots, \gr^{\ord}(\zeta_m) \in \gr^{\ord} \scD_{X \times \bC_t, (\fx, 0)}$ form a $\widetilde{V}^\bullet \gr^{\ord} \scD_{X \times \bC_t, (\fx, 0)}$-respecting basis.
    \end{enumerate}
\end{lem}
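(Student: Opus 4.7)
The strategy is to apply Lemma \ref{lem-GRespNZDs} iteratively, starting from the $\zeta_i$'s. For part (i), since $F_\bullet^{\ord} \scD_{X \times \bC_t, (\fx,0)}$ is bounded, Gr\"obner bases and respecting bases coincide (Lemma \ref{lem-BoundedFiltration}); I work with the latter. My first step is to promote $\zeta_1, \dots, \zeta_m$ from an $F^{\ord}$-respecting basis in $\scD_{X,\hspace{1pt}\fx}$ to one in $\scD_{X \times \bC_t, (\fx,0)}$. Any $b$ in the extended ideal admits a unique expansion $b = \sum_{p,q \geq 0} b_{p,q} t^p \partial_t^q$ with $b_{p,q} \in \ann_{\scD_{X,\hspace{1pt}\fx}} f^{s-1}$, since $t$ and $\partial_t$ commute with the $\zeta_i \in \scD_{X,\hspace{1pt}\fx}$. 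Comparing leading monomials in the symbol ring $\scO_{X,\hspace{1pt}\fx}\{t\}[\xi, T]$ yields $\ord(b) = \max_{p,q}(\ord(b_{p,q}) + q)$; assembling $F^{\ord}$-standard representations of the $b_{p,q}$'s (possible since the $\zeta_i$ form a respecting basis in $\scD_{X,\hspace{1pt}\fx}$) gives a standard representation of $b$.

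With this in hand, I add $t - f$ via Lemma \ref{lem-GRespNZDs}: the relevant quotient is $(\scO_{X,\hspace{1pt}\fx}[\xi]/J)\{t\}[T]$, where $J := \gr^{\ord}(\ann_{\scD_{X,\hspace{1pt}\fx}} f^{s-1})$. By the parametric primality hypothesis together with Proposition \ref{prop-ParaPrimeEquivalent}, $J$ is prime, so this quotient is a domain; since $f \notin J$ (because $f \cdot f^{s-1} \neq 0$), the element $t - f$ is nonzero, hence a non-zero divisor. Then I add $E + \partial_t t + 1$, whose principal symbol is $\gr^{\ord}(E) + tT$: after modding further by $t - f$, the quotient becomes $(\scO_{X,\hspace{1pt}\fx}[\xi]/J)[T]$ via the Weierstrass-type identification $\scO_{X \times \bC_t, (\fx,0)}/(t-f) \simeq \scO_{X,\hspace{1pt}\fx}$, in which the image $\gr^{\ord}(E) + fT$ is a degree-$1$ polynomial in $T$ with nonzero leading coefficient $f$, hence a non-zero divisor. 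Lemma \ref{lem-GRespNZDs} then completes part (i).

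For part (ii), I begin with the respecting basis $\gr^{\ord}(\zeta_1), \dots, \gr^{\ord}(\zeta_m)$ supplied by Lemma \ref{lem-IndVRespBasisonGrOrd}. One checks that $\gr^{\widetilde{V}}(\gr^{\ord}\scD_{X \times \bC_t, (\fx,0)})$ is the polynomial ring $\scO_{X,\hspace{1pt}\fx}[t, \xi, T]$ (the $t$-adic associated graded of $\scO_{X,\hspace{1pt}\fx}\{t\}$ is $\scO_{X,\hspace{1pt}\fx}[t]$), in which the $\gr^{\ord}(\zeta_i)$'s have $\widetilde{V}$-order $0$ and hence are their own $\widetilde{V}$-symbols. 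I add $t - f$: its $\widetilde{V}$-order is $0$ with symbol $-f$ (since $t \in \widetilde{V}^1$ is killed modulo $\widetilde{V}^1$), and $-f$ is a non-zero divisor in the domain $(\scO_{X,\hspace{1pt}\fx}[\xi]/J)[t,T]$. Finally I add $\gr^{\ord}(E) + Tt$, of $\widetilde{V}$-order $0$ and equal to its own symbol; in the next quotient $((\scO_{X,\hspace{1pt}\fx}/(f))[\xi]/\bar{J})[t,T]$, which need not be a domain, this element is linear in $T$ with leading coefficient $t$, and the polynomial variable $t$ is always a non-zero divisor, so a standard coefficient-comparison argument shows $\gr^{\ord}(E) + Tt$ is a non-zero divisor. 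The main obstacle throughout is the non-zero divisor verification, which depends squarely on the primality of $J$ provided by parametric primality; the Weierstrass identification and the computation of the double associated graded $\gr^{\widetilde{V}} \gr^{\ord}$ are standard but need care to set up correctly.
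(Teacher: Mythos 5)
Your proof is correct and follows essentially the same route as the paper: establish the respecting-basis property for the $\zeta_i$ in the larger ring, then apply Lemma \ref{lem-GRespNZDs} iteratively with non-zero-divisor verifications driven by primality of $\gr^{\ord}(\ann_{\scD_{X,\hspace{1pt}\fx}} f^{s-1})$ (via Proposition \ref{prop-ParaPrimeEquivalent}) and the computation of $\gr_{\widetilde{V}}\gr^{\ord}\scD_{X\times\bC_t,(\fx,0)}$.

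Two small differences from the paper are worth flagging. First, you explicitly spell out the ``base-case'' promotion of $\zeta_1, \dots, \zeta_m$ from an $F^{\ord}$-respecting basis in $\scD_{X,\hspace{1pt}\fx}$ to one in $\scD_{X \times \bC_t, (\fx,0)}$; the paper simply invokes Lemma \ref{lem-GRespNZDs} with this base case taken as given. Your decomposition $b = \sum b_{p,q} t^p \partial_t^q$ and the order identity are correct, and assembling standard representations does give a representation of the right total degree; just be aware that the assembled coefficients $c_j = \sum_{p,q} k_{p,q,j} t^p \partial_t^q$ must be genuine convergent germs, which requires the $k_{p,q,j}$ to be chosen with some uniformity in $p$. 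This is standard but nontrivial in the analytic category (it is the usual flatness/coherence argument underlying base change of characteristic ideals, handled e.g.\ via \cite{MSai88}, Lemma 1.2.14), and neither your write-up nor the paper makes it explicit. Second, in part (ii) you add $t-f$ \emph{before} $\gr^{\ord}(E) + Tt$, the opposite order from the paper, and therefore verify non-zero-divisibility of $\gr^{\ord}(E)+Tt$ in $(R/fR)[t,T]$ via a leading-coefficient argument in $T$ (noting $t$ is a non-zero divisor there). The paper instead adds $\gr^{\ord}(E)+Tt$ first, flatly descends to $R[tT] \simeq R[S]$ where the quotient by $\gr^{\ord}(E)+S$ is just $R$, and then checks $\overline{f}$ is a non-zero divisor in the domain $R$. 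Your ordering is a bit more direct (no flat-descent detour), at the mild cost of having to reason in a quotient that is no longer a domain; both are valid.
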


\begin{proof}
    $i)$: Consider the $\scD_{X \times \bC_t, (\fx, 0)}$-ideal 
    \begin{equation*}
        J = \gr^{\ord} \scD_{X \times \bC_t, (\fx, 0)} \cdot \gr^{\ord} \ann_{\scD_{X,\hspace{1pt}\fx}} f^{s-1} = \gr^{\ord} \scD_{X \times \bC_t, (\fx, 0)} \cdot (\gr^{\ord} (\zeta_1), \ldots \gr^{\ord} (\zeta_m)).
    \end{equation*}
    By Lemma \ref{lem-GRespNZDs}, it suffices to show that $t-f, \gr^{\ord}(E + \partial_t t + 1)$ is a regular sequence on $ \gr^{\ord} \scD_{X \times \bC_t, (\fx, 0)} / J$. 
    
    We claim $t-f \notin J$. Indeed, if not, then modding out by $t$ and $T:= \gr^{\ord}(\partial_t)$, implies that $f \in \gr^{\ord} \ann_{\scD_{X,\hspace{1pt}\fx}} f^{s-1}$, which in turn implies the impossible membership $f \in \ann_{\scD_{X,\hspace{1pt}\fx}} f^{s-1}$. Since $J$ is prime by hypothesis, $t-f$ is a non-zero divisor on the cokernel of $J$.
    
    Now we must show that $\gr^{\ord}(E) + Tt$ is a non-zero divisor on $ \gr^{\ord} \scD_{X \times \bC_t, (\fx, 0)} / ((t-f) + J)$. This quotient ring is a domain by hypothesis, being isomorphic to $(\gr^{\ord} \scD_{X,\hspace{1pt}\fx} / J)[T].$ So it suffices to show that $\gr^{\ord}(E) + Tt \notin (t-f) + J$. If this membership held, then $\gr^{\ord}(E) + fT \in \gr^{\ord} \scD_{X,\hspace{1pt}\fx}[T] \cdot \gr^{\ord} \ann_{\scD_{X,\hspace{1pt}\fx}} f^{s-1}$, which, by modding out by $T$, yields $\gr^{\ord}(E) \in \gr^{\ord} \ann_{\scD_{X,\hspace{1pt}\fx}} f^{s-1}$. Since $\gr^{\ord}(E) \in \gr^{\ord} \scD_{X,\hspace{1pt}\fx}$ has degree $1$, this means we can find a $g \in \scO_{X,\hspace{1pt}\fx}$ such that 
    \begin{equation*}
        0 = (E + g) \cdot f^{s-1} = ((s-1) + g) f^{s-1},
    \end{equation*}
    which is clearly impossible. 

    $ii)$: Recall \eqref{eqn - incudedVonGrexplicit} the canonical isomorphism 
\begin{equation} \label{eqn-doubleGrIso}
    \gr_{\widetilde{V}}\gr^{\ord} \scD_{X\times \bC_t, (\fx, 0)} \simeq (\gr^{\ord}\scD_{X,\hspace{1pt}\fx})[t,T].
\end{equation}
The symbols of $t-f, \gr^{\ord}(E)+Tt,\gr^{\ord}(\zeta_1),\ldots, \gr^{\ord}(\zeta_m) \in \gr^{\ord}\scD_{X\times \bC_t,(\fx,0)}$ with respect to $\widetilde{V}^{\bullet} \gr^{\ord}\scD_{X \times \bC_t, (\fx, 0)}$ are, under the above isomorphism, 
\begin{equation*}
    -f, \gr^{\ord}(E)+Tt,\gr^{\ord}(\zeta_1),\ldots,\gr^{\ord}(\zeta_m) \in (\gr^{\ord} \scD_{X,\hspace{1pt}\fx})[t,T].
\end{equation*}
We must show this set is a $\widetilde{V}^\bullet \gr^{\ord} \scD_{X \times \bC_t, (\fx, 0)}$-respecting basis in $(\gr^{\ord} \scD_{X,\hspace{1pt}\fx})[t,T].$

First, since $\gr^{\ord}(\zeta_1), \ldots , \gr^{\ord}(\zeta_m) \in \gr^{\ord} \scD_{X,\hspace{1pt}\fx}$, we know that $\gr^{\ord}(\zeta_1), \ldots , \gr^{\ord}(\zeta_m) \in \gr^{\ord} \scD_{X \times \bC_t, (\fx, 0)}$ is a $\widetilde{V}^\bullet \gr^{\ord} \scD_{X \times \bC_t, (\fx, 0)}$-respecting basis thanks to Lemma \ref{lem-IndVRespBasisonGrOrd}. Now set 
\begin{equation*}
    R = \gr^{\ord} \scD_{X,\hspace{1pt}\fx} / \ann_{\scD_{X,\hspace{1pt}\fx}} f^{s-1}.
\end{equation*}
which is a domain by hypothesis. Lemma \ref{lem-GRespNZDs} shows that to prove $ii)$ it suffices to show that $\overline{\gr^{\ord}(E) + Tt}$, $\overline{f} \in R[t,T]$ is a regular sequence in 
\begin{equation*}
    \gr^{\ord} \scD_{X,\hspace{1pt}\fx}[t,T] / (\gr^{\ord}(\zeta_1), \ldots, \gr^{\ord}(\zeta_m)) \simeq R[t,T].
\end{equation*}

Since $R[tT] \xhookrightarrow{} R[t,T]$ is a flat morphism and flat morphisms preserve regularity (see Proposition 9.6.7 \cite{Bourbaki}), it suffices to prove that $\overline{\gr^{\ord}(E) + Tt}, \overline{f}$ is a regular sequence in $R[tT]$. As $R[tT] / (\overline{\gr(E) + Tt}) \simeq R$, clearly $\overline{\gr^{\ord}(E) + Tt}$ is regular on $R[tT]$.

We are done once we show that $\overline{f}$ is regular on the domain $R$. This is equivalent to showing that $f \notin \gr^{\ord} \ann_{\scD_{X,\hspace{1pt}\fx}} f^{s-1} \subseteq \gr^{\ord} \scD_{X,\hspace{1pt}\fx}$. Since $f \in \gr^{\ord} \scD_{X,\hspace{1pt}\fx}$ has degree zero, this is equivalent to showing that $f \notin \ann_{\scD_{X,\hspace{1pt}\fx}} f^{s-1}$ which is clear.
\end{proof}

Let us clarify what we have just done. Under Hypotheses \ref{hyp-mainHypotheses}, we saw in Proposition \ref{prop-presentations} that if $(\zeta_1, \ldots, \zeta_m) = \ann_{\scD_X} f^{s-1}$ as in Lemma \ref{propgroe-primeHyp(lemma)} and if $E$ is an Euler homogeneity for $f$ on $X$, then
\begin{equation*} \label{eqn-I(f)Defn}
    (t-f, E + \partial_t t, \zeta_1, \ldots, \zeta_m) \subseteq \scD_{X \times \bC_t}
\end{equation*}
is the $\scD_{X \times \bC_t}$-annihilator of $f^{-1} \delta \in \scO_X(*f)[\partial_t] \delta \simeq i_{f,+}\scO_X(*f)$. Moreover, $f^{-1} \delta$ cyclically generates $\scO_X(*f)[\partial_t] \delta \simeq i_{f,+} \scO_X(*f)$. So the preceding lemma opens up Gr{\"o}bner basis techniques for understanding the induced order filtration and $V$-filtrations on $i_{f,+} \scO_X(*f)$, as well as their interactions.

This is what we exploit in the following compatability style result. This should be contrasted to Proposition 3.10 \cite{CNS22}: we mimic the structure of their proof but as we have weakened our assumptions, we augment the argumentation.

\begin{prop} \label{propcomp-new} (Compare Proposition 3.10 \cite{CNS22}) Suppose that $f \in \scO_{X,\hspace{1pt}\fx}$ satisfies: $f$ has an Euler homogeneity $E$; $f$ is parametrically prime at $\fx$; $-1$ is the smallest integral root of the (local) Bernstein-Sato polynomial. Then, for any pair of non-negative integers $k$ and $\ell$,
\begin{equation*}
F_\ell^{\ord} i_{f,+}\scO_{X,\hspace{1pt}\fx} (*f)\cap V_{\ind}^ki_{f,+}\scO_{X,\hspace{1pt}\fx}(*f) = (F_\ell^{\ord} \scD_{X\times\bC_t, (\fx, 0)} \cap V^k\scD_{X\times\bC_t, (\fx, 0)}) \cdot f^{-1}\delta.
\end{equation*}
\end{prop}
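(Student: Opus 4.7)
The inclusion $\supseteq$ is immediate from the definitions of the induced filtrations on $i_{f,+}\scO_{X,\hspace{1pt}\fx}(*f)$; the content is the reverse containment $\subseteq$.

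Under Hypothesis \ref{hyp-mainHypotheses}(c) and Proposition \ref{prop-specialization}, $f^{-1}\delta$ is a cyclic generator of $i_{f,+}\scO_{X,\hspace{1pt}\fx}(*f)$, and by Proposition \ref{prop-presentations} its annihilator $I(f) := \ann_{\scD_{X\times\bC_t,(\fx,0)}}(f^{-1}\delta)$ is generated by $t-f$, $E+\partial_t t+1$, and any Gr\"obner basis $\zeta_1,\dots,\zeta_m$ of $\ann_{\scD_{X,\hspace{1pt}\fx}} f^{s-1}$ with respect to $F^{\ord}_\bullet$. Given $u$ in the intersection, I would fix a representative $P \in F^{\ord}_\ell \scD_{X\times\bC_t,(\fx,0)}$ with $P\cdot f^{-1}\delta = u$ and argue by induction on the slack $k - \ord^V(P)$, aiming to replace $P$ at each step by a new representative of $u$ in $F^{\ord}_\ell$ of strictly larger $V$-order, until reaching $V^k$. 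Concretely, if $\nu := \ord^V(P) < k$, then $u \in V^{\nu+1}_{\ind}$ supplies some $Q \in V^{\nu+1}$ with $Q\cdot f^{-1}\delta = u$, so $P - Q \in I(f) \cap V^\nu$ and $\gr^V(P-Q) = \gr^V(P)$ in $V^\nu/V^{\nu+1}$. The inductive step reduces to producing $\rho \in I(f)\cap F^{\ord}_\ell \cap V^\nu$ whose $V$-symbol in $V^\nu/V^{\nu+1}$ equals that of $P$, for then $P' := P - \rho \in F^{\ord}_\ell \cap V^{\nu+1}$ still represents $u$.

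Existence of such a $\rho$ is the crux, and is where Lemma \ref{propgroe-primeHyp(lemma)} is decisive. Part (i) says $t-f, E+\partial_t t+1, \zeta_1,\dots,\zeta_m$ is a Gr\"obner basis of $I(f)$ in the order filtration, hence, by the boundedness of $F^{\ord}_\bullet \scD_{X\times\bC_t,(\fx,0)}$ and Lemma \ref{lem-BoundedFiltration}, an $F^{\ord}_\bullet$-respecting basis; part (ii) says that their order symbols form a $\widetilde{V}^\bullet$-respecting basis of $\gr^{\ord} I(f)$ inside $\gr^{\ord}\scD_{X\times\bC_t,(\fx,0)}$. Together these give a compatible bigraded Gr\"obner picture, and the plan is to lift $\gr^V(P)$ through it: first, at the $\gr^{\ord}$-level, use the $\widetilde{V}^\bullet$-respecting property to write the image of $\gr^{\ord}(P-Q)$ as a combination of $\gr^{\ord}(t-f)$, $\gr^{\ord}(E)+Tt$, $\gr^{\ord}(\zeta_i)$ with $\widetilde{V}$-orders bounded by $\nu$; then lift those coefficients back and apply the $F^{\ord}_\bullet$-respecting property to control their usual orders by $\ell$. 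The resulting $\rho \in I(f)$ will meet both $\ord(\rho)\leq \ell$ and $\ord^V(\rho)\geq \nu$ with the correct $V$-symbol.

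The main obstacle is precisely the simultaneous control of the order and $V$-filtrations. A naive Gr\"obner decomposition of $P - Q$ in the order filtration gives coefficients of order only bounded by $\max(\ord(P),\ord(Q))$, and $Q\in V^{\nu+1}$ carries no order bound; likewise a bare $V$-filtration decomposition controls only the $V$-side. What closes the induction is the double respecting-basis property of Lemma \ref{propgroe-primeHyp(lemma)}, which itself rests on Hypotheses \ref{hyp-mainHypotheses}: Euler homogeneity plus parametric primality provide the domain/regular sequence inputs that power Lemma \ref{propgroe-primeHyp(lemma)}, and the Bernstein--Sato hypothesis is what makes $f^{-1}\delta$ cyclic in the first place. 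The detailed execution of the lifting parallels Proposition 3.10 of \cite{CNS22}, with the strongly Koszul free inputs replaced throughout by our bigraded respecting-basis statements.
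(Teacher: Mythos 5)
Your plan diverges from the paper's in a way that introduces a genuine gap at the inductive step. The paper does not induct on the $V$-order of a representative. Instead it invokes Lemma 1.2.14 of \cite{MSai88} to reduce the whole statement to the surjectivity of the principal-symbol map
\begin{equation*}
F_\ell^{\ord}\scD_{X\times\bC_t,(\fx,0)}\cap V^k\scD_{X\times\bC_t,(\fx,0)}\cap I(f)\;\longrightarrow\;\widetilde{V}^k\gr^{\ord}\scD_{X\times\bC_t,(\fx,0)}\cap\gr_\ell^{\ord}\bigl(I(f)\bigr),
\end{equation*}
and then proves this surjectivity entirely on the $\gr^{\ord}$ side. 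Starting from a single $P\in I(f)$ with $\ord(P)=\ell$ and $\gr_\ell^{\ord}(P)\in\widetilde{V}^k$, one writes $\gr_\ell^{\ord}(P)=\sum k_i\gr^{\ord}(G_i)$ using the $\widetilde{V}^\bullet$-respecting basis of Lemma \ref{propgroe-primeHyp(lemma)}.ii), truncates each $k_i$ to its homogeneous piece of degree $\ell-\ord(G_i)$, and lifts those pieces to elements of $F_{\ell-\ord(G_i)}^{\ord}\scD\cap V^k\scD$. The \cite{MSai88} reduction is precisely what guarantees one only ever manipulates an element of $I(f)$ whose order is already bounded by $\ell$.

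In your scheme this control is lost at the point you flag. Once you pass from $P$ to $P-Q\in I(f)$ with $Q\in V^{\nu+1}$, there is no upper bound on $\ord(P-Q)$, so $\gr^{\ord}(P-Q)$ sits in some degree potentially far greater than $\ell$. Any $\widetilde{V}^\bullet$-respecting decomposition of that symbol, followed by lifting, produces an element of $I(f)\cap V^\nu$ whose $F^{\ord}$-order is bounded only by $\ord(P-Q)$; the $F^{\ord}_\bullet$-respecting property of Lemma \ref{propgroe-primeHyp(lemma)}.i) cannot lower that bound to $\ell$ after the fact. So the $\rho$ you build need not lie in $F_\ell^{\ord}\scD_{X\times\bC_t,(\fx,0)}$, and the inductive replacement $P\mapsto P-\rho$ can leave the order filtration. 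You have correctly identified the ingredients (the dual respecting-basis statements, the role of Hypotheses \ref{hyp-mainHypotheses}), but the missing piece is the \cite{MSai88} reduction that reformulates the claim so that both filtrations are controlled simultaneously from the start. I'd point you to the proof of Proposition 3.10 in \cite{CNS22}, which the paper follows, to see how that reduction is deployed before the Gr\"obner argument begins.
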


\begin{proof} 
First of all, this can be checked in local coordinates, which is what we will do. Also, in what follows $\ord(-)$ denotes the order of an element of $\scD_{X \times \bC_t, (\fx, 0)}$ with respect to the order filtration $F_\bullet^{\ord} \scD_{X \times \bC_t, (\fx, 0)}$, whereas $\ord^{\widetilde{V}}(-)$ denotes the order of an element in $\gr^{\ord} \scD_{X \times \bC_t, (\fx, 0)}$ with respect to the the filtration $\widetilde{V}^{\bullet} \gr^{\ord}\scD_{X \times \bC_t, (\fx, 0)}.$

Let $\zeta_1, \ldots, \zeta_m$ be a Gr{\"o}bner basis of $\ann_{\scD_{X,\hspace{1pt}\fx}} f^{s-1}$ with respect to the order filtration $F_\bullet^{\ord} \scD_{X,\hspace{1pt}\fx}$. Let $I(f) = (t-f, E + \partial_t t, \zeta_1, \ldots, \zeta_m) \subseteq \scD_{X \times \bC_t, (\fx, 0)}$ be the left $\scD_{X \times \bC_t, (\fx, 0)}$-ideal coinciding, thanks to our assumptions, with the $\scD_{X \times \bC_t, (\fx, 0)}$-annihilator of $f^{-1} \delta \in i_{f,+} \scO_{X,\hspace{1pt}\fx}(*f)$, cf. Proposition \ref{prop-presentations}. Using \cite{MSai88}, Lemma 1.2.14, we reduce as in the proof of \cite{CNS22}, Proposition 3.10 to showing that the principal symbol map
\begin{equation} \label{eqn-Compatibility-0}
    F_\ell^{\ord} \scD_{X\times \bC_t, (\fx, 0)} \cap V^k\scD_{X\times \bC_t, (\fx, 0)} \cap I(f) \to \widetilde{V}^k \gr^{\ord} \scD_{X \times \bC_t, (\fx, 0)} \cap \gr_{\ell}^{\ord}(I(f))
\end{equation}
is surjective. Here $\gr_{\ell}^{\ord}: \scD_{X \times \bC_t, (\fx, 0)} \to \gr_{\ell}^{\ord}(\scD_{X \times \bC_t, ({\fx, 0)}}) \subseteq \gr^{\ord}(\scD_{X \times \bC_t, ({\fx, 0)}})$ is the principal symbol map sending an operator of order $\ell$ to its principal symbol in the submodule of homogeneous degree $\ell$ elements (and otherwise sending an operator to $0$). 

Consider $P \in I(f)$ such that $\ord(P) = \ell$. We must find an appropriate lift of $P$ to validate the surjectivity of \eqref{eqn-Compatibility-0}. We have 
\begin{equation*}
    \gr_{\ell}^{\ord}(P) \in \gr^{\ord}(I(f)) = (\gr^{\ord}(G_0), \ldots, \gr^{\ord}(G_{m+1})) \subseteq \gr^{\ord}(\scD_{X \times \bC_t, (\fx, 0)})
\end{equation*}
where we write $G_0 = t-f$, $G_i = \zeta_i$ for $1 \leq i \leq m$, and $G_{m+1} = E + \partial_t t + 1$. Here we used Proposition \ref{prop-ParaPrimeEquivalent} to invoke Lemma \ref{propgroe-primeHyp(lemma)}.i) which says that $G_0, \dots, G_{m+1}$ is a Gr{\"o}bner basis for $I(f)$. As Lemma \ref{propgroe-primeHyp(lemma)}.ii) says that $\gr^{\ord}(G_0), \ldots, \gr^{\ord}(G_{m+1})$ is a $\widetilde{V}^{\bullet} \gr^{\ord} \scD_{X \times \bC_t, (\fx, 0)}$-respecting basis for $\gr^{\ord}(I(f))$, we may now find $k_0, \ldots, k_{m+1} \in \gr^{\ord} \scD_{X \times \bC_t, (\fx, 0)}$ such that 
\begin{equation} \label{eqn-Compatibility-1}
    \gr_{\ell}^{\ord}(P) = \sum_{i=0}^{m+1} k_i \gr^{\ord}(G_i) \quad \text{ where} \quad k_i \in \widetilde{V}^{\ord^{\widetilde{V}}(\gr_{\ell}^{\ord}(P)) - \ord^{\widetilde{V}}(\gr^{\ord}(G_i))} \gr^{\ord} \scD_{X \times \bC_t, (\fx, 0)}.
\end{equation}
(Recall that $\widetilde{V}^{\bullet} \gr^{\ord} \scD_{X \times \bC_t, (\fx, 0)}$ is a decreasing filtration with explicit local description \eqref{eqn - incudedVonGrexplicit}.) Since $G_1, \dots, G_m \in \scD_{X,\hspace{1pt}\fx}$, we have that $\ord^{\widetilde{V}}(\gr^{\ord}(G_i)) = 0$ for all $1\leq i \leq m$. By the definitions of $G_0$ and $G_{m+1}$ we also have $\ord^{\widetilde{V}}(\gr^{\ord}(G_0))=\ord^{\widetilde{V}} (\gr^{\ord}(G_{m+1})) = 0.$ So we may rewrite \eqref{eqn-Compatibility-1} as: there exist $k_0, \ldots, k_{m+1} \in \gr^{\ord} \scD_{X \times \bC_t, (\fx, 0)}$ such that 
\begin{equation} \label{eqn-Compatibility-2}
    \gr_{\ell}^{\ord}(P) = \sum_{i=0}^{m+1} k_i \gr^{\ord}(G_i) \quad \text{ where} \quad k_i \in \widetilde{V}^{\ord^{\widetilde{V}}(\gr_{\ell}^{\ord}(P))} \gr^{\ord} \scD_{X \times \bC_t, (\fx, 0)}.
\end{equation}  

Take one such $k_i$ and write $k_i = \sum_{j_i \geq 0} k_{i,j}$ where $k_{i,j} \in \gr_j^{\ord} \scD_{X \times \bC_t, (\fx, 0)}$ is homogeneous of degree $j$. We have 
\begin{equation*}
    k_i \in \widetilde{V}^{\ord^{\widetilde{V}} (\gr_{\ell}^{\ord}(P))} \gr^{\ord} \scD_{X \times \bC_t, (\fx,0)} = \scO_{X \times \bC_t, (\fx, 0)}[\xi_1, \ldots, \xi_n, (tT)] \cdot t^{{\ord^{\widetilde{V}}(\gr_{\ell}^{\ord}(P))}}
\end{equation*}
Write $k_i$ as a sum of monomials $g_{I,q} \xi_I (tT)^{q} t^{\ord^{\widetilde{V}}(\gr_{\ell}^{\ord}(P))}$ for suitable $g_{I,q} \in \mathscr{O}_{X \times \bC_t, (\fx, 0)}$. Each of these monomials then also belongs to $\widetilde{V}^{\ord^{\widetilde{V}}(\gr_{\ell}^{\ord}(P))} \gr^{\ord} \scD_{X \times \bC_t, (\fx, 0)}$. In particular, each $k_{i, j} \in \widetilde{V}^{\ord^{\widetilde{V}}(\gr_{\ell}^{\ord}(P))} \gr^{\ord} \scD_{X \times \bC_t, (\fx, 0)}$. Since $\gr_{\ell}^{\ord}(P)$ is homogeneous of degree $\ell$ and since each $\gr^{\ord}(G_i)$ is homogeneous of degree $\ord(G_i)$ we may simplify \eqref{eqn-Compatibility-2} as: there exist $k_{0, \ell-\ord(G_0)}, \ldots , k_{m+1, \ell-\ord(G_0)} \in \gr^{\ord} \scD_{X \times \bC_t, (\fx, 0)}$ homogeneous of degrees $\ell-\ord(G_0), \ldots, \ell-\ord(G_{m+1})$ respectively such that
\begin{equation} \label{compatibility-3}
\gr_{\ell}^{\ord}(P) = \sum_{i=0}^{m+1} k_{i, \ell-\ord(G_i)} \gr^{\ord}(G_i) \quad \text{ where} \quad k_{i, \ell-\ord(G_i)} \in \widetilde{V}^{\ord^{\widetilde{V}}(\gr_{\ell}^{\ord}(P))} \gr^{\ord} \scD_{X \times \bC_t, (\fx, 0)}.
\end{equation}

As noted in the proof of Corollary 3.9 \cite{CNS22}, the principal symbol map 
\begin{equation*}
    F_{\ell}^{\ord} \scD_{X \times \bC_t, (\fx, 0)} \cap V^k \scD_{X \times \bC_t, (\fx, 0)} \to \widetilde{V}^k \gr^{\ord} \scD_{X \times \bC_t, (\fx, 0)} \cap \gr_{\ell}^{\ord} \scD_{X \times \bC_t, (\fx, 0)}
\end{equation*}
is surjective. Thus for each $k_{i, \ell-\ord(G_i)}$ we may find $K_{i, \ell-\ord(G_i)} \in \scD_{X \times \bC_t, (\fx, 0)}$ such that both $\gr^{\ord}(K_{i, \ell-\ord(G_i)}) = k_{i, \ell-\ord(G_i)}$ and $K_{i, \ell-\ord(G_i)} \in V^{\ord^{\widetilde{V}}(\gr_{\ell}^{\ord}(P))} \scD_{X \times \bC_{t}, (\fx, 0)}$. And
\begin{equation*}
    \gr^{\ord} \bigg( \sum_{i} K_{i, \ell-\ord(G_i)} G_i \bigg) = \gr_{\ell}^{\ord}(P).
\end{equation*}
Moreover, by construction, $\sum_{i} K_{i, \ell-\ord(G_i)} G_i$ is in $I(f)$, is in $F_{\ell}^{\ord} \scD_{X \times \bC_t, (\fx, 0)}$, and is in $V^{\ord^{\widetilde{V}}(\gr_{\ell}^{\ord}(P))} \scD_{X \times \bC_t, (\fx, 0)}$. So we are done: the surjectivity of \eqref{eqn-Compatibility-0} is guaranteed by the constructed $\sum K_{i, \ell-\ord(G_i)} G_i$. 
\end{proof}

We deduce a precise relationship between the Hodge and order filtrations on $V^0 i_{f,+} \scO_X(*f)$. 

\begin{prop} \label{propiota-new}
Assume that $f \in \scO_X$ satisfies Hypotheses \ref{hyp-mainHypotheses}: it is Euler homogeneous; $f$ is parametrically prime; the roots of the Bernstein-Sato polynomial are contained in $(-2,0)$ locally everywhere. Then for any positive integer $k$,
\begin{equation*}
    F_k^H V^0i_{f,+}\scO_X(*f)= F_{k-1}^{\text{\emph{ord}}} V^0i_{f,+} \scO_X(*f).
\end{equation*}
\end{prop}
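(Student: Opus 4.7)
The plan is to establish both containments of the displayed equality, then induct on $k$ for the nontrivial direction. For the easy inclusion $F_{k-1}^{\ord}V^0 i_{f,+}\scO_X(*f) \subseteq F_k^H V^0 i_{f,+}\scO_X(*f)$: Lemma \ref{lemHtordV} rewrites the right-hand side as $F_{k-1}^{\Torder}V^0 i_{f,+}\scO_X(*f)$; Lemma \ref{lem-OrderFilGraph} expresses $F_{k-1}^{\ord}i_{f,+}\scO_X(*f)$ as $\sum_{j\geq 0}F_{k-1-j}^{\ord}\scO_X(*f)\partial_t^j\delta$, so its elements have $\partial_t^j\delta$-coefficients vanishing for $j>k-1$, giving $F_{k-1}^{\ord}\subseteq F_{k-1}^{\Torder}$; intersecting with $V^0$ preserves the containment.

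For the hard inclusion I induct on $k\geq 1$. In the base case $k=1$, $F_1^H V^0 = F_0^{\Torder}V^0 = V^0\cap\scO_X(*f)\delta$ by Lemma \ref{lemHtordV}, which by Theorem \ref{thmformula} at $k=0$ and the vanishing $F_0^H i_{f,+}\scO_X(*f) = 0$ from Remark \ref{rmk-0HodgeGraph} equals $F_0^H\scO_X(*f)\delta$; Corollary \ref{cor-zeroHodgePiece} then places this inside $\scO_X\cdot f^{-1}\delta = F_0^{\ord}\scO_X(*f)\delta$, hence inside $F_0^{\ord}V^0$.

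For the inductive step, assume equality at level $k$ and take $u=\sum_{j=0}^k u_j\partial_t^j\delta \in F_{k+1}^H V^0 = F_k^{\Torder}V^0$. The graph-embedding identity $t\cdot\partial_t^j\delta = f\partial_t^j\delta - j\partial_t^{j-1}\delta$ yields $tu = fu - v$ with $v := \sum_{j=1}^k ju_j\partial_t^{j-1}\delta$ of $t$-order $\leq k-1$. Since $tu\in V^1\subseteq V^0$ and $fu\in V^0$, one has $v\in V^0$, so $v\in F_{k-1}^{\Torder}V^0 = F_k^H V^0 = F_{k-1}^{\ord}V^0$ by the inductive hypothesis. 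Extracting $\partial_t^{j-1}\delta$-coefficients of $v$ via Lemma \ref{lem-OrderFilGraph} immediately gives $u_j\in F_{k-j}^{\ord}\scO_X(*f)$ for every $1\leq j\leq k$.

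The hard part will be controlling the leading coefficient $u_0$, which the identity $tu = fu - v$ does not see. Since $V^0\subseteq V_{\ind}^0$, I would write $u = P\cdot f^{-1}\delta$ with $P\in V^0\scD_{X\times\bC_t}$, and apply Proposition \ref{propcomp-new} to realize $v = Q\cdot f^{-1}\delta$ with $Q\in V^0\scD_{X\times\bC_t}\cap F_{k-1}^{\ord}\scD_{X\times\bC_t}$; the identity $tu = fu - v$ then becomes $(t-f)P+Q\in I(f) := \ann_{\scD_{X\times\bC_t}}(f^{-1}\delta)$. The Gr\"obner basis $\{t-f,\, E+\partial_t t+1,\, \zeta_1,\ldots,\zeta_m\}$ of $I(f)$ with respect to $F_\bullet^{\ord}\scD_{X\times\bC_t}$ furnished by Lemma \ref{propgroe-primeHyp(lemma)}.i---whose existence crucially exploits the parametric primality and Euler homogeneity in Hypotheses \ref{hyp-mainHypotheses}---enables a Gr\"obner-style reduction of $P$ modulo $I(f)\cap V^0\scD_{X\times\bC_t}$ to a representative $P'\in V^0\scD_{X\times\bC_t}\cap F_k^{\ord}\scD_{X\times\bC_t}$ with $u = P'\cdot f^{-1}\delta$. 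Extracting the $\partial_t^0\delta$-coefficient yields $u_0\in F_k^{\ord}\scO_X(*f)$, and combining with the earlier bounds on $u_j$ via Lemma \ref{lem-OrderFilGraph} gives $u\in F_k^{\ord}V^0$, closing the induction.
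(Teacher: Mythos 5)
Your argument handles most of the steps correctly: the easy inclusion, the base case via Corollary \ref{cor-zeroHodgePiece}, and the inductive deduction that $u_j \in F_{k-j}^{\ord}\scO_X(*f)$ for $1 \leq j \leq k$ from $v = fu - tu \in F_{k-1}^{\ord}V^0$. But the final step — ``a Gr\"obner-style reduction of $P$ modulo $I(f)\cap V^0\scD_{X\times\bC_t}$ to a representative $P'\in V^0\scD_{X\times\bC_t}\cap F_k^{\ord}\scD_{X\times\bC_t}$'' — is circular. Gr\"obner reduction with respect to $F_\bullet^{\ord}\scD_{X\times\bC_t}$ produces a representative of \emph{minimal} order, but that minimal order is by definition the $F^{\ord}$-order of $u$ in $i_{f,+}\scO_X(*f)$, which is exactly the quantity you need to bound by $k$. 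Nothing in the reduction procedure itself forces termination at order $\leq k$; that would require already knowing $u \in F_k^{\ord}i_{f,+}\scO_X(*f)$, equivalently $u_0 \in F_k^{\ord}\scO_X(*f)$.

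The relation $(t-f)P + Q \in I(f)$ cannot supply the missing bound because $t - f$ is a zero-divisor on the quotient $\scD_{X\times\bC_t}/I(f) \simeq i_{f,+}\scO_X(*f)$: a direct check with the graph-embedding action shows that $(t-f)\cdot\bigl(\sum_j u_j\partial_t^j\delta\bigr) = -\sum_{j\geq 1}ju_j\partial_t^{j-1}\delta$, so the kernel of $t - f$ is precisely $\scO_X(*f)\delta$. Thus the identity $tu = fu - v$ carries no information about $u_0$ — it is literally killed by $t - f$ — and at the level of principal symbols the membership $(t-f)\gr^{\ord}(P) \in \gr^{\ord}I(f)$ is vacuous, since $t - f$ is itself a generator of $\gr^{\ord}I(f)$.

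The paper's proof supplies the missing ingredient with a simple auxiliary claim, proved by an elementary induction with no hypotheses on $f$: for any $u \in F_k^{\Torder}i_{f,+}\scO_X(*f)$ there is a $p \geq 0$ with $t^p\cdot u \in F_k^{\ord}i_{f,+}\scO_X(*f)$. The point is that multiplying by $t^p$ (which acts as $f$ on $\scO_X(*f)\delta$ plus lower $t$-order terms) eventually brings \emph{every} coefficient $u_j$, including $u_0$, into $\scO_X\cdot f^{-1}$. This places $t^p u$ in $F_k^{\ord}V^p_{\ind}$; Proposition \ref{propcomp-new} then gives $t^p u = P\cdot f^{-1}\delta$ with $P \in F_k^{\ord}\scD_{X\times\bC_t}\cap V^p\scD_{X\times\bC_t}$. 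The structure of the $V$-filtration on $\scD_{X\times\bC_t}$ lets one factor $P = t^p P'$ with $P' \in F_k^{\ord}\cap V^0$, and the invertibility of $t$ on $i_{f,+}\scO_X(*f)$ gives $u = P'\cdot f^{-1}\delta$. You would need something like this ``multiply by $t^p$, then descend'' device — or a genuinely new argument bounding $u_0$ — to close your proof.
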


\begin{proof}
By Lemma \ref{lemHtordV} it suffices to show that 
\[F_k^{\Torder} V^0i_{f,+}\scO_X(*f)= F_k^{\text{ord}} V^0i_{f,+} \scO_X(*f)\]
for every non-negative integer $k$. This holds if and only if it holds locally, so it suffices to prove the equality in the case that $X$ is an sufficiently small open subset of $\bC^n$. 

We prove this by first proving the following claim:

\vspace{10pt}

\emph{Claim:} For any non-negative integer $k$ and $u \in F_k^{\Torder}i_{f,+}\scO_X(*f)$, there exists some $p \in \bZ_{\geq 0}$ such that $t^p\cdot u \in F_k^{\text{ord}}i_{f,+}\scO_X(*f)$.

\emph{Proof of claim:} We prove this inductively on $k$. For $k=0$, $u=u_0 \delta$ for some $u_0 \in\scO_X(*f)$. Now, there exists some $p \geq 0$ such that $f^pu_0 \in \scO_Xf^{-1}$. Therefore, $t^p\cdot u = f^pu_0\delta \in \scO_Xf^{-1}\delta =F_0^{\text{ord}}i_{f,+}\scO_X(*f)$.

For $k >0$, write $u=\sum_{i=0}^ku_i\partial_t^i\delta$, where $u_i \in\scO_X(*f)$ for each $i$. Now, there exists some $p \geq 0$ and $g \in \scO_X$ such that $f^pu_k = g f^{-1}$. Therefore, $t^p\cdot u - \partial_t^k g f^{-1} \delta \in F_{k-1}^{\Torder} i_{f,+} \scO_X(*f)$. By our inductive hypothesis, there exists some $q \geq 0$ such that $t^{p+q}\cdot u - t^q \partial_t^k g f^{-1} \delta \in F_{k-1}^{\ord} i_{f,+} \scO_X(*f)$. Rearranging yields $t^{p+q} \cdot u \in F_k^{\ord} i_{f,+} \scO_X(*f)$. $\square$

\vspace{10pt}

\noindent Now we prove the statement of the proposition. The inclusion 
\[F_k^{\text{ord}} V^0i_{f,+}\scO_X(*f)\subseteq F_k^{\Torder} V^0i_{f,+} \scO_X(*f)\]
is immediate by the definition of the order filtration. For the reverse inclusion, take any $u \in F_k^{\Torder}V^0i_{f,+}\scO_X(*f)$. Then, by the above claim, there exists some $p \geq 0$ such that $t^p \cdot u \in F_k^{\text{ord}}V^pi_{f,+}\scO_X(*f)$. 

Since $V^pi_{f,+}\scO_X(*f) \subseteq V^p_{\text{ind}}i_{f,+}\scO_X(*f)$ by Proposition \ref{prop-funnyBSpolyInducedV}, we see that 
\[t^p \cdot u \in F_k^{\text{ord}}V^p_{\text{ind}}i_{f,+}\scO_X(*f) = (V^p\scD_{X\times\bC_t}\cap F_k^{\ord}\scD_{X\times\bC_t}) \cdot f^{-1}\delta,\]
by Proposition \ref{propcomp-new}. Namely, there exists some $P\in V^p\scD_{X\times\bC_t}\cap F_k^{\ord}\scD_{X\times\bC_t}$ such that $t^p \cdot u =P\cdot f^{-1}\delta$. However, by the definition of $V^p\scD_{X\times\bC_t}$, there exists some $P'\in V^0\scD_{X\times\bC_t}\cap F_k^{\ord}\scD_{X\times\bC_t}$ such that $P=t^pP'$. Since $t$ acts invertibly on $i_{f,+}\scO_X(*f)$, this finally implies that
\[u = P'\cdot f^{-1}\delta \in F_k^{\text{ord}}i_{f,+}\cO_X(*f).\]

\end{proof}

This essentially completes the proof of Theorem \ref{thmmain-new}:

\begin{proof}[Proof of Theorem \ref{thmmain-new}] 
Let us first assume that $D$ is globally defined by $f \in \scO_X$ as in the first paragraph of the statement. By Lemma \ref{lemorderfilt2-new}, the claims of Theorem \ref{thmmain-new} and Proposition \ref{propiota-new} are equivalent. When $D$ is not globally defined, the strongly Euler homogeneous assumption along with Remark \eqref{rmk-EulerHomBasics}.\eqref{item-StrongEulerChoice} allows us to reduce to the case $f \in \scO_U$ global and Euler homogeneous.
\end{proof}

\subsection{A Formula for the Hodge Filtration}

Here we prove our second main result, giving a formula for $F_k^H \scO_{X,\hspace{1pt}\fx}(*f)$ for $f \in \scO_{X,\hspace{1pt}\fx}$ a suitable germ. For readability, we recite some iconography:
\begin{itemize}
    \item when $Z(b_f(s)) \subseteq (-2,0)$, $\beta_f(s)$ is obtained from $b_f(-s-1)$ by removing all roots (and their multiplicities) living in $(-1,0]$, cf. Definition \ref{def-funnyBSpoly};
    \item when $Z(b_f(s)) \subseteq (-2,0)$, $\beta_f^{\prime}(s) = b_f(-s-1) / \beta_f(s)$, cf. Definition \ref{def-funnyBSpoly};
    \item $\Gamma_f$ is the $\scD_{X,\hspace{1pt}\fx}[s]$-ideal $\scD_{X,\hspace{1pt}\fx}[s] \cdot f + \scD_{X,\hspace{1pt}\fx}[s] \cdot \beta_f(-s) + \ann_{\scD_{X,\hspace{1pt}\fx}[s]} f^{s-1}$, cf. Definition \ref{def-Gammaf};
    \item $\phi_0 : \scD_{X,\hspace{1pt}\fx}[s] \to \scD_{X,\hspace{1pt}\fx}$ is the $\scD_{X,\hspace{1pt}\fx}$-map given by $s \mapsto 0$, cf. Lemma \ref{lem-commDiagram};
    \item $\specialize \scD_{X,\hspace{1pt}\fx}[s]\cdot f^{s-1} \to \scD_{X,\hspace{1pt}\fx}\cdot f^{-1}$ is the specialization map, cf. Definition \ref{def-specialization};
    \item $\pi_f : \scD_{X,\hspace{1pt}\fx}[s] \to \scO_{X,\hspace{1pt}\fx}(*f)[\partial_t] \delta \simeq i_{f,+} \scO_{X,\hspace{1pt}\fx}(*f)$ is the $\scD_{X,\hspace{1pt}\fx}$-linear map given by $P(s) \mapsto P(-\partial_t t) \cdot f^{-1} \delta$.
\end{itemize}

We will prove the following extension of Corollary \ref{cor-zeroHodgePiece} to all pieces of $F_\bullet^H \scO_{X,\hspace{1pt}\fx}(*f)$:

\begin{thm} \label{thmgamma2-new}
Assume that $f \in \scO_{X,\hspace{1pt}\fx}$ satisfies Hypotheses \ref{hyp-mainHypotheses}: $f$ is Euler homogeneous; $f$ is parametrically prime at $\fx$; the zeroes of the Bernstein-Sato polynomial are contained in $(-2,0)$. Then, for all $k \in \mathbb{Z}_{\geq 0}$,
\begin{align*}
    F_k^H\scO_{X,\hspace{1pt}\fx}(*f) 
    = \phi_0 \bigg( \Gamma_f\cap F_k^{\sharp} \scD_{X,\hspace{1pt}\fx}[s] \bigg) \cdot f^{-1}
\end{align*}

In other words, let $\specialize: \scD_{X,\hspace{1pt}\fx}[s]\cdot f^{s-1} \to \scD_{X,\hspace{1pt}\fx}\cdot f^{-1}$ be the specialization map, cf. Definition \ref{def-specialization}. Then $F_k^H \scO_{X,\hspace{1pt}\fx}(*f)$ is the $\scO_{X,\hspace{1pt}\fx}$-module generated by $\{\specialize(P(s) \cdot f^{s-1})\}$ where $P(s)$ ranges over all elements of $F_k^\sharp \scD_{X,\hspace{1pt}\fx}[s]$ satisfying the ``functional equation''
\begin{equation*}
    P(s) \beta_f^\prime(-s) \cdot f^{s-1} \in \scD_{X,\hspace{1pt}\fx}[s]\cdot f^{s}.
\end{equation*}
\end{thm}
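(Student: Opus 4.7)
The plan is to extend the argumentation of Corollary \ref{corrf} to every piece of the Hodge filtration. That corollary's proof observed that Theorem \ref{thmformula} can be reformulated as $F_k^H \scO_{X,\hspace{1pt}\fx}(*f) = \psi_{f,0}(F_k^{\Torder} V^0 i_{f,+} \scO_{X,\hspace{1pt}\fx}(*f))$. Under Hypotheses \ref{hyp-mainHypotheses}, Lemma \ref{lemHtordV} and Proposition \ref{propiota-new} combine to give $F_k^{\Torder} V^0 i_{f,+} \scO_{X,\hspace{1pt}\fx}(*f) = F_k^{\ord} V^0 i_{f,+} \scO_{X,\hspace{1pt}\fx}(*f)$. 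The theorem therefore reduces to establishing
\begin{equation*}
    F_k^{\ord} V^0 i_{f,+} \scO_{X,\hspace{1pt}\fx}(*f) = \pi_f(\Gamma_f \cap F_k^\sharp \scD_{X,\hspace{1pt}\fx}[s]),
\end{equation*}
after which applying $\psi_{f,0}$ and invoking $\psi_{f,0} \circ \pi_f = \phi_0(\cdot) \cdot f^{-1}$ from Lemma \ref{lem-commDiagram} yields the first claimed identity; the ``functional equation'' reformulation is then Proposition \ref{prop-funnyBSFunctional}.

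The forward containment $\supseteq$ is the content of the first half of Corollary \ref{corrf}: $\pi_f$ sends $s$ to the order-one operator $-\partial_t t$, so $\pi_f(F_k^\sharp \scD_{X,\hspace{1pt}\fx}[s]) \subseteq F_k^{\ord} i_{f,+} \scO_{X,\hspace{1pt}\fx}(*f)$, while $\pi_f(\Gamma_f) = V^0 i_{f,+} \scO_{X,\hspace{1pt}\fx}(*f)$ by Proposition \ref{propcondnBS-new}.

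The reverse containment $\subseteq$ is the crux. Given $v \in F_k^{\ord} V^0 i_{f,+}\scO_{X,\hspace{1pt}\fx}(*f)$, the inclusion $V^0 \subseteq V_{\ind}^0$ from Proposition \ref{prop-funnyBSpolyInducedV} together with the compatibility result Proposition \ref{propcomp-new} furnishes $R \in F_k^{\ord} \scD_{X\times\bC_t, (\fx, 0)} \cap V^0 \scD_{X\times\bC_t, (\fx, 0)}$ with $v = R \cdot f^{-1}\delta$. Working locally in coordinates, and using that $\partial_x$-operators commute with $t$ and $\partial_t t$ together with the relation $[\partial_t t, t] = t$, I write
\begin{equation*}
    R = \sum_{\alpha, a, b} g_{\alpha, a, b}(x) \partial_x^\alpha (\partial_t t)^b t^a, \qquad g_{\alpha, a, b} \in \scO_{X,\hspace{1pt}\fx},
\end{equation*}
where $R \in F_k^{\ord}$ forces $|\alpha| + b \leq k$ whenever $g_{\alpha, a, b} \neq 0$. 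Since $t^a - f^a \in \scD_{X \times \bC_t, (\fx, 0)} \cdot (t-f) \subseteq \ann_{\scD_{X \times \bC_t, (\fx, 0)}}(f^{-1}\delta)$ and $\partial_t t$ commutes with multiplication by $f^a$, I obtain $R \cdot f^{-1}\delta = R' \cdot f^{-1}\delta$ for $R' := \sum g_{\alpha, a, b} \partial_x^\alpha f^a (\partial_t t)^b \in \scD_{X,\hspace{1pt}\fx}[\partial_t t]$, which satisfies $\ord R' \leq k$. Setting $Q(s) := \sum (-1)^b g_{\alpha, a, b} \partial_x^\alpha f^a s^b \in F_k^\sharp \scD_{X,\hspace{1pt}\fx}[s]$ then gives $\pi_f(Q(s)) = R' \cdot f^{-1}\delta = v$. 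Because $v \in V^0 i_{f,+}\scO_{X,\hspace{1pt}\fx}(*f) = \pi_f(\Gamma_f)$ and $\ker \pi_f = \ann_{\scD_{X,\hspace{1pt}\fx}[s]} f^{s-1} \subseteq \Gamma_f$ (cf. Lemma \ref{lem-commDiagram} and Definition \ref{def-Gammaf}), one has $Q(s) \in \Gamma_f + \ker \pi_f = \Gamma_f$, closing the proof.

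The main obstacle is the order-preserving substitution $R \rightsquigarrow R'$ in the reverse direction: the $t$-monomials of $R$ must be absorbed into $\scO_{X,\hspace{1pt}\fx}$-coefficients without disturbing either the $\scD$-order or the $V^0$-membership, using only the one-sided annihilation relation $\scD_{X\times\bC_t, (\fx, 0)} \cdot (t-f) \subseteq \ann(f^{-1}\delta)$. Once Proposition \ref{propcomp-new} has packaged the Gr\"obner-theoretic heart of the argument upstream, the commutation gymnastics above suffice.
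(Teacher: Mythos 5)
Your proposal is correct and follows the paper's own argument essentially verbatim: it uses the same reformulation of Theorem \ref{thmformula} via $\psi_{f,0}$, the same appeal to Lemma \ref{lemHtordV} and Proposition \ref{propiota-new} to replace $F_k^{\Torder}V^0$ with $F_k^{\ord}V^0$, and then reproves Proposition \ref{proppi-new} inline (the $R \rightsquigarrow R' \rightsquigarrow Q(s)$ manipulation together with the $\ker\pi_f \subseteq \Gamma_f$ observation is exactly the paper's proof of that proposition, with the coordinate computation made a little more explicit). The only cosmetic difference is that the paper factors the key identity $F_k^{\ord}V^0 i_{f,+}\scO_{X,\fx}(*f) = \pi_f(\Gamma_f \cap F_k^\sharp \scD_{X,\fx}[s])$ out as a separate result before assembling the commutative diagram.
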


\vspace{5mm}

As in Corollary \ref{cor-zeroHodgePiece}, the proof uses the commutative diagram Lemma \ref{lem-commDiagram} to transfer Hodge data between $i_{f,+} \scO_{X,\hspace{1pt}\fx}(*f)$, $\scD_{X,\hspace{1pt}\fx}[s]\cdot f^{s-1}$, and $\scO_{X,\hspace{1pt}\fx}(*f)$. But this time Proposition \ref{propcomp-new} makes the argument more robust. First:

\begin{prop} \label{proppi-new}
Assume that the zeroes of the Bernstein-Sato polynomial of $f \in \scO_{X,\hspace{1pt}\fx}$ are contained in $(-2,0)$ and that the order filtration and the induced $V$-filtration on $i_{f,+}\scO_{X,\hspace{1pt}\fx}(*f)$ are compatible in the sense of Proposition \ref{propcomp-new}. Then, for all $k \in \mathbb{Z}_{\geq 0}$, 
\begin{equation*}
    F_k^{\ord} V^0 i_{f,+} \scO_{X,\hspace{1pt}\fx}(*f) = \pi_f(\Gamma_f\cap F_k^{\sharp}\scD_{X,\hspace{1pt}\fx}[s]).
\end{equation*}
\end{prop}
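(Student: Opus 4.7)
The plan is to prove both inclusions separately. The containment $\subseteq$ is immediate from Proposition \ref{propcondnBS-new}.i, which states $\pi_f(\Gamma_f) = V^0 i_{f,+}\scO_{X,\hspace{1pt}\fx}(*f)$, combined with the observation that $\pi_f$ maps $F_k^{\sharp}\scD_{X,\hspace{1pt}\fx}[s]$ into $F_k^{\ord} i_{f,+}\scO_{X,\hspace{1pt}\fx}(*f)$. The latter is because $\pi_f$ substitutes the order-one operator $-\partial_t t$ for $s$: a representative $P(s) = \sum_{i=0}^k P_{k-i}\, s^i$ with $\ord(P_{k-i}) \leq k-i$ maps to an operator of order $\max_i (\ord(P_{k-i}) + i) \leq k$ applied to $f^{-1}\delta$.

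For the reverse containment, pick $u \in F_k^{\ord} V^0 i_{f,+}\scO_{X,\hspace{1pt}\fx}(*f)$. Proposition \ref{prop-funnyBSpolyInducedV} gives $V^0 \subseteq V_{\ind}^0$, and then the hypothesized compatibility (Proposition \ref{propcomp-new}) writes $u = Q \cdot f^{-1}\delta$ for some $Q \in F_k^{\ord}\scD_{X\times\bC_t,(\fx,0)} \cap V^0\scD_{X\times\bC_t,(\fx,0)}$. It suffices to produce some $P(s) \in F_k^{\sharp}\scD_{X,\hspace{1pt}\fx}[s]$ with $\pi_f(P(s)) = u$: membership in $\Gamma_f$ is then automatic, because $u \in V^0 = \pi_f(\Gamma_f)$ furnishes $P^{\prime}(s) \in \Gamma_f$ with $\pi_f(P^{\prime}(s)) = u$, and $P(s) - P^{\prime}(s) \in \ker \pi_f = \ann_{\scD_{X,\hspace{1pt}\fx}[s]} f^{s-1} \subseteq \Gamma_f$ by the definition of $\Gamma_f$ and Proposition \ref{propcondnBS-new}.i.

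The technical heart of the argument is lifting $Q$. Working in local coordinates, I would write $Q$ in a $\scO_{X\times\bC_t,(\fx,0)}$-basis of $V^0 \scD_{X\times\bC_t,(\fx,0)}$ as $Q = \sum_{|\alpha|+j \leq k} g_{\alpha,j}(x,t)\,\partial_x^\alpha (t\partial_t)^j$ and decompose each analytic coefficient via Hadamard's lemma as $g_{\alpha,j}(x,t) = g_{\alpha,j}(x,f) + (t-f)\,h_{\alpha,j}(x,t)$ in $\scO_{X\times\bC_t,(\fx,0)}$. The first piece contributes $\pi_f\bigl(g_{\alpha,j}(x,f)\,\partial_x^\alpha (-s-1)^j\bigr)$, which lies in $F_{|\alpha|+j}^{\sharp}\scD_{X,\hspace{1pt}\fx}[s] \subseteq F_k^{\sharp}\scD_{X,\hspace{1pt}\fx}[s]$. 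For the second piece, the key commutation identity
\begin{equation*}
(t-f)\cdot \pi_f(R(s)) \;=\; \pi_f\bigl(f\,(R(s+1) - R(s))\bigr),
\end{equation*}
which follows from $t\,R(-\partial_t t) = R(-\partial_t t + 1)\,t$ together with $t \cdot f^{-1}\delta = \delta$, strictly lowers the $s$-degree of $R(s)$ and hence its total $F^{\sharp}$-order by one. Iterating Hadamard's lemma on the residual analytic coefficient and applying this commutation, the procedure terminates after at most $|\alpha|+j+1$ steps, assembling the desired $P(s) \in F_k^{\sharp}\scD_{X,\hspace{1pt}\fx}[s]$. The main obstacle is exactly this iterative reduction: verifying both the commutation identity and that each iteration strictly decreases total order, so that the analytic Hadamard descent terminates in finitely many steps. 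This analytic descent replaces the Gr\"obner-basis strictness manipulations used in the analogous result of \cite{CNS22}.
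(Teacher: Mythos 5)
Your proposal is correct and takes essentially the same route as the paper: both use Proposition \ref{prop-funnyBSpolyInducedV} and the compatibility hypothesis to write $u = Q\cdot f^{-1}\delta$ with $Q\in F_k^{\ord}\scD_{X\times\bC_t,(\fx,0)}\cap V^0\scD_{X\times\bC_t,(\fx,0)}$, eliminate the $t$-dependence of $Q$ to produce $\widetilde{P}(s)\in F_k^\sharp\scD_{X,\hspace{1pt}\fx}[s]$ with $\pi_f(\widetilde{P}(s))=u$, and then invoke $\ker\pi_f=\ann_{\scD_{X,\hspace{1pt}\fx}[s]}f^{s-1}\subseteq\Gamma_f$ to conclude $\widetilde{P}(s)\in\Gamma_f$. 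The only place you expand on the paper is the $t$-elimination step, which the paper asserts in one sentence and you justify in detail via Hadamard's lemma together with the (correctly stated and verified) commutation identity and its attendant strict drop in total order.
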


\begin{proof}
Proposition \ref{propcondnBS-new}.i) says $\pi_f(\Gamma_f)=V^0 i_{f,+} \scO_{X,\hspace{1pt}\fx}(*f)$.  So the containment $\supseteq$ in the statement of the proposition is clear since $\pi_f$ cannot increase order. We must prove the reverse inclusion. Assume that $u \in F_k^{\text{ord}}V^0i_{f,+}\scO_{X,\hspace{1pt}\fx}(*f)$. Then $u \in V^0i_{f,+}\scO_{X,\hspace{1pt}\fx}(*f)$ and so there exists $Q(s) \in \Gamma_f$ such that $u = \pi_f(Q(s))$.

By Proposition \ref{prop-funnyBSpolyInducedV}, $V^0i_{f,+}\scO_{X,\hspace{1pt}\fx}(*f)\subseteq V^0_{\ind} i_{f,+}\scO_{X,\hspace{1pt}\fx}(*f)$ and so $u \in F_k^{\ord}V^0_{\ind} i_{f,+}\scO_{X,\hspace{1pt}\fx}(*f)$. Therefore, by the hypothesis of the compatibility of the order filtration and the induced $V$-filtration, there exists some $P\in F_k^{\ord} \scD_{X\times\bC_t, (\fx,0)}\cap V^0\scD_{X\times\bC_t, (\fx,0)}$ such that $u = P\cdot f^{-1}\delta$. By using the definition of the $V$-filtration on $\scD_{X \times \bC_t, (\fx, 0)}$ along $\{t=0\}$ as well as the fact $t \cdot g f^{-1} \delta = f g f^{-1} \delta$ (for any $g \in \scO_{X,\hspace{1pt}\fx}$), we may find a $P^{\prime} \in \sum_{\ell=0}^{k} \sum_{j=0}^{k - \ell}(F_\ell^{\ord} \scD_{X,\hspace{1pt}\fx}) (\partial_t t)^j$ such that $u = P \cdot f^{-1} \delta = P^{\prime} \cdot f^{-1} \delta$. So there is some $\widetilde{P}(s) \in F_k^\sharp \scD_{X,\hspace{1pt}\fx}[s]$ such that $P^{\prime} = \widetilde{P} (-\partial_t t)$ and, in particular, $\pi_f (\widetilde{P}(s)) = P^{\prime} \cdot f^{-1} \delta = P \cdot f^{-1} \delta = u$.

So now we have that $\pi_f(\widetilde{P}(s))=\pi_f(Q(s))=u$. Hence $\widetilde{P}(s)-Q(s)\in\ker\pi_f$. We also have the commutative diagram of $\scD_{X,\hspace{1pt}\fx}$-maps from Lemma \ref{lem-commDiagram}
\begin{center}
\begin{tikzcd}
\scD_{X,\hspace{1pt}\fx}[s] \arrow[rr, bend left = 20, "\pi_f"] \arrow[r, twoheadrightarrow] & \scD_{X,\hspace{1pt}\fx}[s]\cdot f^{s-1} \arrow[r, "\varphi_f"] & i_{f,+}\scO_{X,\hspace{1pt}\fx}(*f),
\end{tikzcd}
\end{center}
and in Lemma \ref{lem-commDiagram} it is proven that $\ker \pi_f = \ann_{\scD_{X,\hspace{1pt}\fx}[s]} f^{s-1}$. As $\ann_{\scD_{X,\hspace{1pt}\fx}[s]} f^{s-1} \subseteq \Gamma_f$ by definition, $\widetilde{P}(s) - Q(s) \in \ker \pi_f \implies \widetilde{P}(s) \in \Gamma_f$. And thus we are done, since then $u = \pi_f(\widetilde{P}(s))\in \pi_f(\Gamma_f\cap F_k^{\sharp}\scD_{X,\hspace{1pt}\fx}[s])$.
\end{proof}

Now we can prove Theorem \ref{thmgamma2-new}:

\begin{proof}[Proof of Theorem \ref{thmgamma2-new}]
The justifications of ``in other words'' follows from the first formula via Lemma \ref{prop-funnyBSFunctional}. 

Recall the commutative diagram of $\scD_{X,\hspace{1pt}\fx}$-maps from Lemma \ref{lem-commDiagram}:
    \begin{equation*}
        \begin{tikzcd}
            \widetilde{\phi}_{f,0}: \scD_{X,\hspace{1pt}\fx}[s] \arrow[rr, bend left = 20, "\pi_f"] \arrow[r, twoheadrightarrow] 
                & \scD_{X,\hspace{1pt}\fx}[s]\cdot f^{s-1} \arrow[r, hookrightarrow, "\varphi_f"] \arrow[rr, bend right = 20, swap, "\specialize"]
                    & i_{f,+}\scO_{X,\hspace{1pt}\fx}(*f) \arrow[r, "\psi_{f,0}"]
                        & \scO_{X,\hspace{1pt}\fx}(*f).
        \end{tikzcd}
    \end{equation*}
Also remember that: $\psi_{f,0} : i_{f,+} \scO_{X,\hspace{1pt}\fx}(*f) \to \scO_{X,\hspace{1pt}\fx}(*f)$ is given by $\partial_t \mapsto 0$ and $u \delta \mapsto u$; $\varphi_f : \scD_{X,\hspace{1pt}\fx}[s]\cdot f^{s-1} \to i_{f,+} \scO_{X,\hspace{1pt}\fx}(*f)$ is as defined in Proposition \ref{prop-MP-graphIsofs}. 

Recall also (see the justification for \eqref{eqn-reformulation-0}) that Theorem \ref{thmformula} is equivalent to
\begin{equation} \label{eqn-reformulation}
    F_k^H\scO_{X,\hspace{1pt}\fx}(*f) = \psi_{f,0}(F_k^{\Torder}V^0i_{f,+}\scO_{X,\hspace{1pt}\fx}(*f)).
\end{equation}

Furthermore, we know, by Lemma \ref{lemHtordV} and Proposition \ref{propiota-new}, that under our hypotheses
\begin{equation} \label{eqn-VordVersusTorder-help}
    F_k^{\Torder}V^0i_{f,+}\scO_{X,\hspace{1pt}\fx}(*f)=F_k^{\text{ord}}V^0i_{f,+}\scO_{X,\hspace{1pt}\fx}(*f).
\end{equation}

Moreover, Proposition \ref{proppi-new} gives us the equality
\begin{equation} \label{eqn-VordVersusGamma}
    F_k^{\ord}V^0i_{f,+}\scO_{X,\hspace{1pt}\fx}(*f) = \pi_f(\Gamma_f\cap F_k^{\sharp}\scD_{X,\hspace{1pt}\fx}[s]).
\end{equation}

So finally, combining \eqref{eqn-reformulation}, \eqref{eqn-VordVersusTorder-help} and \eqref{eqn-VordVersusGamma} into the commutative diagram Lemma \ref{lem-commDiagram} yields:
\begin{align*} 
\phi_0\left((\Gamma_f \cap F^{\sharp}_k\scD_{X,\hspace{1pt}\fx}[s])\right)\cdot f^{-1} 
&= \specialize \left((\Gamma_f \cap F^{\sharp}_k\scD_{X,\hspace{1pt}\fx}[s])\cdot f^{s-1}\right) \\
&= \psi_{f,0}\left(\varphi_f\left((\Gamma_f \cap F^{\sharp}_k\scD_{X,\hspace{1pt}\fx}[s])\cdot f^{s-1}\right)\right) \\ 
&= \psi_{f,0}\left(\pi_f\left(\Gamma_f \cap F^{\sharp}_k\scD_{X,\hspace{1pt}\fx}[s]\right)\right) \\[0.2em]
&= \psi_{f,0}\left(F_k^{\text{ord}}V^0i_{f,+}\scO_{X,\hspace{1pt}\fx}(*f)\right)\\[0.5em]
&= \psi_{f,0}\left(F_k^{t\text{-ord}}V^0i_{f,+}\scO_{X,\hspace{1pt}\fx}(*f)\right) \\[0.5em]
&= F_k^H\scO_{X,\hspace{1pt}\fx}(*f).\end{align*}
\end{proof}

\subsection{A Special Case}

When we additionally assume that the roots of the Bernstein--Sato polynomial lie in $(-2,-1]$, we have equality of Hodge and induced order filtrations:

\begin{cor}

Assume that $f \in \scO_{X,\hspace{1pt}\fx}$ satisfies Hypotheses \ref{hyp-mainHypotheses}: it is Euler homogeneous; $f$ is parametrically prime at $\fx$; the roots of the Bernstein-Sato polynomial are contained in $(-2,0)$. Assume moreover that $Z(b_f(s))\subseteq(-2,-1]$. Then, for every non-negative integer $k$, 
\[F_k^H\scO_{X,\hspace{1pt}\fx}(*f)=F_k^{\text{\emph{ord}}}\scO_{X,\hspace{1pt}\fx}(*f).\]

\end{cor}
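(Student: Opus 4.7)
The plan is to observe that this corollary follows almost immediately by combining two of the paper's earlier results, once one notices what the additional hypothesis $Z(b_f(s))\subseteq(-2,-1]$ buys us.

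First, I would unpack the definition of $\beta_f(s)$ under the strengthened Bernstein--Sato condition. Recall from Definition \ref{def-funnyBSpoly} that
\[
\beta_f(s) = \prod_{\lambda \in Z(b_f(s)) \cap (-1,0)} (s+\lambda+1)^{\ell_\lambda}.
\]
Since $Z(b_f(s)) \subseteq (-2,-1]$, the indexing set $Z(b_f(s)) \cap (-1,0)$ is empty, so $\beta_f(s)=1$ and consequently $r_f := \deg \beta_f(s) = 0$. (This is already recorded as Proposition \ref{propcondnBS-new}.ii-iii.)

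Next I would invoke Corollary \ref{corrf}, whose only standing hypothesis is $Z(b_f(s))\subseteq(-2,0)$ (which is weaker than our current assumption, hence satisfied). It gives, for every $k \in \mathbb{Z}$,
\[
F_{k-r_f}^{\ord}\scO_{X,\hspace{1pt}\fx}(*f) \subseteq F_k^H\scO_{X,\hspace{1pt}\fx}(*f).
\]
Plugging in $r_f=0$ from the previous paragraph yields the inclusion
\[
F_k^{\ord}\scO_{X,\hspace{1pt}\fx}(*f) \subseteq F_k^H\scO_{X,\hspace{1pt}\fx}(*f).
\]

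Finally, for the reverse inclusion I would apply Theorem \ref{thmmain-new}, whose hypotheses (Euler homogeneity, parametrically prime at $\fx$, and $Z(b_f(s))\subseteq(-2,0)$ locally everywhere) are all a part of our assumptions. That theorem yields $F_k^H\scO_{X,\hspace{1pt}\fx}(*f) \subseteq F_k^{\ord}\scO_{X,\hspace{1pt}\fx}(*f)$ for every non-negative integer $k$. Combining the two inclusions gives equality, as required. There is really no obstacle here: the only subtlety is the bookkeeping observation that the additional restriction $Z(b_f(s))\subseteq(-2,-1]$ collapses the shift $r_f$ in Corollary \ref{corrf} to zero, which is precisely what is needed to upgrade the containment of Theorem \ref{thmmain-new} to an equality.
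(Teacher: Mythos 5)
Your proof is correct and takes essentially the same approach as the paper: combine Corollary \ref{corrf} and Theorem \ref{thmmain-new} into the chain $F_{k-r_f}^{\ord}\scO_{X,\hspace{1pt}\fx}(*f)\subseteq F_k^H\scO_{X,\hspace{1pt}\fx}(*f)\subseteq F_k^{\ord}\scO_{X,\hspace{1pt}\fx}(*f)$, then note that the refined hypothesis $Z(b_f(s))\subseteq(-2,-1]$ forces $\beta_f(s)=1$ and hence $r_f=0$.
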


\begin{proof}
    Combining Corollary \ref{corrf} and Theorem \ref{thmmain-new}, we have the chain of inclusions
\[F_{k-r_f}^{\ord}\scO_{X,\hspace{1pt}\fx}(*f)\subseteq F_k^H\scO_{X,\hspace{1pt}\fx}(*f)\subseteq F_k^{\ord}\scO_{X,\hspace{1pt}\fx}(*f).\]
    Our assumption $Z(b_f(s)) \subseteq (-2,-1]$ implies $r_f = 0$, whence the claim. 
\end{proof}

\begin{cor}

Assume that $f \in \scO_X$ satisfies Hypotheses \ref{hyp-mainHypotheses}: it is Euler homogeneous; it is parametrically is prime; the roots of the Bernstein-Sato polynomial are contained in $(-2,0)$ locally everywhere. Let $D$ be the divisor arising from our global $f \in \scO_X$. Then $D$ is log-canonical, and if $\pi:(Y,E)\to(X,D)$ is a log resolution of the pair $(X,D)$, isomorphic over $Y\backslash E$, then $R^p\pi_*\Omega_Y^{n-p}(\log E)=0$ for all $p \in\bZ_{\geq 1}$.

\end{cor}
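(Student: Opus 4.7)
The plan is to combine the equality $F_k^H\scO_X(*f) = F_k^{\ord}\scO_X(*f)$ for all $k \geq 0$ --- supplied by the previous corollary, which this subsection implicitly retains the strengthening $Z(b_f(s)) \subseteq (-2,-1]$ locally everywhere for, in the spirit of ``A Special Case'' --- with Mustaţă--Popa's birational/Hodge-ideal description of the Hodge filtration on $\scO_X(*D)$. Since $F_k^{\ord}\scO_X(*f) = F_k\scD_X \cdot (\scO_X\cdot f^{-1}) = F_k\scD_X\cdot \scO_X(D)$ coincides locally with Mustaţă--Popa's order filtration $F_k\scO_X(*D)$, and the Hodge ideals $I_k(D) \subseteq \scO_X$ are defined via $F_k^H\scO_X(*D) = I_k(D)\otimes \scO_X((k+1)D)$, the equality from the previous corollary translates into the triviality $I_k(D) = \scO_X$ for every $k \geq 0$.

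For the first assertion, the $k=0$ case $I_0(D) = \scO_X$ is exactly the sheaf-theoretic characterization of the pair $(X,D)$ being log-canonical, due to Saito and refined by Mustaţă--Popa (see Theorem A of \cite{MP19a}). This yields $D$ log-canonical immediately.

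For the vanishing statement, I would invoke Mustaţă--Popa's birational description of the Hodge filtration via a log resolution $\pi:(Y,E)\to (X,D)$, namely that $\pi_+ \scO_Y(*E)$ realizes $\scO_X(*D)$ and carries a natural Hodge filtration built from $\Omega_Y^{\bullet}(\log E)$, cf. Section D of \cite{MP19a}. The spectral sequence computing $F_\bullet^H$ birationally degenerates precisely when $R^p \pi_*\Omega_Y^{n-p}(\log E) = 0$ for every $p \geq 1$; conversely, when $D$ is log-canonical, the equality $F_k^H\scO_X(*D) = F_k\scO_X(*D)$ for all $k \geq 0$ forces the degeneration and hence the vanishing. (This is the equivalence encoded in Theorem B of \cite{MP19b} and its companion statements.) Plugging in the triviality of the Hodge ideals from the first step then delivers $R^p \pi_* \Omega_Y^{n-p}(\log E) = 0$ for all $p \in \bZ_{\geq 1}$.

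The main obstacle is locating the exact formulation in the Mustaţă--Popa series \cite{MP19a,MP19b,MP19c} that packages the equality of all pieces of the Hodge and order filtrations into the full family of higher direct image vanishings; the $k=0$ equivalence (log-canonical $\iff I_0(D) = \scO_X$) is standard and widely cited, but the higher-$k$ part requires some care in translating between the filtration-theoretic language used in our paper and the Hodge-ideal / birational language employed in loc. cit. Once the correct reference is cited, the proof is essentially a direct application, since the analytic heavy lifting has already been carried out by Theorem \ref{thmmain-new} and the previous corollary.
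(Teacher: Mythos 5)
Your argument from the previous corollary to \cite{MP19a} goes wrong at the translation step. You assert that $F_k^{\ord}\scO_X(*D)=F_k\scD_X\cdot f^{-1}$ coincides with the pole order filtration $\scO_X((k+1)D)$, so that $F_k^H=F_k^{\ord}$ would give $I_k(D)=\scO_X$ for all $k\geq 0$. These are two different filtrations: the induced order filtration $F_k\scD_X\cdot f^{-1}$ sits inside $\scO_X((k+1)D)$ but is a proper subsheaf for $k\geq 1$ as soon as $D$ is singular. Already for $f=xy$ one has $F_1\scD_X\cdot f^{-1}=(x,y)f^{-2}\subsetneq\scO_X f^{-2}$, so $I_1(D)=(x,y)$ for the simplest SNC divisor. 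In fact $I_k(D)=\scO_X$ for all $k$ forces $D$ smooth (see \cite{MP19a}), which is certainly not what the corollary asserts.

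The correct route through \cite{MP19a} for the vanishing is via generating level, not triviality of all Hodge ideals. From $F_k^H=F_k^{\ord}=F_k\scD_X\cdot f^{-1}=F_k\scD_X\cdot F_0^H$ for every $k$, the Hodge filtration is generated at level $0$, i.e. $\genLevel(\scO_X(*D))=0$, and \cite{MP19a} identifies generating level $\leq k$ exactly with the vanishing $R^q\pi_*\Omega_Y^{n-q}(\log E)=0$ for $q\geq k+1$; generation at level $0$ therefore yields the full family $R^p\pi_*\Omega_Y^{n-p}(\log E)=0$, $p\geq 1$. Your handling of the $k=0$ case is fine: $F_0^H=\scO_X(D)$ gives $I_0(D)=\scO_X$, i.e. triviality of the multiplier ideal $\scJ(D,1-\epsilon)$, which is log-canonicity. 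You are also right that the statement tacitly retains the strengthening $Z(b_f(s))\subseteq(-2,-1]$ from the subsection's setup, since without it the preceding corollary delivers only inclusions between the Hodge and induced order filtrations, not equalities.
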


\begin{proof}

This is immediate by the previous Corollary, using the main results of \cite{MP19a}.
    
\end{proof}

\begin{rem}
    The previous two corollaries also hold for reduced effective divisors $D$ that are not necessarily globally defined, provided we add the assumption that $D$ is strongly Euler homogeneous.     
\end{rem}

\section{Linear Jacobian Type}

Our objective is to introduce a condition on $f \in \scO_X$ or a reduced effective divisor $D$ ensuring that: $\gr^{\sharp} \ann_{\scD_X[s]} f^{s-1} \subseteq \gr^{\sharp} \scD_X[s]$ is prime; $\ann_{\scD_{X}[s]} f^{s-1}$ has as ``simple'' of a generating set as possible. The former certifies the parametrically prime assumption in Hypotheses \ref{hyp-mainHypotheses}; when all these hypotheses hold, the latter helps make computations tractable. 

The condition goes by \emph{linear Jacobian type} (Definition \ref{def-linearJacType}): it is nothing more than asking the ideal defining the singular locus to be of linear type. This is an algebraic notion. When the restriction on roots of the Bernstein-Sato polynomial hold, linear Jacobian type divisors satisfy our main theorems (Corollary \ref{cor-LJTsatisfiesHyp}). In the final subsection we introduce some geometric properties on a divisor ensuring it is of linear Jacobian type, which yields a more explicit class of divisors satisfying our main theorems (Theorem \ref{thmOfUli}). 

\subsection{The definition of linear Jacobian type}

The condition of $f \in \scO_X$ being linear Jacobian type involves the defining ideal of the singular locus:

\begin{defn} \label{def-Jacobian}
    For $f \in \scO_X$, we call the \emph{true Jacobian ideal} $\scJ_f$ of $f$ the $\scO_X$-ideal generated by $f$ and its partial derivatives. In local coordinates,
    \begin{equation*}
        \scJ_f = (f, \frac{\partial f}{\partial x_1}, \ldots, \frac{\partial f}{\partial x_n} ) \subseteq \scO_X.
    \end{equation*}
    We can also consider the $\scO_X$-ideal $\widetilde{\scJ_f}$ generated by just the partial derivatives, which, in local coordinates, is given by
    \begin{equation*}
        \widetilde{\scJ_f} = (\frac{\partial f}{\partial x_1}, \ldots, \frac{\partial f}{\partial x_n} ) \subseteq \scO_X.
    \end{equation*}
    These ideals can be checked to be well-behaved with respect to coordinate changes and choice of defining equation.
\end{defn}

\begin{defn}[c.f. Section 7.2 \cite{Vas97}] \label{def-linearType}
Let $R$ be a commutative ring and $I\subseteq R$ an ideal. $I$ is of \emph{linear type} if the canonical surjective map of graded $R$-algebras
\begin{equation} \label{eqn-linearType}
    \text{Sym}_R(I) \to \text{Rees}(I):=\sum_{i=0}^{\infty}(It)^i \subseteq R[t]
\end{equation}
is an isomorphism. Here \eqref{eqn-linearType} is the unique $R$-algebra homomorphism extending the linear map
\begin{equation*}
    I \to \text{Rees}(I) \quad \text{where} \quad g \mapsto gt,
\end{equation*}
guaranteed by the universal property of the symmetric algebra $\text{Sym}_R(I)$.
\end{defn}

\begin{defn}[c.f. \cite{CN08}] \label{def-linearJacType}
$f \in \scO_X$ is of \emph{linear Jacobian type} if $\scJ_f\subseteq \scO_X$ is of linear type. A reduced effective divisor $D$ is of linear Jacobian type if it is so locally everywhere.
\end{defn}

We will make precise the relationship between $\scJ_f$ being of linear type and $\widetilde{\scJ_f}$ being of linear type. It is relatively immediate that $f \in \scO_X$ being Euler homogeneous is equivalent to $\scJ_f = \widetilde{\scJ_f}$. The same is true for the divisor version. So in the Euler homogeneous setting, $\scJ_f$ being of linear type is equivalent to $\widetilde{\scJ_f}$ being of linear Jacobian type. Amazingly we have the following, due to Narváez Macarro (Proposition 1.5 \cite{Nar15}):

\begin{prop}[Proposition 1.5 \cite{Nar15}] \label{propLJ-new}
Let $f \in \scO_X$. Then the following are equivalent:
    \begin{enumerate}[label=\roman*)]
        \item $f$ is of linear Jacobian type, i.e. $\scJ_f \subseteq \scO_X$ is of linear type.
        \item The ideal $\widetilde{\scJ}_f\subseteq\scO_X$ is of linear type and $f$ is strongly Euler homogeneous.
\end{enumerate}
\end{prop}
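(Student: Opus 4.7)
The plan is to prove the two implications separately. The reverse direction $(ii) \Rightarrow (i)$ is essentially formal, obtained by collapsing both ideals onto each other. The forward direction $(i) \Rightarrow (ii)$ is the substantive one: from the single linear type hypothesis on $\scJ_f$ we must extract both strong Euler homogeneity and the linear type of $\widetilde{\scJ_f}$.

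For $(ii) \Rightarrow (i)$, I would work locally at each $\fx \in D$. Choose a local defining equation admitting a strong Euler vector field $E = \sum_{i} \epsilon_i \partial_{x_i}$ with $\epsilon_i \in \mathfrak{m}_{X,\fx}$, which is available in the strongly Euler homogeneous setting independently of unit multiplication, cf. Remark \ref{rmk-EulerHomBasics}.\eqref{item-StrongEulerChoice}. Then $f = E \cdot f = \sum_i \epsilon_i \partial_i f$ lies in $\widetilde{\scJ_f}$, so $\scJ_f = \widetilde{\scJ_f}$ on a neighborhood of $\fx$. Away from $D$ both ideals equal $\scO_X$. Since linear type is a local property, it transfers from $\widetilde{\scJ_f}$ to $\scJ_f$ without further work.

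For $(i) \Rightarrow (ii)$, the strategy is to first upgrade the hypothesis to strong Euler homogeneity, for once this is in hand the equality $\scJ_f = \widetilde{\scJ_f}$ follows exactly as in the reverse direction, and linear type for $\widetilde{\scJ_f}$ is then automatic. By definition, linear type of $\scJ_f$ asserts that the canonical surjection $\operatorname{Sym}_{\scO_X}(\scJ_f) \twoheadrightarrow \operatorname{Rees}(\scJ_f)$ is an isomorphism, equivalently that every syzygy among the generators $f, \partial_1 f, \ldots, \partial_n f$ is a consequence of \emph{linear} ones. Among those linear syzygies one always finds the Koszul relations $(\partial_j f)\,\partial_i f - (\partial_i f)\,\partial_j f = 0$; the task is to locate, at every point $\fx \in D$, an additional linear syzygy of the Euler shape $f = \sum \epsilon_i\, \partial_i f$ with $\epsilon_i \in \mathfrak{m}_{X,\fx}$.

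The main obstacle is precisely producing this Euler-type relation from the abstract linear type hypothesis. I would approach it via the first Koszul homology of the sequence $(f, \partial_1 f, \ldots, \partial_n f)$, exploiting the classical connection between linear type and the $d$-sequence / proper-sequence property: linear type of $\scJ_f$ forces the vanishing of suitable Koszul homology modulo $\scJ_f$, and a careful manipulation converts this vanishing into the desired syzygy with coefficients in $\mathfrak{m}_{X,\fx}$. This is the technically delicate step and is the heart of Narváez Macarro's argument in \cite{Nar15}; once the strong Euler vector field has been produced, the remainder of the proof is bookkeeping.
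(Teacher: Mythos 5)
Your proof is correct and follows the same route as the paper: $(ii) \Rightarrow (i)$ is the observation that Euler homogeneity forces $\scJ_f = \widetilde{\scJ_f}$ locally everywhere so linear type transfers, and $(i) \Rightarrow (ii)$ reduces, via the same collapse of ideals, to Proposition 1.5 of Narv\'aez Macarro for the key implication linear Jacobian type $\Rightarrow$ strongly Euler homogeneous. One small simplification in $(ii) \Rightarrow (i)$: the proposition fixes a single $f \in \scO_X$, so you need not ``choose a local defining equation'' or appeal to unit-change invariance --- the given $f$ is itself strongly Euler homogeneous at each $\fx \in \text{Var}(f)$ by hypothesis, and off $\text{Var}(f)$ both ideals are trivially the unit ideal.
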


\begin{proof}
That $ii) \implies i)$ follows since $f$ is Euler homogeneous at all $\fx \in X$ and so $\widetilde{\scJ_{f,\fx}} = \scJ_{f,\fx}$ locally everywhere. For $i) \implies ii)$, Proposition 1.5 \cite{Nar15} shows linear Jacobian type implies strongly Euler homogeneous, from whence $\widetilde{\scJ_f} = \scJ_f$.
\end{proof}

\subsection{Liouville ideal}

We begin to relate the linear Jacobian type condition to $\ann_{\scD_X[s]} f^{s-1}$ having the simplest possible generating set.

\begin{defn} \label{def-logDer}
    Fix $f \in \scO_X$. The \emph{logarithmic derivations} $\Der_X(-\log f)$ is the coherent $\scO_X$-module of $\bC$-derivations tangent to our fixed $f$:
    \begin{equation*}
        \Der_X(-\log f) = \{\delta \in \Der_{\bC}(\scO_X) \mid \delta \cdot f \in \scO_X \cdot f \}.
    \end{equation*}
    The coherent $\scO_X$-submodule of \emph{annihilating logarithmic derivations} $\Der_X(-\log_0 f)$ are the derivations that kill $f$:
    \begin{equation*}
        \Der_X(-\log_0 f) = \{\delta \in \Der_{\bC}(\scO_X) \mid \delta \cdot f = 0 \}. 
    \end{equation*}
\end{defn}

\begin{rem} \label{rmk-logDerivations} \enspace 
    \begin{enumerate}
        \item The logarithmic derivations $\Der_X(-\log f)$ are well-behaved with respect to coordinate change and choice of a defining equation for $f$. So one can define $\Der_X(-\log D)$ for $D$ a divisor similarly.
        \item \label{item-annLogCoordChanges} The annihilating logarithmic derivations do depend on choice of defining equations (cf. Remark 2.10.(4) \cite{Wal17}). But they are well-behaved with respect to coordinate change, via multiplication with the transpose of the Jacobian matrix. Alternatively, this well-behavedness follows from the $\scO_X$-isomorphism
        \begin{equation*}
            \Der_X(-\log_0 f) \simeq \ann_{\scD_X} f^{s} \cap F_1^{\ord} \scD_X  = \ann_{\scD_X} f^s \cap \Der_{\bC}(\scO_X).
        \end{equation*}
        \item Fix $f \in \scO_X$. The annihilating logarithmic derivations $\Der_X(-\log_0 f)$ are the $\scO_X$-syzygies of the Jacobian $\widetilde{\scJ_f}$; the logarithmic derivations $\Der_X(-\log f)$ are the $\scO_X$-syzygies of the generating set $\{f, \frac{\partial f}{\partial x_1}, \ldots, \frac{\partial f}{\partial x_n}\}$ of the true Jacobian $\scJ_f$.
    \end{enumerate}
\end{rem}

Remark \ref{rmk-logDerivations}.\eqref{item-annLogCoordChanges} shows that $\scD_X\cdot \Der_X(-\log_0 f) \subseteq \ann_{\scD_X}f^{s-1}$. We will study when the annihilating logarithmic derivations generate the entire $\scD_X$-annihilator, by investigating the ideal generated by the principal symbols of $\Der_X(-\log_0f)$.

\begin{defn}
Fix $f \in \scO_X$. The \emph{Liouville ideal} $\scL_f$ of $f$ is the $(\gr^{\ord} \scD_X)$-ideal
\begin{equation*}
    \scL_f := \gr^{\ord} \scD_X\cdot \gr^{\ord}(\Der_X(-\log_0f)) \subseteq \gr^{\ord} \scD_X.
\end{equation*}

\end{defn}

\begin{rem} \label{rmk-Liouville} \enspace 

\begin{enumerate}
\item Given any biholomorphism $\varphi:U\to V$ between open subsets $U$ and $V$ of $\bC^n$,
\[\varphi(\scL_f)=\scL_{f\circ \varphi^{-1}}.\]
i.e., the definition of the Liouville ideal commutes with coordinate change (see \cite{Wal17}, Remark 3.2.(1)), implying that the Liouville ideal $\scL_f \subseteq \text{gr}^{\ord} \scD_X$ is indeed well-defined in general.
\item If rather $\fx\in X$ and $f\in\scO_{X,\hspace{1pt}\fx}$, then we choose an extension of $f$ to an open neighborhood of $\fx$ and define the Liouville ideal $\scL_f\subseteq\text{gr}^{\ord} \scD_{X,\hspace{1pt}\fx}$ of $f$ as the ideal of germs of elements in the Liouville ideal of this extension. Alternatively, this is of course just the ideal generated by the symbols of derivations of $\scO_{X,\hspace{1pt}\fx}$ that kill $f$.
\item When $f \in \scO_{X,\hspace{1pt}\fx}$ is strongly Euler homogeneous, Remark 3.2 \cite{Wal17} shows local and algebraic properties of the Liouville ideal $\scL_f$ are independent of the choice of defining equation of the divisor germ. In particular, one can define the Liouville ideal sensibly for reduced, effective, strongly Euler homogeneous divisors.
\end{enumerate}
\end{rem}

We would like to thank Luis Narváez Macarro for pointing out the equivalence of the notions stated in Proposition \ref{propLJ0-new}, and we also note a variant of $ii) \implies i)$ appears in Corollary 3.23 \cite{Wal17}. Note also that $iii)$ is equivalent to the same facts about $\gr^{\ord} \ann_{\scD_X} f^{s-1}$ by a formal substitution. 

\begin{prop} \label{propLJ0-new}

Let $f \in \scO_X$. Then the following are equivalent:

\begin{enumerate}[label=\roman*)]
    \item The ideal $\widetilde{\scJ}_f\subseteq\scO_X$ is of linear type.
    \item The Liouville ideal $\scL_f$ is prime of height $n-1$.
    \item $\scL_f$ is prime and any generating set of $\scL_f$ is a Gr{\"o}bner basis for $\gr^{\ord} \ann_{\scD_X} f^s$, i.e.
    \begin{equation*}
        \scL_f=\gr^{\ord}(\scD_X\cdot \Der_X(-\log_0f))=\gr^{\ord}(\ann_{\scD_X}f^s).
    \end{equation*}
\end{enumerate}
So if $f$ is of linear Jacobian type then all three conditions hold. 
\end{prop}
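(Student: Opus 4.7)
The plan is to pivot on the $\scO_X$-algebra map
\begin{equation*}
    \Phi \colon \gr^{\ord}\scD_X = \scO_X[\xi_1, \dots, \xi_n] \longrightarrow \scO_X[t], \qquad \xi_i \longmapsto (\partial_i f)\, t,
\end{equation*}
whose image is $\text{Rees}(\widetilde{\scJ_f})$. A linear form $\sum a_i \xi_i$ lies in $\ker \Phi$ iff $(a_1, \dots, a_n)$ is a syzygy of $(\partial_1 f, \dots, \partial_n f)$, equivalently iff $\sum a_i \partial_i \in \Der_X(-\log_0 f)$; so $\scL_f$ is precisely the ideal generated by the linear elements of $\ker\Phi$, and $\gr^{\ord}\scD_X / \scL_f \cong \text{Sym}_{\scO_X}(\widetilde{\scJ_f})$. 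The canonical surjection $\text{Sym}_{\scO_X}(\widetilde{\scJ_f}) \twoheadrightarrow \text{Rees}(\widetilde{\scJ_f})$ of Definition \ref{def-linearType} is therefore an isomorphism precisely when $\ker \Phi = \scL_f$.

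For $i) \Leftrightarrow ii)$, observe that $\text{Rees}(\widetilde{\scJ_f})$ is a subring of the domain $\scO_X[t]$ of Krull dimension $n+1$ at each stalk (as $\widetilde{\scJ_f}$ has positive height since $f$ is reduced non-invertible). Sym surjects onto Rees, so this surjection is an isomorphism iff Sym is a domain of Krull dimension $n+1$, which under the identification above translates exactly to $\scL_f$ being prime of height $n-1$.

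For $ii)\Rightarrow iii)$, the inclusion $\scL_f \subseteq \gr^{\ord}(\ann_{\scD_X} f^s)$ is immediate. For the reverse, let $P \in \ann_{\scD_X} f^s$ be of exact order $k$; a direct Leibniz-rule computation in $\scO_X(*f)[s]f^s$ yields
\begin{equation*}
    P \cdot f^s = s^k \sigma_k(P)(x, \partial f)\, f^{s-k} + O(s^{k-1}),
\end{equation*}
where $\sigma_k(P)(x, \partial f)$ means substituting $\xi_i \mapsto \partial_i f$ in the principal symbol $\sigma_k(P) \in \scO_X[\xi]$. Viewing $\scO_X(*f)[s]f^s$ as the free rank-one $\scO_X(*f)[s]$-module on $f^s$, the family $\{s^j f^{-m}\}_{j,m\geq 0}$ is $\scO_X$-linearly independent, so $P\cdot f^s = 0$ forces the top $s$-coefficient $\sigma_k(P)(x, \partial f)$ to vanish, i.e.\ $\sigma_k(P)\in\ker\Phi$. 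Under $ii)\Leftrightarrow i)$ we have $\ker\Phi = \scL_f$; hence $\sigma_k(P)\in\scL_f$ and $\gr^{\ord}(\ann_{\scD_X}f^s) \subseteq \scL_f$. Combined with primality of $\scL_f$, this gives $iii)$.

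Conversely, $iii)\Rightarrow ii)$: primality is given, and the height is pinned down to $n-1$ by the smooth-point computation — at any smooth point of $f$, coordinates with $f = x_1$ yield $\scL_f = (\xi_2,\dots,\xi_n)$, prime of height $n-1$; irreducibility of $V(\scL_f)$ then propagates this height everywhere. The final clause, that $f$ of linear Jacobian type implies the three conditions, is immediate from Proposition \ref{propLJ-new}. The main obstacle I anticipate is the coefficient-extraction step in $ii)\Rightarrow iii)$: one must justify that $\{s^j f^{-m}\}$ is $\scO_X$-free in $\scO_X(*f)[s]f^s$, which becomes transparent after unwinding the free $\scO_X(*f)[s]$-module structure on $f^s$ and rewriting $f^{s-k} = f^{-k}\cdot f^s$.
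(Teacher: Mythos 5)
Your proof is correct, and the two directions of $ii)\Leftrightarrow iii)$ are handled by a genuinely different argument than the paper's.

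For $i)\Leftrightarrow ii)$ you run the same Rees/Sym computation that the paper uses, so there is nothing to compare there. The real divergence is in $ii)\Rightarrow iii)$. The paper proceeds by a dimension estimate: it establishes a priori that $\gr^{\ord}(\ann_{\scD_X}f^s)$ has height at most $n-1$ (via a somewhat informal "any open set meets smooth points" contradiction), and then observes that a prime ideal $\scL_f$ of height $n-1$ sitting inside an ideal of height at most $n-1$ must equal it. You instead prove the inclusion $\gr^{\ord}(\ann_{\scD_X}f^s)\subseteq \ker\Phi$ \emph{directly}, by the coefficient-extraction identity
\begin{equation*}
P\cdot f^s = s^k\,\sigma_k(P)(x,\partial f)\,f^{s-k}+O(s^{k-1}),
\end{equation*}
so that $P\cdot f^s=0$ forces $\sigma_k(P)(x,\partial f)=0$, i.e.\ $\sigma_k(P)\in\ker\Phi$; under $ii)$ (hence $i)$, hence $\ker\Phi=\scL_f$) this gives $\gr^{\ord}(\ann_{\scD_X}f^s)\subseteq\scL_f$ and therefore equality. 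This is cleaner and more self-contained: in particular, since $\ker\Phi$ is prime of height $n-1$, your coefficient-extraction lemma also recovers the paper's height bound $\operatorname{ht}(\gr^{\ord}\ann_{\scD_X}f^s)\leq n-1$ as a byproduct, and it supplies the missing lower bound on that height as well, which tightens the $iii)\Rightarrow ii)$ direction that the paper handles a bit loosely. Your $iii)\Rightarrow ii)$ via "primality plus the smooth-point computation pins the dimension" is in the same spirit as the paper.

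One small caveat: the assertion that "the family $\{s^j f^{-m}\}_{j,m\geq 0}$ is $\scO_X$-linearly independent" is false as stated (the elements $f^{-m}\in\scO_X(*f)$ satisfy the $\scO_X$-relation $f\cdot f^{-m}=f^{-m+1}$). What your argument actually needs, and actually uses, is simply that $\scO_X(*f)[s]f^s$ is free of rank one over $\scO_X(*f)[s]$, so that $P\cdot f^s=0$ annihilates the $s^k$-coefficient $\sigma_k(P)(x,\partial f)\,f^{-k}\in\scO_X(*f)$, and then $f^{-k}$ is a unit in $\scO_X(*f)$ and $\scO_X\hookrightarrow\scO_X(*f)$ is injective. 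You should state it that way rather than via linear independence of $\{s^j f^{-m}\}$.
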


\begin{proof}
Once we prove the equivalence, the ``so if'' claim follows from Proposition \ref{propLJ-new}.

$i)\Leftrightarrow ii)$: By definition, $\widetilde{\scJ}_f$ is of linear type as an ideal of $\scO_X$ if and only if the surjective $\scO_X$-algebra homomorphism
\[\text{Sym}_{\scO_X}(\widetilde{\scJ}_f)\to \text{Rees}(\widetilde{\scJ}_f)\]
is an isomorphism. This can be checked in local coordinates and is the case if and only if the surjective $\scO_X$-algebra homomorphism
\[\varphi:\text{gr}^F\scD_X=\scO_X[\xi_1,\ldots,\xi_n]\to\text{Rees}(\widetilde{\scJ}_f)\; ;\;\xi_i \mapsto \frac{\partial f}{\partial x_i}\]
has kernel $\scL_f$, since
\[\text{Sym}_{\scO_X}(\widetilde{\scJ}_f)\simeq \scO_X[\xi_1,\ldots,\xi_n]/(\text{Syz}(\widetilde{\scJ}_f)) = \scO_X[\xi_1,\ldots,\xi_n]/\scL_f .\]
Here by $(\text{Syz}(\widetilde{\scJ}_f))$ we mean the ideal generated by $\{s \cdot (\xi_1 \ldots \xi_n)^T \; | \; s \in \text{Syz}(\widetilde{\scJ}_f)\subseteq \scO_X^n\}$, which coincides with $\scL_f$ by definition.

Since $\text{Rees}(\widetilde{\scJ}_f)$ is an integral domain of dimension $n+1$, the kernel of $\varphi$ is \emph{a priori} a prime ideal of $\text{gr}^F\scD_X$ of height $n-1$, containing the Liouville ideal $\scL_f$. Then it is clear that the Liouville ideal is prime of height $n-1$ if and only if it equals the kernel of $\varphi$, showing the equivalence required.

$ii)\Leftrightarrow iii)$: It is clear that \emph{a priori} we have that 
\begin{equation*}
    \scL_f \subseteq \gr^{\ord}(\scD_X\cdot \Der_X(-\log_0f)) \subseteq \gr^{\ord} (\ann_{\scD_X}f^s).
\end{equation*}
Therefore it suffices to prove that $\gr^{\ord} (\ann_{\scD_X}f^s)$ has height at most $n-1$, as then under the assumption that $\scL_f$ is prime, it has height $n-1$ if and only if it equals $\gr^{\ord} (\ann_{\scD_X}f^s)$.

If this wasn't the case, $\gr^{\ord} \ann_{\scD_X}f^s$ would be of height at least $n$. But, over a smooth point of the zero locus of $f$ in $X$, $\gr^{\ord} \ann_{\scD_X}f^s$ is a prime ideal with height $n-1$. Indeed, local algebraic properties of the characteristic ideal of $\scD_X\cdot f^s$ do not depend on choice of defining equation nor coordinate system, so locally at a smooth point we may write $f=x_1$ after a change of coordinates, making the characteristic ideal exactly $(\xi_2,\ldots,\xi_n)$. As any open subset intersecting $\{ f= 0 \}$ contains smooth points, this causes a contradiction.
\end{proof}

Computing $\ann_{\scD_X} f^s$ is trivial in this case because:

\begin{prop} \label{propcondnPres-new}
Assume that $f \in \scO_X$ satisfies the equivalent conditions of Proposition \ref{propLJ0-new}. Then
\begin{equation*}
    \ann_{\scD_X} f^{s} = \scD_X\cdot \Der_X(-\log_0 f).
\end{equation*}
If $f$ is additionally assumed to have a global Euler homogeneity $E$, then
\begin{equation*}
    \ann_{\scD_X[s]} f^{s} = \scD_X[s]\cdot \Der_X(-\log_0 f) + \scD_X[s] \cdot (E - s).
\end{equation*}
\end{prop}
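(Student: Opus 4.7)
The plan is to establish the first equality via a descending Gr\"obner-style induction on the order filtration, and then bootstrap the $\scD_X[s]$-statement via a central-variable elimination trick against $E-s$.

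For the first equality, the containment $\scD_X\cdot \Der_X(-\log_0 f)\subseteq\ann_{\scD_X}f^s$ is immediate from the definition of $\Der_X(-\log_0 f)$. For the reverse inclusion, the input is Proposition \ref{propLJ0-new}.iii), which under our assumption yields
\[
\gr^{\ord}\bigl(\ann_{\scD_X}f^s\bigr)=\scL_f=\gr^{\ord}\scD_X\cdot\bigl\{\gr^{\ord}(\theta)\mid\theta\in\Der_X(-\log_0 f)\bigr\}.
\]
Given $P\in\ann_{\scD_X}f^s$ of order $d$, I would write $\gr^{\ord}(P)=\sum_i\bar A_i\,\gr^{\ord}(\theta_i)$ with $\theta_i\in\Der_X(-\log_0 f)$ and $\bar A_i\in\gr^{\ord}\scD_X$ homogeneous of degree $d-1$. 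Lifting each $\bar A_i$ to $A_i\in F_{d-1}^{\ord}\scD_X$, the element $P-\sum_i A_i\theta_i$ again lies in $\ann_{\scD_X}f^s$ but now has order strictly less than $d$. Since $F_{-1}^{\ord}\scD_X=0$, decreasing induction on $d$ terminates, exhibiting $P$ as an element of $\scD_X\cdot\Der_X(-\log_0 f)$.

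For the second equality, first note that $(E-s)\cdot f^s=E\cdot f^s-sf^s=sf^{s-1}(E\cdot f)-sf^s=0$, so $E-s\in\ann_{\scD_X[s]}f^s$ and the containment $\supseteq$ is immediate. For the reverse inclusion, the plan is to exploit the fact that $s$ is central in $\scD_X[s]$ in order to reduce modulo the left ideal $\scD_X[s]\cdot(E-s)$ down into $\scD_X$. Concretely, since $s$ commutes with $E\in\scD_X$, for any $P_i\in\scD_X$ and $i\geq 1$ one has the identity $P_is^i=-P_is^{i-1}(E-s)+P_iEs^{i-1}$, and iterating this rewrite yields
\[
P(s)=\sum_i P_is^i\;\equiv\;\sum_i P_iE^i\pmod{\scD_X[s]\cdot(E-s)}.
\]
Writing $\widetilde P:=\sum_iP_iE^i\in\scD_X$, the assumption $P(s)\in\ann_{\scD_X[s]}f^s$ combined with $(E-s)\cdot f^s=0$ forces $\widetilde P\cdot f^s=P(s)\cdot f^s=0$, so $\widetilde P\in\ann_{\scD_X}f^s=\scD_X\cdot\Der_X(-\log_0 f)$ by the first part. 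Consequently $P(s)\in\scD_X[s]\cdot\Der_X(-\log_0 f)+\scD_X[s]\cdot(E-s)$, as desired.

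The only delicate step is the descending induction in the first paragraph: it relies crucially on the strong conclusion of Proposition \ref{propLJ0-new}.iii) that the \emph{entire} initial ideal of $\ann_{\scD_X}f^s$ equals $\scL_f$ (and not merely contains it), together with the boundedness below of $F_\bullet^{\ord}\scD_X$. Once the first equality is in hand, eliminating $s$ against $E$ in the second part is a routine division manoeuvre, rendered unambiguous by the centrality of $s$ in $\scD_X[s]$.
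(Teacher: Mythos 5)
Your argument is correct and is essentially the same as the one the paper delegates to Lemma 3.25 and Theorem 3.26 of \cite{Wal17} (and which appears almost verbatim in an earlier draft of this proposition): a descending induction on $\ord_F$, using the identification $\gr^{\ord}(\ann_{\scD_X}f^s)=\scL_f$ from Proposition \ref{propLJ0-new}.iii) and the boundedness of $F^{\ord}_\bullet\scD_X$, followed by the central-variable substitution $s\mapsto E$, which rests on $s^i-E^i\in\scD_X[s]\cdot(E-s)$. The only presentational caveat is that $\Der_X(-\log_0 f)$ need not admit global generators, so the first paragraph should be read as an argument on a local chart (or stalk), which suffices since the claimed equality of coherent ideals can be checked locally.
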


\begin{proof}
    Argue as in Lemma 3.25 (set $\scP = \emptyset)$ and Theorem 3.26 of \cite{Wal17}.
\end{proof}

\subsection{Use Cases}

Now we establish some ``practical'' algebraic and geometric conditions certifying the property of linear Jacobian type. This gives a large class of ``nice'' divisors satisfying all but the Bernstein-Sato part of Theorem \ref{thmgamma2-new}'s Hypotheses \ref{hyp-mainHypotheses}. We also give a natural class of divisors in $\mathbb{C}^{3}$ satisfying all the Hypotheses \ref{hyp-mainHypotheses}. 

First of all, it is now immediate that $f \in \scO_X$ of linear Jacobian type condition already gives a large class of use cases of our main results:

\begin{cor} \label{cor-LJTsatisfiesHyp}
    Suppose that $f \in \scO_X$ is of linear Jacobian type. If the roots of the Bernstein-Sato polynomial of $f$ are contained in $(-2,0)$ locally everywhere, then Theorem \ref{thmmain-new} and Theorem \ref{thmgamma2-new} apply to $F_\bullet^H \scO_X(*f)$ and $F_\bullet^H \scO_{X,\hspace{1pt}\fx}(*f)$. 
\end{cor}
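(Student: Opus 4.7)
The plan is to verify that all three parts of Hypotheses \ref{hyp-mainHypotheses} hold for $f$, after which Theorem \ref{thmmain-new} and Theorem \ref{thmgamma2-new} apply directly to $F_\bullet^H \scO_X(*f)$ and $F_\bullet^H \scO_{X,\hspace{1pt}\fx}(*f)$.

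First I would unpack the linear Jacobian type hypothesis via Proposition \ref{propLJ-new}: at each $\fx \in X$ it yields both that $f$ is strongly Euler homogeneous (in particular Euler homogeneous, confirming part (a) of Hypotheses \ref{hyp-mainHypotheses}) and that the reduced Jacobian ideal $\widetilde{\scJ}_f \subseteq \scO_X$ is of linear type. Part (c) is granted by assumption, so the only remaining task is part (b).

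To show $f$ is parametrically prime at an arbitrary $\fx \in X$, my approach is to pass to the order filtration. Since $f$ is Euler homogeneous at $\fx$, Proposition \ref{prop-ParaPrimeEquivalent} reduces the question to showing that $\gr^{\ord}(\ann_{\scD_{X,\fx}} f^{s-1}) \subseteq \gr^{\ord} \scD_{X,\fx}$ is prime. To bridge to the Liouville ideal, I would first record the elementary identity $\ann_{\scD_{X,\fx}} f^{s-1} = \ann_{\scD_{X,\fx}} f^s$: for any $P \in \scD_{X,\fx}$ the expression $P \cdot f^{s-1}$ is obtained from $P \cdot f^s$ by the formal substitution $s \mapsto s-1$, so, viewed as polynomials in $s$ with coefficients in $\scO_{X,\fx}(*f)$, they vanish identically in $s$ simultaneously. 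Then the equivalence $(i) \Leftrightarrow (iii)$ of Proposition \ref{propLJ0-new}, applied locally at $\fx$ using that $\widetilde{\scJ}_f$ is of linear type, delivers the equality $\scL_f = \gr^{\ord}(\ann_{\scD_{X,\fx}} f^s)$ together with its primality. Combining this with the identity above, $\gr^{\ord}(\ann_{\scD_{X,\fx}} f^{s-1})$ is prime, completing part (b).

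There is no real obstacle here: the corollary is essentially a bookkeeping of already established propositions. The only mildly subtle step is the identification $\ann_{\scD_{X,\fx}} f^{s-1} = \ann_{\scD_{X,\fx}} f^s$, which is needed to bridge Proposition \ref{propLJ0-new} (phrased in terms of $f^s$) with Proposition \ref{prop-ParaPrimeEquivalent} (phrased in terms of $f^{s-1}$).
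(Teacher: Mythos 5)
Your proposal is correct and follows exactly the same route as the paper, which proves the corollary by citing Proposition \ref{propLJ-new}, Proposition \ref{propLJ0-new}, and Proposition \ref{prop-ParaPrimeEquivalent} to verify Hypotheses \ref{hyp-mainHypotheses}. The one detail you spell out explicitly, the identification $\ann_{\scD_{X,\fx}} f^{s-1} = \ann_{\scD_{X,\fx}} f^s$ bridging the $f^s$ phrasing of Proposition \ref{propLJ0-new} with the $f^{s-1}$ phrasing of Proposition \ref{prop-ParaPrimeEquivalent}, is indeed needed and is acknowledged in the paper (in the remark just before Proposition \ref{propLJ0-new}, ``by a formal substitution''); your verification that this substitution is $\scD_{X,\fx}$-linear and hence preserves annihilators is the right justification.
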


\begin{proof}
    Hypotheses \ref{hyp-mainHypotheses} hold by \ref{propLJ0-new}, Proposition \ref{propLJ-new}, and Proposition \ref{prop-ParaPrimeEquivalent}.
\end{proof}

Turning to easily computable assumptions guaranteeing linear Jacobian type, we first require the \emph{logarithmic differential forms}. These can be thought of as the objects in the smallest subcomplex of the de Rham complex $\Omega_X^k(*D)$ such that: every module member has pole order at most one; the subcomplex is preserved under exterior differentiation. 

\begin{defn} \label{def-logforms}
    Let $D$ be a reduced effective divisor on $X$ (or replace $D$ with $f \in \scO_X$). The sheaf of logarithmic differential $k$-forms along $D$ is the $\scO_X$-module
    \begin{equation*}
        \Omega_X^k(\log D) = \{ \omega \in \Omega_X^k(D) \mid d(\omega) \in \Omega_X^{k+1}(D) \}.
    \end{equation*}
\end{defn}

Recall that $D$ (or $f \in \scO_X$) is \emph{free} when $\Der_X(-\log D)$ is a free $\scO_X$-module. This is equivalent to $\Omega_X^1(\log D)$ being a free $\scO_X$-module, which in turns implies all $\Omega_X^k(\log D)$ are free. The following is a significant relaxation of freeness:

\begin{defn} \label{def-tame}

$X$ a complex manifold, $\fx\in X$, $f\in\scO_{X,\hspace{1pt}\fx}$. Then $f$ is \emph{tame} if
\[\text{prdim}_{\scO_{X,\hspace{1pt}\fx}}(\Omega_{X,\hspace{1pt}\fx}^k(\log f))\leq k \;\;\;\forall k \geq 0.\]
If $f\in\scO_X$, then $f$ is \emph{tame} if the germ $f\in\scO_{X,\hspace{1pt}\fx}$ is tame for every $\fx\in X$. If $D$ is a reduced effective divisor, then $D$ is \emph{tame} if it admits tame defining equations locally around every point $\fx\in X$.  
\end{defn}

Now we need a geometric condition coming from logarithmic derivations. $D$ is a reduced effective divisor as always. We define an equivalence relation on points of $X$, denoted $\approx_X$. For $\fx, \fy\in X$, $\fx\approx_D\fy$ if and only if there exists an open subset $U$ of $X$ containing both $\fx$ and $\fy$, and a logarithmic derivation $\delta \in \Der_U(-\log D \cap U)$ on $U$, such that $\delta$ is nowhere-vanishing on $U$, and such that there exists an integral curve of $\delta$ passing through $\fx$ and $\fy$. 

\begin{defn}[Saito, \cite{KSai80}]
    The \emph{logarithmic stratification of $X$ with respect to $D$} is the stratification of $X$ whose strata are precisely the irreducible components of the equivalence classes of $\approx_Y$. We say that $D$ is \emph{Saito holonomic} if the logarithmic stratification of $X$ induced by $D$ is locally finite, i.e. if for any point $\fx\in X$, there exists an open neighborhood $U$ of $\fx$ such that $U$ has a non-empty intersection with only finitely many logarithmic strata.
\end{defn}

\begin{rem}
    An equivalent definition is that 
    \begin{equation*}
        \text{dim}_{\bC}\{\fx \in X\, :\, \text{dim}_{\bC}(\Der_X(-\log D)\otimes\scO_{X,\hspace{1pt}\fx}/\fm_{X,\hspace{1pt}\fx})=i\}\leq i
    \end{equation*} 
    for all $i\geq 0$.
\end{rem}

\begin{rem} \enspace

\begin{enumerate}  

\item If $\dim X = 3$, then any reduced effective divisor $D \subseteq X$ is tame, thanks to the reflexivity of logarithmic differential forms.

\item The zero set of a hyperplane arrangement $\scA \subseteq \bC^n$ is Saito holonomic. The stratification is given by flats of the intersection lattice.

\item A free divisor is Saito holonomic if and only if it is Koszul (\cite{GMNS09}, Theorem 7.4). So strongly Koszul free divisors are a subset of tame, Saito holonomic, and strongly Euler homogeneous divisors.

\end{enumerate}

\end{rem}

At last, we arrive at the natural and easily verifiable conditions on divisors guaranteeing the assumptions necessary in our main theorems. The nontrivial part of the below is due to Walther \cite{Wal17}; the trivial part is Corollary \ref{cor-LJTsatisfiesHyp}.

\begin{thm}[Walther, \cite{Wal17}] \label{thmOfUli}
Let $f \in \scO_X$ for $X$ or $D$ a reduced effective divisor. If $f$ (resp. $D$) is tame, Saito holonomic, and strongly Euler homogeneous, then $f$ is of linear Jacobian type. In particular, if in addition the roots of the Bernstein-Sato polynomial of $f$ (resp. $D$) are contained in $(-2,0)$ locally everywhere, then Theorem \ref{thmmain-new} and Theorem \ref{thmgamma2-new} apply.

\end{thm}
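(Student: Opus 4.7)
The plan is to peel off the ``in particular'' part first: once we have verified linear Jacobian type, the applicability of Theorem \ref{thmmain-new} and Theorem \ref{thmgamma2-new} under the additional Bernstein--Sato constraint is exactly the content of Corollary \ref{cor-LJTsatisfiesHyp}. So the real work is to show that the combined hypotheses---tame, Saito holonomic, strongly Euler homogeneous---force $f$ (or a local defining equation of $D$) to be of linear Jacobian type.

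My first reduction is via Proposition \ref{propLJ-new}: since strong Euler homogeneity is already assumed, $f$ is of linear Jacobian type if and only if the reduced Jacobian $\widetilde{\scJ_f}$ is of linear type. Applying Proposition \ref{propLJ0-new}, this in turn is equivalent to the Liouville ideal $\scL_f \subseteq \gr^{\ord} \scD_X$ being prime of height $n-1$. Thus the entire theorem becomes a statement about $\scL_f$.

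To control $\scL_f$, I would work on the cotangent bundle $T^*X$ via the natural chain of inclusions
\[
\scL_f \;\subseteq\; \gr^{\ord}\bigl(\scD_X \cdot \Der_X(-\log_0 f)\bigr) \;\subseteq\; \gr^{\ord}(\ann_{\scD_X} f^{s}),
\]
so that $V(\gr^{\ord}\ann_{\scD_X} f^{s}) \subseteq V(\scL_f) \subseteq T^*X$. The strategy is to identify all three zero loci with the union of closures of the conormal varieties to the strata of the logarithmic stratification of $(X, \{f=0\})$. Saito holonomicity is what makes this sensible: it guarantees that the logarithmic stratification is locally finite, so the conormal union is locally a finite union of irreducible Lagrangian subvarieties of $T^*X$, each of dimension $n$. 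This computes the dimension of $V(\scL_f)$ to be $n$, which, inside the $2n$-dimensional $\gr^{\ord}\scD_X$, corresponds to the required height $n-1$. The reverse inclusion---that the conormal union lies in $V(\scL_f)$---is automatic from the definition of logarithmic derivations along strata.

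The main obstacle is the last step: upgrading ``correct radical / correct dimension'' to \emph{primality} of $\scL_f$ as an ideal (not merely of its radical). This is exactly where tameness enters non-trivially, via the projective-dimension bound on $\Omega_X^\bullet(\log f)$; tameness provides the Cohen--Macaulay / Koszul-type behavior of the logarithmic de Rham complex that forces $\scL_f$ to be radical, and when combined with the irreducibility input from Saito holonomicity and strong Euler homogeneity (which, through Proposition \ref{propLJ0-new}, equates primality of $\scL_f$ to $\widetilde{\scJ_f}$ being of linear type at smooth points of the characteristic variety), yields primality on the nose. Rather than reprove these intricate homological facts, my plan is to invoke them directly from Walther's \cite{Wal17}; once $\scL_f$ is prime of height $n-1$, Proposition \ref{propLJ0-new} gives $\widetilde{\scJ_f}$ of linear type, Proposition \ref{propLJ-new} upgrades this to $\scJ_f$ of linear type using strong Euler homogeneity, and Corollary \ref{cor-LJTsatisfiesHyp} concludes the ``in particular'' statement.
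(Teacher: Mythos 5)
Your proposal is correct and takes the same route the paper does: the paper offers no internal proof of this theorem, simply attributing the implication ``tame $+$ Saito holonomic $+$ strongly Euler homogeneous $\Rightarrow$ linear Jacobian type'' to Walther \cite{Wal17} and the ``in particular'' clause to Corollary \ref{cor-LJTsatisfiesHyp}. Your intermediate reductions through Proposition \ref{propLJ-new} and Proposition \ref{propLJ0-new} (translating linear Jacobian type into primality of the Liouville ideal $\scL_f$ of height $n-1$), and your sketch of the conormal-variety and dimension-counting ideas underlying Walther's theorem, are useful scaffolding that the paper does not spell out, but since you ultimately defer to \cite{Wal17} for the genuinely hard step (upgrading the dimension and radical control to actual primality of $\scL_f$ via tameness) rather than attempting to reprove it, your argument is logically no different from the paper's.
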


To use Theorem \ref{thmOfUli}, one must still verify the Bernstein-Sato polynomial's roots are (locally) in $(-2,0)$. The result at the end of this section gives a class where this can be checked easily using local cohomology. 

\begin{defn} \label{defn-PWHomLe}
    We say $f \in \scO_X$ or  a reduced effective divisor $D \subseteq X$ is \emph{positively weighted homogeneous at $\fx$} if there exists local coordinates $x_1, \ldots, x_n$ at $\fx$, a choice of local defining equation $h \in \bC[x_1, \dots, x_n]$, and positive weights $w_1, \ldots, w_n\in\bR_{>0}$ such that
    \begin{equation*}
        \left( \sum_{1 \leq i \leq n} w_i x_i \partial_i \right) \cdot h = (\deg_{\textbf{w}} h)h
    \end{equation*}
    for some $\deg_{\textbf{w}}h\in\bR$. This is equivalent to requiring that $h$ is ``homogeneous" with respect to the non-standard grading on $\bC[x_1,\ldots,x_n]$ given by assigning $x_i$ to have degree $w_i$, and then $\deg_{\textbf{w}}h$ equals the degree of $h$ with respect to this grading. When $f$ is assumed to be in $\bC[x_1, \ldots, x_n]$, our convention is that it is positively weighted homogeneous when it is itself homogeneous with respect to some valid weight system.

    We say $f \in \scO_X$ or a reduced effective divisor $D \subseteq X$ is \emph{positively weighted homogeneous locally everywhere} if for all $\fx \in \text{Var}(f)$ (or $\fx \in D$), $f$ (or $D$) is positively weighted homogeneous at $\fx$. 
\end{defn}

\begin{rem} \enspace 
    \begin{enumerate}
        \item Any divisor is positively weighted homogeneous at a smooth point. 
        \item Positively weighted homogeneous at $\fx$ implies strong Euler homogeneity there.
        \item If $Z \subseteq \bP^{n-1}$ is positively weighted homogeneous locally everywhere, so is the attached cone $C(Z) \subseteq \bC^n.$
        \item Any positively weighted homogeneous locally everywhere divisor is Saito holonomic.
    \end{enumerate}
    
\end{rem}

When $f \in R = \bC[x_1, \ldots, x_n]$ is positively weighted homogeneous with respect to the weight system $w_1, \ldots, w_n$, the affine Jacobian ideal $(\partial f) = (\partial_1 \cdot f, \ldots, \partial_n \cdot f) \subseteq R$ is graded. So if $\fm \subseteq R$ is the irrelevant ideal, the zeroeth local cohomology module 
\begin{equation*}
    H_{\fm}^{0} R / (\partial f) = \{ \xi \in R / \partial f \mid \fm^j \xi = 0 \text{ for } j \gg 0 \}
\end{equation*}
is also graded. Using this graded data, we have the following class of $\bC^3$ divisors satisfying all of Hypotheses \ref{hyp-mainHypotheses} and hence Theorem \ref{thmmain-new} and Theorem \ref{thmgamma2-new}. (In what follows we use a global Bernstein-Sato polynomial, since we are working with polynomials.)

\begin{thm}[Corollary 7.10 \cite{Bath24}] \label{thm-PWHomLENiceClass}
    Suppose that $f \in R = \bC[x_1, x_2, x_3]$ is positively weighted homogeneous with respect to the weight system $w_1, w_2, w_3$ and also positively weighted homogeneous locally everywhere. Then $f \in \scO_{\bC^3}$ is of linear Jacobian type. Moreover, $Z(b_f(s)) \subseteq (-2,0)$ if and only if 
    \begin{equation*}
        [H_{\fm}^0 R / (\partial f)]_t = 0 \text{ for all } t \in [2 \deg_{\mathbf{w}}(f) - (w_1 + w_2 + w_3), 3 \deg_{\mathbf{w}}(f) - (w_1 + w_2 + w_3))
    \end{equation*}
    In particular, $f$ and $\text{\normalfont div}(f)$ satisfy Hypotheses \ref{hyp-mainHypotheses} and so Theorem \ref{thmmain-new} and Theorem \ref{thmgamma2-new} apply.
\end{thm}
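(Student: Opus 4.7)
The strategy is to assemble the theorem from three already-available ingredients. The plan is to verify linear Jacobian type via Walther's Theorem \ref{thmOfUli}, to control the Bernstein-Sato polynomial via a graded local-cohomology computation made possible by positive weighted homogeneity, and then to invoke Corollary \ref{cor-LJTsatisfiesHyp}.

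First I would establish that $f \in \scO_{\bC^3}$ is of linear Jacobian type. Since $\dim X = 3$, every reduced effective divisor is tame by reflexivity of logarithmic differential forms (the first item of the remark after Definition \ref{def-tame}). The assumption that $f$ is positively weighted homogeneous locally everywhere immediately gives that $f$ is strongly Euler homogeneous (the weighted Euler field is a strong Euler homogeneity at each point of $\text{Var}(f)$) and Saito holonomic (the logarithmic stratification refines the stratification by orbits of the local weighted $\bC^\ast$-actions, which is locally finite). Applying Theorem \ref{thmOfUli} therefore concludes that $f$ is of linear Jacobian type.

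Next I would tackle the characterization of $Z(b_f(s)) \subseteq (-2,0)$. The key is a reduction to the origin: at each singular point $\fx \neq 0$ there is a local positively weighted homogeneous defining equation, and a standard result for quasi-homogeneous singularities (roots of the local Bernstein-Sato polynomial are determined by the weighted degrees of a monomial basis of the Milnor algebra and therefore lie in $(-2,0)$) already forces $Z(b_{f,\fx}(s)) \subseteq (-2,0)$. Consequently the "locally everywhere" condition reduces to $Z(b_{f,0}(s)) \subseteq (-2,0)$, equivalently to the global Bernstein-Sato polynomial having roots in $(-2,0)$. Globally, positive weighted homogeneity makes $R = \bC[x_1,x_2,x_3]$ and $R/(\partial f)$ into graded rings, and a Brieskorn-Milnor/Gauss-Manin analysis (as developed in \cite{Bath24} Section 7, generalizing \cite{BS23}) relates the roots of $b_f(-s-1)$ to the weighted degrees carried by the torsion part $H_{\fm}^0(R/(\partial f))$ of the Milnor algebra. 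A direct calculation then shows that a root outside $(-1,1)$ of $b_f(-s-1)$ corresponds precisely to a nonzero graded piece $[H_{\fm}^0(R/(\partial f))]_t$ with $t$ in the stated interval $[2\deg_{\mathbf{w}}(f) - (w_1+w_2+w_3),\, 3\deg_{\mathbf{w}}(f) - (w_1+w_2+w_3))$, and vice versa.

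The main obstacle is precisely this spectral dictionary between graded pieces of $H_{\fm}^0(R/(\partial f))$ and roots of the Bernstein-Sato polynomial in the quasi-homogeneous surface case. This content is exactly Corollary 7.10 in \cite{Bath24}, which in turn extends Proposition 1 of \cite{BS23}, and I would simply quote it rather than reprove it. Once parts (1) and (2) are in place, the final claim is immediate: linear Jacobian type combined with Proposition \ref{propLJ-new} and Proposition \ref{prop-ParaPrimeEquivalent} ensures that $f$ is Euler homogeneous and parametrically prime, so with the Bernstein-Sato hypothesis in hand the full Hypotheses \ref{hyp-mainHypotheses} are satisfied. Strong Euler homogeneity (which holds locally by the positive weighted homogeneity assumption) transfers the conclusions to $\text{div}(f)$, so Theorem \ref{thmmain-new} and Theorem \ref{thmgamma2-new} apply in their stated forms.
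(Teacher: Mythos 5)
The paper itself offers no proof of this theorem: it is stated as a citation of Corollary 7.10 of \cite{Bath24}, with the surrounding text only indicating that the linear Jacobian type claim comes from Theorem \ref{thmOfUli} (positively weighted homogeneous locally everywhere $\Rightarrow$ tame, Saito holonomic, strongly Euler homogeneous in $\bC^3$) and the Bernstein--Sato characterization from local cohomology of the Milnor algebra. So there is no internal proof to compare against; you are reconstructing the ingredients of a cited result, which is a reasonable exercise.

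Your plan is structurally sound and mirrors the logic the paper points at: tameness is automatic in dimension $3$; local positive weighted homogeneity gives strong Euler homogeneity and (via the unproved remark item (4) after Definition \ref{defn-PWHomLe}) Saito holonomicity; Theorem \ref{thmOfUli} then gives linear Jacobian type; the spectral dictionary for the Bernstein--Sato polynomial is taken from \cite{Bath24}; and Corollary \ref{cor-LJTsatisfiesHyp} closes the loop. One small simplification you missed: the implication from global $Z(b_f(s)) \subseteq (-2,0)$ to the same holding locally everywhere is immediate from $b_{f,\fx}(s) \mid b_f(s)$, so the ``reduction to the origin'' only has content in the other direction.

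The one genuine gap is your assertion that ``a standard result for quasi-homogeneous singularities \dots already forces $Z(b_{f,\fx}(s)) \subseteq (-2,0)$'' at $\fx \neq 0$. The germ $f_\fx$ is a $3$-variable germ, and the Yano/Kashiwara monomial-basis description of the $b$-function applies only to isolated singularities. To make the claim you would first need a slicing argument: since only $0$ is fixed by the positive weighted $\bC^\ast$-action, at $\fx \neq 0$ the germ is locally a suspension $g(u,v) \cdot (\text{unit})$ of a plane curve germ $g$, which is then reduced (hence isolated singular in $2$ variables), and the local positive weighted homogeneity assumption at $\fx$ makes $g$ quasi-homogeneous; only then does a Poincar\'e-series bound on the weights ($w_1 + w_2 \leq 1$) put the Yano roots inside $(-2,0)$. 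As written your argument skips these steps and quotes the conclusion as if it applied directly to the $3$-variable germ. Since the ``moreover'' part is being cited anyway, this does not sink the proposal, but if you intend this as an independent reconstruction rather than an unpacking of the citation, you should fill in that reduction.
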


\section{Calculations} \label{calns}



\noindent In this section we calculate the Hodge filtration for some examples of divisors/germs satisfying the hypotheses of the main theorems of this paper. The computational complexity is concentrated in two areas: calculating the Bernstein-Sato polynomial of $f \in \scO_X$; determining a Gröbner basis for the ideal $\Gamma_f$.

To improve the presentation/computation of Hodge filtrations we now introduce \emph{Hodge ideals}. The following is due to Saito (and can be recovered from Theorem \ref{thmMP}):

\begin{thm}[Saito, \cite{MSai93}] 

Let $X$ be a complex manifold and let $D$ be a reduced effective divisor on $X$. Then
\[F_k^H\scO_X(*D)\subseteq \scO_X((k+1)D) \;\;\; \text{ for all } \;\; k \geq 0.\]
    
\end{thm}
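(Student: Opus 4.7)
The statement is a quick consequence of Mustaţă-Popa's formula (Theorem \ref{thmMP}), once one reduces to the case of a global defining equation. Here is how I would proceed.

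First I would reduce to the setting of Theorem \ref{thmMP}. The inclusion to be proved is a containment of coherent $\scO_X$-submodules of $\scO_X(*D)$, so it may be checked on any sufficiently small open cover of $X$. Over such an open subset $U$ we can pick a reduced $f \in \scO_U$ with $D \cap U = \mathrm{div}(f)$, so $\scO_X(*D)|_U = \scO_U(*f)$ and $\scO_X((k+1)D)|_U = f^{-(k+1)}\scO_U$. The problem therefore becomes
\[
F_k^H \scO_U(*f) \;\subseteq\; f^{-(k+1)}\scO_U.
\]

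Next I would apply Theorem \ref{thmMP} directly. According to that theorem, every element $u \in F_k^H \scO_U(*f)$ admits a representation
\[
u \;=\; \sum_{j=0}^{k} j!\, f^{-j-1} v_j, \qquad v_j \in \scO_U,
\]
subject to the constraint $\sum_{j=0}^k v_j \partial_t^j \delta \in V^1 i_{f,+}\scO_U$ (which plays no role for this particular containment—it is precisely what distinguishes $F_k^H$ from the looser pole-order filtration). For each index $j$ with $0 \leq j \leq k$ we have $j+1 \leq k+1$, so $j! f^{-j-1} v_j = f^{-(k+1)} \cdot j! f^{k-j} v_j \in f^{-(k+1)}\scO_U$. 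Summing over $j$, we obtain
\[
u \;=\; f^{-(k+1)} \sum_{j=0}^{k} j!\, f^{k-j} v_j \;\in\; f^{-(k+1)}\scO_U,
\]
which is the desired local containment. Gluing over an open cover then yields the global statement $F_k^H\scO_X(*D) \subseteq \scO_X((k+1)D)$.

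There is essentially no obstacle: the work has already been done in Theorem \ref{thmMP}, whose formula exhibits elements of $F_k^H \scO_X(*f)$ in a shape that automatically has pole order at most $k+1$. The only ``step'' of substance is noticing that the restrictive $V$-filtration condition appearing in Theorem \ref{thmMP} is \emph{not needed} to deduce the bound—the pole-order statement is purely about the shape of representatives, whereas the $V$-filtration condition is what makes the resulting inclusion generally strict.
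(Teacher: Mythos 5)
Your proof is correct, and it follows precisely the route the paper itself points to: the text immediately preceding the theorem says it ``can be recovered from Theorem~\ref{thmMP},'' and your argument fills in exactly that derivation, with the observation that the $V$-filtration constraint in Theorem~\ref{thmMP} is irrelevant to the pole bound being the key (and entirely accurate) point. The one thing worth keeping in mind, which you use implicitly, is that $F_k^H\scO_X(*D)$ is defined intrinsically from the mixed Hodge module $j_*\bQ_U^H[n]$ and hence is independent of the local defining equation, so the local reduction and the gluing at the end are legitimate.
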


\begin{defn}[Mustaţă-Popa, \cite{MP19a}]

Let $X$ a complex manifold and let $D$ be a reduced effective divisor on $X$. Then the \emph{$k$-th Hodge ideal} associated to $D$ is the ideal sheaf on $X$ defined by
\[F_k^H\scO_X(*D)=I_k(D)\otimes_{\scO_X}\scO_X((k+1)D).\]
We define the Hodge ideals $I_k(f)$ associated to a reduced non-constant non-invertible global section $f\in\scO_X$ identically.

\end{defn}

\begin{thm}[Mustaţă-Popa, \cite{MP19a}]

The $0$-th Hodge ideal associated to a reduced effective divisor $D$ is equal to the multiplier ideal $\scJ(D,1-\epsilon)$, $0<\epsilon\ll 1$.
    
\end{thm}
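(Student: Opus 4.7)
The plan is to pass to a log resolution of $(X,D)$ and match both sides by writing each as the same pushforward on the resolution.

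Let $\pi : (Y,E) \to (X,D)$ be a log resolution of the pair $(X,D)$, so $E = \pi^{-1}(D)_{\text{red}}$ is a simple normal crossings divisor on the smooth variety $Y$ and $\pi$ is an isomorphism over $U = X\setminus D$. Write $\pi^* D = \sum_i a_i E_i$ with each $a_i \in \bZ_{>0}$, so $E = \sum_i E_i$. I would compute both sides in the Picard group/as subsheaves of the constant sheaf $\scO_X(*D)$, then identify them.

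For the left-hand side: since $E$ is SNC, a classical computation (Hodge and order filtrations on $\scO_Y(*E)$ coincide for SNC divisors by Saito's theory) gives $F_0^H \scO_Y(*E) = \scO_Y \cdot g^{-1} \simeq \scO_Y(E)$, where $g$ locally defines $E$; equivalently $I_0(E) = \scO_Y$. Feeding this into the birational pushforward formula for the zeroth Hodge piece --- Section D of \cite{MP19a}, a consequence of Saito's strictness for direct images in $\mathrm{MHM}$ combined with the vanishing of higher direct images of $\scO_Y(*E)$ along $\pi$ --- one obtains
\begin{equation*}
I_0(D) = \pi_* \scO_Y\bigl(K_{Y/X} + E - \pi^* D\bigr).
\end{equation*}

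For the right-hand side: by the definition of the multiplier ideal via a log resolution,
\begin{equation*}
\scJ(D, c) = \pi_* \scO_Y\bigl(K_{Y/X} - \lfloor c\,\pi^* D \rfloor\bigr).
\end{equation*}
For $0 < \epsilon < 1/\max_i a_i$ we have $\lfloor (1-\epsilon) a_i \rfloor = a_i - 1$ for every $i$, whence $\lfloor (1-\epsilon) \pi^* D \rfloor = \pi^* D - E$. Substituting,
\begin{equation*}
\scJ(D, 1-\epsilon) = \pi_* \scO_Y\bigl(K_{Y/X} + E - \pi^* D\bigr) = I_0(D),
\end{equation*}
which is the claim.

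The hard part will be the birational pushforward formula for $I_0(D)$ used in the first display. This rests on the full filtered direct image machinery of Saito's mixed Hodge modules: one needs both the vanishing $R^{>0}\pi_* \scO_Y(*E) = 0$ (so that the Hodge module $j_*\bQ_U^H[n]$ on $X$ is recovered from its counterpart on $Y$ by ordinary $\pi_*$) and the strictness of the filtration on the direct image, which identifies the zeroth piece as $\pi_*(\omega_{Y/X} \otimes F_0^H \scO_Y(*E))$ before a projection-formula rewrite. Everything else --- the SNC computation, the floor manipulation, and independence from the chosen log resolution (by dominating two resolutions with a third) --- is routine.
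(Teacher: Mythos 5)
This theorem is only \emph{cited} in the paper (attributed to Mustaţă--Popa, \cite{MP19a}) and is not given a proof there, so there is no internal argument to compare against. Your outline correctly reconstructs the original proof from \cite{MP19a}: compute $I_0$ on a log resolution (where the Hodge filtration is elementary because the divisor is SNC), push forward the bottom Hodge piece using Saito's strictness and local/Nakano-type vanishing to obtain $I_0(D)=\pi_*\scO_Y(K_{Y/X}+E-\pi^*D)$, and then observe that this is literally the defining formula for $\scJ(D,1-\epsilon)$ once $\epsilon$ is small enough that $\lfloor(1-\epsilon)\pi^*D\rfloor=\pi^*D-E$. The heavy lifting you flag (the filtered direct-image formula for $F_0^H$, needing both $R^{>0}\pi_*\scO_Y(*E)=0$ and strictness) is indeed where the real content lies, and the surrounding bookkeeping---the projection-formula rewrite $\pi_*\omega_Y(E)=\omega_X(D)\otimes\pi_*\scO_Y(K_{Y/X}+E-\pi^*D)$ and the floor computation---is carried out correctly. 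One small imprecision: ``recovered by ordinary $\pi_*$'' should be read as the $\scD$-module direct image $\pi_+$ being concentrated in degree zero, not the naive sheaf pushforward of $\scO_Y(*E)$; and you do not need to verify resolution-independence for $I_0(D)$ itself, since the Hodge ideal is intrinsically defined --- only the multiplier-ideal side uses (standard) independence.
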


A measure of the complexity of the Hodge filtration is its generating level, which tells us the first point at which the filtration determines all higher steps. 

\begin{defn}

Let $X$ be a complex manifold and let $D$ be a reduced effective divisor on $X$. The \emph{generating level} $\genLevel(\scO_X(*D))$ of the Hodge filtration $F_{\bullet}^H\scO_X(*D)$ is 
\begin{equation*}
    \genLevel(\scO_X(*D)) = \min \{ k \in \mathbb{Z}_{\geq 0} \mid  F_{k'+k}^H\scO_X(*D)=F_{k'}\scD_X\cdot F_k^H\scO_X(*D) \;\;\; \forall k ' \in \bZ_{\geq 0} \}.
\end{equation*}
We define the generating level $\genLevel(\scO_X(*f))$ for $f \in \scO_X$ analogously. 
    
\end{defn}

\begin{thm}[Mustaţă-Popa, \cite{MP19a}] For $D \subseteq X$ a reduced effective divisor
\begin{equation*}
    \genLevel (\scO_X(*D)) \leq \dim X - 2.
\end{equation*}

\end{thm}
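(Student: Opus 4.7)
The plan is to take the birational route, using a log resolution $\pi : (Y, E) \to (X, D)$ of the pair, with $E = \pi^{-1}(D)_{\text{red}}$ a reduced simple normal crossings divisor and $\pi$ an isomorphism over $U = X \setminus D$. The idea is that on the resolution everything is simple, and one transports the control of the Hodge filtration down to $X$ via strictness of Hodge module direct images.

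First I would record that on $Y$, since $E$ is SNC, the Hodge filtration on $\scO_Y(*E)$ is generated at level $0$; explicitly $F_k^H \scO_Y(*E) = F_k^{\ord}\scD_Y \cdot \scO_Y(E)$ for every $k \geq 0$. This is a direct computation in local coordinates on $Y$ adapted to $E$ and can be taken as input. Next, by Saito's general theory, the filtered $\scD_X$-module $(\scO_X(*D), F_\bullet^H)$ underlying $j_* \mathbf{Q}_U^H[n]$ is identified with the filtered direct image $\pi_+(\scO_Y(*E), F_\bullet^H)$, and this direct image is strict, i.e. the $E_1$-degeneration of the associated spectral sequence holds.

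The next step is to compute this filtered direct image using the relative de Rham complex $\mathrm{DR}_{Y/X}(\scO_Y(*E))$. Because $\pi$ is an isomorphism over $U$, all higher direct image contributions are supported on $D$ and come from fibers of $\pi|_{\pi^{-1}(D)}$. A log resolution may be arranged so that its exceptional locus has fiber dimension bounded by the codimension of the centers being blown up, which in turn is controlled by $\dim D_{\mathrm{sing}} \leq \dim X - 2$. Combining this with the projection formula and the level-zero generation on $Y$, each $F_k^H \scO_X(*D)$ is expressed as the $\scO_X$-span of images of $F_k \scD_Y \cdot \scO_Y(E)$ through a relative de Rham complex of length at most $\dim X - 2$. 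Carefully tracking the shift induced by the relative canonical, the strictness of the filtration then gives that $F_{k+k'}^H \scO_X(*D) = F_{k'}^{\ord}\scD_X \cdot F_k^H \scO_X(*D)$ for all $k' \geq 0$ as soon as $k \geq \dim X - 2$.

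The main obstacle will be the bookkeeping of filtration shifts in the direct image: namely, showing that the generating level picked up from pushing forward the de Rham complex is exactly the fiber-dimension bound $\dim X - 2$ and not something larger. This requires invoking Saito's strictness precisely, together with the explicit description of $\pi_+$ on filtered $\scD$-modules, and then checking that the only index jump that survives degeneration is the one coming from the top relative de Rham differential. Once this is in hand, the geometric bound on fiber dimensions of a log resolution closes the argument.
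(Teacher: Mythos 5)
Your overall framework --- log resolution $\pi : (Y, E) \to (X, D)$, level-$0$ generation of $F_\bullet^H \scO_Y(*E)$ on the SNC model, strictness of filtered direct images --- is indeed the approach taken in \cite{MP19a}. But the step in which you bound the generating level by the fiber dimension of $\pi$ has a genuine gap. You claim that a log resolution can be arranged so that the fiber dimension over $D$ is controlled by $\dim D_{\mathrm{sing}} \leq \dim X - 2$. This is false. The exceptional locus of a log resolution is a divisor in $Y$, and if $D_{\mathrm{sing}}$ has an isolated point $p$, the whole component of the exceptional divisor lying over $p$ is contained in $\pi^{-1}(p)$, which therefore has dimension $\dim X - 1$, not $\dim X - 2$. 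Concretely, for $D = \{x^2 + y^2 + z^2 = 0\} \subset \bC^3$, blowing up the origin gives an exceptional fiber $\bP^2$ of dimension $2 = \dim X - 1$, and no choice of log resolution can improve this. Thus a pure fiber-dimension estimate gives at best $\genLevel(\scO_X(*D)) \leq \dim X - 1$, one worse than the statement.

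The extra $-1$ in the Mustaţă--Popa argument comes from a different ingredient that your sketch does not supply: a local vanishing theorem of the form $R^q \pi_* \Omega_Y^p(\log E) = 0$ for $p + q > \dim X$, which kills the top-degree contributions in the filtered complex computing $\pi_+\big(\scO_Y(*E), F_\bullet^H\big)$. Your closing remark about ``the only index jump that survives degeneration'' may be gesturing toward this, but as written it is supported by the incorrect fiber-dimension claim rather than by the needed vanishing. To close the argument you would need to replace the fiber-dimension estimate with that vanishing (together with a careful bookkeeping of the filtration shifts coming from the Spencer/de Rham resolution), which is precisely what \cite{MP19a} does.
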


\subsection{Free divisors}

Recall that \cite{CNS22} considers strongly Koszul free divisors, in which case our main Hypotheses \ref{hyp-mainHypotheses} are known to hold. Computations can be found in \cite{CNS22}. Note that being free is equivalent to the true Jacobian ideal being Cohen--Macaulay of codimension two. Tameness is much weaker: for example, all divisors in $\bC^3$ are tame, irrespective of Cohen--Macaulay-ness.

The following clarifies the relationship between (one of) our use cases of linear Jacobian type and the contents of \cite{CNS22}.

\begin{thm}

\cite{Nar15} Let $X$ be a complex manifold, $D$ a reduced effective divisor on $X$. Assume that $D$ is a free divisor, i.e. that $\text{\emph{Der}}_X(-\log D)$ is locally free as an $\scO_X$-module. Then the following conditions are equivalent:

\begin{enumerate}[label=\roman*)]

\item $D$ is of linear Jacobian type.

\item $D$ is strongly Koszul.

\item $D$ is weakly Koszul and $\text{\emph{ann}}_{\scD_X}f^s$ is generated by differential operators of degree $1$.

\item $D$ is Koszul and strongly Euler homogeneous.

\item $D$ is Koszul and satisfies the log comparison theorem.
    
\end{enumerate}

\label{thmfree}
    
\end{thm}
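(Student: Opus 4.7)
The plan is to prove the equivalences by routing everything through the Liouville ideal $\scL_f$ and the known homological properties of free divisors, leaning heavily on Propositions \ref{propLJ-new} and \ref{propLJ0-new} already established above. First I would establish $(i) \Leftrightarrow (iv)$. By Proposition \ref{propLJ-new}, linear Jacobian type is equivalent to strong Euler homogeneity together with $\widetilde{\scJ}_f$ being of linear type, and by Proposition \ref{propLJ0-new} the latter is in turn equivalent to $\scL_f$ being prime of height $n-1$ and equal to $\gr^{\ord}(\ann_{\scD_X} f^s)$. For a free divisor, if $\delta_1,\dots,\delta_n$ is an $\scO_X$-basis of $\Der_X(-\log_0 f)$ extended appropriately (or rather, if $\delta_1,\dots,\delta_{n-1}$ is a basis of $\Der_X(-\log_0 f)$, which has rank $n-1$), then $\scL_f = (\gr^{\ord}(\delta_1),\dots,\gr^{\ord}(\delta_{n-1}))$ and this ideal being prime of the right height is exactly the statement that the sequence of symbols is regular. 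This last condition is the definition of Koszulness for a free divisor, so $(i) \Leftrightarrow (iv)$ reduces to the identification of Koszul + strongly Euler homogeneous with the linear-type condition.

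Next I would treat the implications involving ``strongly Koszul'' and ``weakly Koszul''. The cleanest route is $(ii) \Leftrightarrow (iii)$ and $(ii) \Leftrightarrow (i)$ through the parametric version of the Liouville ideal: strongly Koszul is defined so that $\gr^\sharp(\scD_X[s]\cdot\Der_X(-\log_0 f) + \scD_X[s]\cdot(E-s))$ is generated by a regular sequence (with $E$ an Euler field), which, combined with the Euler relation, is precisely what is needed to conclude both that $\ann_{\scD_X}f^s = \scD_X \cdot \Der_X(-\log_0 f)$ (degree-one generation, hence $(iii)$) and that the resulting symbol ideal coincides with $\scL_f$. The additional degree-one generation in $(iii)$ combined with weak Koszulness forces strong Euler homogeneity, collapsing it down to $(ii)$. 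This is essentially the content of the main theorems of \cite{Nar15} and I would cite them at the corresponding steps rather than reprove the Spencer complex computations.

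For $(iv) \Leftrightarrow (v)$ I would invoke the Calderón-Moreno--Narváez-Macarro logarithmic comparison theorem for Koszul free divisors: for a Koszul free divisor, the logarithmic comparison theorem holds if and only if the divisor is strongly Euler homogeneous. This is classical and independent of the previous steps, so one may simply cite it. Combined with the $(i) \Leftrightarrow (iv)$ step above and the $(ii) \Leftrightarrow (iii) \Leftrightarrow (i)$ chain, the five equivalences close up.

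The main obstacle is purely bookkeeping: making sure the various ``Koszul'' qualifiers (Koszul, weakly Koszul, strongly Koszul) are consistently defined, and that the passage between the $s$-parametric ideal $\ann_{\scD_X[s]}f^s$ and the non-parametric ideal $\ann_{\scD_X}f^s$ is properly controlled using the Euler operator $E-s$, as in the proof of Proposition \ref{prop-ParaPrimeEquivalent}. Since the result is entirely contained in \cite{Nar15}, I would structure the proof as a sequence of citations connected by the proposition \ref{propLJ-new}--\ref{propLJ0-new} framework of this paper, rather than rederiving the Koszul complex arguments from scratch.
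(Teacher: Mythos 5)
The paper does not actually prove this theorem: it is stated with \cite{Nar15} at the head precisely because it is a quoted result from Narváez Macarro, recorded here only to situate the class of strongly Koszul free divisors within the linear-Jacobian-type framework. So there is no ``paper's proof'' to compare against; your proposal is filling in a blank the authors deliberately left to a citation. With that caveat, here is an assessment of the sketch on its own terms.

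Your route through Propositions \ref{propLJ-new} and \ref{propLJ0-new} is the natural one given the surrounding machinery, and the citations you would use for $(ii) \Leftrightarrow (iii)$ and for $(iv) \Leftrightarrow (v)$ (the Calderón Moreno--Narváez Macarro logarithmic comparison theorem for Koszul free divisors) are correct pointers. The $(i)\Leftrightarrow(iv)$ step, however, has a genuine gap. You write that $\scL_f = (\gr^{\ord}(\delta_1),\dots,\gr^{\ord}(\delta_{n-1}))$ ``being prime of the right height is exactly the statement that the sequence of symbols is regular,'' and that this is ``the definition of Koszulness.'' Neither half of this is accurate. First, Koszulness of a free divisor is the regularity of the $n$-element sequence of symbols of a basis of $\Der_X(-\log D)$, not of the $(n-1)$-element basis of $\Der_X(-\log_0 f)$; the symbol $\gr^{\ord}(E)$ of the Euler field must be included, and showing it is a non-zero-divisor modulo $\scL_f$ is exactly where primality of $\scL_f$ is used going one way. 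Second, and more seriously, ``complete intersection of height $n-1$'' (what Koszul plus the Euler field gives you for $\scL_f$) does \emph{not} imply prime; it implies unmixed. The direction (Koszul $+$ strongly Euler homogeneous) $\Rightarrow$ ($\scL_f$ prime) is the nontrivial content of Narváez Macarro's argument and cannot be dispatched by the equivalence of ``regular sequence'' with ``prime of the right height.'' Indeed, Example \ref{ex:uliQuestion} in the paper shows that even the weaker condition of $\gr^{\ord}\ann_{\scD_X}f^{s-1}$ being prime can fail for free, Euler homogeneous divisors; ruling this out under the Koszul hypothesis is precisely the work one would have to import from \cite{Nar15}. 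Your final paragraph, proposing to structure the argument as a string of citations, is therefore the right instinct and is in fact what the paper does, but the middle paragraph should not present the primality--regularity identification as a tautology.
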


\subsection{Hyperplane arrangements} As already mentioned, we have the following:

\begin{prop}[Example 3.14 \cite{KSai80} and \cite{MSai16}]

$X=\bC^n$ and $f\in\scO_X$ a reduced defining equation for a central hyperplane arrangement. Then $f$ is strongly Euler homogeneous and Saito holonomic, and $Z(b_f(s)) \subseteq (-2,0)$.
    
\end{prop}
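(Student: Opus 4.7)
The three claims can be addressed independently, each by citation or short reduction.

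For strongly Euler homogeneity, I would first handle the origin $0 \in \bC^n$. Since $f = \prod_{i=1}^d \ell_i$ is a product of homogeneous linear forms, the standard Euler vector field $E_0 = \sum_{i=1}^n x_i \partial_{x_i}$ satisfies $E_0 \cdot f = d f$, and $E_0 \in \mathfrak{m}_{X,0} \cdot \Der_{\bC}(\scO_{X,0})$, so $E_0/d$ is a strong Euler homogeneity at $0$. For an arbitrary point $\fx \in \text{Var}(f)$, group the factors of $f$ as $f = f_{\fx} \cdot g_{\fx}$ where $f_{\fx}$ collects those $\ell_i$ passing through $\fx$ and $g_{\fx}$ collects the rest (so $g_{\fx}(\fx) \neq 0$, i.e.\ $g_{\fx}$ is a unit in $\scO_{X,\hspace{1pt}\fx}$). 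After translating $\fx$ to the origin, $f_{\fx}$ becomes a central arrangement in its own right, hence strongly Euler homogeneous at $\fx$ by the previous case; by Remark \ref{rmk-EulerHomBasics}.\eqref{item-StrongEulerChoice} this passes to $f = u \cdot f_{\fx}$ for the unit $u = g_{\fx}$.

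For Saito holonomicity, I would just invoke Example 3.14 of \cite{KSai80}, where Saito shows the logarithmic stratification of a central hyperplane arrangement coincides with the stratification by flats of the intersection lattice $\scL(\mathscr{A})$; the latter is finite, hence the stratification is locally finite.

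For the root bound $Z(b_f(s)) \subseteq (-2,0)$, this is the main theorem of M.~Saito's paper \cite{MSai16}, which I would cite directly. (The upper bound $Z(b_f(s)) \subseteq (-1, 0]$ is part of more general facts: $-1$ is always a root, and Kashiwara's theorem forces all roots to be negative rationals; the sharper lower bound $> -2$ is the substantive content of \cite{MSai16} for arrangements.)

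The hard part is really none of these — the statement is essentially a bookkeeping aggregation of well-known results. The only minor subtlety is ensuring strong Euler homogeneity at non-origin points of the arrangement, which is handled cleanly by the factorization $f = f_{\fx} \cdot g_{\fx}$ and the invariance of strong Euler homogeneity under unit multiples.
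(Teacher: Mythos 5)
Your proposal is correct and takes the same approach as the paper, which simply cites Example 3.14 of \cite{KSai80} (for Saito holonomicity, and implicitly for strong Euler homogeneity via the explicit logarithmic stratification) and \cite{MSai16} (for $Z(b_f(s)) \subseteq (-2,0)$); your elaboration of strong Euler homogeneity via the factorization $f = f_{\fx}\cdot g_{\fx}$ and Remark \ref{rmk-EulerHomBasics}.\eqref{item-StrongEulerChoice} is a clean way to make the non-origin case explicit.

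One small error in your parenthetical aside: the claim that ``the upper bound $Z(b_f(s)) \subseteq (-1,0]$ is part of more general facts'' is both garbled and false --- since $-1$ is always a root of $b_f(s)$ for reduced $f$, the roots cannot all lie in $(-1,0]$; and for general $f$ the roots are only known to lie in $(-n,0)$, not in any interval independent of $n$. What you likely meant is that the roots are always negative rationals with $-1$ among them (Kashiwara, plus reducedness of $f$), and the substantive content of \cite{MSai16} for arrangements is precisely the two-sided bound $(-2,0)$. Since you cite \cite{MSai16} for the actual claim, the parenthetical does not damage the proof, but as stated it is incorrect.
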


\begin{cor}

$X=\bC^n$ and $f\in\scO_X$ a reduced defining equation for a tame central hyperplane arrangement. Then $f$ is of linear Jacobian type and $Z(b_f(s)) \subseteq(-2,0)$. In particular, Theorem \ref{thmmain-new} and Theorem \ref{thmgamma2-new} apply.
\end{cor}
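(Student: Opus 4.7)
The plan is to assemble the corollary from three previously-established inputs, with no real new content required. First, I would invoke the preceding proposition (drawn from Example 3.14 of \cite{KSai80} and \cite{MSai16}) to extract, from the hypothesis that $f$ is a reduced defining equation of a central hyperplane arrangement, the three facts: $f$ is strongly Euler homogeneous, $f$ is Saito holonomic, and $Z(b_f(s)) \subseteq (-2,0)$. These are precisely the non-tameness conditions appearing in the hypotheses of Walther's result, Theorem \ref{thmOfUli}.

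Next, I would combine the above with the assumed tameness of the arrangement. That is, together with the tame hypothesis, Theorem \ref{thmOfUli} directly yields that $f$ is of linear Jacobian type. This settles the first conclusion of the corollary.

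Finally, to obtain the applicability of Theorem \ref{thmmain-new} and Theorem \ref{thmgamma2-new}, I would appeal to Corollary \ref{cor-LJTsatisfiesHyp}: it says that any $f \in \scO_X$ that is of linear Jacobian type and whose Bernstein-Sato polynomial has roots in $(-2,0)$ locally everywhere satisfies Hypotheses \ref{hyp-mainHypotheses} and thus the conclusions of the two main theorems apply. Since the two relevant inputs have been verified in the preceding steps, the corollary follows.

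There is essentially no obstacle: all the substantial work (the root location for arrangements, Walther's linear Jacobian type criterion for tame/Saito holonomic/strongly Euler homogeneous germs, and the verification that linear Jacobian type plus the Bernstein-Sato condition implies Hypotheses \ref{hyp-mainHypotheses}) has been done elsewhere and cited in the paper. The corollary is purely a bookkeeping combination, and the proposal above is essentially the full proof in two or three sentences.
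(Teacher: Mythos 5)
Your proposal is correct and follows exactly the implicit route the paper intends: the preceding Proposition (citing Example 3.14 of \cite{KSai80} and \cite{MSai16}) supplies strong Euler homogeneity, Saito holonomicity, and the root condition $Z(b_f(s))\subseteq(-2,0)$, which together with tameness feed into Theorem \ref{thmOfUli} (itself already containing the bookkeeping step you carry out via Corollary \ref{cor-LJTsatisfiesHyp}). The paper omits a proof precisely because the corollary is this kind of immediate assembly, so there is nothing further to add.
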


Since any divisor $D \subseteq X$ is tame provided that $\dim X \leq 3$, any central hyperplane arrangement in $\bC^3$ fits into the above Corollary. Then recall that $\ann_{\scD_X} f^{s-1}$ is generated by $\Der_X(-\log_0 f)$, reducing the complexity of computation. Here are two examples in $\bC^3$:

\begin{egs} In the following examples, note that our formulas for $I_0(f)$ confirm the general formula for hyperplane arrangements found in \cite{Mustata06} (see also \cite{Teitler08}). The point here is that we can also compute generation level.

\begin{enumerate}
    
\item $f=xyz(x+y+z) \in \bC[x,y,z]$. This is a generic arrangement of four hyperplanes. The sheaf of logarithmic vector fields has $4$ generators in this case and is not free. Using the results of this paper we calculate that 
\[I_0(f)=(x,y,z) \quad \text{and} \quad \genLevel (\scO_{\bC^3}(*f)) = 0.\]

\item $f=xyz(x+y+z)(x+y+2z) \in \bC[x,y,z]$. The associated hyperplane arrangement again isn't free but this time is of course not generic. Nevertheless, the generating level turns out again to be zero, with 
\[I_0(f)=(z^2,yz,xz,xy+y^2,x^2-y^2) \quad \text{and} \quad \genLevel (\scO_{\bC^3}(*f)) = 0.\]

\end{enumerate}

\end{egs}

Even in cases where linear Jacobian type fails, Hypotheses \ref{hyp-mainHypotheses} may hold:

\begin{eg} Let 
\begin{equation*}
    f=xyz(w+x)(w+y)(w+z)(w+x+y)(w+x+z)(w+y+z) \in \bC[x,y,z,w].
\end{equation*} 
and $\scA \subseteq \bC^4$ the attached divisor. This is one of the few non-tame hyperplane arrangements known. And: $\scD_{\bC^4} \cdot \Der_{\bC^4}(-\log_0 f) \subsetneq \ann_{\scD_{\bC^4}} f^{s-1} $; any generating set of the $\scD_{\bC^4}[s]$-annihilator of $f^{s-1}$ contains an operator of total order at least two and $\text{div}(f)$ is not of linear Jacobian type. Nevertheless $\gr^{\ord} (\ann_{\scD_{\bC^4}} f^{s-1})$ is prime (see Example 5.7-Question 5.9 \cite{Wal17}). By Theorem \ref{thmmain-new} we conclude that, for all $k \in \mathbb{Z}_{\geq 0}$,
\begin{equation*}
    F_{k}^H \scO_{\bC^4}(*f) \subseteq F_k^{\ord} \scO_{\bC^4}(*f).
\end{equation*}
\end{eg}


\subsection{Positively weighted homogeneous locally everywhere divisors in $\bC^3$}

By Theorem \ref{thm-PWHomLENiceClass}, Theorem \ref{thmmain-new} and Theorem \ref{thmgamma2-new} apply to any positively weighted homogeneous locally everywhere divisor given by a positively weighted homogeneous polynomial $f \in \bC[x_1, x_2, x_3]$, provided the appropriate vanishing of graded pieces of $H_{\fm}^{0} R / \partial f$ holds. Moreover, $\scD_{\bC^3} \cdot \Der_{\bC^3}(-\log_0 f) = \ann_{\scD_{\bC^3}} f^{s-1}$ helping with Gr{\"o}bner computations.

\begin{egs}

\; \begin{enumerate}

\item $f=x^5+y^4z \in \bC[x,y,z]$. The singular locus is the $z$-axis. Working over $\bP^2$, the local equation at $[0:0:1]$ is $x^5 + y^4$, which is positively weighted homogeneous with an isolated singularity. So $\text{div}(f)$ is positively weighted homogeneous locally everywhere. One checks the appropriate graded pieces of $H_{\fm}^0 R / \partial f$ vanish and so $\text{div}(f)$ falls under Theorem \ref{thm-PWHomLENiceClass}'s purview. In particular, $\text{div}(f)$ is of linear Jacobian type.

Saito has computed the Bernstein-Sato polynomial of $f$ in Example I \cite{MSai20}. So
\begin{equation*}
    \beta_f(s) = \left(s+\frac{1}{20}\right)\left(s+\frac{2}{20}\right)\left(s+\frac{3}{20}\right)\left(s+\frac{6}{20}\right)\left(s+\frac{7}{20}\right)\left(s+\frac{11}{20}\right).
\end{equation*}
The sheaf of logarithmic vector fields has 4 generators, so $\text{div}(f)$ is not free. We obtain that 
\begin{equation*}
    I_0(f)=(x,y)^3 \quad \text{and} \quad \genLevel \scO_{\bC^3}(*f) = 0.
\end{equation*}

\item $f=x^5+x^2y^3+y^4z \in \bC[x,y,z]$. The singular locus is the $z$-axis. The singular locus of the associated projective hypersurface is the point $[0:0:1]$, and the divisor is defined by the equation $h=x^5+x^2y^3+y^4$ at this point. A simple calculation gives that 
\[(90xy+400)h=(18x^2y+9y^2+80x)\partial_x h +(18xy^2-15x^2+100y)\partial_y h,\]
so that $h$ is strongly Euler homogeneous at the origin. Since $h$ has an isolated singularity at the origin, this implies in fact that $h$ is positively weighted homogeneous at the origin (see \cite{Ksai71}), and so $\text{div}(f)$ is positively weighted homogeneous locally everywhere. One checks the appropriate graded pieces of $H_{\fm}^0 R / \partial f$ vanish and so Theorem \ref{thm-PWHomLENiceClass} applies to $\text{div}(f)$. In particular, $\text{div}(f)$ is of linear Jacobian type. 

Saito (Example II \cite{MSai20}) calculated $b_f(s)$. From this we read off:
\[\beta_f(s)=\left(s+\frac{1}{20}\right)\left(s+\frac{2}{20}\right)\left(s+\frac{3}{20}\right)\left(s+\frac{6}{20}\right)\left(s+\frac{7}{20}\right)\left(s+\frac{11}{20}\right)\left(s+\frac{1}{5}\right)\left(s+\frac{2}{5}\right).\]
Again the sheaf of log vector fields has 4 generators and $\text{div}(f)$ is not free. Here:
\begin{align*}
    &I_0(f) = (x^3,x^2y,xy^2,y^3) = (x,y)^3; \\
    &I_1(f) =(x^7,x^6y,x^5y^2,xy^6,y^7,y^6z,xy^5z,x^2y^3) = (x,y)^7 \cap (z,x^2,y^7,xy^6); \\
    &\genLevel(\scO_{\bC^3}(*f)) = 1.
\end{align*}
The associated primes of $I_1(f)$ are $\{(x,y), (x,y,z)\}$. Its primary decomposition is given above.
\end{enumerate}

\end{egs}

\subsection{Non-linear Jacobian type divisors}

Here we give some examples of divisors that are not of linear Jacobian type, but nevertheless satisfy Hypotheses \ref{hyp-mainHypotheses} and so succumb to our main theorems. These examples are also not Saito-holonomic. For presentation's sake, set
\begin{equation*}
    \ann_{\scD_X}^{(k)} f^{s-1} = \scD_X\cdot (F_k^{\ord} \scD_X \cap \ann_{\scD_X} f^{s-1}) \subseteq \ann_{\scD_{X}} f^{s-1}.
\end{equation*}
Note that $\scD_X \cdot \Der_X(-\log_0 f) = \ann_{\scD_X}^{(1)} f^{s-1}.$ In all examples in this subsection we will have that
\begin{equation*}
    \ann_{\scD_X}^{(1)} f^{s-1} \neq \ann_{\scD_X} f^{s-1}.
\end{equation*}

\begin{egs} \enspace
\begin{enumerate} 

    \item $f = xy(x+y)(x+yz) \in \bC[x,y,z]$. This is the notorious ``four lines'' example introduced in Remark 4.2.4 \cite{CalderonMoreno99}. Here $\text{div}(f)$ is free, strongly Euler homogeneous, but not Saito holonomic: every point on the $z$-axis is its own logarithmic stratum. So $\text{div}(f)$ does not fit into the paradigm of \cite{CNS22}. One checks that
    \begin{equation*}
        \ann_{\scD_{\bC^3}}^{(1)} f^{s-1} \subsetneq \ann_{\scD_{\bC^3}}^{(2)} f^{s-1} = \ann_{\scD_{\bC^3}} f^{s-1}.
    \end{equation*}
    Nevertheless, Macaulay2 confirms that $\gr^{\ord} (\ann_{\scD_X}f^{s-1})$ is prime. Macaulay2 also gives:
    \begin{equation*}
        b_f(s) = (s+1)^3 (s+1/2)(s+3/4)(s+5/4).
    \end{equation*}
    (And $b_f(s) = b_{f,0}(s)$ despite $f$ not being homogeneous). So Hypotheses \ref{hyp-mainHypotheses} are satisfied and the main theorems hold. We compute that:
    \begin{equation*}
        I_0(f) = (x,y)^2 \quad \text{and} \quad \genLevel(\scO_{\bC^3}(*f)) = 0.
    \end{equation*}

    \item $f = wx(w+x)(w+xyz) \in \bC[x,y,z,w].$ Here $\text{div}(f)$ is not Saito holonomic, as $\{w=x=0\}$ is an irreducible component of $\{\fx\in \bC^4\, :\, \text{dim}_{\bC}(\text{Der}_{\bC^4} (-\log f) \otimes \scO_{\bC^4,\,\fx}/\fm_{\bC^4,\,\fx})=1\}$, since $E=\frac{1}{4}(w\partial_w+x\partial_x)\in\text{Der}_{\bC^4}(-\log f)$ (each line in the $w$-$x$ plane parallel to $w=x$ is its own logarithmic stratum). Here $\text{div}(f)$ is tame, but neither free nor of linear Jacobian type: Macaulay2 says that
    \begin{equation*}
    \ann_{\scD_{\bC^4}}^{(1)} f^{s-1} \subsetneq \ann_{\scD_{\bC^4}}^{(2)} f^{s-1} = \ann_{\scD_{\bC^4}} f^{s-1}.
    \end{equation*}

    However Macaulay2 says that $\gr^{\ord} (\ann_{\scD_{\bC^4}} f^{s-1})$ is prime and 
    \begin{equation*}
        b_f(s)=\left(s+1\right)^4\left(s+\frac{1}{2}\right)\left(s+\frac{3}{4}\right)\left(s+\frac{5}{4}\right)\left(s+\frac{3}{2}\right)
    \end{equation*}
    so $f$ and $\text{div}(f)$ satisfy Hypotheses \ref{hyp-mainHypotheses}. (And $b_f(s) = b_{f,0}(s)$ despite $f$ not being homogeneous.) We compute that 
    \begin{equation*}
        I_0(f) = (x,w)^2 \quad \text{and} \quad \genLevel \scO_{\bC^4}(*f) = 0.
    \end{equation*}

\end{enumerate}

If one replaces $y$ with $w$ in $(1)$ one observes the hypersurface in $(1)$ is obtained by restricting the hypersurface in $(2)$ to the smooth hyperplane $\{y = 1 \}$. The Hodge ideals of $(1)$ are obtained from those in $(2)$ by the same restriction. Compare to Theorem 16.1 \cite{MP19a}.
\end{egs}

\addtocontents{toc}{\protect\setcounter{tocdepth}{2}}


\bibliographystyle{siam}
\bibliography{bibliography}

\end{document}